\newtheorem{thm}{Theorem}[section]
\newtheorem{prop}[thm]{Proposition}
\newtheorem{lem}[thm]{Lemma}
\newtheorem{cor}[thm]{Corollary}
\theoremstyle{definition}
\newtheorem{defn}[thm]{Definition}
\newtheorem*{claim}{Claim}
\theoremstyle{remark}
\newtheorem{rem}[thm]{Remark}
\newcommand{\Z}{\mathbb{Z}}
\newcommand{\R}{\mathbb{R}}
\newcommand{\C}{\mathbb{C}}
\newcommand{\GL}{\mathrm{GL}}
\newcommand{\aff}{\mathrm{aff}}
\newcommand{\red}{\mathrm{red}}
\newcommand*{\alggrp}[1]{\boldsymbol{#1}}
\DeclareMathOperator{\Hom}{Hom}
\DeclareMathOperator{\Ker}{Ker}
\DeclareMathOperator{\Imm}{Im}
\DeclareMathOperator{\supp}{supp}
\DeclareMathOperator{\val}{val}
\DeclareMathOperator{\Stab}{Stab}
\numberwithin{equation}{section}
\title{Modulo $p$ parabolic induction of pro-$p$-Iwahori Hecke algebra}
\author{Noriyuki Abe}
\address{Creative Research Institution (CRIS), Hokkaido University, N21, W10, Kita-ku, Sapporo, Hokkaido 001-0021, Japan}
\email{abenori@math.sci.hokudai.ac.jp}
\subjclass[2010]{22E50,20C08}
\begin{document}
\begin{abstract}
We study the structure of parabolic inductions of a pro-$p$-Iwahori Hecke algebra.
In particular, we give a classification of irreducible modulo $p$ representations of pro-$p$-Iwahori Hecke algebra in terms of supersingular representations.
Since supersingular representations are classified by Ollivier and Vign\'eras, it completes the classification of irreducible modulo $p$ representations.
\end{abstract}
\maketitle

\section{Introduction}
\label{sec:Introduction}
Let $G$ be the group of $F$-valued points of a connected reductive group over a $p$-adic field $F$ and $K'$ its open compact subgroup.
Then $K'$-biinvariant functions with compact support on $G$ forms an algebra via the convolution product.
We call this algebra the Hecke algebra.
When we investigate the representation theory of $G$ over a characteristic zero field, this algebra has an important role.
One of the most important case is when $K'$ is an Iwahori subgroup.
In this case, the category of representations of the Hecke algebra is equivalent to a block of the category of smooth representations of $G$.

We are interested in the representations of $G$ over a characteristic $p$ field.
In this setting, it is natural to consider a \emph{pro-$p$-Iwahori subgroup} since any non-zero representations of $G$ over a characteristic $p$ field has a non-zero vector fixed by a pro-$p$-Iwahori subgroup.
The corresponding Hecke algebra is called a \emph{pro-$p$-Iwahori Hecke algebra} and the structure of this algebra is studied by Vign\'eras~\cite{Vigneras-prop}.
This structure theorem is used in \cite{arXiv:1412.0737} in which we gave a classification of irreducible admissible modulo $p$ representations of $G$ in terms of supersingular representations.

Another motivation to consider a pro-$p$-Iwahori Hecke algebra is to study Galois representations.
It is conjectured by Vign\'eras~\cite{MR2122539} and proved by Ollivier~\cite{MR2728487} that there is the ``numerical modulo $p$ Langlands correspondence'' between modulo $p$ Galois representations and modulo $p$ representations of a pro-$p$-Iwahori Hecke algebra of $\GL_n$.
It asserts that the number of $n$-dimensional modulo $p$ Galois representations with a fixed determinant and the number of $n$-dimensional modulo $p$ supersingular representations of a pro-$p$-Iwahori Hecke algebra of $\GL_n$ with a fixed action of the center of $\GL_n$ coincide with each other.
Recently, Gro\ss e-Kl\"onne~\cite{Grosse-Klonne-pro-p-Iwahori-Hecke-and-Galois-rep} constructed a functor from the category of modulo $p$ representations of a pro-$p$-Iwahori Hecke algebra of $\GL_n$ to the category of modulo $p$ Galois representations which gives a realization of the numerical modulo $p$ Langlands correspondence.

In this paper, we study the representations, especially parabolic inductions of a pro-$p$-Iwahori Hecke algebra.
Our main theorem is the classification of the irreducible modulo $p$ representations of a pro-$p$-Iwahori Hecke algebra in terms of supersingular representations.
The supersingular representations are classified by Ollivier~\cite{arXiv:1211.5366} (split case) and Vign\'eras~\cite{Vigneras-prop-III} (general).

We state our main result.
Let $C$ be an algebraically closed field of characteristic $p$.
Fix a pro-$p$-Iwahori subgroup of $G$ and let $\mathcal{H}$ be the corresponding Hecke algebra with coefficients in $C$.
We consider $\mathcal{H}$-modules.
Here, modules always are right modules unless otherwise stated.
Let $P$ be a parabolic subgroup, $M$ its Levi subgroup and $\mathcal{H}_M$ a pro-$p$-Iwahori Hecke algebra of $M$.
Then for a representation $\sigma$ of $\mathcal{H}_M$, the parabolic induction $I_P(\sigma)$ is defined (Definition~\ref{defn:parabolic induction}).

The statement of the main theorem is almost the same as the group case~\cite{MR3143708,arXiv:1412.0737}.
To state it, fix a minimal parabolic subgroup $B$ and its Levi subgroup $Z$.
Then we have a root system $\Phi$ and the set of simple roots $\Pi$.
For $\alpha\in\Pi$, let $G_\alpha'$ be the group generated by the root subgroups of $G$ corresponding to $\pm\alpha$ and put $Z'_\alpha = Z\cap G'_\alpha$.
Then for a standard Levi subgroup $M\supset Z$ and a representation $\sigma$ of $\mathcal{H}_M$ we define
\[
	\Pi(\sigma) = \{\alpha\in\Pi\mid \langle\Pi_M,\check{\alpha}\rangle = 0,\ \text{$T^M_\lambda$ is identity for any $\lambda\in Z'_\alpha$}\}\cup \Pi_M,
\]
where $T^M_\lambda\in \mathcal{H}_M$ is the function supported on the double coset containing $\lambda$ with respect to the pro-$p$-Iwahori subgroup of $M$ (Iwahori-Matsumoto basis) and $\Pi_M$ is the set of simple roots of $M$.
Let $P(\sigma)$ be the corresponding parabolic subgroup.
Now consider triples $(P,\sigma,Q)$ such that
\begin{itemize}
\item $P$ is a parabolic subgroup with Levi subgroup $M$.
\item $\sigma$ is a supersingular representation of $M$.
\item $Q$ is a parabolic subgroup such that $P\subset Q\subset P(\sigma)$.
\end{itemize}
For such a pair, we define an $\mathcal{H}$-module $I(P,\sigma,Q)$ as follows.
Let $M(\sigma)\supset Z$ be the Levi subgroup of $P(\sigma)$.
We can prove that there is the ``extension'' $e(\sigma)$ of $\sigma$ to $\mathcal{H}_{M(\sigma)}$ (Proposition~\ref{prop:extending}).
Then for each $Q'\supset Q$, we have an embedding $I_{Q'}(e(\sigma))\hookrightarrow I_Q(e(\sigma))$.
Now let
\[
	I(P,\sigma,Q) = I_Q(e(\sigma))/\sum_{Q'\supsetneq Q}I_{Q'}(e(\sigma)).
\]
Then the main theorem of this paper is the following.
\begin{thm}\label{thm:main theorem, introduction, not precisely}
The corresponding $(P,\sigma,Q)\mapsto I(P,\sigma,Q)$ gives a bijection between such triples and isomorphism classes of irreducible representations of $\mathcal{H}$.
\end{thm}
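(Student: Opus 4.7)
\medskip

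The plan is to follow the strategy that succeeded on the group side in \cite{arXiv:1412.0737} (building on Herzig's classification for $\GL_n$) and transport it to the Hecke algebra. The proof splits into three tasks: (i) each $I(P,\sigma,Q)$ is irreducible, (ii) the triple $(P,\sigma,Q)$ can be recovered from $I(P,\sigma,Q)$, and (iii) every irreducible $\mathcal{H}$-module is obtained this way.

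First I would develop the formal properties of parabolic induction $I_P$ for pro-$p$-Iwahori Hecke modules that are needed: a left adjoint (a Hecke-algebra ``ordinary part''), transitivity $I_P = I_Q \circ I_{P\cap L}^{L}$ for Levi factors $L$, and compatibility with the extension $e(\sigma)$ of Proposition~\ref{prop:extending}. Using $e(\sigma)$ and this transitivity, for $P\subset Q\subset P(\sigma)$ I would construct the canonical embeddings $I_{Q'}(e(\sigma))\hookrightarrow I_Q(e(\sigma))$ for $Q'\supset Q$ and verify that they are compatible (so the sum in the definition of $I(P,\sigma,Q)$ is literally a sum of submodules). The key input here is the fact that the extension $e(\sigma)$ has a controlled character on the ``extra'' simple roots in $\Pi(\sigma)\setminus\Pi_M$, which is exactly what makes $I_{P(\sigma)}(e(\sigma))$ embed into every smaller $I_{P'}(e(\sigma))$.

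Second, to prove that $I(P,\sigma,Q)$ is irreducible and that the triple can be recovered, I would compute the image of $I(P,\sigma,Q)$ under the left adjoint of $I_P$, showing it contains $\sigma$ with multiplicity one. Combined with an analysis of the composition series of $I_P(e(\sigma))$ indexed by parabolics $Q$ with $P\subset Q\subset P(\sigma)$ (a Steinberg-type filtration), this both forces irreducibility of each quotient $I(P,\sigma,Q)$ and shows that distinct $Q$ give non-isomorphic modules. The parabolic $P$ and the representation $\sigma$ are then recovered as the smallest standard parabolic and the supersingular representation appearing in the ordinary-part computation; $Q$ is recovered as the largest parabolic such that $I(P,\sigma,Q)$ is a quotient of $I_Q(e(\sigma))$.

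The main obstacle is surjectivity. Given an arbitrary irreducible $\mathcal{H}$-module $\pi$, I would choose a minimal standard parabolic $P$ and an irreducible $\mathcal{H}_M$-module $\tau$ such that $\pi$ is a subquotient (in fact a quotient, by adjunction and minimality) of $I_P(\tau)$. The heart of the argument is to show that this minimality forces $\tau$ to be supersingular: if not, some simple root $\alpha\notin\Pi_M$ with $\langle\Pi_M,\check{\alpha}\rangle=0$ and non-trivial $T^M_\lambda$-action would allow one to extend $\tau$ to a larger Levi and thereby realize $\pi$ as a subquotient of a parabolic induction from a smaller Levi, contradicting minimality of $P$. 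This requires a careful analysis of the elements $T_\lambda^M$ for $\lambda\in Z'_\alpha$ and of how they twist under passage to the larger Levi; it is here that the definition of $\Pi(\sigma)$ is forced upon us. Once $\sigma:=\tau$ is known to be supersingular, the set $\Pi(\sigma)$ determines $P(\sigma)$, and the particular quotient $\pi$ of $I_P(e(\sigma))$ corresponds to a unique $Q$ with $P\subset Q\subset P(\sigma)$ by the filtration established in the previous step, completing the proof.
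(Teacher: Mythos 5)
Your high-level plan — prove each $I(P,\sigma,Q)$ irreducible, show the triple is recoverable, show every irreducible arises — matches the paper's, but the proposed implementation contains a genuine confusion in the surjectivity step and omits the machinery that actually drives the irreducibility proof.

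The surjectivity argument has a real gap. You propose to show that minimality of $P$ forces $\tau$ supersingular by claiming that otherwise ``some simple root $\alpha\notin\Pi_M$ with $\langle\Pi_M,\check\alpha\rangle=0$ and non-trivial $T^M_\lambda$-action'' lets you extend $\tau$ to a larger Levi and thereby descend to a smaller parabolic. This conflates two unrelated conditions and also gets the logic backwards. Non-supersingularity of $\tau$ means a central element $z_\mathcal O$ with $\ell>0$ acts invertibly; equivalently, by Lemma~\ref{lem:supersingurality, in terms of A-moudle}, some irreducible $\mathcal{A}_M$-submodule of $\tau$ has support strictly smaller than $\Lambda_{\Delta_M}(1)$. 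The condition ``$T^M_\lambda$ non-trivial on $Z'_\alpha$'' is the negation of the $\Pi(\sigma)$-criterion: it \emph{prevents} extending $\sigma$ to $\mathcal{H}_{M(\sigma)}$, it has nothing to do with supersingularity, and in any case passing to a \emph{larger} Levi via $e(\sigma)$ does not produce a \emph{smaller} inducing parabolic. The paper runs the descent entirely through the support of an irreducible $\mathcal{A}$-submodule $X$: if $\supp X = \Lambda_\Theta^+(1)$ with $\Theta\ne\Delta$, one realizes $X\otimes_\mathcal{A}\mathcal{H}$ as $(X\otimes_{\mathcal{A}'}\mathcal{H}^+_{M'})\otimes_{\mathcal{H}^+_{M'}}\mathcal{H}$, applies Proposition~\ref{prop:tensor induction and parabolic induction} to exhibit $\pi$ as a subquotient of a proper parabolic induction, and closes by induction on $\dim G$.

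Separately, the paper does not build an ``ordinary parts'' left adjoint or invoke transitivity $I_P = I_Q\circ I^L_{P\cap L}$; neither appears anywhere in the argument, and a Steinberg-type filtration is not established independently. The actual engine of irreducibility is the $\mathcal A$-module analysis together with the intertwining operators of Section~\ref{sec:Intertwining operators}, which your proposal never mentions: one restricts to $\mathcal{A}$, uses Lemma~\ref{lem:I(P,sigma,Q) as A-mod} to write $I(P,\sigma,Q)|_\mathcal{A}\simeq\bigoplus_{\Delta_w\cap\Delta(\sigma)=\Delta_Q}w\sigma_\mathcal{A}$, and then invokes Corollary~\ref{cor:isom between std mod for simple module} (which rests on Propositions~\ref{prop:construction of intertwining operators}--\ref{prop:compositions of intertwining operators} and the freeness result \ref{prop:freeness}) to transport any irreducible $\mathcal{A}$-submodule of a candidate proper $\mathcal{H}$-submodule into the distinguished piece $w_\Delta w_{\Delta_Q}\sigma$, forcing the submodule to be everything by Propositions~\ref{prop:M'-mod in parabolic induction} and \ref{prop:tensor induction and parabolic induction}. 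Recovering the triple likewise proceeds by comparing supports of irreducible $\mathcal A$-submodules, not by an adjunction computation.
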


The study is based on the structure theory of pro-$p$-Iwahori Hecke algebras studied by Vign\'eras~\cite{Vigneras-prop}.
Fix a maximal split torus in $Z$ and denote (the group of $F$-valued points of) it by $S$.
Let $I(1)$ be the pro-$p$-Iwahori subgroup. (It corresponds to a certain chamber in the apartment corresponding to $S$, see the next section for the detail.)
Then $Z$ is the centralizer of $S$ in $G$.
Put $\widetilde{W}(1) = N_G(S)/(Z\cap I(1))$.
Then we have the Bruhat decomposition $\widetilde{W}(1) = I(1)\backslash G/I(1)$.
Hence there is a basis of $\mathcal{H}$ which is indexed by $\widetilde{W}(1)$.
The basis is called \emph{Iwahori-Matsumoto basis}.
Similar to the affine Hecke algebra, this basis satisfies the braid relations and quadratic relations.
Like the affine Hecke algebra, we have another basis $\{E(\widetilde{w})\mid \widetilde{w}\in \widetilde{W}(1)\}$, called \emph{Bernstein basis}.
Put $\Lambda(1) = Z/(Z\cap I(1))\subset \widetilde{W}(1)$.
Then $\mathcal{A} = \bigoplus_{\lambda \in \Lambda(1)}CE(\lambda)$ is a subalgebra.
In general, $\mathcal{A}$ is not commutative.
However, it is almost commutative and the study of representations of $\mathcal{A}$ is not so difficult.

The basic tactics to study the representations is via the restriction to $\mathcal{A}$.
Hence it is important to know about irreducible representations of $\mathcal{A}$.
An important invariant of an irreducible representation $X$ of $\mathcal{A}$ is its support $\supp X$ defined by $\supp X = \{\lambda\in\Lambda(1)\mid XE(\lambda)\ne 0\}$.
Then we can prove that it is the closure of a facet (Proposition~\ref{prop:irred rep of A factors chi_Theta}).
More precisely, the following holds.
Let $X_*(S)$ be the group of cocharacters of $S$ and set $V = X_*(S)\otimes\R$.
Then the finite Weyl group $W$ acts on $V$ as a reflection group.
Hence we have a notion of a facet in $V$.
We also have a group homomorphism $\nu \colon \Lambda(1)\to V$.
Let $\Theta$ be a subset of $\Pi$ and put $\Lambda_\Theta(1) = \{\lambda\in\Lambda(1)\mid \langle \nu(\lambda),\alpha\rangle = 0\ (\alpha\in\Theta)\}$.
For $w\in W$, fix a representative $n_w\in \widetilde{W}(1)$.
Let $C[\Lambda_\Theta(1)] =  \bigoplus_{\tau\in\Lambda_\Theta(1)}C\tau_\lambda$ be the group algebra of $\Lambda_\Theta(1)$.
Define $w\chi_\Theta\colon \mathcal{A}\to C[\Lambda_\Theta(1)]$ by
\[
	w\chi_\Theta(E(\lambda)) =
	\begin{cases}
	\tau_{n_w^{-1}\lambda n_w} & (\text{$w^{-1}(\nu(\lambda))$ is dominant and $n_w^{-1}\lambda n_w\in \Lambda_\Theta(1)$}),\\
	0 & (\text{otherwise}).
	\end{cases}
\]
Then we can prove that any irreducible representation factors through $w\chi_\Theta$ for some $w$ and $\Theta$.
For the proof of our main theorem, we study the modules $w\chi_\Theta\otimes_\mathcal{A}\mathcal{H}$.

We study these modules in Section~\ref{sec:Intertwining operators} by constructing intertwining operators.
Let $s\in W$ be a simple reflection and assume that $sw > w$.
Then we construct a homomorphism $w\chi_\Theta\otimes_\mathcal{A}\mathcal{H}\to sw\chi_\Theta\otimes_\mathcal{A}\mathcal{H}$ (Proposition~\ref{prop:construction of intertwining operators}) which is always injective (Proposition~\ref{prop:injectivity of intertwining operator}).
Moreover, often it is an isomorphism.
In fact, if we put $\Pi_w = \{\alpha\in\Pi\mid w(\alpha) > 0\}$, then $\Pi_w = \Pi_{sw}$ implies that $w\chi_\Theta\otimes_\mathcal{A}\mathcal{H}\to sw\chi_\Theta\otimes_\mathcal{A}\mathcal{H}$ is an isomorphism (Theorem~\ref{thm:std mod depends only on Delta_w}).
Moreover, we have $w_1\chi_\Theta\otimes_\mathcal{A}\mathcal{H}\simeq w_2\chi_\Theta\otimes_\mathcal{A}\mathcal{H}$ if $\Pi_{w_1} = \Pi_{w_2}$.
If $\Pi_{w} \ne \Pi_{sw}$, $w\chi_\Theta\otimes_\mathcal{A}\mathcal{H}\to sw\chi_\Theta\otimes_\mathcal{A}\mathcal{H}$ is not an isomorphism.
We can construct an intertwining operator $sw\chi_\Theta\otimes_\mathcal{A}\mathcal{H}\to w\chi_\Theta\otimes_\mathcal{A}\mathcal{H}$ in opposite direction and calculate the compositions.

In Section~\ref{sec:Classification theorem}, we give a definition and fundamental properties of parabolic inductions.
One of the most important properties to prove our main theorem is the structure of a parabolic induction as an $\mathcal{A}$-module (Lemma~\ref{lem:I(P,sigma,Q) as A-mod}).
For example, if $P$ is a minimal parabolic subgroup and $P(\sigma) = P$, then we can prove $I_P(\sigma)|_{\mathcal{A}} = \bigoplus_{w\in W}wX$ for some irreducible representation $X$ of $\mathcal{A}$.
In this case, using results in Section~\ref{sec:Intertwining operators}, we can prove that for any $w,w'\in W$, we have $wX\otimes_\mathcal{A}\mathcal{H}\simeq w'X\otimes_\mathcal{A}\mathcal{H}$.
Hence if $\pi$ is a submodule of $I_P(\sigma)$ and $\pi|_{\mathcal{A}}$ contains $wX$ for some $w\in W$, then $\pi|_{\mathcal{A}}$ contains $wX$ for any $w\in W$.
Therefore we get $\pi = I_P(\sigma)$, so $I_P(\sigma) = I(P,\sigma,P)$ is irreducible.
This is a part of our main theorem.
Using such arguments, we prove our main theorem in Section~\ref{sec:Classification theorem}.

We have nothing about the relations between the representations of the group in this paper.
We hope to address this question in a future work.
\subsection*{Acknowledgment}
I had many discussion with Marie-France Vign\'eras on the structure of pro-$p$-Iwahori Hecke algebras.
I thank her for reading the manuscript and giving helpful comments.
This work was supported by JSPS KAKENHI Grant Number 26707001.

\section{Notation and Preliminaries}
\subsection{Notation}\label{subsec:Notation}
We will use the slightly different notation from the introduction.
Our main reference is \cite{Vigneras-prop}.

Let $F$ be a non-archimedean local field, $\mathcal{O}$ its ring of integers, $\kappa$ its residue field and $p$ the characteristic of $\kappa$.
Let $\alggrp{G}$ be a connected reductive group over $F$.
As usual, the group of its valued points is denoted by $\alggrp{G}(F)$.
Fix a maximal split torus $\alggrp{S}$ of $\alggrp{G}$ and a minimal parabolic subgroup $\alggrp{B}$ which contains $\alggrp{S}$.
Then the centralizer $\alggrp{Z}$ of $\alggrp{S}$ in $\alggrp{G}$ is a Levi subgroup of $\alggrp{B}$.
Let $\alggrp{U}$ be the unipotent radical of $\alggrp{B}$, $\alggrp{\overline{B}} = \alggrp{Z}\alggrp{\overline{U}}$ the opposite parabolic subgroup of $\alggrp{B}$.
Take a special point from the apartment attached to $\alggrp{S}$ and let $K$ be the special parahoric subgroup corresponding to this point.
Then there is a connected reductive group $\alggrp{G}_\kappa$ over $\kappa$ and the surjective homomorphism $K\to \alggrp{G}_\kappa(\kappa)$.
The kernel of $K\to \alggrp{G}_\kappa(\kappa)$ is a pro-$p$ group.
The image of $\alggrp{Z}(F)\cap K$ (resp.~$\alggrp{U}(F)\cap K$, $\alggrp{\overline{U}}(F)\cap K$) is the $\kappa$-valued points of an algebraic subgroup $\alggrp{Z}_\kappa$ (resp.~$\alggrp{U}_\kappa$, $\alggrp{\overline{U}}_\kappa$) of $\alggrp{G}_\kappa$.
Put $Z_\kappa = \alggrp{Z}_\kappa(\kappa)$.
Let $I(1)$ be the inverse image of $\alggrp{U}_\kappa(\kappa)$ by $K\to \alggrp{G}_\kappa(\kappa)$.
Then this is a \emph{pro-$p$-Iwahori subgroup} of $\alggrp{G}(F)$.
The Hecke algebra attached to $(\alggrp{G}(F),I(1))$ is called a \emph{pro-$p$-Iwahori Hecke algebra}.
We study the modules of this algebra over an algebraically closed filed of characteristic $p$.
However, we will not use this algebra directly due to a technical reason. 
Instead of this algebra, we use another algebra defined by generators and relations which will be introduced later.
Under a suitable specialization, the algebra becomes the pro-$p$-Iwahori Hecke algebra.

The torus $\alggrp{S}$ gives a root datum $(X^*(\alggrp{S}),\Phi,X_*(\alggrp{S}),\check{\Phi})$ and $\alggrp{B}$ gives a positive system $\Phi^+\subset \Phi$.
Let $W = N_{\alggrp{G}(F)}(\alggrp{S}(F))/\alggrp{Z}(F)$ be the finite Weyl group and set $\widetilde{W} = N_{\alggrp{G}(F)}(\alggrp{S}(F))/(\alggrp{Z}(F)\cap K)$ where $N_{\alggrp{G}(F)}(\alggrp{S}(F))$ is the normalizer of $\alggrp{S}(F)$ in $\alggrp{G}(F)$.
Then we have the surjective homomorphism $\widetilde{W}\to W$ and its kernel $\alggrp{Z}(F)/(\alggrp{Z}(F)\cap K)$ is denoted by $\Lambda$.
Set $V = X_*(\alggrp{S})\otimes_\Z\R$.
Then $V$ is identified with the apartment corresponding to $\alggrp{S}$ and $\widetilde{W}$ acts on $V$ as an affine transformations.
In particular, $\alggrp{Z}(F)$ acts on $V$ as a translation.
Hence we have a group homomorphism $\nu\colon \alggrp{Z}(F)\to V$.
It is characterized by $\alpha(\nu(z)) = -\val(\alpha(z))$ for $\alpha\in X^*(\alggrp{S})$ and $z\in \alggrp{S}(F)$.
Since $\nu$ annihilates  a compact subgroup ($V$ has no non-trivial compact subgroup), $\nu$ factors thorugh $\alggrp{Z}(F)\to \Lambda$.
The induced homomorphism $\Lambda\to V$ is denoted by the same letter $\nu$.
Since we have a positive system $\Phi^+$, we have the set of simple reflections $S_\aff\subset\widetilde{W}$.
Let $\widetilde{W}_\aff$ be the group generated by $S_\aff$.
Then there exists a reduced root datum $\Sigma\subset V^* = \Hom_\R(V,\R)$ such that $\widetilde{W}_\aff$ is the group generated by the reflections with respect to $\{v\in V\mid \alpha(v) + k = 0\}$ where $\alpha\in\Sigma$ and $k\in\Z$.
Since we fixed a positive system $\Phi^+\subset \Phi$, this determines a positive system $\Sigma^+\subset \Sigma$.
Let $\Delta\subset\Sigma^+$ be the set of simple roots.
There is a natural bijection between $\Delta$ and the set of simple roots in $\Phi^+$. (The set of simple roots in $\Phi^+$ is denoted by $\Pi$ in Section~\ref{sec:Introduction}. We will never use $\Pi$.)
More precisely, for $\alpha\in\Delta$, there exists a real number $r > 0$ such that $r\alpha$ is a simple root for $\Phi^+$.
Define the positive system $\Sigma_\aff^+$ of $\Sigma_\aff$ by $\Sigma_\aff^+ = \Sigma^+\times\Z_{\ge 0}\cup \Sigma\times\Z_{>0}$.
For $\alpha\in\Sigma$ (resp.\ $\widetilde{\alpha}\in\Sigma_\aff$), let $s_\alpha\in W$ (resp.\ $s_{\widetilde{\alpha}}\in \widetilde{W}$) be the corresponding reflection.
An element in $v\in V$ is called dominant (resp.~anti-dominant) if for any $\alpha\in\Sigma^+$, $\langle v,\alpha\rangle \ge 0$ (resp.\ $\langle v,\alpha\rangle \le 0$).
We call $v$ regular if $\langle v,\alpha\rangle\ne 0$ for any $\alpha\in\Sigma$.

Let $\alpha\in\Sigma$.
Then there exists a unique element $\alpha'\in \Phi^\red\cap \R_{>0}\alpha$.
The root $\alpha'$ gives a unipotent subgroup of $\alggrp{G}$ which is denoted by $\alggrp{U}_\alpha$.
Let $G'_\alpha$ be the group generated by $\alggrp{U}_\alpha(F)$ and $\alggrp{U}_{-\alpha}(F)$.
We also denote this group by $G'_s$ where $s = s_\alpha$.
Notice that it is not the group of $F$-valued points of an algebraic subgroup in general.
Set $Z'_\alpha = G'_\alpha\cap \alggrp{Z}(F)$ and the image of $Z'_\alpha$ in $\Lambda$ is denoted by $\Lambda'_\alpha$.
It is also denoted by $\Lambda'_{s}$ where $s = s_\alpha$.
Let $(\alpha,k)\in \Sigma_\aff$ and $r\in\R_{>0}$ such that $\alpha' = r\alpha$.
Put $U_{(\alpha,k)} = U_{\alpha' + rk}$ where the notation is in \cite[3.5]{Vigneras-prop}.

As usual, the length function of $\widetilde{W}$ is denoted by $\ell$.
It is defined by $\ell(\widetilde{w}) = \#(\Sigma_\aff^+\cap \widetilde{w}\Sigma_\aff^-)$.
We have the following length formula~\cite[Proposition~1.23]{MR0185016} for $\lambda\in \Lambda$ and $w\in W$:
\begin{equation}\label{eq:length formula}
\ell(\lambda w) = \sum_{\alpha\in\Sigma^+,w^{-1}(\alpha) > 0}\lvert\langle\nu(\lambda),\alpha\rangle\rvert + \sum_{\alpha\in\Sigma^+,w^{-1}(\alpha) < 0}\lvert\langle\nu(\lambda),\alpha\rangle - 1\rvert.
\end{equation}
Notice that we have an embedding $W\hookrightarrow \widetilde{W}$ since we fixed a special point and $\widetilde{W} = W \ltimes \Lambda$.
Put $S = S_\aff\cap W$.
This is the set of simple reflections in $W$ and $(W,S)$ is a Coxeter system.

Put $\widetilde{W}(1) = N_{\alggrp{G}(F)}(\alggrp{S}(F))/(\alggrp{Z}(F)\cap I(1))$.
Then we have a bijection $\widetilde{W}(1)\simeq I(1)\backslash \alggrp{G}(F)/I(1)$.
Hence the pro-$p$-Iwahori Hecke algebra $\mathcal{H}(\alggrp{G}(F),I(1))$ has a basis parameterized by $\widetilde{W}(1)$.
There is a surjective homomorphism $\widetilde{W}(1)\to \widetilde{W}$.
The kernel is isomorphic to $Z_\kappa$.
We will regard $Z_\kappa$ as a subgroup of $\widetilde{W}(1)$.
The kernel of $\widetilde{W}(1)\to W$ is $\alggrp{Z}(F)/(\alggrp{Z}(F)\cap I(1))$ and it is denoted by $\Lambda(1)$.
The composition $\Lambda(1)\to \Lambda\xrightarrow{\nu}V$ is also denoted by $\nu$.
We call $\lambda\in \Lambda(1)$ dominant (resp.\ anti-dominant, regular) if $\nu(\lambda)$ is dominant (resp.\ anti-dominant, regular).

For $s\in S_\aff$, let $q_s$ be the indeterminates such that if $s$ and $s'$ are conjugate to each other, then $q_s = q_{s'}$.
Let $\Z [q_s\mid s\in S_\aff/\mathord{\sim}]$ be the polynomial algebra with these indeterminate here $\sim$ means the equivalence relation defined via the conjugation by $\widetilde{W}$.
It will be denoted by $\Z[q_s]$ for short.
Via a reduced expression, we extend $s\mapsto q_s$ to $\widetilde{W}\ni \widetilde{w}\to q_{\widetilde{w}}\in \prod_{s\in S_\aff/\mathord{\sim}}q_s^{\Z_{\ge 0}}$.
By $\widetilde{W}(1)\to \widetilde{W}$ we have $q_{\widetilde{w}}$ for any $\widetilde{w}\in \widetilde{W}(1)$.
Since $Z_\kappa\subset \widetilde{W}(1)$ is a normal subgroup, the adjoint action of $\widetilde{w}\in \widetilde{W}(1)$ gives a homomorphism $\Z[q_s^{1/2}][Z_\kappa]\to \Z[q_s^{1/2}][Z_\kappa]$.
Denote this action by $c\mapsto \widetilde{w}\cdot c$.
For each $\widetilde{s}$ which is a lift of $s\in S_\aff$, fix $c_{\widetilde{s}}\in \Z[Z_\kappa]$ such that if $\widetilde{w}\widetilde{s}\widetilde{w}^{-1} = \widetilde{s}'$ for $\widetilde{w}\in \widetilde{W}$, then $\widetilde{w}\cdot c_{\widetilde{s}} = c_{\widetilde{s}'}$ and $c_{t\widetilde{s}} = tc_{\widetilde{s}}$ for $t\in Z_\kappa$.
Then Vign\'eras~\cite[4.3]{Vigneras-prop} defined an algebra over $\Z[q_s]$.
Under the specialization $q_s\mapsto \#(I(1)sI(1)/I(1))$ and a suitable choice of $c_{\widetilde{s}}$ (which will be explained later), this algebra becomes $\mathcal{H}(\alggrp{G}(F),I(1))$.
Let $\mathcal{H}_\Z$ be the scalar extension of this algebra to $\Z[q_s^{1/2}]$.
This is the main object of this paper.
Later, we will put $\mathcal{H} = \mathcal{H}_\Z\otimes_\Z C$ for a field $C$ of characteristic $p$.
This is a $C[q_s^{1/2}]$-algebra.

As a $\Z[q_s^{1/2}]$-module, $\mathcal{H}_\Z$ is free and has a basis indexed by $\widetilde{W}(1)$.
The basis is denoted by $\{T_{\widetilde{w}}\}_{\widetilde{w}\in\widetilde{W}(1)}$, namely we have $\mathcal{H}_\Z = \bigoplus_{\widetilde{w}\in\widetilde{W}(1)}\Z[q_s^{1/2}]T_{\widetilde{w}}$.
The multiplications in $\mathcal{H}_\Z$ is described as follows.
Define $\ell\colon \widetilde{W}(1)\to \Z_{\ge 0}$ by the composition $\widetilde{W}(1)\to \widetilde{W}\xrightarrow{\ell}\Z_{\ge 0}$.
The multiplication in $\mathcal{H}$ is defined as follows~\cite[4.3]{Vigneras-prop}.
\begin{itemize}
\item the braid relations: $T_{\widetilde{w}_1}T_{\widetilde{w}_2} = T_{\widetilde{w}_1\widetilde{w}_2}$ if $\ell(\widetilde{w}_1) + \ell(\widetilde{w}_2) = \ell(\widetilde{w}_1\widetilde{w}_2)$.
\item the quadratic relations: $T_{\widetilde{s}}^2 = q_sT_{\widetilde{s}} + c_{\widetilde{s}}T_{\widetilde{s}}$ for $\widetilde{s}\in \widetilde{W}(1)$ which is a lift of $s\in S_\aff$.
\end{itemize}
In particular, $Z_\kappa\ni t\mapsto T_t\in \mathcal{H}_\Z$ gives an embedding $\Z[q_s^{1/2}][Z_\kappa]\hookrightarrow \mathcal{H}_\Z$.

It is convenient to fix a representative $n_s$ of $s\in S_\aff$ in $\widetilde{W}(1)$ as follows.
Consider the apartment attached to $\alggrp{S}$.
Then the elements in the apartment fixed by $s$ is a hyperplane.
Take a facet $\mathcal{F}$ contained in this hyperplane whose codimension in $V$ is one and consider the parahoric subgroup $K_\mathcal{F}$ attached to this facet.
Then we take $n_s$ from the group generated by $K_\mathcal{F}\cap \alggrp{U}(F)$ and $K_\mathcal{F}\cap \alggrp{\overline{U}}(F)$.
Moreover, we can (and do) take $n_s$ such that $\{n_s\}_{s\in S}$ satisfies the braid relations~\cite{MR0206117}. (See also \cite[Proposition~3.4]{Vigneras-prop}.)
Hence we define $n_w$ for $w\in W$ via a reduced expression of $w$.

Let $G'$ be the group generated by $\bigcup_{\alpha\in\Sigma}G'_\alpha$.
It is also the group generated by $\alggrp{U}(F)$ and $\overline{\alggrp{U}}(F)$.
Then $\widetilde{W}_\aff$ is the image of $N_{\alggrp{G}(F)}(\alggrp{S}(F))\cap G'$.
Let $\widetilde{W}_\aff(1)$ be the image of $N_{\alggrp{G}(F)}(\alggrp{S}(F))\cap G'$ in $\widetilde{W}(1)$. (This group is different from the group denoted by $W^\aff(1)$ in \cite{Vigneras-prop}.)
This is a normal subgroup of $\widetilde{W}(1)$ since $G'$ is normal in $\alggrp{G}(F)$.
Put $\Lambda'(1) = \widetilde{W}_\aff(1)\cap \Lambda(1)$.
This is the image of $G'\cap \alggrp{Z}(F)$ and it is again normal in $\widetilde{W}(1)$ since it is an intersection of normal subgroups.
We introduce the Bruhat order on $\widetilde{W}(1)$ as follows.
Let $\widetilde{w}_1,\widetilde{w}_2\in \widetilde{W}(1)$ and denote its image in $\widetilde{W}$ by $w_1,w_2$.
Then $\widetilde{w}_1 < \widetilde{w}_2$ if and only if $w_1 < w_2$ and $\widetilde{w}_1\in \widetilde{w}_2\widetilde{W}_\aff(1)$.
Here the order in $\widetilde{W}$ is the usual Bruhat order.
Assume that for $\widetilde{s}\in \widetilde{W}_\aff(1)$ which is a lift of a simple reflection, we have $c_{\widetilde{s}}\in C[Z_\kappa\cap \widetilde{W}_\aff(1)]$. (This assumption is satisfied if $c_{\widetilde{s}}$ comes from the group, see subsection~\ref{subsec:Bernstein basis}.)
Then we have $T_{\widetilde{w}_1}T_{\widetilde{w}_2}\in \sum_{\widetilde{v}_1\le \widetilde{w}_1,\widetilde{v}_2\le\widetilde{w}_2,\widetilde{v} \le \widetilde{v}_1\widetilde{v}_2}\Z[q_s^{1/2}]T_{\widetilde{v}}$.
\begin{rem}
In \cite[5.3]{Vigneras-prop}, the order is defined as follows: let $\widetilde{w}_1,\widetilde{w}_2\in\widetilde{W}(1)$ with the image $w_1,w_2$ in $\widetilde{W}$ respectively, then $\widetilde{w}_1 < \widetilde{w}_2$ if and only if $w_1 < w_2$.
If $\widetilde{w}_1 < \widetilde{w}_2$ in the sense of this paper, then $\widetilde{w}_1 < \widetilde{w}_2$ in the sense of Vign\'eras.
However the converse is not true.
\end{rem}

\subsection{Properties of $\widetilde{W}_\aff(1)$}
The subgroup $\widetilde{W}_\aff(1)$ has the similar properties to $\widetilde{W}_\aff$.
The first property is the Bruhat decomposition.
\begin{lem}
We have $G'(\alggrp{Z}(F)\cap I(1)) = \coprod_{\widetilde{w}\in\widetilde{W}_\aff(1)}I(1)\widetilde{w}I(1)$.
\end{lem}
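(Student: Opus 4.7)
The plan is to reduce this Bruhat-type decomposition for the subgroup $G'(\alggrp{Z}(F)\cap I(1))$ to the full Bruhat decomposition of $\alggrp{G}(F)$ with respect to $I(1)$, namely $\alggrp{G}(F) = \coprod_{\widetilde{w}\in \widetilde{W}(1)}I(1)\widetilde{w}I(1)$. Once that is in hand, the disjointness on the right-hand side is automatic, and the whole task is to identify which double cosets lie in $G'(\alggrp{Z}(F)\cap I(1))$.

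The key preliminary observation I would establish first is the inclusion
\[
	I(1)\subset G'\cdot(\alggrp{Z}(F)\cap I(1)).
\]
This follows from the Iwahori-type decomposition $I(1)=(I(1)\cap\alggrp{\overline{U}}(F))\cdot(I(1)\cap\alggrp{Z}(F))\cdot(I(1)\cap\alggrp{U}(F))$: the two unipotent factors sit inside $G'$, and the middle factor is $\alggrp{Z}(F)\cap I(1)$. Since $G'$ is normal in $\alggrp{G}(F)$, the subset $G'(\alggrp{Z}(F)\cap I(1))$ is in fact a subgroup, and we obtain $I(1)\subset G'(\alggrp{Z}(F)\cap I(1))$.

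For the inclusion $\coprod_{\widetilde{w}\in\widetilde{W}_\aff(1)}I(1)\widetilde{w}I(1)\subset G'(\alggrp{Z}(F)\cap I(1))$, I would pick a representative $\widetilde{n}\in N_{\alggrp{G}(F)}(\alggrp{S}(F))\cap G'$ of $\widetilde{w}$, which exists by the definition of $\widetilde{W}_\aff(1)$, and then use the preliminary observation to bound $I(1)\widetilde{n}I(1)\subset G'(\alggrp{Z}(F)\cap I(1))\cdot \widetilde{n}\cdot G'(\alggrp{Z}(F)\cap I(1))= G'(\alggrp{Z}(F)\cap I(1))$.

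For the reverse inclusion, I would take $g\in G'(\alggrp{Z}(F)\cap I(1))$ and use the full Bruhat decomposition to write $g\in I(1)\widetilde{n}I(1)$ for some $\widetilde{n}\in N_{\alggrp{G}(F)}(\alggrp{S}(F))$, whose image in $\widetilde{W}(1)$ is the relevant $\widetilde{w}$. Since $I(1)\subset G'(\alggrp{Z}(F)\cap I(1))$, the element $\widetilde{n}$ lies in $G'(\alggrp{Z}(F)\cap I(1))\cap N_{\alggrp{G}(F)}(\alggrp{S}(F))$. Writing $\widetilde{n}=g'z$ with $g'\in G'$ and $z\in \alggrp{Z}(F)\cap I(1)\subset N_{\alggrp{G}(F)}(\alggrp{S}(F))$, we conclude $g'\in G'\cap N_{\alggrp{G}(F)}(\alggrp{S}(F))$; since $g'$ and $\widetilde{n}$ differ by an element of $\alggrp{Z}(F)\cap I(1)$, they define the same class in $\widetilde{W}(1)$, proving $\widetilde{w}\in\widetilde{W}_\aff(1)$.

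The main obstacle is the preliminary claim $I(1)\subset G'(\alggrp{Z}(F)\cap I(1))$; everything else is a formal consequence of it, the normality of $G'$, and the known Bruhat decomposition for $\alggrp{G}(F)$. The Iwahori decomposition of $I(1)$ needed here is standard, coming from the structure of $K$ via $K\to\alggrp{G}_\kappa(\kappa)$ and the Bruhat decomposition of $\alggrp{G}_\kappa(\kappa)$, so I would simply invoke it as a known property of pro-$p$-Iwahori subgroups.
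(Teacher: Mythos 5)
Your proposal follows essentially the same route as the paper: both rely on the Bruhat decomposition $\alggrp{G}(F) = \coprod_{\widetilde{w}\in\widetilde{W}(1)}I(1)\widetilde{w}I(1)$, the Iwahori factorization $I(1) = (\overline{\alggrp{U}}(F)\cap I(1))(\alggrp{Z}(F)\cap I(1))(\alggrp{U}(F)\cap I(1))$ to see $I(1)\subset G'(\alggrp{Z}(F)\cap I(1))$, and the identity $G'(\alggrp{Z}(F)\cap I(1))\cap N = (G'\cap N)(\alggrp{Z}(F)\cap I(1))$ to pin down which double cosets arise. The only small difference is that you spell out the easy inclusion $\coprod_{\widetilde{w}\in\widetilde{W}_\aff(1)}I(1)\widetilde{w}I(1)\subset G'(\alggrp{Z}(F)\cap I(1))$, which the paper leaves implicit.
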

\begin{proof}
Put $N = N_{\alggrp{G}(F)}(\alggrp{S}(F))$.
We have $\alggrp{G}(F) = \coprod_{\widetilde{w}\in\widetilde{W}(1)}I(1)\widetilde{w}I(1)$~\cite[Proposition~3.35]{Vigneras-prop}.
Let $g\in G'(\alggrp{Z}(F)\cap I(1))$ and take $n\in N$ such that $g\in I(1)nI(1)$.
We have the decomposition $I(1) = (\overline{\alggrp{U}}(F)\cap I(1))(\alggrp{Z}(F)\cap I(1))(\alggrp{U}(F)\cap I(1))$.
Since $\alggrp{U}(F),\overline{\alggrp{U}}(F)\subset G'$, $I(1)$ is contained in $G'(\alggrp{Z}(F)\cap I(1))$.
Hence we have $n\in G'(\alggrp{Z}(F)\cap I(1))\cap N$.
Since $\alggrp{Z}(F)\cap I(1)\subset N$, we have $G'(\alggrp{Z}(F)\cap I(1))\cap N = (G'\cap N)(\alggrp{Z}(F)\cap I(1))$.
Therefore the image of $n$ in $\widetilde{W}(1)$ is in $\widetilde{W}_\aff(1)$.
\end{proof}

\begin{lem}[{\cite[III.16.\ Proposition]{arXiv:1412.0737}}]\label{lem:structure of Lambda'_s}
For $s = s_\alpha \in S$, we have an exact sequence
\[
	1\to \Lambda'_s(1)\cap Z_\kappa\to \Lambda'_s(1)\xrightarrow{\nu}\Z\check{\alpha}\to 0.
\]
\end{lem}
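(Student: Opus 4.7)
My plan is to analyze $\nu$ restricted to $\Lambda'_s(1)$, separately identifying the kernel and the image. The kernel identification is essentially formal: the map $\nu\colon\Lambda(1)\to V$ factors as $\Lambda(1)\twoheadrightarrow\Lambda\xrightarrow{\nu}V$, and the second arrow is injective because $\alggrp{Z}(F)\cap K$ is the maximal compact subgroup of $\alggrp{Z}(F)$ and hence equals $\ker(\nu\colon\alggrp{Z}(F)\to V)$. Thus $\ker(\nu|_{\Lambda(1)}) = (\alggrp{Z}(F)\cap K)/(\alggrp{Z}(F)\cap I(1)) = Z_\kappa$, and intersecting with $\Lambda'_s(1)$ yields the left-hand term of the claimed sequence.

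Next I would show $\nu(\Lambda'_s(1))\subseteq\R\check{\alpha}$. The apartment of $G'_\alpha$, taken with respect to a maximal split subtorus of $G'_\alpha$, embeds into $V$ as an affine line parallel to $\R\check{\alpha}$. Any $z\in Z'_\alpha = G'_\alpha\cap\alggrp{Z}(F)$ acts on $V$ by the translation $\nu(z)$ and must preserve that affine line, since $z\in G'_\alpha$ preserves the $G'_\alpha$-apartment. This forces $\nu(z)\in\R\check{\alpha}$.

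Third, $\Z\check{\alpha}\subseteq\nu(\Lambda'_s(1))$. Take the two affine reflections $s_0, s_1\in S_\aff$ through the walls $\{\alpha = 0\}$ and $\{\alpha = -1\}$, so that $s_0 s_1$ acts on $V$ as translation by $\check{\alpha}$. Choose the distinguished lifts $n_{s_0}, n_{s_1}\in\widetilde{W}(1)$ fixed in Section~\ref{subsec:Notation}; each lies in the subgroup of the relevant parahoric generated by its intersections with $\alggrp{U}(F)$ and $\overline{\alggrp{U}}(F)$, hence in $G'_\alpha$. Consequently $n_{s_0}n_{s_1}$ lies in $G'_\alpha\cap N_{\alggrp{G}(F)}(\alggrp{S}(F))$ with trivial image in $W$, so it lies in $Z'_\alpha$; its image in $\Lambda(1)$ is an element of $\Lambda'_s(1)$ with $\nu$-value exactly $\check{\alpha}$. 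Combined with the previous paragraph and the discreteness of $\nu(\Lambda)\subseteq V$, the image $\nu(\Lambda'_s(1))$ is a discrete subgroup of $\R\check{\alpha}$ containing $\check{\alpha}$, hence equal to $(1/n)\Z\check{\alpha}$ for some positive integer $n$; compatibility of the coroot normalization between $\Phi$ and $\Sigma$ forces $n = 1$.

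The principal obstacle is this last integrality point: ensuring the lattice is exactly $\Z\check{\alpha}$ and not a strict refinement. All earlier steps are quotient bookkeeping or routine use of Bruhat--Tits geometry, but ruling out $n > 1$ requires either a careful normalization check in the root datum setup or a direct calculation inside the rank-one subgroup $G'_\alpha$. The cleanest route is the latter: up to central isogeny $G'_\alpha$ is an $\mathrm{SL}_2$-type group over $F$ or an extension of $F$, and the computation of $\nu$ on its torus becomes completely explicit.
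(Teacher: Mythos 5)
The paper does not supply its own proof of this lemma — the statement is quoted from \cite[III.16.\ Proposition]{arXiv:1412.0737} — so I will assess your argument on its own terms. Your skeleton (identify kernel, then image) is the right plan, but the kernel step has a genuine gap: you assert that $\alggrp{Z}(F)\cap K$ is the maximal compact subgroup of $\alggrp{Z}(F)$, hence equals $\ker(\nu\colon\alggrp{Z}(F)\to V)$. The maximal compact is indeed $\ker\nu$, but the parahoric subgroup $\alggrp{Z}(F)\cap K$ is in general only a \emph{finite-index} subgroup of it; the quotient is governed by the component group of the N\'eron model of $\alggrp{Z}$ and is nontrivial for the non-split tori that arise in ramified groups. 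The paper itself claims only the inclusion (``$\nu$ annihilates a compact subgroup,'' hence factors through $\Lambda$), not that $\nu\colon\Lambda\to V$ is injective — and it generally is not.

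The clean repair is to argue inside $\widetilde{W}_\aff$ rather than $\widetilde{W}$. Since $\Lambda'_s(1)\subseteq\Lambda'(1)=\widetilde{W}_\aff(1)\cap\Lambda(1)$, and $\Lambda'(1)$ surjects onto the translation subgroup $\Lambda':=\Lambda\cap\widetilde{W}_\aff$ of $\widetilde{W}_\aff$ with kernel $\Lambda'(1)\cap Z_\kappa$, it suffices to know $\nu|_{\Lambda'}$ is injective — and this holds because the Coxeter group $(\widetilde{W}_\aff,S_\aff)$ acts faithfully on $V$ via its geometric representation. This also dispatches your integrality worry without any rank-one computation: $\nu(\Lambda')=\Z\Sigma^\vee$, and since $\Sigma$ is reduced with the simple coroots a $\Z$-basis of $\Z\Sigma^\vee$, we have $\Z\Sigma^\vee\cap\R\check{\alpha}=\Z\check{\alpha}$ for $\alpha\in\Delta$. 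Two minor further adjustments: the wall $\{\alpha=-1\}$ is a wall of the fundamental alcove of $\alggrp{G}$ only when $\alpha$ is the highest root of its component, so $s_1$ is generally \emph{not} in $S_\aff$ — you should either build its lift by the same parahoric recipe or pass to the rank-one group $G'_\alpha$, as the paper does in the proof of Lemma~\ref{lem:c_s,c_{s,-1},mu_n_s and Lambda_s'(1) cap Z_k}, where both affine reflections become simple; with that in place, your second step ($\nu(Z'_\alpha)\subseteq\R\check{\alpha}$ via the $G'_\alpha$-apartment) together with $\check{\alpha}\in\nu(\Lambda'_s(1))$ gives the image claim.
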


We have the generators of $\widetilde{W}_\aff(1)$.
\begin{lem}
The subgroup $\widetilde{W}_\aff(1)$ of $\widetilde{W}(1)$ is generated by $\bigcup_{s\in S}\Lambda_s'(1)$ and $\{n_s\mid s\in S\}$.
\end{lem}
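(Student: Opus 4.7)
The plan is to denote by $H$ the subgroup of $\widetilde{W}(1)$ generated by $\bigcup_{s\in S}\Lambda'_s(1)$ together with $\{n_s \mid s \in S\}$, and to show $H = \widetilde{W}_\aff(1)$. The inclusion $H\subset\widetilde{W}_\aff(1)$ is immediate from the setup: each $\Lambda'_s(1)$ is by definition the image in $\Lambda(1)$ of $Z'_s = G'_s\cap\alggrp{Z}(F)\subset G'\cap\alggrp{Z}(F)$, hence sits inside $\Lambda'(1) = \widetilde{W}_\aff(1)\cap\Lambda(1)$; and each $n_s$ with $s\in S$ was chosen inside the group generated by $K_\mathcal{F}\cap\alggrp{U}(F)$ and $K_\mathcal{F}\cap\alggrp{\overline{U}}(F)$, both of which are contained in $G'$, so its image lies in $\widetilde{W}_\aff(1)$.

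For the reverse inclusion, I would exploit the decomposition $N_{\alggrp{G}(F)}(\alggrp{S}(F)) = \alggrp{Z}(F)\cdot\{n_w : w\in W\}$, where $n_w = n_{s_1}\cdots n_{s_k}$ is well defined from any reduced expression $w = s_1\cdots s_k$ in $S$ (by the braid relations for $\{n_s\}_{s\in S}$). Given $g\in N_{\alggrp{G}(F)}(\alggrp{S}(F))\cap G'$, write $g = z\cdot n_w$; since $n_w$ lies in $G'$ as a product of $n_{s_i}$'s, we must have $z\in G'\cap\alggrp{Z}(F)$. Projecting to $\widetilde{W}(1)$, any element of $\widetilde{W}_\aff(1)$ is then the product of an element of $\Lambda'(1)$ with the image of some $n_w$, and the latter lies in $H$. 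It therefore suffices to prove that $\Lambda'(1)$ is generated by $\bigcup_{s\in S}\Lambda'_s(1)$.

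This last assertion is the heart of the argument. By Lemma~\ref{lem:structure of Lambda'_s}, for $s = s_\alpha\in S$ the map $\nu$ sends $\Lambda'_s(1)$ onto $\Z\check{\alpha}$, so the subgroup $L := \langle\Lambda'_s(1) : s\in S\rangle$ satisfies $\nu(L) = \sum_{\alpha\in\Delta}\Z\check{\alpha}$. Since $\widetilde{W}_\aff$ acts on $V$ through reflections in the hyperplanes $\alpha + k = 0$ for $\alpha\in\Sigma$ and $k\in\Z$, its translation lattice $\nu(\Lambda'(1))$ equals this same sum $\sum_{\alpha\in\Delta}\Z\check{\alpha}$. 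Consequently $\Lambda'(1) = L\cdot(\Lambda'(1)\cap\ker\nu)$, and the problem reduces to verifying $\Lambda'(1)\cap\ker\nu\subset L$. Because $\ker(\nu|_{\Lambda(1)})$ lies in $Z_\kappa$, this amounts to showing that the image in $Z_\kappa$ of $G'\cap\alggrp{Z}(F)\cap\ker\nu$ is generated by $\bigcup_{s\in S}(\Lambda'_s(1)\cap Z_\kappa)$. I expect this to be the main technical obstacle; it should follow by reducing modulo the pro-$p$ kernel of $K\to\alggrp{G}_\kappa(\kappa)$ and invoking the fact that $\alggrp{Z}_\kappa(\kappa)\cap[\alggrp{G}_\kappa(\kappa),\alggrp{G}_\kappa(\kappa)]$ is generated by the tori of the root-$\mathrm{SL}_2$ subgroups of $\alggrp{G}_\kappa$, a standard structural fact about reductive groups over $\kappa$.
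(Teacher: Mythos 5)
Your reduction of the lemma to the statement that $\Lambda'(1)$ is generated by $\bigcup_{s\in S}\Lambda'_s(1)$ is sound (write $g = z n_w$ with $z\in G'\cap\alggrp{Z}(F)$, using that each $n_s$, hence each $n_w$, lies in $G'$), and so is the observation that $\nu$ maps $L=\langle\Lambda'_s(1)\rangle$ onto $\sum_{\alpha\in\Delta}\Z\check{\alpha}=\nu(\Lambda'(1))$. But the argument stops exactly where the difficulty begins: you do not prove $\Lambda'(1)\cap\Ker\nu\subset L$, you only state that you expect it to follow from a structural fact about $\alggrp{G}_\kappa$. Two things go wrong there. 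First, the inclusion $\Ker(\nu|_{\Lambda(1)})\subset Z_\kappa$ is not automatic: $Z_\kappa$ is the image of the parahoric subgroup $\alggrp{Z}(F)\cap K$, while the kernel of $\nu$ on $\alggrp{Z}(F)$ can be strictly larger (the quotient is the torsion of $\Lambda$), so even the reduction to a statement inside $Z_\kappa$ requires an argument for elements of $G'\cap\alggrp{Z}(F)$. Second, the ``standard structural fact'' you invoke is not standard and is essentially the hard content of the whole circle of ideas: the rank-one analogue is precisely Lemma~\ref{lem:c_s,c_{s,-1},mu_n_s and Lambda_s'(1) cap Z_k}, which the paper proves with a substantial Hecke-algebra argument (a character trivial on the relevant subgroup is shown to extend to $G'$ and hence to be trivial); moreover the relation between the image of $G'\cap\alggrp{Z}(F)\cap K$ in $\alggrp{G}_\kappa(\kappa)$ and $\alggrp{Z}_\kappa(\kappa)\cap[\alggrp{G}_\kappa(\kappa),\alggrp{G}_\kappa(\kappa)]$ would itself need proof. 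So the key step is a genuine gap, not a routine verification.

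Note also that the statement you reduce to is exactly Lemma~\ref{lem:generators of Lambda'}, which the paper deduces \emph{from} the present lemma; your proposal inverts that logical order and concentrates all the difficulty in the step you leave open. The paper's own proof never touches $\Lambda'(1)\cap\Ker\nu$: it shows $G'_s\subset I(1)\widetilde{W}'_s(1)I(1)$ with $\widetilde{W}'_s(1)=\langle\Lambda'_s(1),n_s\rangle$ by the rank-one Bruhat decomposition, deduces $G'\subset I(1)\widetilde{W}'(1)I(1)$ because products of $I(1)$-double cosets stay among the double cosets indexed by the generated subgroup, and then compares with the decomposition $G'(\alggrp{Z}(F)\cap I(1))=\coprod_{\widetilde{w}\in\widetilde{W}_\aff(1)}I(1)\widetilde{w}I(1)$ of the preceding lemma to conclude $\widetilde{W}_\aff(1)\subset\widetilde{W}'(1)$. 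To repair your approach you would have to either supply a real proof of $\Lambda'(1)\cap\Ker\nu\subset L$ or switch to this double-coset comparison.
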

\begin{proof}
Let $\widetilde{W}'_s(1)$ be the subgroup of $\widetilde{W}(1)$ generated by $\Lambda_s'(1)$ and $n_s$.
Then we have $G'_s  \subset I(1)\widetilde{W}_{s}'(1)I(1)$ by the Bruhat decomposition of $G'_s$.
Let $\widetilde{W}'(1)$ be the group generated by $\bigcup_{s\in S}\Lambda_s'(1)$ and $\{n_s\mid s\in S\}$.
Since $G'$ is generated by $\bigcup_{s\in S}G'_s$, we have $G'\subset I(1)\widetilde{W}'(1)I(1)$.
Comparing the Bruhat decomposition of $G'$, we get $\widetilde{W}_\aff(1)\subset \widetilde{W}'(1)$.
\end{proof}
Using this lemma, we get the following property on $\Lambda'(1)$.
\begin{lem}\label{lem:generators of Lambda'}
The subgroup $\Lambda'(1)$ of $\Lambda(1)$ is generated by $\bigcup_{s\in S}\Lambda'_s(1)$ and $\Lambda'(1)\cap Z_\kappa$ is generated by $\bigcup_{s\in S}(\Lambda'_s(1)\cap Z_\kappa)$.
\end{lem}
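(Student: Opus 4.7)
The plan is to derive both statements from the preceding lemma on the generators of $\widetilde{W}_\aff(1)$. The key preliminary fact is that $n_s^2\in\Lambda'_s(1)$ for each $s\in S$: by our choice of $n_s$ in $G'_s$, its square has a representative in $G'_s\cap\alggrp{Z}(F)=Z'_s$, whose image in $\Lambda(1)$ lies in $\Lambda'_s(1)$. In particular $n_s^{-1}=n_s\cdot(n_s^2)^{-1}$ with $(n_s^2)^{-1}\in\Lambda'_s(1)$, so we may assume all $n_s$-factors occur with positive exponent.

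For the first statement, set $\widetilde{\Lambda}'_\ast:=\langle\bigcup_{s\in S}\Lambda'_s(1)\rangle$. Given $\lambda\in\Lambda'(1)$ expressed as a word in the generators, its image in $W$ is trivial, so the $S$-word formed by the $n_s$-factors represents $1\in W$. The Coxeter defining relations of $W$ lift to $\widetilde{W}_\aff(1)$: braid relations hold exactly by our choice of $\{n_s\}$, while each $s^2$-cancellation corresponds to replacing $n_s^2$ by an element of $\Lambda'_s(1)$. Iterating these moves eliminates all $n_s$-factors. The delicate step is that commuting a factor $\mu\in\Lambda'_s(1)$ (with $s=s_\alpha$) past $n_{s'}$ produces $n_{s'}\mu n_{s'}^{-1}\in\Lambda'_{s'(\alpha)}(1)$, which is outside $\bigcup_{s\in S}\Lambda'_s(1)$ when $s'(\alpha)$ is a non-simple root. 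To absorb this, I would use Lemma~\ref{lem:structure of Lambda'_s}: the subgroup $\widetilde{\Lambda}'_\ast$ surjects under $\nu$ onto the coroot lattice $\bigoplus_{\alpha\in\Delta}\Z\check\alpha$, and $\nu(n_{s'}\mu n_{s'}^{-1})=s'(\nu(\mu))$ lies in this lattice. Hence one can choose $\mu'\in\widetilde{\Lambda}'_\ast$ with $\nu(\mu')=\nu(n_{s'}\mu n_{s'}^{-1})$, and the discrepancy sits in $Z_\kappa\cap\Lambda'(1)$.

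Collecting all such discrepancies, one obtains $\lambda=\mu_0\cdot z$ for some $\mu_0\in\widetilde{\Lambda}'_\ast$ and $z\in Z_\kappa\cap\Lambda'(1)$, so the first statement reduces to $Z_\kappa\cap\Lambda'(1)\subset\widetilde{\Lambda}'_\ast$, which is implied by the second statement. For the second statement, I would track the $Z_\kappa$-components arising during the reduction: the $s^2$-replacements contribute elements of $\Lambda'_s(1)\cap Z_\kappa$, while the conjugation discrepancies are to be absorbed into $\langle\bigcup_{s\in S}(\Lambda'_s(1)\cap Z_\kappa)\rangle$ using that $Z_\kappa=\alggrp{Z}_\kappa(\kappa)$ is a finite abelian group on which conjugation by the $n_s$ acts in a controlled manner.

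The main obstacle is precisely the interaction between the $\nu$-component and the $Z_\kappa$-component under conjugation by the $n_s$: the $\nu$-components can always be matched using the spanning property of simple coroots, but the $Z_\kappa$-contributions require care to ensure they lie in $\langle\bigcup_{s\in S}(\Lambda'_s(1)\cap Z_\kappa)\rangle$ rather than merely in $Z_\kappa$. The short exact sequence in Lemma~\ref{lem:structure of Lambda'_s}, together with the finiteness of $Z_\kappa$ and the $W$-equivariance of $\nu$, provides the tools for this bookkeeping, so that both statements may be established in one sweep.
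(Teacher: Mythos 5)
Your plan correctly identifies the starting point (the preceding lemma on generators of $\widetilde{W}_\aff(1)$), the useful observation that $n_s^2\in\Lambda'_s(1)$, and the structure of the reduction (eliminate the $n_s$-factors using the Coxeter relations). But the step you flag as delicate is handled in the paper by a single clean commutator computation that you do not identify, and without it your workaround leaves an essential gap.

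The key fact is this: for \emph{any} $z\in\alggrp{Z}(F)$ and $s\in S$, one has $n_sz n_s^{-1}z^{-1}\in G'_s\cap\alggrp{Z}(F)=Z'_s$ (because $z$ normalizes $G'_s$ and $n_s$ normalizes $\alggrp{Z}(F)$, so the commutator lies in both). Passing to $\Lambda(1)$, this gives $n_s\lambda n_s^{-1}\in\Lambda'_s(1)\lambda$ for every $\lambda\in\Lambda(1)$. In particular, $\widetilde{\Lambda}'_\ast:=\langle\bigcup_{s\in S}\Lambda'_s(1)\rangle$ is normalized by every $n_s$, and the entire ``commute past $n_{s'}$'' problem you describe disappears: the conjugate of $\mu\in\widetilde{\Lambda}'_\ast$ lands back in $\widetilde{\Lambda}'_\ast$ exactly, with no need to match $\nu$-components or track a $Z_\kappa$-discrepancy. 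You correctly observe that $n_{s'}\Lambda'_{s_\alpha}(1)n_{s'}^{-1}=\Lambda'_{s'(\alpha)}(1)$, which may be indexed by a non-simple root; but the commutator fact shows this conjugate is nonetheless contained in $\Lambda'_{s'}(1)\cdot\Lambda'_{s_\alpha}(1)\subset\widetilde{\Lambda}'_\ast$, which is the stronger and more useful statement.

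Because you do not have this, your proof of the first statement reduces to the containment $Z_\kappa\cap\Lambda'(1)\subset\widetilde{\Lambda}'_\ast$, i.e.\ essentially the second statement, and your account of the second statement is not a proof: ``the conjugation discrepancies are to be absorbed [\dots] using that $Z_\kappa$ is a finite abelian group on which conjugation by the $n_s$ acts in a controlled manner'' does not explain why those discrepancies lie in $\langle\bigcup_{s}(\Lambda'_s(1)\cap Z_\kappa)\rangle$ rather than merely in $Z_\kappa$. The paper's proof of the second statement goes by a different and concrete route: once the first statement is in hand, the normality of each $\Lambda'_s(1)$ in $\Lambda(1)$ gives the \emph{product} decomposition $\Lambda'(1)=\Lambda'_{s_1}(1)\dotsm\Lambda'_{s_n}(1)$; write $\lambda\in\Lambda'(1)\cap Z_\kappa$ as $\lambda_1\dotsm\lambda_n$ with $\lambda_i\in\Lambda'_{s_i}(1)$; then $\nu(\lambda_i)\in\Z\check\alpha_i$ by Lemma~\ref{lem:structure of Lambda'_s}, $\sum_i\nu(\lambda_i)=0$, and linear independence of $\{\check\alpha_i\}$ forces each $\nu(\lambda_i)=0$, hence $\lambda_i\in\Lambda'_{s_i}(1)\cap Z_\kappa$. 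This step, not a bookkeeping of discrepancies through the reduction, is what actually closes the second statement. You should replace your ``matching and tracking'' outline with the commutator fact and then the product-plus-linear-independence argument.
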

\begin{proof}
Let $\Lambda'(1)_0$ be the subgroup generated by $\bigcup_{s\in S}\Lambda'_s(1)$.
We prove that this subgroup is stable under the action of $n_w$ for $w\in W$.
It is sufficient to prove that $n_s\lambda n_s^{-1}\in \Lambda_s'(1)\lambda$ for $\lambda\in\Lambda(1)$ and $s\in S$.
Let $z\in \alggrp{Z}(F)$ be a representative of $\lambda$.
We have $n_s zn_s^{-1}z^{-1} = (n_szn_s^{-1})z^{-1}\in \alggrp{Z}(F)$.
Since $G'_s$ is normalized by $\alggrp{Z}(F)$, we have $n_szn_s^{-1}z^{-1} = n_s(zn_s^{-1}z^{-1})\in G'_s$.
Hence $n_szn_s^{-1}z^{-1}\in G'_s\cap \alggrp{Z}(F)$.
Therefore we have $n_s\lambda n_s^{-1}\lambda^{-1}\in \Lambda'_s(1)$.
Hence by the above lemma, any element in $\widetilde{W}_\aff(1)$ has the form $\lambda n_{s_1}\dotsm n_{s_l}$ where $\lambda\in \Lambda'(1)_0$ and $s_i\in S$.
Take the minimal $i$ such that $s_1\dotsm s_i < s_1\dotsm s_{i - 1}$.
Put $w = s_1\dotsm s_{i - 1}$.
Then $n_w = n_{ws_i}n_{s_i}$ and we have $\lambda n_{s_1}\dotsm n_{s_l} = \lambda n_{ws_i}n_{s_i}^2n_{s_{i + 1}}\dotsm n_{s_l} = \lambda (n_{ws_i}n_{s_i}^2n_{ws_i}^{-1})n_{ws_i}n_{s_{i + 1}}\dotsm n_{s_l}\in \Lambda'(1)_0n_{ws_i}n_{s_{i + 1}}\dotsm n_{s_l}$.
Therefore, we can take $\lambda$ and $s_1,\dots,s_l$ such that $s_1\dotsm s_l$ is a reduced expression.
Hence $\lambda n_{s_1}\dotsm n_{s_l} = \lambda n_{s_1\dotsm s_l}$.
It is in $\Lambda(1)$ if and only if $s_1\dotsm s_l = 1$.
Therefore, $\Lambda'(1) = \widetilde{W}_\aff(1)\cap \Lambda(1)$ is $\Lambda'(1)_0$.

Let $s_1,\dots,s_n\in S$ such that $S = \{s_1,\dots,s_n\}$ where $n = \#S$.
Since $\Lambda'_{s_i}(1)$ is normal in $\Lambda(1)$, we have $\Lambda'(1) = \Lambda'_{s_1}(1)\dotsm \Lambda'_{s_n}(1)$.
Let $\lambda\in \Lambda'(1)\cap Z_\kappa$ and take $\lambda_i\in \Lambda'_{s_i}(1)$ such that $\lambda = \lambda_1\dotsm \lambda_n$.
Then $0 = \nu(\lambda) = \sum_i\nu(\lambda_i)$.
Take $\alpha_i\in \Delta$ such that $s_i = s_{\alpha_i}$.
Then $\nu(\lambda_i)\in \Z\check{\alpha_i}$.
Since $\{\check{\alpha}_1,\dots,\check{\alpha}_n\}$ is linearly independent, we have $\nu(\lambda_i) = 0$.
Hence $\lambda_i\in \Lambda'_{s_i}(1)\cap Z_\kappa$ by Lemma~\ref{lem:structure of Lambda'_s}.
\end{proof}

\subsection{Bernstein basis}
\label{subsec:Bernstein basis}
For investigating the representations of pro-$p$-Iwahori Hecke algebra, another basis, called \emph{Bernstein basis} is important.

Let $\Delta'\subset \Sigma$ be a set of simple roots corresponding to a positive system of $\Sigma$.
Vign\'eras~\cite{Vigneras-prop} defined a $\Z[q_s^{1/2}]$-basis $\{E_{\Delta'}(\widetilde{w})\}_{\widetilde{w}\in \widetilde{W}(1)}$ of $\mathcal{H}_\Z$.
If $c_{\widetilde{s}}\in C[Z_\kappa\cap \widetilde{W}_\aff(1)]$ for any $\widetilde{s}\in \widetilde{W}_\aff(1)$ which is a lift of a simple affine reflection, then $E_{\Delta'}(\widetilde{w})\in T_{\widetilde{w}} + \sum_{\widetilde{v} < \widetilde{w}} \Z[q_s]T_{\widetilde{v}}$. (The same proof for \cite[Corollary~5.6]{Vigneras-prop} is applicable.)
\begin{rem}
In fact, Vign\'eras defined a basis $\{E_o(\widetilde{w})\mid \widetilde{w}\in \widetilde{W}(1)\}$ for any orientation $o$.
we can attache a spherical orientation $o_{\Delta'}$ \cite[Definition~5.16]{Vigneras-prop} and one can consider a basis $\{E_{o_{\Delta'}}(\widetilde{w})\}$.
We put $E_{\Delta'}(\widetilde{w}) = E_{o_{\Delta'}}(\widetilde{w})$.
\end{rem}
Here are some examples of $E_{\Delta'}(\widetilde{w})$.
(See \cite[5.3]{Vigneras-prop}.)
For $\widetilde{w}\in \widetilde{W}(1)$, $T_{\widetilde{w}^{-1}}$ is invertible in $\mathcal{H}[q_s^{\pm 1/2}]$ and $q_{\widetilde{w}}T_{\widetilde{w}^{-1}}^{-1}\in \mathcal{H}$.
We denote this element by $T_{\widetilde{w}}^*$.
\begin{prop}\label{prop:example of E}
Let $\Delta'$ be a set of simple roots.
\begin{enumerate}
\item If $\lambda\in \Lambda(1)$ is dominant with respect to $\Delta'$, then $E_{\Delta'}(\lambda) = T_\lambda$.
\item If $\lambda\in \Lambda(1)$ is anti-dominant with respect to $\Delta'$, then $E_{\Delta'}(\lambda) = T_\lambda^*$.
\item Let $w\in W$.
We have $E_{-\Delta}(n_w) = T_{n_w}$ and $E_{\Delta}(n_w) = E_{w(-\Delta)}(n_w) = T_{n_w}^*$.
In particular, for $s\in S$, $E_{\Delta}(n_s) = E_{s(-\Delta)}(n_s) = T_{n_s}^* = T_{n_s} - c_{n_s}$.
\end{enumerate}
\end{prop}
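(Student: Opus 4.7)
The plan is to peel back Vign\'eras's definition of $E_o(\widetilde{w})$ for the spherical orientation $o = o_{\Delta'}$ and check how it interacts with dominance. I would first establish (1) directly from the definition, derive (2) from (1) by an opposite-orientation argument, and then deduce (3) as a specialization to the finite Weyl group elements $n_w$.

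For (1), if $\lambda\in\Lambda(1)$ is dominant with respect to $\Delta'$, then by the length formula~\eqref{eq:length formula} applied with $w=1$, one has $\ell(\lambda) = \sum_{\alpha\in\Sigma^+}\langle\nu(\lambda),\alpha\rangle$, and a minimal gallery from the base alcove $C_0$ to $\lambda C_0$ crosses every affine hyperplane on its ``positive'' side with respect to the spherical chamber attached to $\Delta'$. Consequently every sign factor appearing in the defining expansion of $E_{\Delta'}(\lambda)$ equals $+1$, so the expansion collapses to $E_{\Delta'}(\lambda) = T_\lambda$. This is the content of the example in \cite[5.3]{Vigneras-prop} and I would simply unwind it.

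For (2), note that $\lambda$ is anti-dominant with respect to $\Delta'$ exactly when $\lambda^{-1}$ is dominant with respect to $\Delta'$, so by (1) we have $E_{\Delta'}(\lambda^{-1}) = T_{\lambda^{-1}}$. On the other hand, $T_{\lambda^{-1}}$ is invertible in the localization $\mathcal{H}_\Z[q_s^{\pm 1/2}]$ and $T_\lambda^*$ is defined by $T_\lambda^* = q_\lambda T_{\lambda^{-1}}^{-1}$. Using the product relation for Bernstein bases $E_{\Delta'}(\lambda)E_{\Delta'}(\lambda^{-1}) = q_\lambda$ (which follows from the braid-like behaviour of $E_{\Delta'}$ on $\Lambda(1)$, since the crossings along the two galleries cancel), we obtain $E_{\Delta'}(\lambda) = q_\lambda T_{\lambda^{-1}}^{-1} = T_\lambda^*$.

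For (3), apply the same type of analysis to $n_w\in \widetilde{W}(1)$ for $w\in W$: the element $n_w$ has length $\ell(w)$ and a gallery from $C_0$ to $n_w C_0$ stays within the finite Weyl chambers through the origin. With the anti-dominant orientation $o_{-\Delta}$ every such crossing is positive along any reduced expression of $w$, so the same argument as in (1) gives $E_{-\Delta}(n_w) = T_{n_w}$. For $E_\Delta(n_w) = E_{w(-\Delta)}(n_w) = T_{n_w}^*$, I would observe that the orientations $o_\Delta$ and $o_{w(-\Delta)}$ have all the same signs along the chosen gallery attached to $n_w$, and that the role of dominant/anti-dominant is now reversed so the argument of (2) applies. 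Finally, the identity $T_{n_s}^* = T_{n_s} - c_{n_s}$ for $s\in S$ is a one-line consequence of the quadratic relation: from $T_{n_s}^2 = q_s T_{n_s^2} + c_{n_s}T_{n_s}$ (the correct form of the stated quadratic relation) together with $T_{n_s^{-1}} = T_{n_s}T_{n_s^2}^{-1}$ (which uses $\ell(n_s^{-1})+\ell(n_s^2)=\ell(n_s)$), one computes $T_{n_s}^* = q_s T_{n_s^{-1}}^{-1} = q_s T_{n_s^2} T_{n_s}^{-1} = T_{n_s} - c_{n_s}$.

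The main obstacle I anticipate is the change-of-orientation step needed for (2): one has to be careful that the product relation $E_{\Delta'}(\lambda)E_{\Delta'}(\lambda^{-1}) = q_\lambda$ (or equivalently the precise ``opposite orientation'' formula linking $E_{\Delta'}$ and $E_{-\Delta'}$) is in force for lifts in $\Lambda(1)$ and not only in $\Lambda$. Once this is clear, (1)--(3) are all just specializations of Vign\'eras's general formalism.
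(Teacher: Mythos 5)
The paper does not supply a proof of this proposition at all — it simply records the examples with a pointer to \cite[5.3]{Vigneras-prop}, so there is no ``paper proof'' to compare against; your task is really to flesh out Vign\'eras's statements. With that understood, your outline is largely sound, and you correctly caught the typo in the quadratic relation as printed (the first term on the right should be $q_s T_{\widetilde{s}^2}$, not $q_sT_{\widetilde{s}}$; one also needs to use $n_s\cdot c_{n_s}=c_{n_s}$, i.e.\ that $c_{n_s}$ commutes with $T_{n_s}$, to convert $T_{n_s}^2-c_{n_s}T_{n_s}$ to $(T_{n_s}-c_{n_s})T_{n_s}$ in your final computation).

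Two remarks on where the argument is thin. First, in part (2) you invoke the product relation $E_{\Delta'}(\lambda)E_{\Delta'}(\lambda^{-1})=q_\lambda$, which is indeed an immediate consequence of Proposition~\ref{prop:multiplication formula} applied with $\widetilde{w}=\lambda$, $\widetilde{w}'=\lambda^{-1}$ (and $\lambda^{-1}(\Delta')=\Delta'$ since $\lambda\in\Lambda(1)$); one only needs to keep in mind that in Vign\'eras's paper the examples in 5.3 precede the multiplication formula in 5.25, so if you cite the latter to derive the former you should check there is no circularity in her exposition. Second, in part (3) the assertion ``the orientations $o_\Delta$ and $o_{w(-\Delta)}$ have all the same signs along the chosen gallery attached to $n_w$'' is correct but is stated without justification. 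Its verification amounts to the following standard fact: for a reduced expression $w=s_1\cdots s_l$ and $\beta = s_1\cdots s_{i-1}(\alpha_i)$, the element $s_l\cdots s_{i+1}(\alpha_i)$ is a positive root (because $s_i\cdots s_l$ is reduced), which gives that both $\mathcal{C}$ and $-w\mathcal{C}$ lie on the same side of $H_\beta$. A way to sidestep most of this: $E_{w(-\Delta)}(n_w)=T_{n_w}^*$ follows directly from the multiplication formula applied to $1=E_{-\Delta}(n_w^{-1}n_w)=q_{n_w}^{-1}E_{-\Delta}(n_w^{-1})E_{w(-\Delta)}(n_w)$ together with $E_{-\Delta}(n_w^{-1})=T_{n_w^{-1}}$, leaving only the equality $E_\Delta(n_w)=E_{w(-\Delta)}(n_w)$ to establish by the sign comparison.
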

For $\widetilde{w}\in \widetilde{W}(1)$, put $\widetilde{w}(\Delta') = w(\Delta')$ for the set of simple roots $\Delta'$ attached to a positive system where $w\in W$ is the image of $\widetilde{w}$ in $W$.

\begin{prop}[{\cite[Theorem~5.25]{Vigneras-prop}}]\label{prop:multiplication formula}
Let $\widetilde{w},\widetilde{w}'\in \widetilde{W}(1)$.
Then we have
\[
	E_{\Delta'}(\widetilde{w}\widetilde{w}') = q_{\widetilde{w}}^{-1/2}q_{\widetilde{w}'}^{-1/2}q_{\widetilde{w}\widetilde{w}'}^{1/2}E_{\Delta'}(\widetilde{w})E_{\widetilde{w}^{-1}(\Delta')}(\widetilde{w}').
\]
\end{prop}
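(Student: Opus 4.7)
The approach is to reduce the formula by induction on $\ell(\widetilde{w}')$ to the case where $\widetilde{w}'$ is a lift $\widetilde{s}$ of a simple affine reflection, and then to split into the length-additive and length-subtractive subcases, exploiting the quadratic relation. The cocycle-like shape of the formula, with the orientation shifting from $\Delta'$ to $\widetilde{w}^{-1}(\Delta')$ on the right, makes induction on the right factor very natural.

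First I would set up the induction. Given the formula for pairs $(\widetilde{v},\widetilde{s})$ with $\widetilde{s}$ a simple reflection lift and for pairs $(\widetilde{w},\widetilde{w}'')$ with $\ell(\widetilde{w}'')<\ell(\widetilde{w}')$, write $\widetilde{w}'=\widetilde{w}''\widetilde{s}$ with $\ell(\widetilde{w}'')+1=\ell(\widetilde{w}')$. Apply the two-step formula to $(\widetilde{w}\widetilde{w}'',\widetilde{s})$ and then to $(\widetilde{w},\widetilde{w}'')$. The orientation identity $(\widetilde{w}\widetilde{w}'')^{-1}(\Delta')=(\widetilde{w}'')^{-1}(\widetilde{w}^{-1}(\Delta'))$ together with the obvious multiplicativity of the $q^{1/2}$-prefactors telescopes the product to the claimed expression, so the problem reduces to verifying the formula when $\widetilde{w}'=\widetilde{s}$ is a lift of a simple affine reflection.

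For $\widetilde{w}'=\widetilde{s}$ in the additive case $\ell(\widetilde{w}\widetilde{s})=\ell(\widetilde{w})+1$, the prefactor is $1$. Here the spherical orientation $\widetilde{w}^{-1}(\Delta')$ determines, via Proposition~\ref{prop:example of E}(3) applied at the wall of $s$, whether $E_{\widetilde{w}^{-1}(\Delta')}(\widetilde{s})$ equals $T_{\widetilde{s}}$ or $T_{\widetilde{s}}^{*}$. Using the triangular expansion $E_{\Delta'}(\widetilde{w})\in T_{\widetilde{w}}+\sum_{\widetilde{u}<\widetilde{w}}\Z[q_s]T_{\widetilde{u}}$ together with the braid relation $T_{\widetilde{w}}T_{\widetilde{s}}=T_{\widetilde{w}\widetilde{s}}$ (and its starred variant), one checks that the product $E_{\Delta'}(\widetilde{w})E_{\widetilde{w}^{-1}(\Delta')}(\widetilde{s})$ lies in $T_{\widetilde{w}\widetilde{s}}+\sum_{\widetilde{u}<\widetilde{w}\widetilde{s}}\Z[q_s]T_{\widetilde{u}}$ and has the correct lower-order terms to coincide with $E_{\Delta'}(\widetilde{w}\widetilde{s})$; this characterization via leading term and triangular shape determines the Bernstein element uniquely.

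In the subtractive case $\ell(\widetilde{w}\widetilde{s})=\ell(\widetilde{w})-1$ the prefactor becomes $q_s^{-1/2}$. Writing $\widetilde{w}=(\widetilde{w}\widetilde{s})\widetilde{s}$ with additive length and applying the previous case, one expresses
\[
E_{\Delta'}(\widetilde{w})=E_{\Delta'}(\widetilde{w}\widetilde{s})\,E_{(\widetilde{w}\widetilde{s})^{-1}(\Delta')}(\widetilde{s}),
\]
so the left-hand side becomes $E_{\Delta'}(\widetilde{w}\widetilde{s})\cdot E_{(\widetilde{w}\widetilde{s})^{-1}(\Delta')}(\widetilde{s})\cdot E_{\widetilde{w}^{-1}(\Delta')}(\widetilde{s})$. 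The crucial observation is that $(\widetilde{w}\widetilde{s})^{-1}(\Delta')$ and $\widetilde{w}^{-1}(\Delta')$ differ exactly by the reflection at the wall of $s$, so by Proposition~\ref{prop:example of E}(3) one of the two middle factors is $T_{\widetilde{s}}$ and the other is $T_{\widetilde{s}}^{*}$. Then $T_{\widetilde{s}}T_{\widetilde{s}}^{*}=q_sT_{\widetilde{s}^{2}}$ (a consequence of the quadratic relation and the definition $T_{\widetilde{s}}^{*}=q_sT_{\widetilde{s}^{-1}}^{-1}$), and after absorbing $T_{\widetilde{s}^{2}}\in \Z[Z_\kappa]$ into $E_{\Delta'}(\widetilde{w}\widetilde{s})$ via the braid/normalization relations, the factor $q_s$ accounts precisely for $q_s^{1/2}$ (the deficit against the claimed $q_s^{-1/2}$ prefactor on the other side), yielding the formula.

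The main obstacle is the orientation bookkeeping in the subtractive case: one must verify that the two middle $E(\widetilde{s})$-factors really are $T_{\widetilde{s}}$ and $T_{\widetilde{s}}^{*}$ rather than a repeated $T_{\widetilde{s}}$, and that the conjugation action $\widetilde{w}\cdot c_{\widetilde{s}}$ threaded through $E_{\Delta'}(\widetilde{w}\widetilde{s})$ matches up correctly so that the $\Z[Z_\kappa]$-coefficients are absorbed without extraneous terms. This requires a careful analysis of how the spherical orientation $o_{\Delta'}$ evaluates on the wall of $\widetilde{s}$ relative to the Weyl chambers traversed by a reduced expression of $\widetilde{w}\widetilde{s}$, which is the technical heart of the argument in \cite[Theorem~5.25]{Vigneras-prop}.
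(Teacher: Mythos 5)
The paper does not actually prove this proposition; it is quoted verbatim from Vign\'eras [Theorem~5.25], so your proposal has to stand on its own. Your skeleton (induction on $\ell(\widetilde{w}')$ down to a lift of a simple affine reflection, telescoping the orientations, and using $T\cdot T^{*}=q_s\cdot(\text{unit})$ in the length-decreasing case) is indeed the standard shape of the argument. But the additive base case, which is where all the content lies, is not proved. You assert that $E_{\Delta'}(\widetilde{w})E_{\widetilde{w}^{-1}(\Delta')}(\widetilde{s})$ is triangular with leading term $T_{\widetilde{w}\widetilde{s}}$ and that ``this characterization via leading term and triangular shape determines the Bernstein element uniquely.'' It does not: the set of elements of the form $T_{\widetilde{w}\widetilde{s}}+\sum_{\widetilde{u}<\widetilde{w}\widetilde{s}}\Z[q_s]T_{\widetilde{u}}$ is a huge affine subspace, so triangularity pins down nothing beyond the leading coefficient, and ``has the correct lower-order terms'' is exactly the statement to be proved. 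The elements $E_{o}(\widetilde{w})$ are defined by Vign\'eras via alcove walks --- products of $T_{\widetilde{s}_i}$ or $T_{\widetilde{s}_i}^{*}$ along a reduced expression, the choice at each step governed by the orientation updated as one walks --- and the additive case of the product formula is a consequence of the independence of that product from the chosen reduced expression, which is a genuine braid-relation argument for the mixed $T/T^{*}$ products. Without supplying that (or an equivalent rigid characterization, e.g.\ the explicit $\theta$-description combined with the Bernstein relations), the base case is unproved and the induction has nothing to stand on.

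Two smaller slips in the subtractive case. In $\widetilde{W}(1)$ one has $(\widetilde{w}\widetilde{s})\widetilde{s}=\widetilde{w}\widetilde{s}^{2}\neq\widetilde{w}$, since $\widetilde{s}^{2}\in Z_\kappa$ is in general nontrivial; you must factor $\widetilde{w}=(\widetilde{w}\widetilde{s})\widetilde{s}^{-1}$ and apply the additive case to the lift $\widetilde{s}^{-1}$, after which the two middle factors multiply to exactly $q_s$ (for instance $T_{\widetilde{s}^{-1}}T_{\widetilde{s}}^{*}=q_sT_{\widetilde{s}^{-1}}T_{\widetilde{s}^{-1}}^{-1}=q_s$), with no $T_{\widetilde{s}^{2}}$ left to absorb. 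Also, when $\ell(\widetilde{w}\widetilde{s})=\ell(\widetilde{w})-1$ the prefactor is $q_{\widetilde{w}}^{-1/2}q_{s}^{-1/2}q_{\widetilde{w}\widetilde{s}}^{1/2}=q_s^{-1}$, not $q_s^{-1/2}$, which is what the factor $q_s$ from the middle product actually cancels. These are repairable bookkeeping errors; the missing additive base case is not.
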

In particular, if $\lambda_1,\lambda_2\in \Lambda(1)$, then $E_{\Delta'}(\lambda_1)E_{\Delta'}(\lambda_2) = q_{\lambda_1}^{1/2}q_{\lambda_2}^{1/2}q_{\lambda_1\lambda_2}^{-1/2}E_{\Delta'}(\lambda_1\lambda_2)$.
By \eqref{eq:length formula}, we have $\ell(\lambda_1\lambda_2^{-1}\lambda_1^{-1}\lambda_2) = 0$.
Therefore we have $q_{\lambda_1\lambda_2^{-1}\lambda_1^{-1}\lambda_2} = 1$ and $q_{\lambda_1\lambda_2\lambda_1^{-1}}q_{\lambda_1\lambda_2^{-1}\lambda_1^{-1}\lambda_2} = q_{\lambda_2}$.
Namely $q_{\lambda_1\lambda_2\lambda_1^{-1}} = q_{\lambda_2}$.
Hence we get
\begin{equation}\label{eq:E(lambda_1) and E(lambda_2)}
\begin{split}
E_{\Delta'}(\lambda_1)E_{\Delta'}(\lambda_2) & = q_{\lambda_1}^{1/2}q_{\lambda_1\lambda_2\lambda_1^{-1}}^{1/2}q_{\lambda_1\lambda_2}^{-1/2} E_{\Delta'}((\lambda_1\lambda_2\lambda_1^{-1})\lambda_1)\\ & = E_{\Delta'}(\lambda_1\lambda_2\lambda_1^{-1})E_{\Delta'}(\lambda_1).
\end{split}
\end{equation}

In this paper, we use $E_{-\Delta}(\widetilde{w})$ mainly.
Set $E(\widetilde{w}) = E_{-\Delta}(\widetilde{w})$.
Using the above properties, we have the following description of $E(\widetilde{w})$. (One can regard this description as a definition of $E(\widetilde{w})$.)
Take anti-dominant $\lambda_1,\lambda_2\in \Lambda(1)$ and $w\in W$ such that $\widetilde{w} = \lambda_1\lambda_2^{-1} n_w$.
Then we have
\[
	E(\lambda_1\lambda_2^{-1}n_w) = q_{\lambda_1\lambda_2^{-1}n_w}^{1/2}q_{\lambda_1}^{-1/2}q_{\lambda_2}^{1/2}q_{n_w}^{-1/2}T_{\lambda_1}T_{\lambda_2}^{-1}T_{n_w}
\]
Put $\theta(\lambda) = q_\lambda^{-1/2}E(\lambda)\in \mathcal{H}_\Z[q_s^{\pm 1/2}]$ for $\lambda\in \Lambda(1)$.
Then for anti-dominant $\lambda\in \Lambda(1)$, we have $\theta(\lambda) = q_\lambda^{-1/2}T_\lambda$ and $\theta(\lambda_1 \lambda_2^{-1}) = \theta(\lambda_1)\theta(\lambda_2)^{-1}$ for any $\lambda_1,\lambda_2\in \Lambda(1)$.
We have
\[
	E(\lambda n_w) = q_{\lambda n_w}^{1/2}q_{n_w}^{-1/2}\theta(\lambda) T_{n_w}.
\]
The element $\theta(\lambda)$ is denoted by $\widetilde{E}_{o_{-\Delta}}(\lambda)$ in \cite[Lemma~5.44]{Vigneras-prop}.

\subsection{Bernstein relations}\label{subsec:Bernstein relations}
Let $\widetilde{\alpha}\in \Sigma_\aff$ be an affine root.
Consider the apartment attached to $\alggrp{S}$ and take a facet $\mathcal{F}$ of codimension one in $V$ contained in the hyperplane on which $\widetilde{\alpha}$ is null.
Let $K_\mathcal{F}$ be the parahoric subgroup attached to $\mathcal{F}$.
Then it defines a connected reductive subgroup over $\kappa$.
Let $G_{\mathcal{F},\kappa}$ be the group of $\kappa$-valued points of this reductive group, $U_{\mathcal{F},\kappa}$ (resp.~$\overline{U}_{\mathcal{F},\kappa})$ the image of $K_{\mathcal{F}}\cap \alggrp{U}(F)$ (resp.\ $K_{\mathcal{F}}\cap \overline{\alggrp{U}}(F)$).
Define $Z_{\widetilde{\alpha},\kappa}$ as the intersection of $Z_\kappa$ with the group generated by $U_{\mathcal{F},\kappa}$ and $\overline{U}_{\mathcal{F},\kappa}$.
\begin{lem}\label{lem:Z_(a,k) is canonical}
The subgroup $Z_{\widetilde{\alpha},\kappa}\subset Z_\kappa$ does not depend on a choice of $\mathcal{F}$.
\end{lem}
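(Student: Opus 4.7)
\textit{Proof plan.} Write $\widetilde\alpha = (\alpha,k)$ with $\alpha\in\Sigma$ and $k\in\Z$. The strategy is to reduce the computation of $Z_{\widetilde\alpha,\kappa}$ to the rank-one subgroup $G'_\alpha$, where the data attached to the wall of $\widetilde\alpha$ manifestly depend only on $\widetilde\alpha$. For any codimension-one facet $\mathcal F$ in the wall of $\widetilde\alpha$, the intersections $K_\mathcal F\cap\alggrp U_\alpha(F) = U_{\widetilde\alpha}$ and $K_\mathcal F\cap\alggrp U_{-\alpha}(F) = U_{-\widetilde\alpha}$ (filtration subgroups in the sense of \cite[3.5]{Vigneras-prop}) depend only on $\widetilde\alpha$, so the subgroup $H := \langle U_{\widetilde\alpha}, U_{-\widetilde\alpha}\rangle\subset G'_\alpha$ is $\mathcal F$-independent, and it lies inside the parahoric $K' := G'_\alpha\cap K_\mathcal F$ of $G'_\alpha$ attached to the point where the wall of $\widetilde\alpha$ meets the apartment of $G'_\alpha$; this point, hence $K'$, is again $\mathcal F$-independent.

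Next, the reductive quotient $G_{\mathcal F,\kappa}$ has semisimple rank one with reduced root system $\{\pm\alpha\}$, because $\pm\widetilde\alpha$ are the only affine roots vanishing on $\mathcal F$. As a consequence, the images $\pi_\mathcal F(U_{\widetilde\alpha})$ and $\pi_\mathcal F(U_{-\widetilde\alpha})$ (where $\pi_\mathcal F:K_\mathcal F\to G_{\mathcal F,\kappa}$ is the reduction) already fill the one-dimensional unipotent radicals $U_{\mathcal F,\kappa}$ and $\overline U_{\mathcal F,\kappa}$; contributions from $K_\mathcal F\cap\alggrp U_\beta(F)$ for $\beta\neq\pm\alpha$ either fall into the pro-$p$ radical or into the same root subgroups already produced by $H$. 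This yields $\langle U_{\mathcal F,\kappa},\overline U_{\mathcal F,\kappa}\rangle = \pi_\mathcal F(H)$.

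The central claim is then $Z_\kappa\cap\pi_\mathcal F(H) = \pi_\mathcal F(H\cap\alggrp Z(F))$. Granting this, $Z_{\widetilde\alpha,\kappa}$ is the image of $H\cap Z'_\alpha$ under the canonical reduction $\alggrp Z(F)\cap K_\mathcal F\to Z_\kappa$, which depends only on $\widetilde\alpha$. The inclusion $\supset$ is clear. For $\subset$, use the Iwahori decomposition $K' = (K'\cap\alggrp U_{-\alpha}(F))\cdot(K'\cap\alggrp Z(F))\cdot(K'\cap\alggrp U_\alpha(F))$: given $h\in H$ with $\pi_\mathcal F(h)\in Z_\kappa$, write $h = u^- z u^+$ with $u^\pm\in K'\cap\alggrp U_{\pm\alpha}(F)\subset H$ and $z\in K'\cap\alggrp Z(F)$; by uniqueness of the open-cell decomposition in the rank-one derived subgroup of $G_{\mathcal F,\kappa}$, the factors $\pi_\mathcal F(u^\pm)$ must be trivial, so $\pi_\mathcal F(h) = \pi_\mathcal F(z)$ with $z = (u^-)^{-1}h(u^+)^{-1}\in H\cap\alggrp Z(F)$.

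\textit{Main obstacle.} The identification $\langle U_{\mathcal F,\kappa},\overline U_{\mathcal F,\kappa}\rangle = \pi_\mathcal F(H)$ in the second paragraph is the most delicate point: it rests on the rank-one structure of $G_{\mathcal F,\kappa}$ and a careful analysis of the images in the reductive quotient of $K_\mathcal F\cap\alggrp U_\beta(F)$ for $\beta\neq\pm\alpha$, which requires input from Bruhat-Tits theory. Once this is established, the remaining step is a standard Iwahori/Bruhat computation inside the rank-one group $G'_\alpha$.
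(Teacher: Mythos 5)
Your argument is correct and follows essentially the same route as the paper's own proof: both rest on the rank-one structure of the reductive quotient $G_{\mathcal{F},\kappa}$ (root system $\{\pm\widetilde\alpha\}$) together with the $\mathcal{F}$-independence of $\alggrp{U}_{\pm\alpha}(F)\cap K_\mathcal{F}=U_{(\pm\alpha,\pm r)}$ from \cite[(45)]{Vigneras-prop}, with $r$ determined by $\widetilde\alpha$ alone. You go a step further than the paper by making explicit, via the open-cell decomposition inside $G'_\alpha$, why $Z_\kappa\cap\pi_\mathcal{F}(H)=\pi_\mathcal{F}(H\cap\alggrp{Z}(F))$ so that the intersection with $Z_\kappa$ can be computed intrinsically in $G'_\alpha$; the paper stops once $K_\mathcal{F}\cap\alggrp{U}_{\pm\alpha}(F)$ is shown to be $\mathcal{F}$-independent and treats this final reduction as immediate.
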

\begin{proof}
Let $\alpha\in \Sigma$ be a root corresponding to $\widetilde{\alpha}$, $U_{\widetilde{\alpha},\kappa}$ (resp.\ $\overline{U}_{\widetilde{\alpha},\kappa}$) the image of $\alggrp{U}_\alpha(F)\cap K_\mathcal{F}$ (resp.\ $\alggrp{U}_{-\alpha}(F)\cap K_\mathcal{F}$) in $G_{\mathcal{F},\kappa}$.
The root system of $G_{\mathcal{F},\kappa}$ is $\{\pm \widetilde{\alpha}\}$.
Hence $Z_{\widetilde{\alpha},\kappa}$ is the intersection of $Z_\kappa$ with the group generated by $U_{\widetilde{\alpha},\kappa}$ and $\overline{U}_{\widetilde{\alpha},\kappa}$.
Set $r = \min\{n\in\Z\mid \text{$\alpha(x) + n\ge 0$ for all $x\in \mathcal{F}$}\}$.
We have $\alggrp{U}_\alpha(F)\cap K_\mathcal{F} = U_{(\alpha,r)}$ \cite[(45)]{Vigneras-prop}.
Since $\alpha(x)$ only depends on $\widetilde{\alpha}$, $r$ does not depend on a choice of $\mathcal{F}$.
Hence $K_\mathcal{F}\cap \alggrp{U}_\alpha(F)$ does not depend on $\mathcal{F}$.
\end{proof}

For $s\in S_\aff$, take a simple affine root $\widetilde{\alpha}$ such that $s = s_{\widetilde{\alpha}}$.
We take $c_{n_s}$ coming from the Hecke algebra attached to $(G,I(1))$ \cite[4.2]{Vigneras-prop}.
We have $c_{n_s}\in\Z_{\ge 0}[Z_{\widetilde{\alpha},\kappa}]$ and
\[
	c_{n_s} \equiv -(\#Z_{\widetilde{\alpha},\kappa})^{-1}\sum_{t\in Z_{\widetilde{\alpha},\kappa}}T_t\pmod{p}.
\]
In particular, $c_{n_s}\pmod {p}$ does not depend on a choice of $n_s$.
It satisfies $n_s\cdot c_{n_s} = c_{n_s}$.

Put $n_w(\lambda) = n_w\lambda n_w^{-1}$ for $w\in W$.
The following lemma is a reformulation of the Bernstein relations of Vign\'eras~\cite[Theorem~5.38]{Vigneras-prop}.
We refer this lemma as Bernstein relations in this paper.
\begin{lem}\label{lem:Bernstein relations}
Let $\lambda\in \Lambda(1)$, $s = s_\alpha\in S$.
There exist $\mu_{n_s}(k)\in \Lambda'_s(1)$ and $c_{n_s,k}\in \Z[Z_\kappa]$ such that, if $\langle\nu(\lambda),\alpha\rangle \ge 0$, then
\begin{align*}
\theta(n_s(\lambda))T_{n_s} - T_{n_s}\theta(\lambda)
& = \sum_{k = 0}^{\langle \nu(\lambda),\alpha\rangle - 1}\theta(n_s(\lambda)\mu_{n_s}(k))c_{n_s,k}\\
& = \sum_{k = 1}^{\langle \nu(\lambda),\alpha\rangle}c_{n_s,k}\theta(\mu_{n_s}(-k)\lambda).
\end{align*}
If $\langle\nu(\lambda),\alpha\rangle < 0$, then
\begin{align*}
\theta(n_s(\lambda))T_{n_s} - T_{n_s}\theta(\lambda)
& = -\sum_{k = 0}^{-\langle\nu(\lambda),\alpha\rangle-1}c_{n_s,-k}\theta(\mu_{n_s}(k)\lambda)\\
& = -\sum_{k = 1}^{-\langle\nu(\lambda),\alpha\rangle}\theta(n_s(\lambda)\mu_{n_s}(-k))c_{n_s,-k}.
\end{align*}
These elements satisfy the following.
We have $c_{n_s,k} \in \Z[Z_{(\alpha,k),\kappa}]$ and
\[
	c_{n_s,k} \equiv -(\#Z_{(\alpha,k),\kappa})^{-1}\sum_{t\in Z_{(\alpha,k),\kappa}}T_t\pmod{p}.
\]
We have $\mu_{n_s}(0) = 1$, $\nu(\mu_{n_s}(k)) = k\check{\alpha}$ for $k\in\Z$, $n_s^{-1}(\mu_{n_s}(k)) = \mu_{n_s}(-k)$ for $k\in\Z_{\ge 0}$, $c_{n_s,0} = c_{n_s}$ and $\mu_{n_s}(-1)^{-1}\cdot c_{{n_s},1} = c_{{n_s},-1}$.
\end{lem}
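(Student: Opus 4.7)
The plan is to derive the lemma as a translation of Vignéras' Bernstein relations (\cite[Theorem~5.38]{Vigneras-prop}) into the $\theta$-normalization and the spherical orientation $o_{-\Delta}$ used in this paper, and then to read off the claimed structural properties of $\mu_{n_s}(k)$ and $c_{n_s,k}$ from the construction. First I would apply \cite[Theorem~5.38]{Vigneras-prop} with $o=o_{-\Delta}$ and for $\lambda$ with $\langle \nu(\lambda),\alpha\rangle \ge 0$ rewrite the formula using $\theta(\mu)=q_\mu^{-1/2}E(\mu)$; the summand in her expression naturally decomposes as a sum over $k=0,\dots,\langle\nu(\lambda),\alpha\rangle-1$ of a term $\theta(n_s(\lambda)\mu_{n_s}(k))c_{n_s,k}$, where $\mu_{n_s}(k)\in\Lambda'_s(1)$ is extracted as her lift of $k\check\alpha$ (using Lemma~\ref{lem:structure of Lambda'_s}) and $c_{n_s,k}$ is the coefficient, automatically landing in $\Z[Z_{(\alpha,k),\kappa}]$ by Lemma~\ref{lem:Z_(a,k) is canonical}. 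The explicit modular formula for $c_{n_s,k}$ then follows from the explicit formula for $c_{n_s}$ in \cite[4.2]{Vigneras-prop} applied at the affine root $(\alpha,k)$.

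Next I would derive the second equality in the dominant case. The key move is that for $t\in Z_\kappa$ and $\mu\in\Lambda(1)$ we have $\theta(\mu)T_t=T_{\mu t\mu^{-1}}\theta(\mu)$ as a consequence of the braid/length-zero relations and Proposition~\ref{prop:multiplication formula}; extended $C$-linearly this reads $\theta(\mu)c = (\mu\cdot c)\theta(\mu)$ for $c\in C[Z_\kappa]$. Using this to move $c_{n_s,k}$ past $\theta(\mu_{n_s}(k))$ and applying the conjugation identity $n_s^{-1}(\mu_{n_s}(k))=\mu_{n_s}(-k)$ together with $\mu_{n_s}(-1)^{-1}\cdot c_{n_s,1}=c_{n_s,-1}$ reindexes the sum $\sum_{k=0}^{N-1}\theta(n_s(\lambda)\mu_{n_s}(k))c_{n_s,k}$ into $\sum_{k=1}^{N}c_{n_s,k}\theta(\mu_{n_s}(-k)\lambda)$, where $N=\langle\nu(\lambda),\alpha\rangle$.

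For the case $\langle\nu(\lambda),\alpha\rangle<0$ I would apply the dominant formula to $\lambda'=n_s(\lambda)$, noting $n_s(\lambda')=\lambda$ and $\langle\nu(\lambda'),\alpha\rangle=-\langle\nu(\lambda),\alpha\rangle>0$, and multiply the resulting identity by $-1$. This converts the two dominant-case formulas into the two anti-dominant ones; the sign and the replacement $\mu_{n_s}(k)\mapsto\mu_{n_s}(-k)$ in the indexing are exactly what is needed. The remaining properties $\mu_{n_s}(0)=1$ and $c_{n_s,0}=c_{n_s}$ are forced by the $k=0$ term (or equivalently by comparing with the quadratic relation $T_{n_s}^2 = q_sT_{n_s}+c_{n_s}T_{n_s}$), and $\nu(\mu_{n_s}(k))=k\check\alpha$ is forced by matching the $\theta$-degrees of both sides of the Bernstein identity.

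The main obstacle I anticipate is the bookkeeping required to match Vignéras' general-orientation conventions with the specific spherical orientation $o_{-\Delta}$ and the $\theta$-normalization used here, and in particular to verify cleanly that the two ways of reindexing the sum (with $c_{n_s,k}$ on the right versus on the left) are genuinely equivalent rather than only equal up to a lower-order correction. The identity $\mu_{n_s}(-1)^{-1}\cdot c_{n_s,1}=c_{n_s,-1}$ is the one that makes this work, so verifying it directly from Vignéras' construction of the $\mu_{n_s}(k)$ is the crucial technical point that I would want to nail down before assembling the argument.
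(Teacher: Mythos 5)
Your proposal correctly identifies the source (Vignéras \cite[Theorem~5.38]{Vigneras-prop}), the need to translate into the $\theta$-normalization, and the role of the identity $\theta(\mu)c = (\mu\cdot c)\theta(\mu)$ and of $\mu_{n_s}(-1)^{-1}\cdot c_{n_s,1} = c_{n_s,-1}$ as the crucial reindexing mechanism. However, there are two genuine gaps.

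First, and most importantly, you do not address the case split that occupies the bulk of the paper's proof: whether $\langle\nu(\Lambda(1)),\alpha\rangle = \Z$ or $\langle\nu(\Lambda(1)),\alpha\rangle = 2\Z$. In the first case one can fix a single $\lambda_s$ with $\langle\nu(\lambda_s),\alpha\rangle = -1$ and define $\mu_{n_s}(k) = n_s(\lambda_s)^k\lambda_s^{-k}$, $c_{n_s,k} = \lambda_s^k\cdot c_{n_s}$, and the manipulation is relatively direct. But in the second case there is no element of $\Lambda(1)$ pairing to $-1$ with $\alpha$, so the paper passes to the highest root $\alpha'$ of the relevant component, chooses $\widetilde{w}$ and $\lambda_s = n_s\widetilde{w}n_{s'}\widetilde{w}^{-1}$ via \cite[Lemma~5.15]{Vigneras-prop}, and the summand in Vignéras's formula now splits into two interleaved families, giving separate definitions of $\mu_{n_s}(2k)$, $\mu_{n_s}(2k+1)$, $c_{n_s,2k}$, $c_{n_s,2k+1}$ in terms of both $c_{n_s}$ and $c' = \widetilde{w}\cdot c_{n_{s'}}$. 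The identity $\mu_{n_s}(-1)^{-1}\cdot c_{n_s,1} = c_{n_s,-1}$ is nontrivial precisely here and is verified by a chain of computations using $n_s^{-1}\lambda_s = \widetilde{w}n_{s'}\widetilde{w}^{-1}$ and $\widetilde{w}n_{s'}^2\widetilde{w}^{-1}\in Z_\kappa$. Without building this structure explicitly, the claim that $\mu_{n_s}(k)$ and $c_{n_s,k}$ are "naturally extracted" from the Bernstein relation is too optimistic, and the properties $\nu(\mu_{n_s}(k))=k\check\alpha$, $c_{n_s,k}\in\Z[Z_{(\alpha,k),\kappa}]$, and the reduction mod $p$ all have to be checked against the explicit formulas rather than "forced by matching degrees."

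Second, your derivation of the $\langle\nu(\lambda),\alpha\rangle<0$ case is incorrect as stated. You claim that applying the dominant-case formula to $\lambda' = n_s(\lambda)$ and multiplying by $-1$ gives the anti-dominant formulas, noting $n_s(\lambda') = \lambda$. But $n_s(\lambda') = n_s^2(\lambda) = n_s^2\lambda n_s^{-2}$, and since $n_s^2\in Z_\kappa$ need not commute with $\lambda$ this is generally not $\lambda$. Even if it were, the resulting left side $\theta(\lambda)T_{n_s} - T_{n_s}\theta(n_s(\lambda))$ is not the negative of $\theta(n_s(\lambda))T_{n_s} - T_{n_s}\theta(\lambda)$, and the coefficients land in the wrong spots (one gets $c_{n_s,k}$ on the right rather than $c_{n_s,-k}$ on the left, and $\theta(\lambda\mu_{n_s}(k))$ rather than $\theta(\mu_{n_s}(k)\lambda)$). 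The paper instead derives the anti-dominant formulas by starting from the base identity at $\lambda = \lambda_s^{-n}$ and multiplying by $\theta(n_s(\lambda_s)^n)$ on the left and $\theta(\lambda_s^n)$ on the right, then multiplying by $\theta(\lambda_s^{-n}\lambda)$ on the right; this is structurally different from a substitution-and-sign-flip.
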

\begin{proof}
First assume that $\langle\nu(\Lambda(1)),\alpha\rangle = \Z$.
Let $\lambda_s\in\Lambda(1)$ such that $\langle\nu(\lambda_s),\alpha\rangle = -1$.
Then by \cite[Theorem~5.38]{Vigneras-prop}, if $n = \langle\nu(\lambda),\alpha\rangle > 0$, we have
\[
	\theta(n_s(\lambda))T_{n_s} - T_{n_s}\theta(\lambda) = \sum_{k = 0}^{n - 1}\theta(n_s(\lambda)n_s(\lambda_s)^k\lambda_s^{-k})(\lambda_s^k\cdot c_{n_s}).
\]
Take $\lambda = \lambda_s^{-n}$ in this formula.
Then
\begin{equation}\label{eq:basic Bernstein relation, easy case}
\theta(n_s(\lambda_s)^{-n})T_{n_s} - T_{n_s}\theta(\lambda_s^{-n}) = \sum_{k = 0}^{n - 1}\theta(n_s(\lambda_s)^{-n + k}\lambda_s^{-k})(\lambda_s^k\cdot c_{n_s})
\end{equation}
We have $\theta(n_s(\lambda_s)^{-n + k}\lambda_s^{-k})(\lambda_s^k\cdot c_{n_s}) = (n_s(\lambda_s)^{-n + k}\cdot c_{n_s})\theta(n_s(\lambda_s)^{-n + k}\lambda_s^{-k})$.
Hence replacing $k$ with $n - k$, we have
\begin{equation}\label{eq:basic Bernstein relation, easy case2}
\theta(n_s(\lambda_s)^{-n})T_{n_s} - T_{n_s}\theta(\lambda_s^{-n}) = \sum_{k = 1}^n(n_s(\lambda_s)^{-k}\cdot c_{n_s})\theta(n_s(\lambda_s)^{-k}\lambda_s^{-n+k}).
\end{equation}
Since $\langle \nu(\lambda_s^n\lambda),\alpha\rangle = 0$, we have $T_{n_s}\theta(\lambda_s^n\lambda) = \theta(n_s(\lambda_s^n\lambda))T_{n_s}$ \cite[Lemma~5.34, 5.35]{Vigneras-prop}.
Hence multiplying by $\theta(\lambda_s^n\lambda)$ on the right, we get
\begin{equation}\label{eq:Bernstein relation, easy case, replacement target 1-1}
\theta(n_s(\lambda))T_{n_s} - T_{n_s}\theta(\lambda) =
\sum_{k = 1}^n(n_s(\lambda_s)^{-k}\cdot c_{n_s})\theta(n_s(\lambda_s)^{-k}\lambda_s^{k}\lambda).
\end{equation}
Multiply by $\theta(n_s(\lambda_s)^n)$ on the left and by $\theta(\lambda_s^n)$ on the right to \eqref{eq:basic Bernstein relation, easy case} and \eqref{eq:basic Bernstein relation, easy case2}.
From \eqref{eq:basic Bernstein relation, easy case}, we have
\begin{align*}
T_{n_s}\theta(\lambda_s^n) - \theta(n_s(\lambda_s)^n)T_{n_s}
& =
\sum_{k = 0}^{n - 1}\theta(n_s(\lambda_s)^k\lambda_s^{-k})(\lambda_s^k\cdot c_{n_s})\theta(\lambda_s^n)\\
& = \sum_{k = 0}^{n - 1}(n_s(\lambda_s)^k\cdot c_{n_s})\theta(n_s(\lambda_s)^k\lambda_s^{n-k}).
\end{align*}
Multiplying by $\theta(\lambda_s^{-n}\lambda)$ on the right, if $-n = \langle\nu(\lambda),\alpha\rangle < 0$, then
\begin{equation*}
T_{n_s}\theta(\lambda) - \theta(n_s(\lambda))T_{n_s} = \sum_{k = 0}^{n - 1}(n_s(\lambda_s)^{k}\cdot c_{n_s})\theta(n_s(\lambda_s)^{k}\lambda_s^{-k}\lambda)\\
\end{equation*}
Similarly, from \eqref{eq:basic Bernstein relation, easy case2}, we have
\[
T_{n_s}\theta(\lambda_s^{n}) - \theta(n_s(\lambda_s)^n)T_{n_s} = \sum_{k = 1}^n\theta(n_s(\lambda_s)^{n-k}\lambda_s^{k})(\lambda_s^{-k}\cdot c_{n_s}).
\]
and, if $-n = \langle\nu(\lambda),\alpha\rangle < 0$, then
\begin{equation}\label{eq:Bernstein relation, easy case, replacement target 1-2}
T_{n_s}\theta(\lambda) - \theta(n_s(\lambda))T_{n_s} = \sum_{k = 1}^n\theta(n_s(\lambda)n_s(\lambda_s)^{-k}\lambda_s^{k})(\lambda_s^{-k}\cdot c_{n_s}).
\end{equation}
In \eqref{eq:Bernstein relation, easy case, replacement target 1-1} and \eqref{eq:Bernstein relation, easy case, replacement target 1-2}, we may replace $\lambda_s$ with $n_s^{-1}(\lambda_s)^{-1}$ since $\langle\nu(n_s^{-1}(\lambda_s)^{-1}),\alpha\rangle = -1$.
We get, if $n = \langle \nu(\lambda),\alpha\rangle > 0$, then
\[
	\theta(n_s(\lambda))T_{n_s} - T_{n_s}\theta(\lambda) = \sum_{k = 1}^n(\lambda_s^{k}\cdot c_{n_s})\theta(\lambda_s^{k}n_s^{-1}(\lambda_s)^{-k}\lambda)
\]
and, if $-n = \langle\nu(\lambda),\alpha\rangle < 0$, then
\[
	T_{n_s}\theta(\lambda) - \theta(n_s(\lambda))T_{n_s} =
	\sum_{k = 1}^n\theta(n_s(\lambda)\lambda_s^{k}n_s^{-1}(\lambda_s)^{-k})(n_s^{-1}(\lambda_s)^{k}\cdot c_{n_s}).
\]
Now, for $k\in\Z_{\ge 0}$, put
\begin{align*}
\mu_{n_s}(k) & = n_s(\lambda_s)^k\lambda_s^{-k},& \mu_{n_s}(-k) & = \lambda_s^kn_s^{-1}(\lambda_s)^{-k},\\
c_{n_s,k} &  = \lambda_s^k\cdot c_{n_s}, & c_{n_s,-k} & = n_s(\lambda_s)^k\cdot c_{n_s}.
\end{align*}
Then if $k\in\Z_{\ge 0}$, $c_{n_s,k}\in \Z[\lambda_s^k\cdot Z_{(\alpha,0),\kappa}] = \Z[Z_{\lambda_s^k(\alpha,0),\kappa}] = \Z[Z_{(\alpha,k),\kappa}]$.
Similarly, we have $c_{n_s,-k}\in\Z[Z_{(\alpha,-k),\kappa}]$.
Since $n_s^2\in Z_\kappa$ and $Z_\kappa$ is commutative, $n_s^{-1}(\lambda_s)^k\cdot c_{n_s} = n_s^{-2}\cdot (n_s(\lambda_s)^k\cdot (n_s^2\cdot c_{n_s})) = n_s(\lambda_s)^k\cdot c_{n_s} = c_{n_s,-k}$.
Therefore these elements satisfy the Bernstein relations.
To check the other conditions is easy.

Next assume that $\langle\nu(\Lambda(1)),\alpha\rangle = 2\Z$.
Let $s'$ be the reflection corresponding to the highest root in the component of $\Sigma$ containing $s$, $\alpha'$ the highest root and $\widetilde{\alpha}' = (\alpha',0)\in \Sigma_\aff$.
Take $\widetilde{w}\in \widetilde{W}(1)$ such that $\lambda_s = n_s\widetilde{w}n_{s'}\widetilde{w}^{-1}\in \Lambda(1)$ satisfies $\ell(\lambda_s) = 2\ell(\widetilde{w}) + 2$ and $\nu(\lambda_s) = -\check{\alpha}$~\cite[Lemma~5.15]{Vigneras-prop}.
Put $c' = \widetilde{w}\cdot c_{n_{s'}}$.
By the condition, in $\widetilde{W}$, we have $s_{\widetilde{w}(\widetilde{\alpha}')} = s \lambda_s = s_{(\alpha,-1)}$
Hence we have $\widetilde{w}(\widetilde{\alpha}') = (\alpha,-1)$.

By similar arguments using \cite[Theorem~5.38]{Vigneras-prop}, if $2n = \langle \nu(\lambda),\alpha\rangle \ge 0$, then $\theta(n_s(\lambda))T_{n_s} - T_{n_s}\theta(\lambda)$ is equal to
\begin{equation*}
\sum_{k = 0}^{n - 1}\theta(n_s(\lambda))(\theta(n_s(\lambda_s)^kn_s^2\lambda_s^{-k-1})(\lambda_s^{k + 1}\cdot c') + \theta(n_s(\lambda_s)^k\lambda_s^{-k})(\lambda_s^k\cdot c_{n_s}))
\end{equation*}
and
\begin{equation}\label{eq:Bernstein relation, difficult case, replacement target 1}
\sum_{k = 1}^n ((n_s(\lambda_s)^{-k}\cdot c')\theta(n_s(\lambda_s)^{-k}n_s^2\lambda_s^{k - 1}) + (n_s(\lambda_s)^{-k}\cdot c_{n_s})\theta(n_s(\lambda_s)^{-k}\lambda_s^k))\theta(\lambda).
\end{equation}
If $-2n = \langle \nu(\lambda),\alpha\rangle < 0$, $T_{n_s}\theta(\lambda) - \theta(n_s(\lambda))T_{n_s}$ is equal to
\begin{equation*}
\sum_{k = 0}^{n - 1}((n_s(\lambda_s)^k\cdot c')\theta(n_s(\lambda_s)^kn_s^2\lambda_s^{-k-1}) + (n_s(\lambda_s)^k\cdot c_{n_s})\theta(n_s(\lambda_s)^k\lambda_s^{-k}))\theta(\lambda)\\
\end{equation*}
and
\begin{equation}\label{eq:Bernstein relation, difficult case, replacement target 2}
\sum_{k = 1}^n\theta(n_s(\lambda))(\theta(n_s(\lambda_s)^{-k}n_s^2\lambda_s^{k - 1})(\lambda_s^{-k+1}\cdot c') + \theta(n_s(\lambda_s)^{-k}\lambda_s^k)(\lambda_s^{-k}\cdot c_{n_s})).
\end{equation}
Replace $n_s$ with $n_s^{-1}$ and $n_{s'}$ with $n_{s'}^{-1}$ in \eqref{eq:Bernstein relation, difficult case, replacement target 1}.
Then $\lambda_s$ becomes $n_s^{-1}(\lambda_s)^{-1}$, $c_{n_s}$ becomes $c_{n_s^{-1}} = n_s^{-2}c_{n_s}$ and $c'$ becomes $\widetilde{w}\cdot c_{n_{s'}^{-1}} = c'\widetilde{w}\cdot (n_{s'}^{-2}) = c'(\lambda_s^{-1}n_s\lambda_s^{-1}n_s)$.
Hence it gives
\begin{align*}
& \theta(n_s^{-1}(\lambda))T_{n_s^{-1}} - T_{n_s^{-1}}\theta(\lambda)\\
& = \sum_{k = 1}^n((n_s^{-2}(\lambda_s)^{k}\cdot (c'\lambda_s^{-1}n_s\lambda_s^{-1}n_s))\theta(n_s^{-2}(\lambda_s)^{k}n_s^{-2}n_s^{-1}(\lambda_s)^{-k + 1})\\
&\quad + (n_s^{-2}(\lambda_s)^{k}\cdot (n_s^{-2}c_{n_s}))\theta(n_s^{-2}(\lambda_s)^{k}n_s^{-1}(\lambda_s)^{-k}))\theta(\lambda).
\end{align*}
Since $\lambda_s^{-1}n_s\lambda_s^{-1}n_s = \widetilde{w}n_{s'}^{-2}\widetilde{w}\in Z_\kappa$, it commutes with $n_s^{-2}$.
Hence we have
\begin{align*}
&(n_s^{-2}(\lambda_s)^{k}\cdot (c'(\lambda_s^{-1}n_s\lambda_s^{-1}n_s)))\theta(n_s^{-2}(\lambda_s)^{k}n_s^{-2}n_s^{-1}(\lambda_s)^{-k + 1})\\
& = (n_s^{-2}(\lambda_s)^{k}\cdot c')\theta(n_s^{-2}(\lambda_s)^{k}(\lambda_s^{-1}n_s\lambda_s^{-1}n_s)n_s^{-2}n_s^{-1}(\lambda_s)^{-k + 1})\\
& = (n_s^{-2}(\lambda_s)^{k}\cdot c')\theta(n_s^{-2}(\lambda_s)^{k}n_s^{-2}(\lambda_s^{-1}n_s\lambda_s^{-1}n_s)n_s^{-1}(\lambda_s)^{-k + 1})\\
& = (n_s^{-2}(\lambda_s)^{k}\cdot c')\theta(n_s^{-2}(\lambda_s)^{k}(n_s^{-2}\lambda_s^{-1}n_s^2)(n_s^{-1}\lambda_s^{-1}n_s)n_s^{-1}(\lambda_s)^{-k + 1})\\
& = (n_s^{-2}(\lambda_s)^{k}\cdot c')\theta(n_s^{-2}(\lambda_s)^{k-1}n_s^{-1}(\lambda_s)^{-k}).
\end{align*}
Similarly, we have
\begin{align*}
&(n_s^{-2}(\lambda_s)^{k}\cdot (n_s^{-2}c_{n_s}))\theta(n_s^{-2}(\lambda_s)^{k}n_s^{-1}(\lambda_s)^{-k})\\
&=
(n_s^{-2}(\lambda_s)^{k}\cdot c_{n_s})\theta(n_s^{-2}(\lambda_s)^{k}n_s^{-2}n_s^{-1}(\lambda_s)^{-k}).
\end{align*}
As in the case of $\langle \nu(\Lambda(1)),\alpha\rangle = \Z$, since $Z_\kappa$ is commutative, we have $n_s^{-2}(\lambda_s)^{k}\cdot c' = \lambda_s^k\cdot c'$ and $n_s^{-2}(\lambda_s)^k\cdot c_{n_s} = \lambda_s^k\cdot c_{n_s}$.
Therefore $\theta(n_s^{-1}(\lambda))T_{n_s^{-1}} - T_{n_s^{-1}}\theta(\lambda)$ is equal to
\[
\sum_{k = 1}^n((\lambda_s^{k}\cdot c')\theta(n_s^{-2}(\lambda_s)^{k-1}n_s^{-1}(\lambda_s)^{-k})
 + (\lambda_s^{k}\cdot c_{n_s})\theta(n_s^{-2}(\lambda_s)^{k}n_s^{-2}n_s^{-1}(\lambda_s)^{-k}))\theta(\lambda).
\]
Multiply by $T_{n_s^2}$ on the left.
Since $n_s^{2}\in Z_\kappa$, $T_{n_s^2}\theta(n_s^{-1}(\lambda)) = \theta(n_s^2)\theta(n_s^{-1}(\lambda)) = \theta(n_s^2n_s^{-1}(\lambda)) = \theta(n_s(\lambda))\theta(n_s^2) = \theta(n_s(\lambda))T_{n_s^2}$.
We also remark that $T_{n_s^2}$ commutes with $\lambda_s^k\cdot c'$ and $\lambda_s^k\cdot c_{n_s}$ since everything is in $\Z[Z_\kappa]$.
Hence, if $2n = \langle \nu(\lambda),\alpha\rangle \ge 0$, then $\theta(n_s(\lambda))T_{n_s} - T_{n_s}\theta(\lambda)$ is equal to
\[
	\sum_{k = 1}^n((\lambda_s^{k}\cdot c')\theta(\lambda_s^{k-1}n_s^2n_s^{-1}(\lambda_s)^{-k}) + (\lambda_s^{k}\cdot c_{n_s})\theta(\lambda_s^{k}n_s^{-1}(\lambda_s)^{-k}))\theta(\lambda).
\]
The same argument for \eqref{eq:Bernstein relation, difficult case, replacement target 2} implies that, if $-2n = \langle \nu(\lambda),\alpha\rangle < 0$, then $T_{n_s}\theta(\lambda) - \theta(n_s(\lambda))T_{n_s}$ is equal to
\[
	\sum_{k = 1}^n\theta(n_s(\lambda))(\theta(\lambda_s^{k-1}n_s^2n_s^{-1}(\lambda_s)^{-k})(n_s(\lambda_s)^{k - 1}\cdot c') + \theta(\lambda_s^kn_s^{-1}(\lambda_s)^{-k})(n_s(\lambda_s)^k\cdot c_{n_s}).
\]
Put
\begin{align*}
\mu_{n_s}(2k) & = n_s(\lambda_s)^k\lambda_s^{-k}, & \mu_{n_s}(2k + 1) & = n_s(\lambda_s)^kn_s^2\lambda_s^{-k-1} & (k\ge 0),\\
\mu_{n_s}(-2k)& = \lambda_s^kn_s^{-1}(\lambda_s)^{-k}, & \mu_{n_s}(-2k + 1) & = \lambda_s^{k-1} n_s^2n_s^{-1}(\lambda_s)^{-k} & (k\ge 1),\\
c_{n_s,2k} & = \lambda_s^k\cdot c_{n_s}, & c_{n_s,2k+1} & = \lambda_s^{k + 1}\cdot c' & (k\ge 0),\\
c_{n_s,-2k} & = n_s(\lambda_s)^{k}\cdot c_{n_s}, & c_{n_s,-2k+1} & = n_s(\lambda_s)^{k - 1}\cdot c' & (k\ge 1).
\end{align*}
We have $\mu_{n_s}(-1) = n_s^2n_s^{-1}(\lambda_s)^{-1}$ and $c_{n_s,1} = \lambda_s\cdot c'$.
Hence $\mu_{n_s}(-1)^{-1}\cdot c_{n_s,1} = (n_s^{-1}\lambda_s n_s^{-1}\lambda_s)\cdot c'$.
We have $n_s^{-1}\lambda_s = \widetilde{w}n_{s'}\widetilde{w}^{-1}$.
Hence $n_s^{-1}\lambda_s n_s^{-1}\lambda_s = \widetilde{w}n_{s'}^2\widetilde{w}^{-1}\in Z_\kappa$.
Therefore $\mu_{n_s}(-1)^{-1}\cdot c_{n_s,1} = c' = c_{n_s,-1}$.
It is easy to check the other conditions.
\end{proof}

Since $n_s^{-1}$ is also a lift of $s\in S$ in $\widetilde{W}(1)$, we have the Bernstein relations for $n_s^{-1}$.
\begin{lem}\label{lem:Bernstein relation, n_s <-> n_s^-1}
Put $\mu_{n_s^{-1}}(k) = \mu_{n_s}(k)n_s^{-2}$ and $c_{n_s^{-1},k} = c_{n_s,k}$.
Then they satisfy the Bernstein relations for $n_s^{-1}$.
\end{lem}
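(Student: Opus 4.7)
The plan is to derive the Bernstein relations for $n_s^{-1}$ from those for $n_s$ (Lemma~\ref{lem:Bernstein relations}) by right-multiplying by $\theta(n_s^{-2})$. The setup uses the observation that $n_s^2 \in Z_\kappa \subset \Lambda(1)$ satisfies $\nu(n_s^{\pm 2}) = 0$, so the length formula~\eqref{eq:length formula} gives $\ell(n_s^{\pm 2}) = 0$, and $n_s^{-2}$ is both dominant and anti-dominant; hence $\theta(n_s^{-2}) = T_{n_s^{-2}}$ by Proposition~\ref{prop:example of E}. Moreover, the decomposition $n_s = n_s^{-1}\cdot n_s^2$ is length-additive, so the braid relations give $T_{n_s} = T_{n_s^{-1}}T_{n_s^2} = T_{n_s^{-1}}\theta(n_s^2)$; equivalently,
\[
	T_{n_s}\theta(n_s^{-2}) = T_{n_s^{-1}}.
\]

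Next I would multiply the first identity of Lemma~\ref{lem:Bernstein relations} (for $\langle\nu(\lambda),\alpha\rangle \ge 0$) on the right by $\theta(n_s^{-2})$. On the left-hand side, $T_{n_s}\theta(n_s^{-2})$ becomes $T_{n_s^{-1}}$, and $\theta(\lambda)$ commutes with $\theta(n_s^{-2})$ since $\Lambda(1)$ is abelian (so the $\theta$'s of $\Lambda(1)$-elements commute by~\eqref{eq:E(lambda_1) and E(lambda_2)}). On the right-hand side, each coefficient $c_{n_s,k}$ lies in $\Z[Z_\kappa]$ together with $\theta(n_s^{-2}) = T_{n_s^{-2}}$, which is a commutative subalgebra; combining with the multiplicativity $\theta(\mu)\theta(n_s^{-2}) = \theta(\mu n_s^{-2})$ on $\Lambda(1)$, this yields
\[
	\theta(n_s(\lambda))T_{n_s^{-1}} - T_{n_s^{-1}}\theta(\lambda) = \sum_{k=0}^{\langle\nu(\lambda),\alpha\rangle - 1}\theta(n_s(\lambda)\mu_{n_s}(k)n_s^{-2})c_{n_s,k}.
\]

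The last step is to note that since $n_s^2 \in \Lambda(1)$ and $\Lambda(1)$ is abelian, conjugation by $n_s$ and by $n_s^{-1}$ induce the same action on $\Lambda(1)$, so $n_s(\lambda) = n_s^{-1}(\lambda)$. Substituting $\mu_{n_s^{-1}}(k) = \mu_{n_s}(k)n_s^{-2}$ and $c_{n_s^{-1},k} = c_{n_s,k}$ then recovers exactly the first Bernstein identity of Lemma~\ref{lem:Bernstein relations} with $n_s$ replaced by $n_s^{-1}$. The remaining three forms (the alternative $\sum c\,\theta(\cdots)$ expression and the two forms for $\langle\nu(\lambda),\alpha\rangle < 0$) are obtained by the same right-multiplication by $\theta(n_s^{-2})$ together with the same commutativity manipulations; no further idea is required. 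There is no real obstacle, only the bookkeeping point that every occurrence of $\mu_{n_s}(k)$ inside a $\theta(\cdots)$ must be converted uniformly into $\mu_{n_s}(k)n_s^{-2} = \mu_{n_s^{-1}}(k)$ across all four forms.
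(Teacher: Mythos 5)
Your overall strategy (right-multiplication by $\theta(n_s^{-2}) = T_{n_s^{-2}}$) is the same as the paper's, but there is a genuine gap: you assume that $\Lambda(1)$ is abelian, and this is false in the generality of the paper. The group $\Lambda(1) = \alggrp{Z}(F)/(\alggrp{Z}(F)\cap I(1))$ is abelian when $\alggrp{Z}$ is a torus (e.g.\ in the quasi-split case), but for general $\alggrp{G}$ the centralizer $\alggrp{Z}$ of a maximal split torus need not be commutative (think of inner forms such as $\GL_n(D)$), and the paper explicitly warns that $\mathcal{A}$ is not commutative in general. Your citation of \eqref{eq:E(lambda_1) and E(lambda_2)} is a misreading: that identity is the \emph{twisted} commutation $E(\lambda_1)E(\lambda_2) = E(\lambda_1\lambda_2\lambda_1^{-1})E(\lambda_1)$, not commutativity. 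This false premise is used in two essential places: (i) to claim $\theta(\lambda)\theta(n_s^{-2}) = \theta(n_s^{-2})\theta(\lambda)$, which requires $\lambda n_s^{-2} = n_s^{-2}\lambda$ in $\Lambda(1)$, and (ii) to claim $n_s(\lambda) = n_s^{-1}(\lambda)$, whereas in fact $n_s^{-1}(\lambda) = n_s^{-2}(n_s(\lambda))$, which differs from $n_s(\lambda)$ unless $n_s^2$ commutes with $n_s(\lambda)$. Without these two identifications your computation only yields
\[
\theta(n_s(\lambda))T_{n_s^{-1}} - T_{n_s}\theta(\lambda n_s^{-2}) = \sum_{k}\theta(n_s(\lambda)\mu_{n_s}(k)n_s^{-2})c_{n_s,k},
\]
which is not the asserted relation for $n_s^{-1}$ (note the lemma's left-hand side must be $\theta(n_s^{-1}(\lambda))T_{n_s^{-1}} - T_{n_s^{-1}}\theta(\lambda)$).

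The repair is exactly what the paper does: \emph{first} replace $\lambda$ by $n_s^{-2}(\lambda)$ in Lemma~\ref{lem:Bernstein relations} (legitimate since $\nu(n_s^{-2}(\lambda)) = \nu(\lambda)$ and $n_s(n_s^{-2}(\lambda)) = n_s^{-1}(\lambda)$), and \emph{then} multiply on the right by $\theta(n_s^{-2}) = T_{n_s^{-2}}$. The substitution supplies precisely the twist needed so that $\theta(n_s^{-2}(\lambda))\theta(n_s^{-2}) = \theta(n_s^{-2}\lambda) = T_{n_s^{-2}}\theta(\lambda)$ holds by multiplicativity alone, with no commutativity of $\Lambda(1)$ required; the only commutativity used is that of $Z_\kappa$ (to move $T_{n_s^{-2}}$ past $c_{n_s,k}$), which is legitimate and which you also invoke correctly. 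With that modification the rest of your bookkeeping, including the treatment of the other three forms of the relation, goes through.
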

\begin{proof}
Assume that $\langle\nu(\lambda),\alpha\rangle > 0$.
Replacing $\lambda$ with $n_s^{-2}(\lambda)$ in Lemma~\ref{lem:Bernstein relations}, we have
\begin{align*}
\theta(n_s^{-1}(\lambda))T_{n_s} - T_{n_s}\theta(n_s^{-2}(\lambda))
& = \sum_{k = 0}^{\langle \nu(\lambda),\alpha\rangle - 1}\theta(n_s^{-1}(\lambda)\mu_{n_s}(k))c_{n_s,k}\\
& = \sum_{k = 1}^{\langle \nu(\lambda),\alpha\rangle}c_{n_s,k}\theta(\mu_{n_s}(-k)n_s^{-2}(\lambda)).
\end{align*}
We have $\theta(n_s^{-2}(\lambda))\theta(n_s^{-2}) = \theta(n_s^{-2}\lambda) = T_{n_s^{-2}}\theta(\lambda)$.
Since $n_s^{-2},c_{n_s,k}\in C[Z_\kappa]$, these commute with each other.
Hence multiplying by $\theta(n_s^{-2}) = T_{n_s^{-2}}$ on the right of the both sides, we get
\begin{align*}
\theta(n_s^{-1}(\lambda))T_{n_s^{-1}} - T_{n_s^{-1}}\theta(\lambda)
& = \sum_{k = 0}^{\langle \nu(\lambda),\alpha\rangle - 1}\theta(n_s^{-1}(\lambda)\mu_{n_s}(k)n_s^{-2})c_{n_s,k}\\
& = \sum_{k = 1}^{\langle \nu(\lambda),\alpha\rangle}c_{n_s,k}\theta(\mu_{n_s}(-k)n_s^{-2}\lambda).
\end{align*}
The same argument implies, if $\langle\nu(\lambda),\alpha\rangle < 0$, we have
\begin{align*}
\theta(n_s^{-1}(\lambda))T_{n_s^{-1}} - T_{n_s^{-1}}\theta(\lambda)
& = -\sum_{k = 0}^{-\langle\nu(\lambda),\alpha\rangle-1}c_{n_s,-k}\theta(\mu_{n_s}(k)n_s^{-2}\lambda)\\
& = -\sum_{k = 1}^{-\langle\nu(\lambda),\alpha\rangle}\theta(n_s(\lambda)\mu_{n_s}(-k)n_s^{-2})c_{n_s,-k}.
\end{align*}
We get the lemma.
\end{proof}

Let $C$ be an algebraically closed field of characteristic $p$ and set $\mathcal{H} = \mathcal{H}_\Z\otimes_\Z C$.
This is a $C[q_s^{1/2}]$-algebra.
As an element of $\mathcal{H}$, $c_{n_s}$ and $c_{n_s,k}$ does not depend on a choice of $n_s$ and we denote it by $c_s$ and $c_{s,k}$.
The field $C$ is a $C[q_s^{1/2}]$-algebra via $q_s^{1/2}\mapsto 0$.
Then $\mathcal{H}\otimes_{C[q_s^{1/2}]} C$ is isomorphic to the Hecke algebra for $(\alggrp{G}(F),I(1))$ with coefficients in $C$.
The following lemma is useful for calculations in $\mathcal{H}\otimes_{C[q_s^{1/2}]}C$.
\begin{lem}\label{lem:lemma on length function}
Let $\lambda,\mu\in \Lambda(1)$ and $\alpha\in\Sigma^+$ such that $\langle\nu(\lambda),\alpha\rangle > 0$.
Assume that there exists $k\in\Z_{\ge 0}$ such that $k\le \langle\nu(\lambda),\alpha\rangle$ and $\nu(\mu) = \nu(\lambda) - k\check{\alpha}$.
Then we have $\ell(\lambda) \ge \ell(\mu) + 2\min\{k,\langle\nu(\lambda),\alpha\rangle - k\}$.
Moreover, equality holds if and only if $k = 0$ or $k = \langle \nu(\lambda),\alpha\rangle$ or $w(\alpha)$ is simple for some $w\in W$ such that $n_w(\lambda)$ is dominant.
\end{lem}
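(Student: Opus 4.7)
The plan is to use the length formula \eqref{eq:length formula} with $w=1$, which for $\lambda\in\Lambda(1)$ simplifies to $\ell(\lambda) = \sum_{\beta\in\Sigma^+}\lvert\langle\nu(\lambda),\beta\rangle\rvert$. Writing $v = \nu(\lambda)$ and $n = \langle v,\alpha\rangle$, together with $\nu(\mu) = v - k\check{\alpha}$, this gives
\[
\ell(\lambda) - \ell(\mu) = \sum_{\beta\in\Sigma^+}\bigl(\lvert\langle v,\beta\rangle\rvert - \lvert\langle v,\beta\rangle - k\langle\check{\alpha},\beta\rangle\rvert\bigr).
\]
I will then partition $\Sigma^+$ into the singleton $\{\alpha\}$, roots $\beta\ne\alpha$ with $\langle\check{\alpha},\beta\rangle=0$, and pairs $\{\beta,\gamma\}$ where $\gamma = s_\alpha(\beta)$ if $s_\alpha(\beta)\in\Sigma^+$ and $\gamma = -s_\alpha(\beta)$ otherwise. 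The singleton $\{\alpha\}$ contributes exactly $n - \lvert n-2k\rvert = 2\min(k,n-k)$, and roots fixed by $s_\alpha$ contribute $0$. For each two-element orbit, writing $b = \langle v,\beta\rangle$ and (after possibly swapping $\beta$ and $\gamma$) $c = \langle\check{\alpha},\beta\rangle > 0$, a direct computation in each case shows that the combined pair contribution is
\[
\lvert b\rvert + \lvert b-cn\rvert - \lvert b-kc\rvert - \lvert b-(n-k)c\rvert.
\]

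The key inequality will come from a convexity argument applied to $\phi(x) = \lvert b-x\rvert$: since $\phi$ is convex, the function $p\mapsto \phi(p) + \phi(cn-p)$ is convex and symmetric about $p = cn/2$, hence monotone in $\lvert p - cn/2\rvert$. As $\lvert 0 - cn/2\rvert = cn/2$ and $\lvert kc - cn/2\rvert = (c/2)\lvert n-2k\rvert \le cn/2$ for $0\le k\le n$, each pair contribution is non-negative. Summing over orbits yields $\ell(\lambda) - \ell(\mu) \ge 2\min(k, n-k)$.

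For the equality statement, a short case analysis on the piecewise linear function $\phi$ will show that a pair contributes exactly $0$ iff $k\in\{0,n\}$, $c=0$, or $\phi$ is affine on $[0,cn]$ (equivalently $b\notin(0,cn)$). In both the type $s_\alpha(\beta)\in\Sigma^+$ and type $s_\alpha(\beta)\notin\Sigma^+$ cases this unifies into the sign condition $\langle v,\beta\rangle\cdot\langle s_\alpha(v),\beta\rangle\ge 0$; hence, assuming $k\notin\{0,n\}$, equality in the main inequality is equivalent to the geometric statement that the segment from $v$ to $s_\alpha(v)$ does not strictly cross any root hyperplane other than $\alpha^\perp$. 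This in turn is equivalent to $v$ lying in the closure of a Weyl chamber $\mathcal{C}$ admitting $\alpha^\perp$ as a wall. The final step is to observe that the element $w\in W$ sending $\mathcal{C}$ to the dominant chamber makes $w(v)$ dominant and sends $\alpha$ to $\pm\alpha_i$ for a simple root $\alpha_i$, with the positivity $\langle w(v),w(\alpha)\rangle = n > 0$ forcing $w(\alpha) = \alpha_i$; translating back via $\nu(n_w(\lambda)) = w(\nu(\lambda))$ gives the characterisation in the lemma.

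The main obstacle I expect is the equality part, specifically the translation from the sign condition $\langle v,\beta\rangle\langle s_\alpha(v),\beta\rangle\ge 0$ on each $\beta$ to the clean existential statement about $w\in W$ with $n_w(\lambda)$ dominant and $w(\alpha)$ simple; the convexity inequality itself is routine, but isolating the vanishing condition for each pair and assembling these pointwise conditions into the global statement about Weyl chambers adjacent across $\alpha^\perp$ requires care.
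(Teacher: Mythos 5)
Your proposal is correct, and the route is genuinely different from the paper's. The paper first reduces to $k\le\langle\nu(\lambda),\alpha\rangle/2$, conjugates by a Weyl element $u\in W$ chosen so that $\nu(n_u(\mu))$ is dominant and $u(\alpha)>0$, and then runs a single chain of estimates via the half-sum $\rho=\tfrac12\sum_{\beta\in\Sigma^+}\beta$: dropping absolute values from $\ell(n_u\mu)=\sum_{\gamma\in\Sigma^+}\langle\nu(n_u\mu),\gamma\rangle$ and using $\langle u(\check\alpha),\rho\rangle\ge 1$ yields $\ell(\mu)\le\ell(\lambda)-2k\langle u(\check\alpha),\rho\rangle\le\ell(\lambda)-2k$. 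You instead decompose $\Sigma^+$ into $s_\alpha$-orbits and bound each orbit's contribution to the length defect by a convexity argument; this is more elementary, is automatically symmetric in $k\leftrightarrow n-k$ (so you skip the initial reduction), and gives a finer, hyperplane-by-hyperplane picture of where the defect comes from. The trade-off shows in the equality analysis. The paper's choice of $u$ makes the equality conditions come out directly in Weyl-group form (dominance of $\nu(n_u\lambda)$, simplicity of $u(\alpha)$), and the converse direction is a one-line check that $n_u(\mu)$ is then also dominant. Your per-orbit criterion instead yields the pointwise sign condition $\langle\nu(\lambda),\beta\rangle\langle s_\alpha\nu(\lambda),\beta\rangle\ge 0$ for each $\beta\ne\pm\alpha$, which you then have to reassemble into the existential statement about $w$. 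You flag this as the hard step, and rightly so: the bridge ``sign conditions $\iff$ $\nu(\lambda)$ lies in the closure of a chamber with $\alpha^\perp$ as a wall'' does need a genuine argument. It is true --- one can observe that the sign conditions force $\langle\nu(\lambda),\beta\rangle=0$ and $\langle\check\alpha,\beta\rangle=0$ whenever the midpoint $m=\tfrac12(\nu(\lambda)+s_\alpha\nu(\lambda))$ lies on $\beta^\perp$ with $\beta\ne\pm\alpha$, so the perturbation $m+\varepsilon\check\alpha$ has the same sign pattern as $m$ away from $\alpha^\perp$, and any chamber containing it in its closure also contains $\nu(\lambda)$ and has $\alpha^\perp$ as a wall --- but it is precisely this geometric reassembly that the paper avoids by taking $u$ to be a Weyl element from the outset.
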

The elements appeared in the Bernstein relations (e.g.\ $\lambda\mu_{n_s}(-k)$) satisfies the condition of $\mu$ in this lemma.
\begin{proof}
Since $\ell(\mu) = \ell(n_s(\mu))$ and $\nu(n_s(\mu)) = \nu(\lambda) - (\langle \nu(\lambda),\alpha\rangle - k)\check{\alpha}$, replacing $\mu$ with $n_s(\mu)$ if necessary, we may assume $k\le \langle \nu(\lambda),\alpha\rangle /2$.
Take $v\in W$ such that $\nu(n_v(\mu))$ is dominant.
We may assume $v(\alpha) > 0$ since $\langle \nu(n_v(\mu)),v(\alpha)\rangle = \langle\nu(\mu),\alpha\rangle = \langle \nu(\lambda),\alpha\rangle - 2k \ge 0$.
Set $\rho = (1/2)\sum_{\beta\in\Sigma^+}\beta$.
Then we have
\begin{align*}
\ell(\mu) & = \ell(n_v(\mu))\\
& = \sum_{\gamma\in\Sigma^+}\lvert \langle \nu(n_v(\mu)),\gamma\rangle\rvert\\
& = \sum_{\gamma\in\Sigma^+}\langle \nu(n_v(\mu)),\gamma\rangle\\
& = \sum_{\gamma\in\Sigma^+}\langle \nu(n_v(\lambda)),\gamma\rangle -  k\sum_{\gamma\in\Sigma^+}\langle v(\check{\alpha}),\gamma\rangle\\
& \le \sum_{\gamma\in\Sigma^+}\lvert\langle \nu(n_v(\lambda)),\gamma\rangle\rvert - k\sum_{\gamma\in\Sigma^+}\langle v(\check{\alpha}),\gamma\rangle\\
& = \ell(n_v(\lambda)) -  2k\langle v(\check{\alpha}),\rho\rangle\\
& = \ell(\lambda) -  2k\langle v(\check{\alpha}),\rho\rangle
\end{align*}
Since $v(\check{\alpha})\in\Sigma^+$, we have $\langle v(\check{\alpha}),\rho\rangle\ge 1$.
Hence we have $\ell(\mu) \le \ell(\lambda) - 2k$.

Assume $k \ne 0$.
By the above argument, if the equality holds, then $n_v(\lambda)$ is dominant and $v(\alpha)$ is simple.
Conversely, assume that there exists $v\in W$ such that $n_v(\lambda)$ is dominant and $v(\alpha)$ is simple.
We have $\langle \nu(n_v(\mu)),v(\alpha)\rangle = \langle \nu(\mu),\alpha\rangle = \langle \nu(\lambda),\alpha\rangle - 2k\ge 0$.
If $\beta\in\Delta\setminus\{v(\alpha)\}$, then $\langle\nu(n_v(\mu)),\beta\rangle = \langle\nu(n_v(\lambda)),\beta\rangle - k\langle v(\alpha),\beta\rangle\ge 0$.
Hence $n_v(\mu)$ is dominant.
By the above argument, the equality holds.
\end{proof}

One application of the above lemma and the Bernstein relations is the following ``the simple Bernstein relations at $q = 0$''.
(See \cite[Corollary~5.53]{Vigneras-prop}.)
\begin{lem}\label{lem:simple Bernstein relations}
Let $\lambda\in\Lambda(1)$, $s = s_\alpha\in S$.
In $\mathcal{H}\otimes C$, we have the following.
If $\langle \nu(\lambda),\alpha\rangle > 0$, then
\[
	E(n_s(\lambda))(T_{n_s} - c_s) = T_{n_s}E(\lambda).
\]
If $\langle \nu(\lambda),\alpha\rangle < 0$, then
\[
	E(n_s(\lambda))T_{n_s} = (T_{n_s} - c_s)E(\lambda).
\]
\end{lem}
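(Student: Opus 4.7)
The plan is to deduce both identities directly from the Bernstein relations of Lemma~\ref{lem:Bernstein relations} by multiplying through by $q_\lambda^{1/2}$ to convert each $\theta$ into an $E$, and then observing that after specialising $q_s^{1/2}\to 0$ only the $k=0$ summand of the Bernstein expansion survives.

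For the positive case $n := \langle\nu(\lambda),\alpha\rangle > 0$, I would multiply
\[
\theta(n_s(\lambda))T_{n_s} - T_{n_s}\theta(\lambda) = \sum_{k = 0}^{n - 1}\theta(n_s(\lambda)\mu_{n_s}(k))c_{n_s,k}
\]
by $q_\lambda^{1/2} = q_{n_s(\lambda)}^{1/2}$ (length, hence $q$, is conjugation invariant). The left-hand side becomes $E(n_s(\lambda))T_{n_s} - T_{n_s}E(\lambda)$, while the $k$-th summand on the right becomes $q_\lambda^{1/2}q_{n_s(\lambda)\mu_{n_s}(k)}^{-1/2}E(n_s(\lambda)\mu_{n_s}(k))c_{n_s,k}$. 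The $k = 0$ term has prefactor $1$ and simplifies to $E(n_s(\lambda))c_s$, using $\mu_{n_s}(0) = 1$ and $c_{n_s,0} = c_{n_s}$.

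For $1 \le k \le n - 1$, since $\nu(n_s(\lambda)\mu_{n_s}(k)) = \nu(\lambda) - (n - k)\check{\alpha}$, applying Lemma~\ref{lem:lemma on length function} with $n - k$ in place of its $k$ gives
\[
\ell(\lambda) \ge \ell(n_s(\lambda)\mu_{n_s}(k)) + 2\min\{k, n - k\} \ge \ell(n_s(\lambda)\mu_{n_s}(k)) + 2,
\]
forcing the prefactor $q_\lambda^{1/2}q_{n_s(\lambda)\mu_{n_s}(k)}^{-1/2}$ to have strictly positive total degree in the $q_s^{1/2}$ and hence vanish in $\mathcal{H}\otimes C$. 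What remains rearranges to the first identity. The case $\langle\nu(\lambda),\alpha\rangle < 0$ is handled identically using the negative Bernstein relation, with Lemma~\ref{lem:lemma on length function} applied to $\lambda' := n_s(\lambda)$ in place of $\lambda$; the surviving $k = 0$ term contributes $-c_sE(\lambda)$, yielding the second identity.

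The main delicate point I would want to verify carefully is that each prefactor $q_\lambda^{1/2}q_{n_s(\lambda)\mu_{n_s}(k)}^{-1/2}$ actually lies in $C[q_s^{1/2}]$ with strictly positive total degree, rather than being a Laurent monomial of positive total degree; the non-negativity of the individual $q_s$-exponents should follow from writing $q_\lambda$ and $q_{n_s(\lambda)\mu_{n_s}(k)}$ explicitly via the length formula and tracking each $S_\aff$-conjugacy class separately.
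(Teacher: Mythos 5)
Your overall strategy matches the paper's exactly: multiply the Bernstein relation by $q_\lambda^{1/2}$, note that the $k=0$ summand gives the $c_s$-term, and argue that every other summand dies under $q_s^{1/2}\mapsto 0$ because its $q$-prefactor has positive total degree by Lemma~\ref{lem:lemma on length function}. The reduction in the negative case via $\lambda\mapsto n_s(\lambda)$ is also the same.

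The one place your argument is not actually complete is the point you flag yourself: showing that each prefactor $q_\lambda^{1/2}q_{n_s(\lambda)\mu_{n_s}(k)}^{-1/2}$ lies in $C[q_s^{1/2}]$ rather than merely being a Laurent monomial of positive \emph{total} degree. You propose to settle this by expanding both $q$'s via the length formula and tracking each $S_{\aff}$-conjugacy class separately, but that is a genuinely harder combinatorial statement and you haven't established it. The paper sidesteps it entirely with a short abstract argument that is worth internalizing: the identity $E(n_s(\lambda))T_{n_s} - T_{n_s}E(\lambda) = \sum_k q_\lambda^{1/2}q_{n_s(\lambda)\mu_{n_s}(k)}^{-1/2}E(n_s(\lambda)\mu_{n_s}(k))c_{s,k}$ holds in $\mathcal{H}$ (the left side visibly does, since $E$'s and $T$'s are integral), $\{E(\widetilde{w})\}_{\widetilde{w}\in\widetilde{W}(1)}$ is a $C[q_s^{1/2}]$-basis of $\mathcal{H}$, and the $\nu$-images of the $n_s(\lambda)\mu_{n_s}(k)$ are pairwise distinct, so the terms for different $k$ involve disjoint sets of basis vectors $E(n_s(\lambda)\mu_{n_s}(k)t)$, $t\in Z_\kappa$. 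Since $c_{s,k}\neq 0$, this forces each prefactor to lie in $C[q_s^{1/2}]$. Only then does the total-degree bound from Lemma~\ref{lem:lemma on length function} finish the job. Replace your proposed conjugacy-class bookkeeping with this integrality-by-basis argument and the proof is complete; as it stands, the step you flagged is a real gap rather than a routine verification.
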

\begin{proof}
If $\langle\nu(\lambda),\alpha\rangle > 0$, by the Bernstein relations,
\[
	E(n_s(\lambda))T_{n_s} - T_{n_s}E(\lambda) = \sum^{\langle\nu(\lambda),\alpha\rangle - 1}_{k = 0}q_\lambda^{1/2}q_{n_s(\lambda)\mu_{n_s}(k)}^{-1/2}E(n_s(\lambda)\mu_{n_s}(k))c_{s,k}.
\]
The lemma follows from the following claim in this case.
\begin{claim}
We have $q_\lambda^{1/2}q_{n_s(\lambda)\mu_{n_s}(k)}^{-1/2}\in C[q_s^{1/2}]$.
If $k\ne 0$, then under $C[q_s^{1/2}]\to C$, $q_\lambda^{1/2}q_{n_s(\lambda)\mu_{n_s}(k)}^{-1/2}$ goes to $0$.
\end{claim}
\begin{proof}[Proof of Claim]
In the above equation, the left hand side is in $\mathcal{H}$.
The images of by $\nu$ of $(n_s(\lambda)\mu_{n_s}(k))_k$ are distinct by Lemma~\ref{lem:Bernstein relations}.
Recall that $\{E(w)\}_{w\in \widetilde{W}(1)}$ is a $C[q_s^{1/2}]$-basis of $\mathcal{H}$.
Therefore $q_\lambda^{1/2}q_{n_s(\lambda)\mu_{n_s}(k)}^{-1/2}\in C[q_s^{1/2}]$.
Hence there exists $n_s \in\Z_{\ge 0}$ for $s\in S_\aff/\mathord{\sim}$ such that $q_\lambda^{1/2}q_{n_s(\lambda)\mu_{n_s}(k)}^{-1/2}\in \prod_s q_s^{n_s/2}$.
We have $\sum_{s\in S_\aff/\mathord{\sim}}n_s = \ell(\lambda) - \ell(n_s(\lambda)\mu_{n_s}(k))$.
By the above lemma, the right hand side is grater than or equal to $2k$.
Hence if $k\ne 0$, there exists $s$ such that $n_s > 0$.
The claim follows.
\end{proof}
The same argument implies the lemma if $\langle \nu(\lambda),\alpha\rangle < 0$.
\end{proof}

If $\langle\nu(\lambda),\alpha\rangle = 0$, we have $T_{n_s}\theta(\lambda) = \theta(n_s(\lambda))T_{n_s}\in \mathcal{H}[q_s^{\pm 1/2}]$.
We have a slightly more general properties.
If $k = 0$, it is \cite[IV.17.~Lemma]{arXiv:1412.0737}.
\begin{lem}\label{lem:when <lambda,alpha>=0, relation with c'(-k)}
Let $\lambda\in\Lambda$, $\alpha\in\Delta$, $s = s_\alpha$ and $k\in\Z_{\ge 0}$.
Assume that $\langle\nu(\lambda),\alpha\rangle = 0$.
Then we have $\theta(\mu_{n_s}(-k))c_{s,-k}\theta(\lambda) = \theta(n_s(\lambda))\theta(\mu_{n_s}(-k))c_{s,-k}$ in $\mathcal{H}[q_s^{\pm 1/2}]$.
In particular $c_s\theta(\lambda) = \theta(n_s(\lambda))c_s$ and hence $(T_{n_s} - c_s)E(\lambda) = E(n_s(\lambda))(T_{n_s} - c_s)$ in $\mathcal{H}$.
\end{lem}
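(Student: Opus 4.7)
The case $k=0$ reduces to $c_s\theta(\lambda)=\theta(n_s(\lambda))c_s$, which is \cite[IV.17.~Lemma]{arXiv:1412.0737}; I simply quote this. The ``in particular'' consequence $(T_{n_s}-c_s)E(\lambda)=E(n_s(\lambda))(T_{n_s}-c_s)$ then follows: the empty-sum instance of Lemma~\ref{lem:Bernstein relations} gives $T_{n_s}\theta(\lambda)=\theta(n_s(\lambda))T_{n_s}$, and combining this with $c_s\theta(\lambda)=\theta(n_s(\lambda))c_s$ and the normalizations $E(\lambda)=q_\lambda^{1/2}\theta(\lambda)$, $q_\lambda=q_{n_s(\lambda)}$, yields the claim.

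For $k\ge1$, my plan is to deduce the identity by comparing two applications of Lemma~\ref{lem:Bernstein relations}. Set $\mu=\mu_{n_s}(-k)$; then $\langle\nu(\mu),\alpha\rangle=\langle\nu(\mu\lambda),\alpha\rangle=-2k<0$. Using the multiplicativity of $\theta$ on $\Lambda(1)$ (which follows from Proposition~\ref{prop:multiplication formula}, since elements of $\Lambda(1)$ act trivially on $-\Delta$) together with $T_{n_s}\theta(\lambda)=\theta(n_s(\lambda))T_{n_s}$, the second form of Lemma~\ref{lem:Bernstein relations} applied to $\mu$ and then multiplied on the right by $\theta(\lambda)$ yields an expansion of $T_{n_s}\theta(\mu\lambda)$. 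Comparing with the direct application of the same lemma to $\mu\lambda$, the leading terms $\theta(n_s(\mu\lambda))T_{n_s}=\theta(n_s(\mu))\theta(n_s(\lambda))T_{n_s}$ coincide and cancel; left-multiplication by $\theta(n_s(\mu))^{-1}$ then produces the sum identity
\[
\sum_{k'=1}^{2k}\theta(\mu_{n_s}(-k'))c_{s,-k'}\theta(\lambda)=\sum_{k'=1}^{2k}\theta(n_s(\lambda)\mu_{n_s}(-k'))c_{s,-k'}.
\]

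To isolate the $k'=k$ summand I pass to the Bernstein basis. Since $c_{s,-k'}\in C[Z_{(\alpha,-k'),\kappa}]\subset C[Z_\kappa]$ and $Z_\kappa\subset\ker\nu$, the $k'$-th summand on either side lies in the $C[q_s^{\pm1/2}]$-span of those $E(\widetilde{w})$ with $\widetilde{w}\in\Lambda(1)$ and $\nu(\widetilde{w})=\nu(\lambda)-k'\check{\alpha}$. Distinct values of $k'$ give distinct $\nu$-values, so the summands occupy disjoint pieces of the basis $\{E(\widetilde{w})\}$, forcing summand-by-summand equality; taking $k'=k$ gives the lemma. The main bookkeeping hurdle is the cancellation of the $T_{n_s}$-terms and the conjugation by $n_s(\mu)$; the final $\nu$-layer separation is routine given that $\{E(\widetilde{w})\}$ is a basis.
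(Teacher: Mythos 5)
Your argument is correct, and it is a small variation on the paper's own proof. The paper chooses a \emph{central} element $\lambda_0\in\Lambda(1)$ with $\langle\nu(\lambda_0),\alpha\rangle<-k$, applies the Bernstein relation to $\lambda_0$, right-multiplies by $\theta(\lambda)$, uses $T_{n_s}\theta(\lambda)=\theta(n_s(\lambda))T_{n_s}$ to re-factor, and then cancels $\theta(n_s(\lambda_0))$ on the left; the centrality of $\lambda_0$ is invoked to commute $\theta(\lambda_0)$ and $\theta(n_s(\lambda_0))$ past $\theta(\lambda)$ and $\theta(n_s(\lambda))$. You instead take the non-central auxiliary $\mu=\mu_{n_s}(-k)$, and rather than shuffling $\theta$-factors by commutativity, you compare the Bernstein relation for $\mu$ (right-multiplied by $\theta(\lambda)$) with the Bernstein relation for $\mu\lambda$; the $T_{n_s}$-parts agree because $T_{n_s}\theta(\lambda)=\theta(n_s(\lambda))T_{n_s}$ and $\theta$ is multiplicative on $\Lambda(1)$, so the error-term sums must agree, and cancelling $\theta(n_s(\mu))$ needs only invertibility, not centrality. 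The final step — separating the $\nu$-layer $\nu(\lambda)-k'\check\alpha$ using that $\{E(\widetilde w)\}$ is a basis and $c_{s,-k'}\in C[Z_\kappa]\subset\ker\nu$ — is identical in both. Your version saves the step of producing a central $\lambda_0$ and the attendant commutativity checks, at no extra cost; both handle $k=0$ by citing \cite[IV.17.~Lemma]{arXiv:1412.0737} (in fact, the paper's projection argument as written only recovers $k\ge 1$, since the sum in the Bernstein relation for $\langle\nu,\alpha\rangle<0$ starts at $l=1$, so the separate citation for $k=0$ is genuinely needed there too). Your range $1\le k'\le 2k$ does contain $k'=k$ precisely when $k\ge 1$, which is the range you handle by this method, so the bookkeeping is fine.
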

\begin{proof}
Take $\lambda_0$ from the center of $\Lambda(1)$ such that $\langle \nu(\lambda_0),\alpha\rangle < -k$.
Put $n = -\langle \nu(\lambda_0),\alpha\rangle$.
Since $T_{n_s}\theta(\lambda) = \theta(n_s(\lambda))T_{n_s}$, we have
\begin{align*}
\sum_{l = 0}^n\theta(n_s(\lambda_0)\mu_{n_s}(-l))c_{s,-l}\theta(\lambda) & = (T_{n_s}\theta(\lambda_0) - \theta(n_s(\lambda_0))T_{n_s})\theta(\lambda)\\
& = T_{n_s}\theta(\lambda)\theta(\lambda_0) - \theta(n_s(\lambda_0))T_{n_s}\theta(\lambda)\\
& = \theta(n_s(\lambda))T_{n_s}\theta(\lambda_0) - \theta(n_s(\lambda_0))\theta(n_s(\lambda))T_{n_s}\\
& = \theta(n_s(\lambda))(T_{n_s}\theta(\lambda_0) - \theta(n_s(\lambda_0))T_{n_s})\\
& = \sum_{l = 0}^n\theta(n_s(\lambda))\theta(n_s(\lambda_0)\mu_{n_s}(-l))c_{s,-l}\\
& = \sum_{l = 0}^n\theta(n_s(\lambda_0))\theta(n_s(\lambda))\theta(\mu_{n_s}(-l))c_{s,-l}.
\end{align*}
Hence we have
\begin{equation}\label{eq:before projection, commutative relation when orthogonal to alpha}
\sum_{l = 0}^n\theta(\mu_{n_s}(-l))c_{s,-l}\theta(\lambda) = \sum_{l = 0}^n\theta(n_s(\lambda))\theta(\mu_{n_s}(-l))c_{s,-l}.
\end{equation}
This is an equality in $\bigoplus_{\lambda'\in\Lambda(1)}\Z\theta(\lambda')$.
We have
\[
	\theta(\mu_{n_s}(-l))c_{s,-l}\theta(\lambda),\theta(n_s(\lambda))\theta(\mu_{n_s}(-l))c_{s,-l} \in \bigoplus_{\nu(\lambda') = \nu(\lambda) - l\check{\alpha}}C[q_s^{\pm 1/2}]\theta(\lambda').
\]
Hence projecting the both sides of \eqref{eq:before projection, commutative relation when orthogonal to alpha} to $\bigoplus_{\nu(\lambda') = \nu(\lambda) - k\check{\alpha}}C[q_s^{\pm 1/2}]\theta(\lambda')$, we get the first statement of the lemma.
Putting $k = 0$, we have $c_s\theta(\lambda) = \theta(n_s(\lambda))c_s$.
Hence the second statement follows.
\end{proof}

These formulas are very simple.
However, it is too simple even for studying representations over $C$.
So later we use the original Bernstein relations (Lemma~\ref{lem:Bernstein relations}).
For specializing $q_s\to 0$, Lemma~\ref{lem:lemma on length function} and an argument in the proof of Lemma~\ref{lem:simple Bernstein relations} is useful and it will be used later.

Finally, we prove the following lemma which will be used later.
This lemma is discovered in the study of \cite{arXiv:1412.0737}.
\begin{lem}[{\cite[IV.24.~Proposition]{arXiv:1412.0737}}]\label{lem:c_s,c_{s,-1},mu_n_s and Lambda_s'(1) cap Z_k}
Let $\alpha\in \Delta$ and put $s = s_\alpha$.
The groups $Z_{(\alpha,0),\kappa}$ and $Z_{(\alpha,-1),\kappa}$ generates $\Lambda'_s(1)\cap Z_\kappa$.
In particular, for a non-trivial character $\psi\colon \Lambda_s'(1)\cap Z_\kappa\to C^\times$ we have $\psi(c_sc_{s,-1}) = 0$.
\end{lem}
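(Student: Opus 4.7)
The plan is to note that the second assertion (about $\psi(c_sc_{s,-1}) = 0$) follows from the generation statement by a standard character-sum argument, so the real content lies in proving that $Z_{(\alpha,0),\kappa}$ and $Z_{(\alpha,-1),\kappa}$ together generate $\Lambda'_s(1)\cap Z_\kappa$.

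First I would dispose of the character-sum deduction, assuming the generation claim. Recall from subsection~\ref{subsec:Bernstein relations} that modulo $p$ the elements $c_s$ and $c_{s,-1}$ are nonzero scalar multiples of the full group-algebra sums
\[
\sigma_0 = \sum_{t\in Z_{(\alpha,0),\kappa}}T_t, \qquad \sigma_{-1} = \sum_{t\in Z_{(\alpha,-1),\kappa}}T_t,
\]
viewed inside $C[\Lambda'_s(1)\cap Z_\kappa]$. For any character $\psi\colon \Lambda'_s(1)\cap Z_\kappa\to C^\times$, orthogonality gives $\psi(\sigma_0) = 0$ unless $\psi|_{Z_{(\alpha,0),\kappa}}$ is trivial, and similarly for $\sigma_{-1}$. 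If $\psi$ is nontrivial on $\Lambda'_s(1)\cap Z_\kappa$, the generation statement prevents $\psi$ from being trivial on \emph{both} subgroups simultaneously, so at least one of $\psi(c_s), \psi(c_{s,-1})$ vanishes and hence $\psi(c_sc_{s,-1}) = \psi(c_s)\psi(c_{s,-1}) = 0$.

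The generation claim I would prove by reducing to the rank-one subgroup $G'_s$, whose apartment inside $V$ is the line $\R\check\alpha$. Pick codimension-one facets $\mathcal{F}_0, \mathcal{F}_{-1}$ of $V$ lying in the two parallel hyperplanes $\{\alpha = 0\}$ and $\{\alpha = -1\}$ bounding a single alcove of $G'_s$, and let $K_0, K_{-1}$ be the corresponding parahoric subgroups of $\alggrp{G}(F)$. By Lemma~\ref{lem:Z_(a,k) is canonical}, $Z_{(\alpha,0),\kappa}$ (resp.\ $Z_{(\alpha,-1),\kappa}$) is exactly the intersection of $Z_\kappa$ with the subgroup of the reductive quotient of $K_0$ (resp.\ $K_{-1}$) generated by the images of $\alggrp{U}_\alpha(F)\cap K_0$ and $\alggrp{U}_{-\alpha}(F)\cap K_0$ (resp.\ their $K_{-1}$-analogues). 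Given $x\in \Lambda'_s(1)\cap Z_\kappa$, lift it to $z\in G'_s\cap \alggrp{Z}(F)\cap K$; the two parahoric subgroups $K_0\cap G'_s$ and $K_{-1}\cap G'_s$ generate $G'_s$ since they correspond to the two vertices of the fundamental alcove in the rank-one apartment of $G'_s$. Applying the Bruhat-Iwahori decomposition for $G'_s$ relative to $I(1)\cap G'_s$ and using that a central element has trivial image in the affine Weyl group of $G'_s$, one rewrites the class of $z$ modulo $\alggrp{Z}(F)\cap I(1)$ as a product of elements from $\alggrp{Z}(F)\cap K_0\cap G'_s$ and $\alggrp{Z}(F)\cap K_{-1}\cap G'_s$, whose images in $Z_\kappa$ land exactly in $Z_{(\alpha,0),\kappa}$ and $Z_{(\alpha,-1),\kappa}$.

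The main obstacle is the last step: although the building-theoretic generation of $G'_s$ by its two adjacent parahorics is intuitively clear, extracting from it a genuine decomposition of the central element $z$ in terms of elements of $\alggrp{Z}(F)\cap K_0$ and $\alggrp{Z}(F)\cap K_{-1}$ (as opposed to arbitrary elements of the two parahorics) requires carefully using the Iwahori decomposition $K_i = (K_i\cap \overline{\alggrp{U}}(F))(K_i\cap \alggrp{Z}(F))(K_i\cap \alggrp{U}(F))$ to absorb unipotent contributions into $I(1)$ and isolate the $Z$-component that actually carries $x$. Since this technical core has already been carried out in \cite[IV.24.~Proposition]{arXiv:1412.0737}, I would cite that for the generation statement and spell out only the character-sum deduction above.
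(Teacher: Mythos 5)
Your deduction of $\psi(c_s c_{s,-1}) = 0$ from the generation claim is correct and standard: since $c_s$ and $c_{s,-1}$ are (up to nonzero scalars, mod $p$) the full group sums over $Z_{(\alpha,0),\kappa}$ and $Z_{(\alpha,-1),\kappa}$, a non-trivial $\psi$ on $\Lambda'_s(1)\cap Z_\kappa$ must restrict non-trivially to at least one of these two subgroups if they generate, and then the corresponding group sum vanishes under $\psi$.

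For the generation claim itself, however, your proposed route is genuinely different from the paper's, and you have correctly identified its difficulty without resolving it. Your plan is the direct one: reduce to rank one, observe that the two adjacent parahorics generate $G'_s$, and then try to rewrite the central lift $z$ of $x$ as a product of $Z$-parts coming from $K_0$ and $K_{-1}$ via Bruhat--Iwahori decompositions. As you yourself flag, the crux is that generic generation of $G'_s$ by the two parahorics gives you a word in \emph{arbitrary} elements of $K_0\cap G'_s$ and $K_{-1}\cap G'_s$, and isolating the $Z$-components and pushing the unipotent pieces into $I(1)$ is not routine; you punt to the reference at exactly this point. The paper sidesteps this obstacle entirely by a character-theoretic dualization. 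Instead of decomposing $z$, one supposes $\psi$ is a character of $\alggrp{Z}(F)\cap K\cap G'$ trivial on the subgroup $H_0$ generated by the lifts $Z_0, Z_1$, forms the set $H = \bigcup_{w\in \widetilde{W}_{\aff}} I(1) n_w H_0 I(1)$, proves using Hecke-algebra multiplication that $H$ is a subgroup, and then extends $\psi$ to a well-defined character $\psi'$ of $G'(\alggrp{Z}(F)\cap I(1)) = H(\alggrp{Z}(F)\cap K\cap G')$ by $\psi'(ht) = \psi(t)$. The well-definedness and multiplicativity of $\psi'$ are checked by a computation in the finite reductive quotient $G_{\mathcal{F},\kappa}$ (showing the relevant commutators land in $Z_i$). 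Since $G'$ has no nontrivial characters, $\psi'$ is trivial, hence $\psi$ is trivial, hence $H_0$ is the whole group. This argument buys exactly what your sketch lacks: it never requires producing an explicit decomposition of a given central element. If you want a self-contained proof rather than a citation, the paper's character extension argument is the mechanism that makes the generation statement tractable.
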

\begin{proof}
Replacing $\alggrp{G}$ with the algebraic group generated by $\alggrp{U}_\alpha$ and $\alggrp{U}_{-\alpha}$, we may assume that $\alggrp{G}$ has a semisimple $F$-rank $1$.
Put $s_0 = s_{(\alpha,0)}$, $s_1 =  s_{(\alpha,-1)}$.
Then we have $S_\aff = \{s_0,s_1\}$ and the map $s_0\mapsto n_{s_0}$, $s_1\mapsto n_{s_1}$ extends to the map $\widetilde{W}_\aff\to G'$ such that the braid relations hold. (Since the semisimple $F$-rank is $1$, a reduced expression of any element in $\widetilde{W}_\aff$ is unique.)
We denote this map by $\widetilde{w}\mapsto n_{\widetilde{w}}$.
Let $Z_0$ (resp.\ $Z_1$) be the inverse image of $Z_{(\alpha,0),\kappa}$ (resp.\  $Z_{(\alpha,-1),\kappa}$) in $\alggrp{Z}(F)\cap K$ and $H_0$ the group generated by $Z_{0}$ and $Z_{1}$.
Put $H = \bigcup_{w\in \widetilde{W}_\aff}I(1)n_w H_0I(1)$.
Since we have $\widetilde{W}_\aff(1) = \{n_w\mid w\in \widetilde{W}_\aff\}Z_\kappa$, we have $G'(\alggrp{Z}(F)\cap I(1)) = \bigcup_{\widetilde{w}\in \widetilde{W}_{\aff}(1)}I(1)\widetilde{w}I(1) = \bigcup_{w\in \widetilde{W}_\aff}I(1)n_w(\alggrp{Z}(F)\cap K\cap G')I(1) = H(\alggrp{Z}(F)\cap K\cap G')$.
By relations between the Iwahori-Matsumoto basis (braid relations and quadratic relations), $\sum_{\widetilde{w}\in H}\C T_{\widetilde{w}}\subset \mathcal{H}_\Z\otimes \C$ is a subalgebra where we regard $\C$ as a $\Z[q_s^{1/2}]$-algebra via $q_s^{1/2}\mapsto \#(I(1)sI(1)/I(1))^{1/2}$.
By the definition of the convolution product, the coefficient of $T_{\widetilde{w}_1\widetilde{w}_2}$ in $T_{\widetilde{w}_1}T_{\widetilde{w}_2}$ is non-zero.
Hence $H$ is a subgroup.

Let $\psi\colon \alggrp{Z}(F)\cap K\cap G'\to \C^\times$ be a character which is trivial on $H_0$.
We prove $\psi$ is trivial.
It follows that $H_0 = \alggrp{Z}(F)\cap K\cap G'$.
By the Bruhat decomposition, $H\cap (\alggrp{Z}(F)\cap K\cap G')\subset H_0$, on which $\psi$ is trivial.
Hence we can define a map $\psi'\colon G'(\alggrp{Z}(F)\cap I(1)) = H(\alggrp{Z}(F)\cap K\cap G')\to \C^\times$ by $\psi'(ht) = \psi'(t)$ for $h\in H$ and $t\in \alggrp{Z}(F)\cap K\cap G'$.

We prove $\psi'$ is a character.
Let $h_1,h_2\in \bigcup_{w\in \widetilde{W}_\aff} n_wH_0$ and $t_1,t_2\in \alggrp{Z}(F)\cap K\cap G'$.
Then $t_1$ normalizes $I(1)$ and $h_2$ normalizes $\alggrp{Z}(F)\cap K\cap G'$.
Hence
\[
	I(1)h_1I(1)t_1I(1)h_2I(1)t_2
	=
	I(1)h_1I(1)h_2I(1) (h_2^{-1}t_1h_2)t_2.
\]
Since $H$ is a subgroup, $I(1)h_1I(1)h_2I(1)\subset H$.
Therefore on the above set, the value of $\psi'$ is $\psi((h_2^{-1}t_1h_2)t_2)$.
We prove $\psi(h_2^{-1}t_1h_2) = \psi(t_1)$.
We may assume $h_2 = n_{s_i}$ where $i = 0$ or $1$.
We prove $t_1^{-1}h_2^{-1}t_1h_2\in Z_i$.
Put $\widetilde{\alpha} = (\alpha,-i)\in \Sigma_\aff$.
Let $\mathcal{F}$ be a facet which we used when we defined $Z_{\widetilde{\alpha},\kappa}$.
Then we have a finite group $G_{\mathcal{F},\kappa}$.
Let $U_{\widetilde{\alpha},\kappa}$ and $\overline{U}_{\widetilde{\alpha},\kappa}$ be as in the proof of Lemma~\ref{lem:Z_(a,k) is canonical} and let $G'_{s_i,\kappa}$ be the subgroup of $G_{\mathcal{F},\kappa}$ generated by these groups.
Let $t'_1$ (resp.\ $h'_2$) be the image of $t_1$ (resp.\ $h_2$) in $G_{\mathcal{F},\kappa}$.
Then $h'_2\in G'_{s_i,\kappa}$.
Since $t'_1$ normalizes this group, $(t'_1)^{-1}(h'_2)^{-1}t'_1h'_2\in G'_{s_i,\kappa}$.
It is also in $Z_\kappa$.
Hence $(t'_1)^{-1}(h'_2)^{-1}t'_1h'_2\in G'_{s_i,\kappa} \cap Z_\kappa = Z_{\widetilde{\alpha},\kappa}$.
Therefore $t_1^{-1}h_2t_1h_2\in Z_i$.
The map $\psi'$ is a character.

Since any character of $G'$ is trivial, $\psi'$ is a trivial character.
Hence $\psi$ is trivial.
\end{proof}

\section{Intertwining operators}\label{sec:Intertwining operators}
\subsection{Construction of intertwining operators}
We use the notation in the previous section.
In particular, $C$ is an algebraically closed filed of characteristic $p$, $\mathcal{H} = \mathcal{H}_\Z\otimes_{\Z}C$ and $C$ is a $C[q_s^{1/2}]$-algebra via $q_s^{1/2}\mapsto 0$.
In the rest of this paper, we investigate the representations of $\mathcal{H}\otimes C$.
All modules (or representations) in this paper are \emph{right} modules.
\begin{rem}
Since $\mathcal{H}\otimes C$ is finitely generated as a module of its center and the center is finitely generated over $C$ \cite[Theorem~1.2]{Vigneras-prop-II}, any irreducible $\mathcal{H}\otimes C$-module is finite-dimensional.
\end{rem}

Let $\lambda_1,\lambda_2\in \Lambda(1)$.
Then we have $E(\lambda_1)E(\lambda_2) = q_{\lambda_1\lambda_2}^{-1/2}q_{\lambda_1}^{1/2}q_{\lambda_2}^{1/2}E(\lambda_1\lambda_2)$ by Proposition~\ref{prop:multiplication formula}.
Hence $\mathcal{A} = \bigoplus_{\lambda\in \Lambda(1)}C[q^{1/2}_s]E(\lambda)$ is a subalgebra of $\mathcal{H}$.
Let $\Lambda^+(1)$ be the set of dominant elements in $\Lambda(1)$ and $C[\Lambda^+(1)] = \bigoplus_{\lambda\in\Lambda^+(1)}C \tau_\lambda$ the monoid algebra of $\Lambda^+(1)$.

In $\mathcal{A}\otimes C$, $E(\lambda_1)E(\lambda_2) = q_{\lambda_1\lambda_2}^{-1/2}q_{\lambda_1}^{1/2}q_{\lambda_2}^{1/2}E(\lambda_1\lambda_2)$ is $E(\lambda_1\lambda_2)$ or $0$ and it is not zero if and only if $\ell(\lambda_1\lambda_2) = \ell(\lambda_1) + \ell(\lambda_2)$.
By the length formula \eqref{eq:length formula}, we have $\ell(\lambda_1\lambda_2) = \ell(\lambda_1) + \ell(\lambda_2)$ if and only if $\nu(\lambda_1),\nu(\lambda_2)$ are in the same closed chamber.
Therefore we get
\begin{equation}\label{eq:multiplication in A}
E(\lambda_1)E(\lambda_2) =
\begin{cases}
E(\lambda_1\lambda_2) & \text{($\nu(\lambda_1)$ and $\nu(\lambda_2)$ are in the same closed chamber)},\\
0 & \text{(otherwise)}
\end{cases}
\end{equation}
in $\mathcal{A}\otimes C$.
Hence the $C$-linear map $\widetilde{\chi}\colon \mathcal{A}\otimes C\to C[\Lambda^+(1)]$ defined by
\[
	\widetilde{\chi}(E(\lambda)) =
	\begin{cases}
	\tau_\lambda & \text{($\lambda\in \Lambda^+(1)$)},\\
	0 & \text{(otherwise)}
	\end{cases}
\]
is an algebra homomorphism.
We also consider it as $\mathcal{A}\to C[\Lambda^+(1)]$.
For $w\in W$, we define $w\widetilde{\chi}\colon \mathcal{A}\to C[\Lambda^+(1)]$ by $(w\widetilde{\chi})(E(\lambda)) = \widetilde{\chi}(E(n^{-1}_{w}(\lambda)))$. (Note that this homomorphism depends on a lift $n_w$ of $w$.)
We will prove that, for an irreducible representation $X$ of $\mathcal{A}\otimes C$, the action of $\mathcal{A}\otimes C$ on $X$ factors through $w\widetilde{\chi}$ for some $w\in W$ (Proposition~\ref{prop:irred rep of A factors chi_Theta}).
This is a motivation to consider this homomorphism.
We construct intertwining operators between $\{w\widetilde{\chi}\otimes_\mathcal{A}\mathcal{H}\}_{w\in W}$.
When $\alggrp{G} = \GL_n$, they are constructed by Ollivier~\cite[5D1]{MR2728487}.
\begin{prop}\label{prop:construction of intertwining operators}
Let $w\in W$ and $s\in S$ such that $sw > w$.
Then
\[
	1\otimes 1\mapsto 1\otimes T_{n_s}^*
\]
gives a homomorphism $w\widetilde{\chi}\otimes_\mathcal{A}\mathcal{H}\to sw\widetilde{\chi}\otimes_\mathcal{A}\mathcal{H}$.
\end{prop}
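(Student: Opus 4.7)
By the tensor-hom adjunction, an $\mathcal H$-module homomorphism $w\widetilde\chi\otimes_{\mathcal A}\mathcal H\to sw\widetilde\chi\otimes_{\mathcal A}\mathcal H$ sending $1\otimes 1$ to $1\otimes T_{n_s}^*$ exists iff the assignment $1\mapsto 1\otimes T_{n_s}^*$ extends to a right $\mathcal A$-module map $w\widetilde\chi\to sw\widetilde\chi\otimes_{\mathcal A}\mathcal H$. Since $1\in w\widetilde\chi=C[\Lambda^+(1)]$ generates $w\widetilde\chi$ over $\mathcal A$ (every $\tau_\mu$ with $\mu\in\Lambda^+(1)$ equals $1\cdot E(n_w\mu n_w^{-1})$), this amounts to the single identity
\[
w\widetilde\chi(E(\lambda))\otimes T_{n_s}^*=1\otimes T_{n_s}^*E(\lambda)\qquad\text{in}\ sw\widetilde\chi\otimes_{\mathcal A}\mathcal H
\]
for every $\lambda\in\Lambda(1)$. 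The plan is to verify this identity.

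Using $sw>w$ and the braid relations among the $\{n_s\}$, we take $n_{sw}=n_sn_w$, so that $n_{sw}^{-1}(n_s\lambda n_s^{-1})n_{sw}=n_w^{-1}\lambda n_w$ and consequently
\[
sw\widetilde\chi\bigl(E(n_s(\lambda))\bigr)=\widetilde\chi\bigl(E(n_w^{-1}\lambda n_w)\bigr)=w\widetilde\chi(E(\lambda)).
\]
Next, combining Lemma~\ref{lem:simple Bernstein relations} with $T_{n_s}^*=T_{n_s}-c_s$, and Lemma~\ref{lem:when <lambda,alpha>=0, relation with c'(-k)} in the orthogonal case, one writes in $\mathcal H$
\[
T_{n_s}^*E(\lambda)=E(n_s(\lambda))T_{n_s}^*+\eta_\lambda,
\]
where $\eta_\lambda\in\mathcal A$ equals $-c_sE(\lambda)$, $E(n_s(\lambda))c_s$, or $0$ according as $\langle\nu(\lambda),\alpha\rangle$ is positive, negative, or zero. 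Substituting this decomposition into $1\otimes T_{n_s}^*E(\lambda)$ and moving the $\mathcal A$-parts across the tensor product reduces the identity to $sw\widetilde\chi(\eta_\lambda)=0$.

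To handle this, observe that conjugation $E(\lambda)\mapsto E(n_{sw}^{-1}\lambda n_{sw})$ is an algebra automorphism of $\mathcal A\otimes C$, since it preserves the ``same closed chamber'' condition governing \eqref{eq:multiplication in A}; hence $sw\widetilde\chi$ is an algebra homomorphism. Applied to the factorizations of $\eta_\lambda$, it therefore suffices to show $sw\widetilde\chi(E(\lambda))=0$ when $\langle\nu(\lambda),\alpha\rangle>0$, and $sw\widetilde\chi(E(n_s(\lambda)))=w\widetilde\chi(E(\lambda))=0$ when $\langle\nu(\lambda),\alpha\rangle<0$. Both come from the same pairing argument: $sw>w$ forces $w^{-1}(\alpha)>0$, and the relations $(sw)^{-1}\nu(\lambda)=w^{-1}(\nu(\lambda))-\langle\nu(\lambda),\alpha\rangle w^{-1}(\check\alpha)$ and $s^{-1}(\alpha)=-\alpha$ give
\[
\langle(sw)^{-1}\nu(\lambda),w^{-1}(\alpha)\rangle=-\langle\nu(\lambda),\alpha\rangle,
\]
which is strictly negative when $\langle\nu(\lambda),\alpha\rangle>0$. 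Writing the positive root $w^{-1}(\alpha)$ as a non-negative combination of simple roots, some simple root $\gamma$ with positive coefficient must satisfy $\langle(sw)^{-1}\nu(\lambda),\gamma\rangle<0$, so $(sw)^{-1}\nu(\lambda)$ is not dominant and $sw\widetilde\chi(E(\lambda))=0$. The case $\langle\nu(\lambda),\alpha\rangle<0$ is identical with $(sw)^{-1}$ replaced by $w^{-1}$.

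The only real content is this final pairing computation, combined with the algebra-homomorphism property of $sw\widetilde\chi$; the remaining work is formal manipulation of the tensor product. The main subtlety is bookkeeping: one must consistently distinguish the conjugation $n_s(\lambda)=n_s\lambda n_s^{-1}$ on $\Lambda(1)$ from the linear action of $s$ on $V$ when computing $\nu$-values, and keep track of the three cases for the sign of $\langle\nu(\lambda),\alpha\rangle$.
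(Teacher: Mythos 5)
Your proposal is correct and follows essentially the same route as the paper's proof: reduce to the identity $1\otimes T_{n_s}^*E(\lambda)=(w\widetilde\chi)(E(\lambda))\otimes T_{n_s}^*$, split on the sign of $\langle\nu(\lambda),\alpha\rangle$, invoke Lemma~\ref{lem:simple Bernstein relations} and Lemma~\ref{lem:when <lambda,alpha>=0, relation with c'(-k)}, and use dominance to kill the correction term. The only cosmetic difference is that you push the correction $\eta_\lambda$ across the tensor by observing $sw\widetilde\chi$ is multiplicative (correctly justified via the conjugation automorphism), whereas the paper instead commutes $c_s$ past $E(\lambda)$ using \eqref{eq:E(lambda_1) and E(lambda_2)} and moves only the single factor $E(\lambda)$ across.
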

\begin{proof}
We prove $1\otimes (T_{n_s} - c_s)E(\lambda) = (w\widetilde{\chi})(E(\lambda))\otimes (T_{n_s} - c_s)\in sw\widetilde{\chi}\otimes_\mathcal{A}\mathcal{H}$ for $\lambda\in\Lambda(1)$.
Take a simple root $\alpha$ such that $s = s_\alpha$.
If $\langle \nu(\lambda),\alpha\rangle = 0$, then by Lemma~\ref{lem:when <lambda,alpha>=0, relation with c'(-k)}, we have $1\otimes (T_{n_s} - c_s)E(\lambda) = 1\otimes E(n_s(\lambda))(T_{n_s} - c_s) = (sw\widetilde{\chi})(E(n_s(\lambda)))\otimes (T_{n_s} - c_s).$
Since $sw > w$, we have $n_{sw} = n_sn_w$.
Therefore we get $(sw\widetilde{\chi})(E(n_s(\lambda))) = \widetilde{\chi}(E(n_w^{-1}n_s^{-1}n_s(\lambda))) = \widetilde{\chi}(E(n_w^{-1}(\lambda))) = (w\widetilde{\chi})(E(\lambda))$.
We get the lemma in this case.

Assume that $\langle \alpha,\nu(\lambda)\rangle > 0$.
Then $\langle (sw)^{-1}(\alpha),(sw)^{-1}(\nu(\lambda))\rangle > 0$.
Therefore $n_{sw}^{-1}(\lambda)$ is not dominant since $(sw)^{-1}(\alpha) < 0$.
Hence $(sw\widetilde{\chi})(E(\lambda)) = 0$.
Therefore we have $1\otimes c_sE(\lambda) = 1\otimes E(\lambda)(\lambda^{-1}\cdot c_s) = (sw\widetilde{\chi})(E(\lambda))\otimes (\lambda^{-1}\cdot c_s) = 0$.
In $\mathcal{H}\otimes C$, we have $E(n_s(\lambda))(T_{n_s} - c_s) = T_{n_s}E(\lambda)$ by Lemma~\ref{lem:simple Bernstein relations}.
Hence
\begin{align*}
1\otimes (T_{n_s} - c_s)E(\lambda) & = 1\otimes T_{n_s}E(\lambda)\\ & = 1\otimes E(n_s(\lambda))(T_{n_s} - c_s)\\& = (sw\widetilde{\chi})(E(n_s(\lambda)))\otimes (T_{n_s} - c_s)\\ & = (w\widetilde{\chi})(E(\lambda))\otimes (T_{n_s} - c_s).
\end{align*}

Finally, assume that $\langle \alpha,\nu(\lambda)\rangle < 0$.
Then $\langle w^{-1}(\alpha),w^{-1}(\nu(\lambda))\rangle < 0$.
Since $w^{-1}(\alpha) > 0$, $n_w^{-1}(\lambda)$ is not dominant.
Hence $(w\widetilde{\chi})(E(\lambda)) = 0$.
Therefore it is sufficient to prove that $1\otimes(T_{n_s} - c_s)E(\lambda) = 0$.
By Lemma~\ref{lem:simple Bernstein relations}, we have
\begin{align*}
1\otimes (T_{n_s} - c_s)E(\lambda)& = 1\otimes E(n_s(\lambda))T_{n_s}\\
& = (sw\widetilde{\chi})(E(n_s(\lambda)))\otimes T_{n_s}\\
& = (w\widetilde{\chi})(E(\lambda))\otimes T_{n_s} = 0.\qedhere
\end{align*}
\end{proof}

The homomorphism obtained in the above lemma is denoted by $\Phi_{sw,w}\colon w\widetilde{\chi}\otimes_\mathcal{A}\mathcal{H}\to sw\widetilde{\chi}\otimes_\mathcal{A}\mathcal{H}$.
More generally, if $\ell(w_1) = \ell(w_1w_2^{-1}) + \ell(w_2)$, we have a homomorphism $\Phi_{w_1,w_2}\colon w_2\widetilde{\chi}\otimes_\mathcal{A}\mathcal{H}\to w_1\widetilde{\chi}\otimes_\mathcal{A}\mathcal{H}$ defined by $1\otimes 1\mapsto 1\otimes T_{n_{w_1w_2^{-1}}}^*$ by the above proposition and induction on $\ell(w_1w_2^{-1})$.

To construct the intertwining operator of the inverse direction, we need a notation.
For $\alpha\in \Delta$, $s = s_\alpha$ and $\lambda\in\Lambda(1)$ such that $\langle \nu(\lambda),\alpha\rangle > 0$, define
\[
	d(s,\lambda) =
	\begin{cases}
	1 & \text{($n_{w_1}(\lambda)$ is dominant and $w_1(\alpha)$ is simple for some $w_1\in W$)},\\
	0 & \text{(otherwise)}.
	\end{cases}
\]
\begin{lem}\label{lem:expression of d}
In $C$, we have $d(s,\lambda) = q_{\lambda}^{1/2}q_{s}^{-1}q_{\lambda\mu_{n_s^{-1}}(-1)}^{-1/2}$.
\end{lem}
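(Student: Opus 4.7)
Set $\mu := \lambda\mu_{n_s^{-1}}(-1)$. Using the definition $\mu_{n_s^{-1}}(-1) = \mu_{n_s}(-1)n_s^{-2}$ from Lemma~\ref{lem:Bernstein relation, n_s <-> n_s^-1} together with $\nu(\mu_{n_s}(-1)) = -\check\alpha$ (Lemma~\ref{lem:Bernstein relations}) and $\nu(n_s^{-2}) = 0$, one has $\nu(\mu) = \nu(\lambda) - \check\alpha$. This places us in the setting of Lemma~\ref{lem:lemma on length function} with $k = 1$, yielding
\[
\ell(\lambda) \ge \ell(\mu) + 2\min\{1,\langle\nu(\lambda),\alpha\rangle - 1\},
\]
with equality iff $k = 0$, $k = \langle\nu(\lambda),\alpha\rangle$, or some $w\in W$ makes $n_w(\lambda)$ dominant and $w(\alpha)$ simple. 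At $k = 1$ this becomes: equality iff $\langle\nu(\lambda),\alpha\rangle = 1$ or $d(s,\lambda) = 1$.

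The heart of the proof is then to show that $q_\lambda^{1/2} q_s^{-1} q_\mu^{-1/2}$ lies in $C[q_{s'}^{1/2}]$ and to identify its image in $C$ with $d(s,\lambda)$. For integrality I would imitate the ``Claim'' embedded in the proof of Lemma~\ref{lem:simple Bernstein relations}: exhibit this monomial as the coefficient of $E(\mu)$ in the basis expansion of a suitable element of $\mathcal{H}$, so that $C[q_{s'}^{1/2}]$-freeness of $\mathcal{H}$ in the basis $\{E(\widetilde{w})\}$ forces it into $C[q_{s'}^{1/2}]$. The natural candidate is the $k = 1$ summand of the Bernstein relation of Lemma~\ref{lem:Bernstein relation, n_s <-> n_s^-1} (the unique place $\mu_{n_s^{-1}}(-1)$ appears), combined with $T_{n_s}^* = q_s T_{n_s^{-1}}^{-1}$ to account for the $q_s^{-1}$, and with $\theta(\lambda) = q_\lambda^{-1/2}E(\lambda)$ plus the commutation rule~\eqref{eq:E(lambda_1) and E(lambda_2)} to produce the ratio $q_\lambda^{1/2}q_\mu^{-1/2}$. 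Once integrality is in hand, the image of the monomial under $q_{s'}^{1/2}\mapsto 0$ is $1$ iff every $q_{s'}$-exponent vanishes (equivalently, $\ell(\lambda) = \ell(\mu) + 2$) and $0$ otherwise, which by the equality criterion above matches $d(s,\lambda)$ exactly.

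\textbf{Main obstacle.} The delicate step is the integrality claim: because of the explicit $q_s^{-1}$ in the statement, a single Bernstein expansion does not place the coefficient in $C[q_{s'}^{1/2}]$ on the nose, so one must carefully combine the Bernstein relation for $n_s^{-1}$, the formula expressing $T_{n_s^{-1}}^{-1}$ in terms of $T_{n_s}$ and $c_s$, and the $\theta$-to-$E$ conversion in order to realize $q_\lambda^{1/2}q_s^{-1}q_\mu^{-1/2}$ as a genuine basis-expansion coefficient; after that, the specialization at $q_{s'}^{1/2} = 0$ is routine.
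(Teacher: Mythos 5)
Your high-level plan does match the paper's: compute $\nu(\lambda\mu_{n_s^{-1}}(-1)) = \nu(\lambda) - \check\alpha$, invoke Lemma~\ref{lem:lemma on length function} with $k = 1$ for the length inequality and its equality criterion, then show the monomial $q_\lambda^{1/2}q_s^{-1}q_{\lambda\mu_{n_s^{-1}}(-1)}^{-1/2}$ is integral and read off its specialization at $q_s^{1/2}\mapsto 0$. However, the mechanism you propose for the integrality step is not the one that works, and you have left out the observation that makes the whole thing go through.

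The paper's key move is the computation $\ell(\lambda n_s^{-1}) = \ell(\lambda) - 1$, which follows directly from the length formula~\eqref{eq:length formula} and $\langle\nu(\lambda),\alpha\rangle > 0$. This gives $q_{\lambda n_s^{-1}}^{1/2}q_{n_s^{-1}}^{-1/2} = q_\lambda^{1/2}q_s^{-1}$, and hence $E(\lambda n_s^{-1}) = q_\lambda^{1/2}q_s^{-1}\,\theta(\lambda)T_{n_s^{-1}}$. The factor $q_s^{-1}$ therefore comes from the length drop $\ell(\lambda)\to\ell(\lambda n_s^{-1})$, not from $T_{n_s}^* = q_sT_{n_s^{-1}}^{-1}$ as you suggest. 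Once this is in hand, a single application of the Bernstein relation for $n_s^{-1}$ (Lemma~\ref{lem:Bernstein relations} via Lemma~\ref{lem:Bernstein relation, n_s <-> n_s^-1}) to $\theta(\lambda)T_{n_s^{-1}}$ expands $E(\lambda n_s^{-1})$ as a sum whose $k$-th term is $q_\lambda^{1/2}q_s^{-1}q_{\lambda\mu_{n_s^{-1}}(-k)}^{-1/2}\,E(\lambda\mu_{n_s^{-1}}(-k))\,c_{s,-k}$ (up to the remaining $T_{n_s^{-1}}\theta(n_s(\lambda))$ summand). Since the left side $E(\lambda n_s^{-1})$ lies in $\mathcal{H}$ and the elements $E(\widetilde{w})$ form a $C[q_s^{1/2}]$-basis of $\mathcal{H}$, the coefficient is integral on the nose; contrary to your stated obstacle, no additional manipulation is needed. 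Your instinct to imitate the Claim inside the proof of Lemma~\ref{lem:simple Bernstein relations} is correct in spirit, but without the identity $E(\lambda n_s^{-1}) = q_\lambda^{1/2}q_s^{-1}\theta(\lambda)T_{n_s^{-1}}$ you have not actually exhibited the monomial as a basis coefficient of an element known a priori to lie in $\mathcal{H}$, which is the whole content of the integrality step.
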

\begin{proof}
By the length formula \eqref{eq:length formula} and the assumption $\langle\nu(\lambda),\alpha\rangle > 0$, we have
\begin{align*}
\ell(\lambda n_s^{-1}) & = \sum_{\beta\in\Sigma^+\setminus\{\alpha\}}\lvert\langle\nu(\lambda),\beta\rangle\rvert + \lvert\langle\nu(\lambda),\alpha\rangle - 1\rvert\\
& = \sum_{\beta\in\Sigma^+\setminus\{\alpha\}}\lvert\langle\nu(\lambda),\beta\rangle\rvert + \lvert\langle\nu(\lambda),\alpha\rangle\rvert - 1\\
& = \ell(\lambda) - 1.
\end{align*}
Hence $E(\lambda n_s^{-1}) = q_\lambda^{1/2}q_s^{-1}\theta(\lambda) T_{n_s^{-1}}$.
By the Bernstein relation (Lemma~\ref{lem:Bernstein relations}), we have
\begin{align*}
E(\lambda n_s^{-1}) & = q_\lambda^{1/2}q_s^{-1}\theta(\lambda)T_{n_s^{-1}}\\
& = q_\lambda q_s^{-1}T_{n_s^{-1}}\theta(n_s(\lambda)) - \sum_{k = 1}^{\langle\nu(\lambda),\alpha\rangle}q_\lambda^{1/2} q_s^{-1}q_{\lambda\mu_{n_s^{-1}}(-k)}^{-1/2}E(\lambda \mu_{n_s^{-1}}(-k))c_{s,-k}.
\end{align*}
Hence we have $q_\lambda^{1/2} q_s^{-1}q_{\lambda\mu_{n_s^{-1}}(-1)}^{-1/2}\in C[q_s]$.
The lemma follows from Lemma~\ref{lem:lemma on length function} and an argument in the proof Lemma~\ref{lem:simple Bernstein relations}.
\end{proof}

\begin{prop}\label{prop:hard intertwining operator}
Let $\alpha\in\Delta$, $s = s_\alpha$ and $w\in W$ such that $sw > w$.
Take $\lambda$ from the center of $\Lambda(1)$ such that $w^{-1}(\lambda)$ is dominant and $\langle \nu(\lambda),\alpha\rangle \ge 2$.
Then
\[
	1\otimes 1\mapsto 1\otimes (E(\lambda n_s^{-1}) + d(s,\lambda)E(\lambda \mu_{n_s^{-1}}(-1))c_{s,-1})
\]
gives a homomorphism $sw\widetilde{\chi}\otimes_\mathcal{A}\mathcal{H}\to w\widetilde{\chi}\otimes_\mathcal{A}\mathcal{H}$.
\end{prop}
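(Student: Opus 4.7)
The map is prescribed on the generator $1\otimes 1$ of $sw\widetilde{\chi}\otimes_{\mathcal{A}}\mathcal{H}$ as a right $\mathcal{H}$-module, so its well-definedness amounts to verifying, for every $\mu\in\Lambda(1)$, the identity
\[
1\otimes X\cdot E(\mu)\;=\;(sw\widetilde{\chi})(E(\mu))\otimes X \qquad\text{in}\quad w\widetilde{\chi}\otimes_{\mathcal{A}}\mathcal{H},
\]
where $X:=E(\lambda n_s^{-1})+d(s,\lambda)E(\lambda\mu_{n_s^{-1}}(-1))c_{s,-1}$. The plan is to mirror the case analysis used in the proof of Proposition~\ref{prop:construction of intertwining operators}, splitting according to the sign of $\langle\nu(\mu),\alpha\rangle$.

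The key preparatory observation is that the correction in the definition of $X$ is exactly the Bernstein term that survives the specialization $q_s^{1/2}\mapsto 0$. Since $\langle\nu(\lambda),\alpha\rangle\ge 2$ forces $\ell(\lambda n_s^{-1})=\ell(\lambda)-1$, we have $E(\lambda n_s^{-1})=q_\lambda^{1/2}q_s^{-1}\theta(\lambda)T_{n_s^{-1}}$ in $\mathcal{H}_\Z[q_s^{\pm 1/2}]$. Applying Lemma~\ref{lem:Bernstein relation, n_s <-> n_s^-1} to commute $\theta(\lambda)$ past $T_{n_s^{-1}}$ produces a main term proportional to $T_{n_s^{-1}}\theta(n_s(\lambda))$ together with $\langle\nu(\lambda),\alpha\rangle$ correction summands indexed by $k=1,\dots,\langle\nu(\lambda),\alpha\rangle$. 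By Lemma~\ref{lem:expression of d}, the $k=1$ summand equals $-d(s,\lambda)E(\lambda\mu_{n_s^{-1}}(-1))c_{s,-1}$, which is exactly cancelled by the correction in the definition of $X$. For $2\le k\le\langle\nu(\lambda),\alpha\rangle-1$, Lemma~\ref{lem:lemma on length function} yields $\ell(\lambda\mu_{n_s^{-1}}(-k))\le\ell(\lambda)-4$, so the corresponding scalar has strictly positive $q_s^{1/2}$-degree and vanishes in $\mathcal{H}\otimes C$ by the argument in the proof of Lemma~\ref{lem:simple Bernstein relations}. The extremal $k=\langle\nu(\lambda),\alpha\rangle$ summand shares its $\nu$-support with $n_s(\lambda)$ and its $q_s^{-1}$ singularity is matched by that of the main term, leaving a well-defined element of $\mathcal{H}\otimes C$.

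With this preparatory rewriting, the computation of $X\cdot E(\mu)$ reduces to analyzing $T_{n_s^{-1}}\theta(n_s(\lambda))E(\mu)$ modulo terms that vanish in $\mathcal{H}\otimes C$. Since $\lambda$ is central in $\Lambda(1)$, $E(\lambda)$ lies in the centre of $\mathcal{A}$ and commutes with $E(\mu)$ via~\eqref{eq:E(lambda_1) and E(lambda_2)}, and the hypothesis $sw>w$ gives $n_{sw}=n_sn_w$, so $(sw\widetilde{\chi})(E(\mu))=(w\widetilde{\chi})(E(n_s(\mu)))$. In each of the three cases $\langle\nu(\mu),\alpha\rangle>0$, $<0$, or $=0$, I would commute $T_{n_s^{-1}}$ across $E(\mu)$ using Lemmas~\ref{lem:simple Bernstein relations} and~\ref{lem:when <lambda,alpha>=0, relation with c'(-k)}, then match the result with the right-hand side. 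When one side vanishes (because the relevant twist of $\mu$ fails to be dominant and so $\widetilde{\chi}$ kills it), vanishing of the other side is established by the same length-plus-specialization argument used to simplify $X$.

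The main obstacle will be the bookkeeping in the case $\langle\nu(\mu),\alpha\rangle\ne 0$: commuting $T_{n_s^{-1}}$ through $E(\mu)$ generates further Bernstein corrections, and each of these must be shown to vanish after tensoring with $w\widetilde{\chi}$---either because its $\Lambda(1)$-support falls outside $\Lambda^+(1)$ after twisting by $n_w^{-1}$, or because its coefficient has positive $q_s^{1/2}$-degree by Lemma~\ref{lem:lemma on length function}. The explicit $d(s,\lambda)$-renormalization is precisely what repairs the defect at the unique equality case $k=1$ of that lemma; this is why no analogous correction was needed in Proposition~\ref{prop:construction of intertwining operators}, whose target $1\otimes T_{n_s}^*$ already sits naturally at length one in $\mathcal{H}$.
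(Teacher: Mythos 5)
Your plan departs from the paper's at the ``preparatory rewriting'' step, and that step contains a concrete error that the rest of the argument inherits. You propose to first apply the Bernstein relation to $\theta(\lambda)T_{n_s^{-1}}$ inside $X$, obtaining a main term proportional to $T_{n_s^{-1}}\theta(n_s(\lambda))$ plus corrections indexed by $k=1,\dots,\langle\nu(\lambda),\alpha\rangle$, and then claim that for $2\le k\le\langle\nu(\lambda),\alpha\rangle-1$ Lemma~\ref{lem:lemma on length function} gives $\ell(\lambda\mu_{n_s^{-1}}(-k))\le\ell(\lambda)-4$. This is false: the bound from that lemma is $\ell(\lambda)-\ell(\lambda\mu_{n_s^{-1}}(-k))\ge 2\min\{k,\langle\nu(\lambda),\alpha\rangle-k\}$, and for $k=\langle\nu(\lambda),\alpha\rangle-1$ the minimum is $1$, giving only $\ell(\lambda\mu_{n_s^{-1}}(-k))\le\ell(\lambda)-2$. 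After multiplying by $q_s^{-1}$, the scalar then has $q_s^{1/2}$-degree $\ge 0$ rather than $>0$, and when $d(s,\lambda)=1$ (the equality case of the lemma) it is actually degree $0$, so this correction does \emph{not} vanish in $\mathcal{H}\otimes C$. The $k=\langle\nu(\lambda),\alpha\rangle$ term is likewise not disposed of by the informal remark about matched singularities: it carries a genuine $q_s^{-1}$ factor (since $\ell(\lambda\mu_{n_s^{-1}}(-\langle\nu(\lambda),\alpha\rangle))=\ell(\lambda)$), and the cancellation with the main term's singularity has to be carried out on basis elements, not waved at. So the proposed pre-simplification does not reduce $X$ to a clean form plus a single $k=1$ correction.

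The paper avoids this entirely by never pre-simplifying $X$. Instead it computes $E(\lambda n_s^{-1})E(\mu)=q_\lambda^{1/2}q_\mu^{1/2}q_s^{-1}\theta(\lambda)\,T_{n_s^{-1}}\theta(\mu)$ and applies the Bernstein relation to $T_{n_s^{-1}}\theta(\mu)$ --- with corrections indexed by $k$ up to $\lvert\langle\nu(\mu),\alpha\rangle\rvert$ --- and then applies Lemma~\ref{lem:lemma on length function} to $\lambda\mu_{n_s^{-1}}(-k)\mu$ rather than to $\lambda\mu_{n_s^{-1}}(-k)$. The decisive point is that $k\le\lvert\langle\nu(\mu),\alpha\rangle\rvert$ forces $\langle\nu(\lambda\mu),\alpha\rangle-k\ge\langle\nu(\lambda),\alpha\rangle\ge 2$, so $\min\{k,\langle\nu(\lambda\mu),\alpha\rangle-k\}=k$ for every surviving $k$, and the length drop is at least $2k$; this uniformly kills every $k\ge 2$ term. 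That $\ge 2$ hypothesis on $\lambda$ thus enters as a buffer for $\mu$, not as a bound on corrections internal to $X$, and this is exactly what your version does not reproduce. You would need to rerun your rewriting inside $XE(\mu)$ rather than in $X$ alone, at which point you are essentially forced back onto the paper's calculation. (A small additional slip: $(sw\widetilde{\chi})(E(\mu))=\widetilde{\chi}(E(n_w^{-1}n_s^{-1}(\mu)))=(w\widetilde{\chi})(E(n_s^{-1}(\mu)))$, not $(w\widetilde{\chi})(E(n_s(\mu)))$; $n_s^2$ need not act trivially on $\Lambda(1)$.)
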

\begin{proof}
Put $d = d(s,\lambda)$.
We have $E(\lambda n_s^{-1}) = q_\lambda^{1/2}q_s^{-1} \theta(\lambda)T_{n_s^{-1}}$ by the proof of Lemma~\ref{lem:expression of d}.
Let $\mu\in\Lambda(1)$.
We prove
\begin{equation}\label{eq:want to prove, constructing intertwining operators}
\begin{split}
&1\otimes (E(\lambda n_s^{-1}) + dE(\lambda \mu_{n_s^{-1}}(-1))c_{s,-1})E(\mu) \\&= (sw\widetilde{\chi})(E(\mu))\otimes (E(\lambda n_s^{-1}) + dE(\lambda \mu_{n_s^{-1}}(-1))c_{s,-1})
\end{split}
\end{equation}
in $w\widetilde{\chi}\otimes_\mathcal{A}\mathcal{H}$.
Assume that $\langle\nu(\mu),\alpha\rangle = 0$.
Since $\lambda$ is in the center of $\Lambda(1)$, by Lemma~\ref{lem:when <lambda,alpha>=0, relation with c'(-k)},  we have
\begin{align*}
\theta(\lambda\mu_{n_s^{-1}}(-1))c_{s,-1}\theta(\mu) & =
\theta(\lambda)\theta(\mu_{n_s^{-1}}(-1))c_{s,-1}\theta(\mu)\\
& = \theta(\lambda)\theta(n_s^{-1}(\mu))\theta(\mu_{n_s^{-1}}(-1))c_{s,-1}\\
& = \theta(n_s^{-1}(\mu))\theta(\lambda\mu_{n_s^{-1}}(-1))c_{s,-1}.
\end{align*}
Hence $E(\lambda\mu_{n_s^{-1}}(-1))c_{s,-1}E(\mu) = E(n_s^{-1}(\mu))E(\lambda\mu_{n_s^{-1}}(-1))c_{s,-1}$.
Since $T_{n_s^{-1}}\theta(\mu) = \theta(n_s^{-1}(\mu))T_{n_s^{-1}}$, we have
\[
	\theta(\lambda)T_{n_s^{-1}}\theta(\mu) = \theta(\lambda)\theta(n_s^{-1}(\mu))T_{n_s^{-1}} = \theta(n_s^{-1}(\mu))\theta(\lambda)T_{n_s^{-1}}.
\]
Hence $E(\lambda n_s^{-1})E(\mu) = E(n_s^{-1}(\mu))E(\lambda n_s^{-1})$.
Therefore,
\begin{align*}
&(E(\lambda n_s^{-1}) + dE(\lambda \mu_{n_s^{-1}}(-1))c_{s,-1})E(\mu)\\
&=
E(n_s^{-1}(\mu))(E(\lambda n_s^{-1}) + dE(\lambda \mu_{n_s^{-1}}(-1))c_{s,-1}).
\end{align*}
We get the lemma in this case.

Next assume that $\langle \nu(\mu),\alpha\rangle > 0$.
Then $\langle (sw)^{-1}(\alpha),(sw)^{-1}(\nu(\mu))\rangle > 0$.
Since $(sw)^{-1}(\alpha) = -w^{-1}(\alpha) < 0$, $n_{sw}^{-1}(\mu)$ is not dominant.
Hence we have  $(sw\widetilde{\chi})(E(\mu)) = 0$.
The right hand side of \eqref{eq:want to prove, constructing intertwining operators} is zero.

By the Bernstein relation, we have
\begin{equation}\label{eq:hard intertwining operator, expansion by Bernstein rel}
\begin{split}
E(\lambda n_s^{-1})E(\mu) & = q_\lambda^{1/2}q_\mu^{1/2}q_s^{-1}\theta(\lambda)T_{n_s^{-1}}\theta(\mu)\\
& = q_\lambda^{1/2}q_\mu^{1/2}q_s^{-1}\theta(\lambda n_s^{-1}(\mu))T_{n_s^{-1}}\\&\quad - \sum_{k = 1}^{\langle \nu(\mu),\alpha\rangle}q_\lambda^{1/2}q_\mu^{1/2}q_s^{-1}\theta(\lambda)c_{s,k}\theta(\mu_{n_s^{-1}}(-k)\mu).
\end{split}
\end{equation}
Since $\lambda$ is in the center of $\Lambda(1)$, we have
\begin{equation}\label{eq:cal E(n_s^-1mu)}
\begin{split}
q_\lambda^{1/2}q_\mu^{1/2}q_s^{-1}\theta(\lambda n_s^{-1}(\mu))T_{n_s^{-1}}
&=
q_\lambda^{1/2}q_\mu^{1/2}q_s^{-1} \theta(n_s^{-1}(\mu))\theta(\lambda)T_{n_s^{-1}}\\
&=
E(n_s^{-1}(\mu))E(\lambda n_s^{-1}).
\end{split}
\end{equation}
Hence
\begin{align*}
1\otimes q_\lambda^{1/2}q_\mu^{1/2}q_s^{-1}\theta(\lambda n_s^{-1}(\mu))T_{n_s^{-1}}
&= 1\otimes E(n_s^{-1}(\mu))E(\lambda n_s^{-1})\\
&= (w\widetilde{\chi})(E(n_s^{-1}(\mu)))\otimes E(\lambda n_s)\\
& = (sw\widetilde{\chi})(E(\mu))\otimes E(\lambda n_s) = 0.
\end{align*}
We calculate the second term of the right hand side of \eqref{eq:hard intertwining operator, expansion by Bernstein rel}.
We have
\begin{equation}\label{eq:term 1, constructing intertwining operators}
q_\lambda^{1/2}q_\mu^{1/2}q_s^{-1}\theta(\lambda)c_{s,k}\theta(\mu_{n_s^{-1}}(-k)\mu) =
q_\lambda^{1/2}q_\mu^{1/2}q_s^{-1}q_{\lambda\mu_{n_s^{-1}}(-k)\mu}^{-1/2}c_{s,k}E(\lambda\mu_{n_s^{-1}}(-k)\mu)
\end{equation}
and, by the argument in the proof of Lemma~\ref{lem:simple Bernstein relations}, it is zero in $\mathcal{H}\otimes C$ if $\ell(\lambda) + \ell(\mu) - \ell(\lambda \mu_{n_s^{-1}}(-k)\mu) - 2 > 0$.
By Lemma~\ref{lem:lemma on length function}, we have
\begin{equation}\label{eq:length, constructing intertwining operators}
\ell(\lambda) + \ell(\mu) - 2 \ge \ell(\lambda \mu) - 2 \ge \ell(\lambda \mu_{n_s^{-1}}(-k)\mu) + 2\min\{k,\langle \nu(\lambda\mu),\alpha\rangle - k\} - 2.
\end{equation}
Hence if $k > 1$ or $\langle \nu(\lambda\mu),\alpha\rangle - k > 1$, then \eqref{eq:term 1, constructing intertwining operators} is zero in $\mathcal{H}\otimes C$.
Since $k\le \langle \nu(\mu),\alpha\rangle$, $\langle\nu(\lambda\mu),\alpha\rangle - k \ge \langle\nu(\lambda),\alpha\rangle \ge 2 > 1$.
Hence \eqref{eq:term 1, constructing intertwining operators} is zero in $\mathcal{H}\otimes C$ if $k \ne 1$.
Since $\mu_{n_s}(-1)^{-1}\cdot c_{s,1} = c_{s,-1}$, by Lemma~\ref{lem:expression of d}, \eqref{eq:term 1, constructing intertwining operators} is
\begin{align*}
q_\lambda^{1/2}q_\mu^{1/2}q_s^{-1}\theta(\lambda)c_{s,1}\theta(\mu_{n_s^{-1}}(-1)\mu) & =
q_\lambda^{1/2}q_s^{-1}q_{\lambda\mu_{n_s^{-1}}(-1)}^{-1/2}E(\lambda\mu_{n_s^{-1}}(-1))c_{s,-1}E(\mu)\\
& = dE(\lambda\mu_{n_s^{-1}}(-1))c_{s,-1}E(\mu)
\end{align*}
Hence the left hand side of \eqref{eq:want to prove, constructing intertwining operators} is zero and we get \eqref{eq:want to prove, constructing intertwining operators}.

Finally assume that $\langle \nu(\mu),\alpha\rangle < 0$.
By the Bernstein relations, we have
\begin{align*}
E(\lambda n_s^{-1})E(\mu) & = q_{\lambda}^{1/2}q_s^{-1}q_\mu^{1/2}\theta(\lambda)T_{n_s^{-1}}\theta(\mu)\\
& = q_{\lambda}^{1/2}q_s^{-1}q_\mu^{1/2}\theta(\lambda n_s^{-1}(\mu))T_{n_s^{-1}}\\&\quad + \sum_{k = 1}^{-\langle \nu(\mu),\alpha\rangle}q_{\lambda}^{1/2}q_s^{-1}q_\mu^{1/2}\theta(\lambda n_s^{-1}(\mu)\mu_{n_s^{-1}}(-k))c_{s,-k}.
\end{align*}
By \eqref{eq:cal E(n_s^-1mu)},
\[
	q_{\lambda}^{1/2}q_s^{-1}q_\mu^{1/2}\theta(\lambda n_s^{-1}(\mu))T_{n_s^{-1}}
	=
	E(n_s^{-1}(\mu))E(\lambda n_s^{-1}).
\]
The term
\begin{equation}\label{eq:term 2, constructing intertwining operators}
\begin{split}
	&q_{\lambda}^{1/2}q_s^{-1}q_\mu^{1/2}\theta(\lambda n_s^{-1}(\mu)\mu_{n_s^{-1}}(-k))c_{s,-k}\\
	&=
	q_{\lambda}^{1/2}q_s^{-1}q_\mu^{1/2}q_{\lambda n_s^{-1}(\mu)\mu_{n_s^{-1}}(-k)}^{-1/2}E(\lambda n_s^{-1}(\mu)\mu_{n_s^{-1}}(-k))c_{s,-k}
\end{split}
\end{equation}
is zero if $\ell(\lambda) + \ell(\mu) + \ell(\lambda n_s^{-1}(\mu)\mu_{n_s^{-1}}(-k)) - 2 > 0$ by the argument in the proof of Lemma~\ref{lem:simple Bernstein relations}.
We have
\[
\begin{split}
&\ell(\lambda) + \ell(\mu) + \ell(\lambda n_s^{-1}(\mu)\mu_{n_s^{-1}}(-k)) - 2 \\
& = \ell(\lambda) + \ell(n_s^{-1}(\mu)) + \ell(\lambda n_s^{-1}(\mu)\mu_{n_s^{-1}}(-k)) - 2\\
& \ge \ell(\lambda n_s^{-1}(\mu)) + \ell(\lambda n_s^{-1}(\mu)\mu_{n_s^{-1}}(-k)) - 2\\
& \ge 2\min\{k,\langle \nu(\lambda n_s^{-1}(\mu)),\alpha\rangle - k\} - 2
\end{split}
\]
by Lemma~\ref{lem:lemma on length function}.
Since $k\le -\langle\nu(\mu),\alpha\rangle = \langle \nu(n_s^{-1}(\mu)),\alpha\rangle$, we have $\langle \nu(\lambda n_s^{-1}(\mu)),\alpha\rangle - k \ge \langle \nu(\lambda),\alpha\rangle > 1$.
Hence \eqref{eq:term 2, constructing intertwining operators} is zero if $k\ne 1$.
If $k = 1$, then \eqref{eq:term 2, constructing intertwining operators} is
\[
	q_{\lambda}^{1/2}q_s^{-1}q_{\lambda \mu_{n_s^{-1}}(-1)}^{-1/2}E(n_s^{-1}(\mu))E(\lambda\mu_{n_s^{-1}}(-1))c_{s,-1}
	=
	dE(n_s^{-1}(\mu))E(\lambda\mu_{n_s^{-1}}(-1))c_{s,-1}.
\]
by Lemma~\ref{lem:expression of d}.
Hence we get
\[
	E(\lambda n_s^{-1})E(\mu) = E(n_s^{-1}(\mu))(E(\lambda n_s^{-1}) + dE(\lambda \mu_{n_s^{-1}}(-1))c_{s,-1}).
\]
in $\mathcal{H}\otimes C$.

On the other hand, by \eqref{eq:E(lambda_1) and E(lambda_2)}, we have
\[
\begin{split}
&1\otimes E(\lambda \mu_{n_s^{-1}}(-1))c_{s,-1}E(\mu) \\& = 1\otimes E(\mu) E(\mu^{-1}\lambda \mu_{n_s^{-1}}(-1)\mu)(\mu^{-1}\cdot c_{s,-1})\\
& = (w\widetilde{\chi})(E(\mu))\otimes E(\mu^{-1}\lambda \mu_{n_s^{-1}}(-1)\mu)(\mu^{-1}\cdot c_{s,-1}).
\end{split}
\]
Since $\langle w^{-1}(\alpha),w^{-1}(\mu)\rangle < 0$ and $w^{-1}(\alpha) > 0$, $w^{-1}(\mu)$ is not dominant.
Hence $w\widetilde{\chi}(E(\mu)) = 0$.
Therefore, the left hand side of \eqref{eq:want to prove, constructing intertwining operators} is
\[
\begin{split}
&1\otimes E(n_s^{-1}(\mu))(E(\lambda n_s^{-1}) + dE(\lambda \mu_{n_s^{-1}}(-1))c_{s,-1})\\
&=(w\widetilde{\chi})(E(n_s^{-1}(\mu)))\otimes (E(\lambda n_s^{-1}) + dE(\lambda \mu_{n_s^{-1}}(-1))c_{s,-1})\\
&=(sw\widetilde{\chi})(E(\mu))\otimes (E(\lambda n_s^{-1}) + dE(\lambda \mu_{n_s^{-1}}(-1))c_{s,-1})\\
\end{split}
\]
We get the proposition.
\end{proof}

\begin{prop}\label{prop:compositions of intertwining operators}
Let $\alpha,s,w,\lambda$ be as in Proposition~\ref{prop:hard intertwining operator}.
Assume that there exist $\lambda_1,\lambda_2$ which satisfy the same conditions for $\lambda$ such that $\lambda = \lambda_1\lambda_2$.
Then the compositions of homomorphisms in Proposition~\ref{prop:construction of intertwining operators} and Proposition~\ref{prop:hard intertwining operator} is given by the multiplication of
\[
	(w\widetilde{\chi})(E(\lambda) - d(s,\lambda)E(\lambda \mu_{n_s^{-1}}(-1))c_{s,-1}c_s)\in C[\Lambda^+(1)].
\]
\end{prop}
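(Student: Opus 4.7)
The plan is to compute the composition explicitly on the generator $1\otimes 1$ of the cyclic right $\mathcal{H}$-module $w\widetilde{\chi}\otimes_{\mathcal{A}}\mathcal{H}$ (and analogously for $sw\widetilde{\chi}\otimes_{\mathcal{A}}\mathcal{H}$ for the other direction) and to identify the result with $x\otimes 1$, where $x=(w\widetilde{\chi})(E(\lambda)-d(s,\lambda)E(\lambda\mu_{n_s^{-1}}(-1))c_{s,-1}c_s)$. Because these modules are cyclic over $\mathcal{H}$, any right $\mathcal{H}$-module endomorphism is determined by its value on the generator, so this identification proves that the composition is multiplication by $x$. By definition of the two operators, the image of $1\otimes 1$ is $1\otimes F(T_{n_s}-c_s)$ with $F=E(\lambda n_s^{-1})+d(s,\lambda)E(\lambda\mu_{n_s^{-1}}(-1))c_{s,-1}$.

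The first step is the identity $E(\lambda n_s^{-1})(T_{n_s}-c_s)=E(\lambda)$ in $\mathcal{H}\otimes C$. I would derive this from Proposition~\ref{prop:multiplication formula} applied to $\widetilde{w}=\lambda n_s^{-1}$ and $\widetilde{w}'=n_s$: $\widetilde{w}\widetilde{w}'=\lambda$, $\widetilde{w}^{-1}(-\Delta)=s(-\Delta)$, and Proposition~\ref{prop:example of E}(3) identifies $E_{s(-\Delta)}(n_s)=T_{n_s}^{*}=T_{n_s}-c_s$; using $\ell(\lambda n_s^{-1})=\ell(\lambda)-1$ the accompanying $q$-coefficient equals $1$. (A direct alternative uses the quadratic relation $T_{n_s^{-1}}T_{n_s}=q_s+c_sT_{n_s^{-1}}$ combined with the centrality of $\lambda$ in $\Lambda(1)$, which forces $\theta(\lambda)c_s=c_s\theta(\lambda)$, and the equality $T_{n_s^{-1}}c_s=c_sT_{n_s^{-1}}$ coming from $n_s\cdot c_s=c_s$.)

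Expanding $F(T_{n_s}-c_s)$ and using this identity gives
\[
F(T_{n_s}-c_s)=\bigl(E(\lambda)-d(s,\lambda)E(\lambda\mu_{n_s^{-1}}(-1))c_{s,-1}c_s\bigr)+d(s,\lambda)E(\lambda\mu_{n_s^{-1}}(-1))c_{s,-1}T_{n_s}.
\]
The bracketed term lies in $\mathcal{A}$ and its image under $w\widetilde{\chi}$ is exactly $x$. Hence the whole proposition is reduced to proving
\[
1\otimes d(s,\lambda)E(\lambda\mu_{n_s^{-1}}(-1))c_{s,-1}T_{n_s}=0
\]
in $w\widetilde{\chi}\otimes_{\mathcal{A}}\mathcal{H}$, i.e.\ that this element lies in the left ideal $\ker(w\widetilde{\chi})\cdot\mathcal{H}$ of $\mathcal{H}$.

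The hard part will be this vanishing step. My approach would combine three ingredients: (i) the conjugation identity $c_{s,-1}T_{n_s}=T_{n_s}c_{s,1}$, which follows from $n_s^{-1}\cdot c_{s,-1}=c_{s,1}$ (a direct consequence of the description of the $c_{s,k}$ in Lemma~\ref{lem:Bernstein relations}); (ii) the Bernstein relations together with Lemma~\ref{lem:lemma on length function} to move $T_{n_s}$ past $E(\lambda\mu_{n_s^{-1}}(-1))$ and isolate only the surviving $q_s^{0}$-terms, exactly as in the proof of Lemma~\ref{lem:simple Bernstein relations}; and (iii) non-dominance arguments in the spirit of the proof of Proposition~\ref{prop:hard intertwining operator} to verify that the remaining corrections fall in $\ker(w\widetilde{\chi})\cdot\mathcal{H}$. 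The factorization hypothesis $\lambda=\lambda_1\lambda_2$ is decisive here: because $\nu(\lambda_1),\nu(\lambda_2)$ lie in the same closed chamber, \eqref{eq:multiplication in A} gives $E(\lambda n_s^{-1})=E(\lambda_1)E(\lambda_2n_s^{-1})$ and $E(\lambda\mu_{n_s^{-1}}(-1))=E(\lambda_1)E(\lambda_2\mu_{n_s^{-1}}(-1))$ in $\mathcal{A}$, enabling an inductive splitting of the single $T_{n_s}$-correction into simpler factor-wise pieces, each of which can be absorbed into $\ker(w\widetilde{\chi})\cdot\mathcal{H}$. The parallel calculation for the reverse composition $\Phi_{sw,w}\circ\Phi^{-}_{\lambda}$ on $sw\widetilde{\chi}\otimes_{\mathcal{A}}\mathcal{H}$ proceeds entirely analogously.
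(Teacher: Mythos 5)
Your first reduction is sound and agrees with the paper: computing $\Psi\circ\Phi$ on $1\otimes 1$, using $E(\lambda n_s^{-1})=q_\lambda^{1/2}q_s^{-1}\theta(\lambda)T_{n_s^{-1}}$ and the quadratic relation to get $E(\lambda n_s^{-1})(T_{n_s}-c_s)=E(\lambda)$, and isolating the single correction term. But the remainder of the plan misidentifies both the nature of the surviving term and the difficulty of the second composition.

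For the correction term: you propose to show $1\otimes d\,E(\lambda\mu_{n_s^{-1}}(-1))c_{s,-1}T_{n_s}=0$ by combining a conjugation identity for $c_{s,-1}$, a Bernstein-relation expansion, and non-dominance arguments to land inside $\ker(w\widetilde{\chi})\cdot\mathcal{H}$. This is unnecessarily elaborate and misses the actual mechanism: the term already vanishes in $\mathcal{H}\otimes C$, before passing to the quotient. Indeed, since $\langle\nu(\lambda),\alpha\rangle\ge 4$ (this is precisely what the factorization $\lambda=\lambda_1\lambda_2$ buys you here, each factor contributing $\ge 2$), one has $\langle\nu(\lambda\mu_{n_s^{-1}}(-1)),\alpha\rangle=\langle\nu(\lambda),\alpha\rangle-2>0$, hence $\ell(\lambda\mu_{n_s^{-1}}(-1)n_s)=\ell(\lambda\mu_{n_s^{-1}}(-1))-1$, and Proposition~\ref{prop:multiplication formula} gives $E(\lambda\mu_{n_s^{-1}}(-1))T_{n_s}=q_s\,E(\lambda\mu_{n_s^{-1}}(-1)n_s)$, which specializes to $0$. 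No Bernstein relations and no information about $w\widetilde{\chi}$ are needed. Your proposed ``inductive splitting'' of the correction via $E(\lambda\mu_{n_s^{-1}}(-1))=E(\lambda_1)E(\lambda_2\mu_{n_s^{-1}}(-1))$ is not how the factorization is used here, and it is not clear it leads anywhere.

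The more serious gap is your final sentence claiming the reverse composition is ``entirely analogous.'' It is not. In $\Phi\circ\Psi$ the factor $(T_{n_s}-c_s)$ multiplies on the \emph{left}, so you must expand $T_{n_s}\theta(\lambda)T_{n_s^{-1}}$ by the Bernstein relation, sift the surviving terms using Lemma~\ref{lem:lemma on length function} and the argument of Lemma~\ref{lem:simple Bernstein relations}, and invoke Lemma~\ref{lem:Bernstein relation, n_s <-> n_s^-1} to compare $n_s$ and $n_s^{-1}$ data. Separately you must handle $1\otimes c_s E(\lambda n_s^{-1})$, and this is the second, independent place the factorization is decisive: one writes $E(\lambda n_s^{-1})=E(\lambda_1)E(\lambda_2 n_s^{-1})$, pushes $c_s$ through $E(\lambda_1)$ by conjugation, and uses that $(sw\widetilde{\chi})(E(\lambda_1))=0$ because $n_{sw}^{-1}(\lambda_1)$ is not dominant. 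None of this mirrors the $\Psi\circ\Phi$ computation, and a reader following your plan would be stuck on this half of the proposition.
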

\begin{proof}
Put $d = d(s,\lambda)$ and $\Phi = \Phi_{sw,w}$.
Let $\Psi\colon sw\widetilde{\chi}\otimes_\mathcal{A}\mathcal{H}\to w\widetilde{\chi}\otimes_\mathcal{A}\mathcal{H}$ be the homomorphisms defined in Proposition~\ref{prop:hard intertwining operator}.
Then we have
\[
	\Psi\circ\Phi(1\otimes 1) = 1\otimes(E(\lambda n_s^{-1}) + dE(\lambda \mu_{n_s^{-1}}(-1))c_{s,-1})(T_{n_s} - c_s).
\]
By the proof of Lemma~\ref{lem:expression of d}, we have $E(\lambda n_s^{-1}) = q_\lambda^{1/2} q_s^{-1}\theta(\lambda) T_{n_s^{-1}}$.
Therefore
\[
E(\lambda n_s^{-1})(T_{n_s} - c_s) = q_\lambda^{1/2}q_s^{-1}\theta(\lambda)T_{n_s^{-1}}(T_{n_s} - c_s) = q_\lambda^{1/2}\theta(\lambda) = E(\lambda).
\]
Since $\langle\nu(\lambda),\alpha\rangle = \langle\nu(\lambda_1),\alpha\rangle + \langle\nu(\lambda_2),\alpha\rangle \ge 4$, $\langle \nu(\lambda\mu_{n_s^{-1}}(-1)),\alpha\rangle = \langle\nu(\lambda),\alpha\rangle - 2 > 0$.
From this and by the length formula \eqref{eq:length formula}, we have $q_{\lambda\mu_{n_s^{-1}}(-1)n_s} = q_{\lambda \mu_{n_s}(-1)}q_s^{-1}$. (See the proof of Lemma~\ref{lem:expression of d}.)
Therefore, by Proposition~\ref{prop:multiplication formula}, we have $E(\lambda \mu_{n_s^{-1}}(-1))T_{n_s} = q_sE(\lambda \mu_{n_s^{-1}}(-1)n_s) = 0$ in $\mathcal{H}\otimes C$.
Hence
\begin{align*}
\Psi\circ\Phi(1\otimes 1) & = 1\otimes(E(\lambda n_s^{-1}) + dE(\lambda \mu_{n_s^{-1}}(-1))c_{s,-1})(T_{n_s} - c_s)\\
& = 1\otimes(E(\lambda) - dE(\lambda \mu_{n_s^{-1}}(-1))c_{s,-1}c_s)\\
& = (w\widetilde{\chi})(E(\lambda) - dE(\lambda \mu_{n_s^{-1}}(-1))c_{s,-1}c_s)\otimes 1.
\end{align*}

Next we calculate
\[
	\Psi\circ\Phi(1\otimes 1) = 1\otimes(T_{n_s} - c_s)(E(\lambda n_s^{-1}) + dE(\lambda \mu_{n_s^{-1}}(-1))c_{s,-1}).
\]
First, by the Bernstein relation, we have
\begin{align*}
T_{n_s}\theta(\lambda)T_{n_s^{-1}} & = \theta(n_s(\lambda))T_{n_s}T_{n_s^{-1}} - \sum_{k = 0}^{\langle \nu(\lambda),\alpha\rangle - 1}\theta(n_s(\lambda)\mu_{n_s}(k))c_{s,k}T_{n_s^{-1}}\\
& = \theta(n_s(\lambda))(q_s + c_sT_{n_s^{-1}}) - \sum_{k = 0}^{\langle \nu(\lambda),\alpha\rangle - 1}\theta(n_s(\lambda)\mu_{n_s}(k))c_{s,k}T_{n_s^{-1}}\\
& = q_s\theta(n_s(\lambda)) - \sum_{k = 1}^{\langle \nu(\lambda),\alpha\rangle - 1}\theta(n_s(\lambda)\mu_{n_s}(k))c_{s,k}T_{n_s^{-1}}.
\end{align*}
Hence, using \eqref{eq:cal E(n_s^-1mu)} with $\mu = 1$,
\begin{align*}
T_{n_s}E(\lambda n_s^{-1}) & = q_\lambda^{1/2}q_s^{-1}T_{n_s}\theta(\lambda)T_{n_s^{-1}}\\
& = E(n_s(\lambda)) - \sum_{k = 1}^{\langle \nu(\lambda),\alpha\rangle - 1}q_\lambda^{1/2}q_s^{-1}q_{n_s(\lambda)\mu_{n_s}(k)}^{-1/2}E(n_s(\lambda)\mu_{n_s}(k))c_{s,k}T_{n_s^{-1}}.
\end{align*}
The term
\begin{equation}\label{eq:term 1, calculating compositions of intertwining operators}
q_\lambda^{1/2}q_s^{-1}q_{n_s(\lambda)\mu_{n_s}(k)}^{-1/2}E(n_s(\lambda)\mu_{n_s}(k))c_{s,k}T_{n_s^{-1}}
\end{equation}
is zero if $\ell(\lambda) - 2 - \ell(n_s(\lambda)\mu_{n_s}(k)) > 0$ by an argument in the proof of Lemma~\ref{lem:simple Bernstein relations}.
Hence, by Lemma~\ref{lem:lemma on length function}, \eqref{eq:term 1, calculating compositions of intertwining operators} is zero if $k\ne 1,\langle \nu(\lambda),\alpha\rangle - 1$.
If $k = \langle \nu(\lambda),\alpha\rangle - 1$, then, since $\langle \nu(n_s(\lambda)\mu_{n_s}(k)),\alpha\rangle > 0$, we have $\ell(n_s(\lambda)\mu_{n_s}(k)n_s^{-1}) = \ell(n_s(\lambda)\mu_{n_s}(k)) - 1$. (See the proof of Lemma~\ref{lem:expression of d}.)
Hence $E(n_s(\lambda)\mu_{n_s}(k))T_{n_s^{-1}} = q_sE(n_s(\lambda)\mu_{n_s}(k)n_s^{-1})$ by Proposition~\ref{prop:multiplication formula}.
Therefore, \eqref{eq:term 1, calculating compositions of intertwining operators} is
\[
	q_\lambda^{1/2}q_{n_s(\lambda)\mu_{n_s}(k)}^{-1/2}E(n_s(\lambda)\mu_{n_s}(k)n_s^{-1})n_s\cdot c_{s,k}.
\]
It is zero since $\ell(\lambda) - \ell(n_s(\lambda)\mu_{n_s}(k)) > 0$ by Lemma~\ref{lem:lemma on length function}.
For $k = 1$, we have $q_{n_s(\lambda)\mu_{n_s}(1)} = q_{\lambda n_s^{-1}(\mu_{n_s}(1))} = q_{\lambda \mu_{n_s}(-1)}$.
Hence by Lemma~\ref{lem:expression of d}, we have that \eqref{eq:term 1, calculating compositions of intertwining operators} is $dE(n_s(\lambda)\mu_{n_s}(1))c_{s,1}T_{n_s^{-1}}$.
Hence, by Lemma~\ref{lem:Bernstein relations}, \ref{lem:Bernstein relation, n_s <-> n_s^-1}, we have
\begin{align*}
1\otimes T_{n_s}E(\lambda n_s^{-1}) & = 1\otimes (E(n_s(\lambda)) - dE(n_s(\lambda)\mu_{n_s}(1))c_{s,1}T_{n_s^{-1}})\\
& = 1\otimes (E(n_s(\lambda)) - dE(n_s(\lambda)\mu_{n_s}(1))T_{n_s^{-1}} (n_s\cdot c_{s,1}))\\
& = 1\otimes (E(n_s(\lambda)) - dE(n_s(\lambda)\mu_{n_s^{-1}}(1))T_{n_s} c_{s,-1})\\
& = 1\otimes (E(n_s(\lambda)) - dE(n_s(\lambda\mu_{n_s^{-1}}(-1)))T_{n_s} c_{s,-1})\\
& = (w\widetilde{\chi})(E(\lambda))\otimes 1 - d(w\widetilde{\chi})(E(\lambda \mu_{n_s^{-1}}(-1)))\otimes T_{n_s}c_{s,-1}.
\end{align*}
By the length formula \eqref{eq:length formula}, we have $\ell(\lambda n_s^{-1}) = \ell(\lambda_1\lambda_2n_s^{-1}) = \ell(\lambda_1) + \ell(\lambda_2 n_s^{-1})$.
Hence $E(\lambda n_s^{-1}) = E(\lambda_1)E(\lambda_2 n_s^{-1})$.
We have
\begin{align*}
1\otimes c_sE(\lambda n_s^{-1}) & = 1\otimes c_sE(\lambda_1)E(\lambda_2 n_s^{-1})\\
& = 1\otimes E(\lambda_1)(\lambda_1^{-1}\cdot c_s)E(\lambda_2 n_s^{-1})\\
& = (sw\widetilde{\chi})(E(\lambda_1))\otimes(\lambda_1^{-1}\cdot c_s)E(\lambda_2 n_s^{-1}).
\end{align*}
Since $\langle \alpha,\nu(\lambda_1)\rangle > 0$, we have $\langle (sw)^{-1}(\alpha),(sw)^{-1}(\nu(\lambda_1))\rangle > 0$.
By the assumption, $(sw)^{-1}(\alpha) < 0$.
Hence $n_{sw}^{-1}(\lambda_1)$ is not dominant.
We have $(sw\widetilde{\chi})(E(\lambda_1)) = 0$.
Hence
\[
	1\otimes c_sE(\lambda n_s^{-1}) = 0.
\]
By Proposition~\ref{prop:construction of intertwining operators}, we have
\[
1\otimes (T_{n_s} - c_s)E(\lambda \mu_{n_s^{-1}}(-1))c_{s,-1}
=
(w\widetilde{\chi})(E(\lambda\mu_{n_s^{-1}}(-1)))\otimes (T_{n_s} - c_s)c_{s,-1}.
\]
From these calculations, we have
\begin{align*}
\Psi\circ\Phi(1\otimes 1) & = 1\otimes(T_{n_s} - c_s)(E(\lambda n_s^{-1}) + dE(\lambda \mu_{n_s^{-1}}(-1))c_{s,-1})\\
& = (w\widetilde{\chi})(E(\lambda))\otimes 1 - d(w\widetilde{\chi})(E(\lambda \mu_{n_s^{-1}}(-1)))\otimes c_{s,-1}c_s\\
& = (w\widetilde{\chi})(E(\lambda) - dE(\lambda \mu_{n_s^{-1}}(-1))c_{s,-1}c_s)\otimes 1.\qedhere
\end{align*}
\end{proof}

\subsection{The modules $w\chi_\Theta\otimes_\mathcal{A}\mathcal{H}$}
For a subset $\Theta\subset \Delta$, set $\Lambda_\Theta(1) = \{\lambda\in\Lambda(1) \mid \langle\nu(\lambda),\alpha\rangle = 0\ (\alpha\in\Theta)\}$ and put $\Lambda_\Theta^+(1) = \Lambda_\Theta(1)\cap \Lambda^+(1)$.
Let $C[\Lambda_\Theta(1)] = \bigoplus_{\lambda\in \Lambda_\Theta(1)}C\tau_\lambda$ be the group algebra.
Define $\chi_\Theta\colon \mathcal{A}\to C[\Lambda_\Theta(1)]$ by
\[
	\chi_\Theta(E(\lambda)) =
	\begin{cases}
	\tau_\lambda & (\lambda\in\Lambda_\Theta^+(1)),\\
	0 & (\text{otherwise}).
	\end{cases}
\]
This homomorphism factors through $\widetilde{\chi}$.
By \eqref{eq:multiplication in A}, it is easy to see that this is an algebra homomorphism.

The motivation to consider $\chi_\Theta$ is the following lemma.
For an $\mathcal{A}\otimes C$-module $X$, set $\supp X = \{\lambda\in \Lambda(1)\mid \text{$E(\lambda)\ne 0$ on $X$}\}$.
We define a new module $wX$ by $(wX)(E(\lambda)) = X(E(n_w^{-1}(\lambda)))$ on the same space.
We have $\supp wX = n_w(\supp X)$.
\begin{prop}\label{prop:irred rep of A factors chi_Theta}
Let $X$ be an irreducible representation of $\mathcal{A}$ over $C$.
Then it factors through $w\chi_\Theta$ for some $w\in W$ and $\Theta\subset \Delta$.
\end{prop}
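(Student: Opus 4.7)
My plan is to use central elements of $\Lambda(1)$ together with Schur's lemma to extract a Weyl chamber from the action on $X$, and then an $\mathcal{A}$-submodule argument to rule out nilpotent non-zero actions of $E(\mu)$, so that chamber information transfers from central elements to all of $\supp X$.

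First I would extract central scalars. Since $\Lambda(1)/Z_\kappa = \Lambda$ is abelian and $Z_\kappa$ is finite, the centralizer of any element of $\Lambda(1)$ has finite index, and because $\Lambda(1)$ is finitely generated the center $Z(\Lambda(1))$ has finite index; in particular for each $\mu\in\Lambda(1)$ some power $\mu^k$ is central. For $\lambda\in Z(\Lambda(1))$ and any $\mu$, the two products $E(\lambda)E(\mu)$ and $E(\mu)E(\lambda)$ both equal $E(\lambda\mu)$ when $\nu(\lambda),\nu(\mu)$ share a closed Weyl chamber and vanish otherwise by \eqref{eq:multiplication in A}, so $E(\lambda)$ is central in $\mathcal{A}\otimes C$. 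Since $X$ is finite-dimensional irreducible over the algebraically closed $C$, Schur's lemma yields $\chi_X(\lambda)\in C$ with $E(\lambda)|_X = \chi_X(\lambda)\operatorname{id}$. If $\chi_X(\lambda_1),\chi_X(\lambda_2)\ne 0$, then $E(\lambda_1)E(\lambda_2)$ is nonzero on $X$, forcing $\nu(\lambda_1),\nu(\lambda_2)$ into a common closed chamber; since the disjointness $\{\sigma=+\}\cap\{\sigma=-\}=\emptyset$ of the resulting sign pattern embeds $\{\sigma=+\}$ into some positive system, a standard sign-vector argument yields $w\in W$ such that $w^{-1}\nu(\lambda)$ is dominant for every $\lambda\in Z(\Lambda(1))$ with $\chi_X(\lambda)\ne 0$.

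Next I would prove a zero-or-invertible dichotomy: $E(\mu)$ acts on $X$ either as zero or as an invertible operator. Choosing $k$ with $\mu^k$ central, $E(\mu)^k = \chi_X(\mu^k)\operatorname{id}$ shows $E(\mu)$ is invertible when $\chi_X(\mu^k)\ne 0$ and nilpotent otherwise; I claim the nilpotent non-zero case is impossible. Assume $E(\mu)\ne 0$ and let $k_0\ge 1$ be minimal with $E(\mu)^{k_0}=0$, and set $Y := X E(\mu)^{k_0-1}$, which is nonzero by minimality. For $\lambda$ whose $\nu$-image is not in the closed chamber of $\nu(\mu)$, $E(\mu)E(\lambda)=0$ gives $YE(\lambda)=0$. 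Otherwise, using that $\Lambda(1)/Z_\kappa$ is abelian, $\mu^{k_0-1}\lambda = t\lambda\mu^{k_0-1}$ for some $t\in Z_\kappa$, so
\[
E(\mu)^{k_0-1}E(\lambda) = E(\mu^{k_0-1}\lambda) = E(t\lambda)E(\mu^{k_0-1}),
\]
giving $YE(\lambda) = XE(t\lambda)E(\mu^{k_0-1}) \subset XE(\mu^{k_0-1}) = Y$. Thus $Y$ is a nonzero $\mathcal{A}$-submodule of $X$, so $Y = X$ by irreducibility and $E(\mu)^{k_0-1}$ is surjective on $X$; but then $XE(\mu) = XE(\mu)^{k_0-1}E(\mu) = XE(\mu)^{k_0}=0$ forces $E(\mu)=0$, contradicting our assumption.

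With the dichotomy in hand, every $\mu\in\supp X$ satisfies $\chi_X(\mu^k)\ne 0$, so $\mu^k$ lies in the set of the first step, giving $w^{-1}(k\nu(\mu))$ dominant and hence $w^{-1}\nu(\mu)$ dominant. Setting $\Theta := \{\alpha\in\Delta : \langle w^{-1}\nu(\lambda),\alpha\rangle = 0\text{ for all }\lambda\in\supp X\}$ yields $\supp X\subset n_w\Lambda^{+}_\Theta(1)n_w^{-1}$, and since $(w\chi_\Theta)(E(\lambda)) = \tau_{n_w^{-1}\lambda n_w}$ on this set and vanishes outside it, the action of $\mathcal{A}$ on $X$ factors through $w\chi_\Theta$. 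The main obstacle is the dichotomy step: its submodule argument is what makes the crucial transfer from central-element information to $\supp X$ possible, and in particular forces every $E(\mu)$ on the irreducible $X$ to act either invertibly or by zero.
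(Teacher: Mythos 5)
Your overall strategy is close in spirit to the paper's (both arguments rest on \eqref{eq:multiplication in A}, the commutation rule \eqref{eq:E(lambda_1) and E(lambda_2)}, irreducibility applied to a stable subspace, and central elements of $\Lambda(1)$), and your zero-or-invertible dichotomy is a correct and nicely packaged version of the paper's observation that $\{x\in X\mid xE(\lambda)=0\}$ is $\mathcal{A}$-stable. Two small imprecisions: the element $t=\mu^{k_0-1}\lambda\mu^{-(k_0-1)}\lambda^{-1}$ need not lie in $Z_\kappa$ (and $\Lambda$ need not be abelian); what you actually need, and what is true, is $\nu(t)=0$, or simply the unconditional identity $E(\lambda_1)E(\lambda_2)=E(\lambda_1\lambda_2\lambda_1^{-1})E(\lambda_1)$ of \eqref{eq:E(lambda_1) and E(lambda_2)}.

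The genuine gap is in the last step. You establish $\supp X\subset n_w(\Lambda_\Theta^+(1))$ and that $E(\mu)$ is invertible for $\mu\in\supp X$, but not the reverse inclusion $n_w(\Lambda_\Theta^+(1))\subset\supp X$. This inclusion is not optional: since every $\tau_\lambda$ is invertible in the group algebra $C[\Lambda_\Theta(1)]$, any action factoring through $w\chi_\Theta$ must have $E(\lambda)$ acting invertibly --- in particular nonzero --- for \emph{every} $\lambda\in n_w(\Lambda_\Theta^+(1))$; the paper records this equivalence explicitly right after the proposition ($\supp X=n_w(\Lambda_\Theta^+(1))$ iff the action factors through $w\chi_\Theta$). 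With your definition of $\Theta$ as the common vanishing locus, nothing you have proved excludes, say, a submonoid such as $\ker\nu\cup\{\lambda\mid \nu(\lambda)\ \text{strictly dominant}\}$ as the support, for which your $\Theta=\emptyset$ would be wrong. The paper closes exactly this gap in the second half of its proof: for each $\alpha\in\Delta$ it picks a central $\lambda_\alpha$ with $\nu(\lambda_\alpha)$ a positive multiple of the fundamental coweight, defines $\Theta=\{\alpha\mid\lambda_\alpha\notin\supp X\}$, and uses a relation $E(\lambda)^n=E(\mu)\prod_\alpha E(\lambda_\alpha)^{n_\alpha}$ (with $E(\mu)$ invertible because $\ell(\mu)=0$) to propagate invertibility to every $\lambda\in\Lambda_\Theta^+(1)$. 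Your tools would also suffice --- e.g.\ one can show $\supp X$ is ``divisor-closed'' in each closed chamber ($E(\lambda)E(\lambda')=E(\lambda\lambda')\ne0$ forces $E(\lambda)\ne0$) and then write a high power of a suitable element of $\supp X$ as $\lambda\cdot\lambda'$ with both factors dominant --- but some such argument must be supplied.
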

\begin{proof}
The statement is equivalent to
\begin{itemize}
\item There is a closure of a facet $\mathcal{F}\subset V$ with respect to $\Sigma$ such that $\nu(\supp X) = \nu(\Lambda(1))\cap \mathcal{F}$.
\item If $\lambda\in \supp X$, then $E(\lambda)$ is invertible on $X$.
\end{itemize}
Since $E(\mu)$ is invertible for any element in $\mu\in \Ker\nu$, we have $\nu(\lambda)\in \nu(\supp X)$ if and only if $\lambda\in \supp X$.

First we prove that $\nu(\supp X)$ is a subset of a closed Weyl chamber.
Take $\lambda\in \supp X$ and set $\mathcal{Y} = \{\mu \in \Lambda(1)\mid \text{$\nu(\mu)$ is in the same closed chamber of $\nu(\lambda)$}\}$.
Then $XE(\lambda) \ne 0$.
Hence $XE(\lambda)\mathcal{A} = X$.
If $\mu\notin \mathcal{Y}$, $E(\lambda)E(\mu) = 0$ and if $\mu\in \mathcal{Y}$ then $E(\lambda)E(\mu) = E(\lambda \mu)$ by \eqref{eq:multiplication in A}.
Hence $X = \sum_{\mu\in \mathcal{Y}}XE(\lambda)E(\mu) = \sum_{\mu\in \mathcal{Y}}XE(\lambda\mu)$.
Let $\lambda'\in \Lambda(1)$ such that $\lambda'\notin \mathcal{Y}$.
Then $E(\lambda\mu)E(\lambda') = 0$ for $\mu\in \mathcal{Y}$.
Hence $XE(\lambda') = \sum_{\mu\in \mathcal{Y}}XE(\lambda\mu)E(\lambda') = 0$.
Therefore $\lambda'\notin \supp X$.
Hence $\nu(\supp X)$ is contained in a closed chamber.

Hence there exists $w\in W$ such that $\supp X\subset n_w^{-1}(\Lambda^+(1))$.
Replacing $X$ with $wX$, we may assume that $\supp X\subset \Lambda^+(1)$.
For each $\alpha\in \Delta$, take $\lambda_\alpha$ from the center of $\Lambda(1)$ which satisfies $\langle \nu(\lambda_\alpha),\beta\rangle = 0$ for $\beta\in\Delta\setminus\{\alpha\}$ and $\langle\nu(\lambda_\alpha),\alpha\rangle > 0$.
Set $\Theta = \{\alpha\in\Delta\mid \lambda_\alpha\notin \supp X\}$.
We prove $\supp X = \Lambda_\Theta^+(1)$.

Take $\lambda\in \Lambda^+(1)$.
There exists $n,n_\alpha\in\Z_{>0}$ such that $\langle \Delta,\nu(\lambda^n\prod_{\alpha\in\Delta}\lambda_\alpha^{-n_\alpha})\rangle = 0$.
Put $\mu = \lambda^n \prod_{\alpha\in\Delta}\lambda_\alpha^{-n_\alpha}$.
Then $E(\mu)$ is invertible and we have $E(\lambda)^n = E(\mu)\prod_{\alpha\in\Delta}E(\lambda_\alpha)^{n_\alpha}$.
If $\lambda\not\in \Lambda_{\Theta}^+(1)$, namely $\langle\nu(\lambda),\alpha\rangle \ne 0$ for some $\alpha\in \Theta$, then $n_\alpha \ne 0$ for such $\alpha\in\Theta$.
Hence $E(\lambda)^n = 0$ on $X$ since $E(\lambda_\alpha) = 0$ on $X$ for $\alpha\in\Theta$.
Hence $\{x\in X\mid xE(\lambda) = 0\}$ is not zero.
Take $x\in X$ such that $xE(\lambda) = 0$.
Let $\mu\in \Lambda(1)$.
Then $xE(\mu)E(\lambda) = xE(\lambda)E(\lambda^{-1}\mu\lambda) = 0$ by \eqref{eq:E(lambda_1) and E(lambda_2)}.
Hence $\{x\in X\mid xE(\lambda) = 0\}$ is $\mathcal{A}$-stable and by the irreducibility of $X$, it is equal to $X$.
Therefore, $E(\lambda) = 0$ on $X$.
Hence $\supp X\subset \Lambda_\Theta^+(1)$.

If $\lambda\in \Lambda_\Theta^+(1)$ then $n_\alpha = 0$ for $\alpha\in\Theta$.
If $\alpha\notin \Theta$, then $E(\lambda_\alpha)$ is non-zero.
Since $\lambda_\alpha$ is in the center of $\Lambda(1)$, $E(\lambda_\alpha)$ is in the center of $\mathcal{A}$.
Hence $E(\lambda_\alpha)$ is scalar on $X$.
In particular, for $\alpha\notin \Theta$, $E(\lambda_\alpha)$ is invertible.
Since $E(\lambda)^n = E(\mu)\prod_{\alpha\notin \Theta}E(\lambda_\alpha)^{n_\alpha}$, $E(\lambda)$ is invertible on $X$.
\end{proof}

By the above lemma, $\supp X = n_w(\Lambda_\Theta^+(1))$ for some $w\in W$ and $\Theta\subset\Delta$.
Let $W_\Theta$ be the group generated by $\{s_\alpha \mid \alpha\in\Theta\}$.
Then for $w\in W_\Theta$, $n_w(\Lambda_\Theta^+(1)) = \Lambda_\Theta^+(1)$.
Since the multiplication $\{w\in W\mid w(\Theta)\subset\Sigma^+\}\times W_\Theta\to W$ is bijective, we can take the above $w$ such that $w(\Theta)\subset \Sigma^+$.
We have $\supp X = n_w(\Lambda_\Theta^+(1))$ if and only if the action of $\mathcal{A}$ on $X$ factors through $w\chi_\Theta$.
\begin{rem}
For an irreducible $\mathcal{A}$-module $X$, we can attache $\Theta\subset \Lambda(1)$, $w\in W/W_\Theta$ and an irreducible module $X_0$ of $C[\Lambda_\Theta(1)]$ such that $X = X_0\circ w\chi_\Theta$.
On the other hand, if $(\Theta,w,X_0)$ is given, then we get $X = X_0\circ w\chi_\Theta$.
Hence irreducible $\mathcal{A}$-modules are classified by such triples.
\end{rem}

The $\mathcal{H}$-module $w\chi_\Theta\otimes_\mathcal{A}\mathcal{H}$ has the right action of $C[\Lambda_\Theta(1)]$.
Hence it is a $(C[\Lambda_\Theta(1)],\mathcal{H})$-bimodule.
The first property of $w\chi_\Theta\otimes_\mathcal{A}\mathcal{H}$ is the following freeness as a $C[\Lambda_\Theta(1)]$-module.
\begin{prop}\label{prop:freeness}
The module $w\chi_\Theta\otimes_\mathcal{A}\mathcal{H}$ is free of finite rank as a $C[\Lambda_\Theta(1)]$-module.
\end{prop}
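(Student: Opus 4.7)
The plan is to exhibit $\{1\otimes T_{n_v}: v\in W\}$ as a $C[\Lambda_\Theta(1)]$-basis of $w\chi_\Theta\otimes_\mathcal{A}\mathcal{H}$, from which freeness of finite rank $|W|$ follows immediately. Both steps rely on the Bernstein basis $\{E(\widetilde{w})\}_{\widetilde{w}\in\widetilde{W}(1)}$ of $\mathcal{H}$ together with the factorization $\widetilde{W}(1)=\Lambda(1)\cdot\{n_v\mid v\in W\}$, so that each $\widetilde{w}$ is uniquely of the form $\lambda n_v$.

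For generation, I would fix a decomposition $\widetilde{w}=\lambda n_v$ and apply Proposition~\ref{prop:multiplication formula} with $\widetilde{w}_1=\lambda$, $\widetilde{w}_2=n_v$. Since $\lambda$ acts trivially on $\Delta$, $\lambda^{-1}(-\Delta)=-\Delta$, so we obtain $E(\lambda n_v)=q_\lambda^{-1/2}q_{n_v}^{-1/2}q_{\lambda n_v}^{1/2}E(\lambda)T_{n_v}$ (using Proposition~\ref{prop:example of E}(3) to identify $E(n_v)=T_{n_v}$). The scalar sits in $C[q_s^{1/2}]$, specializing to $1$ when $\ell(\lambda n_v)=\ell(\lambda)+\ell(n_v)$ and to $0$ otherwise. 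In the length-additive case one immediately gets $1\otimes E(\lambda n_v)=w\chi_\Theta(E(\lambda))\otimes T_{n_v}\in C[\Lambda_\Theta(1)]\cdot(1\otimes T_{n_v})$. For the remaining $\widetilde{w}$ (where $E(\lambda)T_{n_v}$ vanishes after specialization), I would use the alternate factorization $\widetilde{w}=n_{v}\lambda'$ with $\lambda'=n_v^{-1}\lambda n_v$ and the dual Bernstein basis $E_\Delta$, which satisfies $E_\Delta(n_v)=T_{n_v}^*$ and $E_\Delta(\mu)=T_\mu$ for dominant $\mu$ by Proposition~\ref{prop:example of E}; combined with the relation $T_{n_v}^*=T_{n_v}-c_{n_v}$, this also brings $1\otimes E(\widetilde{w})$ into the $C[\Lambda_\Theta(1)]$-span of $\{1\otimes T_{n_{v'}}\}_{v'\in W}$.

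For linear independence, suppose $\sum_{v\in W}f_v\cdot(1\otimes T_{n_v})=0$ with $f_v\in C[\Lambda_\Theta(1)]$. After multiplying through by a sufficiently dominant $\tau_\mu$ with $\mu\in\Lambda_\Theta^+(1)$ (which is invertible in $C[\Lambda_\Theta(1)]$ since that algebra is a group algebra), I may assume each $f_v$ lies in the submodule $C[\Lambda_\Theta^+(1)]$. Using the explicit formula for $w\chi_\Theta$ recalled just before the statement, I can lift each $f_v$ to an element $a_v\in\mathcal{A}$ supported on $n_w(\Lambda_\Theta^+(1))$ so that $w\chi_\Theta(a_v)=f_v$. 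The relation then becomes $1\otimes\bigl(\sum_v a_vT_{n_v}\bigr)=0$ in $w\chi_\Theta\otimes_\mathcal{A}\mathcal{H}$. Expanding $\sum_v a_vT_{n_v}$ in the Bernstein basis (the support of $a_v$ has been chosen so that $a_vT_{n_v}=\sum_{\lambda}c_{v,\lambda}E(\lambda n_v)$ with $\ell(\lambda n_v)=\ell(\lambda)+\ell(n_v)$ for all $\lambda$ occurring), the $C$-linear independence of $\{E(\lambda n_v)\}_{\lambda,v}$ forces all $a_v=0$, and hence $f_v=0$.

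The main obstacle is the generation step for $(\lambda,v)$ pairs where length is not additive, combined with the subtle interaction between the specialization $q_s^{1/2}\mapsto 0$ and the product $E(\lambda)T_{n_v}$; the trick is to switch orientation or to reorder $\lambda$ and $n_v$ so that the relevant Bernstein-basis identity becomes length-additive. A secondary difficulty is ensuring, in the independence step, that the lifts $a_v$ can be chosen with supports concentrated in the length-additive region of $\Lambda(1)\times\{n_v\}$; this is precisely why passing first to $\tau_\mu f_v\in C[\Lambda_\Theta^+(1)]$ with $\mu$ sufficiently dominant is essential.
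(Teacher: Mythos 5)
Your central claim, that $\{1\otimes T_{n_v}: v\in W\}$ is a $C[\Lambda_\Theta(1)]$-basis, is false in general: some of these elements vanish. Take $\alggrp{G} = \mathrm{SL}_2$, $\Theta = \emptyset$, $w = 1$. For any dominant $\lambda$ with $\langle\nu(\lambda),\alpha\rangle > 0$, the length formula gives $\ell(\lambda) + \ell(n_s) - \ell(\lambda n_s) = 2$, so $E(\lambda)T_{n_s} = q_\lambda^{1/2}q_{n_s}^{1/2}q_{\lambda n_s}^{-1/2}E(\lambda n_s) = 0$ in $\mathcal{H}\otimes C$; therefore $\tau_\lambda\otimes T_{n_s} = 1\otimes E(\lambda)T_{n_s} = 0$, and since $\tau_\lambda$ is a unit in $C[\Lambda(1)]$ we get $1\otimes T_{n_s} = 0$. (The rank is still $2 = \lvert W\rvert$ here, but the second generator is not $1\otimes T_{n_s}$.) The underlying error is a sign: the coefficient $q_\lambda^{-1/2}q_{n_v}^{-1/2}q_{\lambda n_v}^{1/2}$ has exponent $\ell(\lambda n_v)-\ell(\lambda)-\ell(n_v)\le 0$, so it lies in $C[q_s^{-1/2}]$, not $C[q_s^{1/2}]$; the relation that does survive the specialization $q_s\mapsto 0$ is the reciprocal one $E(\lambda)T_{n_v}=q^{k/2}E(\lambda n_v)$ with $k\ge 0$, and when $k>0$ it degenerates to the vanishing above rather than to an expression for $1\otimes E(\lambda n_v)$. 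The fallback via the alternate factorization is also incorrect: by Proposition~\ref{prop:multiplication formula}, $E(n_v\lambda')$ is a multiple of $T_{n_v}E_{v^{-1}(-\Delta)}(\lambda')$, not of $T_{n_v}E_\Delta(\lambda')$, and $T_{n_v}^*=T_{n_v}-c_{n_v}$ holds only for $\ell(v)=1$.

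The independence step has a parallel gap: $1\otimes\bigl(\sum_v a_v T_{n_v}\bigr)=0$ in the localized module only forces $E(n_w(\mu))\cdot\sum_v a_v T_{n_v}\in(\ker w\chi_\Theta)(\mathcal{H}\otimes C)$ for some $\mu\in\Lambda_\Theta^+(1)$, not that the element itself vanishes, so Bernstein-basis linear independence alone does not close the argument. The paper avoids exhibiting any basis: it rewrites the module as $C[\Lambda_\Theta(1)]\otimes_{C[\Lambda_\Theta^+(1)]}\bigl((\mathcal{H}\otimes C)/(\ker w\chi_\Theta)(\mathcal{H}\otimes C)\bigr)$, decomposes the quotient into at-most-one-dimensional graded pieces $\sigma_{\lambda,v}$ indexed by $\Lambda(1)\times W$, groups these into $\Lambda_\Theta(1)$-orbits, and invokes an auxiliary lemma that a $\Lambda_\Theta(1)$-graded $C[\Lambda_\Theta^+(1)]$-module with graded pieces of dimension $\le 1$ localizes to a free $C[\Lambda_\Theta(1)]$-module of rank at most one; finiteness of the rank then comes from finite generation of $\mathcal{H}\otimes C$ over $\mathcal{A}\otimes C$, not from a count by $W$.
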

We use the following easy lemma.
\begin{lem}
Let $X$ be a $C[\Lambda_\Theta^+(1)]$-module.
Assume that $X$ has a decomposition $X = \bigoplus_{\lambda\in\Lambda_\Theta(1)}X_\lambda$ such that $\dim X_\lambda \le 1$ and $\tau_\mu(X_\lambda)\subset X_{\mu\lambda}$.
Then the module $C[\Lambda_\Theta(1)]\otimes_{C[\Lambda_\Theta^+(1)]}X$ is free of rank less than or equal to one.
\end{lem}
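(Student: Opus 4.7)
The plan is to identify the tensor product $C[\Lambda_\Theta(1)]\otimes_{C[\Lambda_\Theta^+(1)]}X$ with a localization of $X$ and then exploit the grading to conclude. Let $S=\{\tau_\mu\mid\mu\in\Lambda_\Theta^+(1)\}$ viewed as a multiplicative subset of $C[\Lambda_\Theta^+(1)]$.

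My first step is to check that $C[\Lambda_\Theta(1)]=S^{-1}C[\Lambda_\Theta^+(1)]$, so that the tensor product is canonically $S^{-1}X$. This reduces to showing that $\Lambda_\Theta^+(1)$ generates $\Lambda_\Theta(1)$ as a group. For each $\alpha\in\Delta\setminus\Theta$ one can take a central $\lambda_\alpha\in\Lambda(1)$ with $\langle\nu(\lambda_\alpha),\alpha\rangle>0$ and $\langle\nu(\lambda_\alpha),\beta\rangle=0$ for $\beta\in\Delta\setminus\{\alpha\}$ (these exist by the construction used in the proof of Proposition~\ref{prop:irred rep of A factors chi_Theta}). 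Then $\mu_0=\prod_{\alpha\in\Delta\setminus\Theta}\lambda_\alpha$ lies in $\Lambda_\Theta^+(1)$ with $\nu(\mu_0)$ strictly dominant away from $\Theta$, and for any $\lambda\in\Lambda_\Theta(1)$ a sufficiently large power $\mu_0^n$ satisfies $\lambda\mu_0^n\in\Lambda_\Theta^+(1)$, so $\tau_\lambda=\tau_{\lambda\mu_0^n}\tau_{\mu_0^n}^{-1}$ lies in $S^{-1}C[\Lambda_\Theta^+(1)]$.

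Next I would push the $\Lambda_\Theta(1)$-grading through the localization. Every $\tau_\mu\in S$ is homogeneous of degree $\mu$, so $S^{-1}X$ inherits a $\Lambda_\Theta(1)$-grading with
\[
(S^{-1}X)_\lambda=\varinjlim_{\mu\in\Lambda_\Theta^+(1)}X_{\mu\lambda},
\]
the colimit being taken along left multiplication by the $\tau_{\mu'}$. Since each $X_{\mu\lambda}$ is at most one-dimensional by hypothesis, the colimit is also at most one-dimensional, i.e.\ $\dim_C(S^{-1}X)_\lambda\le 1$ for every $\lambda$.

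Finally, every $\tau_\nu$ with $\nu\in\Lambda_\Theta(1)$ is invertible in $C[\Lambda_\Theta(1)]$, so it induces a $C$-linear bijection $(S^{-1}X)_\lambda\xrightarrow{\sim}(S^{-1}X)_{\nu\lambda}$. Consequently either every graded piece vanishes, in which case $S^{-1}X=0$, or every graded piece is one-dimensional; in the latter case, picking any nonzero $e\in(S^{-1}X)_{\lambda_0}$ produces a $C$-basis $\{\tau_\nu\cdot e\}_{\nu\in\Lambda_\Theta(1)}$ of $S^{-1}X$, and the map $C[\Lambda_\Theta(1)]\to S^{-1}X$ sending $\tau_\nu\mapsto\tau_\nu\cdot e$ is an isomorphism of $C[\Lambda_\Theta(1)]$-modules, giving freeness of rank one. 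The main obstacle is the first step, namely the identification of the tensor product with an honest localization; once that is established, the graded argument is essentially formal.
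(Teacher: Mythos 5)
Your proposal is correct and takes essentially the same approach as the paper: both identify the tensor product with the localization $S^{-1}X$ and exploit the $\Lambda_\Theta(1)$-grading, where each graded piece is at most one-dimensional and the invertible $\tau_\nu$'s permute all pieces, to conclude freeness of rank at most one. The only stylistic difference is that the paper fixes a nonzero $m\in X_\lambda$ and verifies injectivity and surjectivity of $f\mapsto f\otimes m$ directly, whereas you organize the same information as a $\Lambda_\Theta(1)$-grading on $S^{-1}X$ via a filtered colimit.
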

\begin{proof}
Notice that $C[\Lambda_\Theta(1)]$ is the ring of fractions of $C[\Lambda_\Theta^+(1)]$ with respect to $\{\tau_\lambda\mid \lambda\in\Lambda_\Theta^+(1)\}$.
We may assume $C[\Lambda_\Theta(1)]\otimes X_\lambda \ne 0$ for some $\lambda\in \Lambda_\Theta(1)$, since if there is no such $\lambda$, then $C[\Lambda_\Theta(1)]\otimes_{C[\Lambda_\Theta^+(1)]}X = 0$.
Then $\tau_\mu(X_\lambda) \ne 0$ for any $\mu\in \Lambda_\Theta^+(1)$.
Since $\dim X_\lambda,\dim X_{\mu\lambda}\le 1$, $\tau_\mu$ gives an isomorphism $X_\lambda\to X_{\mu\lambda}$.
Hence a homomorphism $C[\Lambda_\Theta^+(1)] \to X$ defined by $f\mapsto fm$ is injective for $m\in X_\lambda\setminus\{0\}$.
Fix $m\in X_\lambda\setminus\{0\}$ and consider the homomorphism $C[\Lambda_\Theta(1)]\to C[\Lambda_\Theta(1)]\otimes_{C[\Lambda_\Theta^+(1)]}X$ induced by the above homomorphism (namely $f\mapsto f\otimes m$).

We prove that it is an isomorphism.
It is injective since the above homomorphism is injective and $C[\Lambda_\Theta(1)]$ is a ring of fractions.
Take $\lambda'\in \Lambda_\Theta(1)$ such that $C[\Lambda_\Theta(1)]\otimes X_{\lambda'} \ne 0$.
Take $\mu_1,\mu_2\in \Lambda_\Theta^+(1)$ such that $\mu_1 \lambda = \mu_2\lambda'$.
Then we have $\tau_{\mu_1}(X_\lambda) = X_{\mu_1\lambda} = \tau_{\mu_2}(X_{\lambda'})$.
Hence $C[\Lambda_\Theta(1)]\otimes X_{\lambda'} = C[\Lambda_\Theta(1)]\tau_{\mu_2}^{-1}\tau_{\mu_1}\otimes X_{\lambda} = C[\Lambda_\Theta(1)]\otimes X_{\lambda}$.
Since $X = \bigoplus_{\lambda'\in \Lambda_\Theta(1)}X_{\lambda'}$, we have $C[\Lambda_\Theta(1)]\otimes X = \sum_{\lambda'\in \Lambda_\Theta(1)}C[\Lambda_\Theta(1)]\otimes X_{\lambda'}$.
Hence the homomorphism is surjective.
\end{proof}

\begin{proof}[Proof of Proposition~\ref{prop:freeness}]
Notice that the image of $w\chi_\Theta$ is $C[\Lambda_\Theta^+(1)]$ and the kernel of $w\chi_\Theta$ is $\sum_{\lambda\in\Lambda(1)\setminus n_w(\Lambda_\Theta^+(1))}CE(\lambda)$.
Hence
\[
	w\chi_\Theta\otimes_\mathcal{A}\mathcal{H}
	=
	C[\Lambda_\Theta(1)]\otimes_{C[\Lambda_\Theta^+(1)]}\left((\mathcal{H}\otimes C)/\left(\sum_{\lambda\in\Lambda(1)\setminus n_w(\Lambda_\Theta^+(1))}CE(\lambda)\mathcal{H}\right)\right).
\]
For $\lambda\in \Lambda(1)$ and $v\in W$, put
\[
	\sigma_{\lambda,v} = CE(n_w(\lambda) n_v)/\left(\sum_{\mu\in \Lambda(1)\setminus n_w(\Lambda_\Theta^+(1))}CE(\mu)E(\mu^{-1}n_w(\lambda) n_v)\right).
\]
Then $\dim \sigma_{\lambda,v}\le 1$.
Since $E(\lambda)E(\mu n_v)\in CE(\lambda \mu n_v)$ for $\lambda,\mu\in\Lambda(1)$ and $v\in W$, we have
\[
	(\mathcal{H}\otimes C)/\left(\sum_{\lambda\in\Lambda(1)\setminus n_w(\Lambda_\Theta^+(1))}CE(\lambda)\mathcal{H}\right)
	=
	\bigoplus_{\lambda\in \Lambda(1),v\in W}\sigma_{\lambda,v}.
\]
Moreover, for $\mu\in \Lambda^+_{\Theta}(1)$, $\tau_\mu(\sigma_\lambda)\subset \sigma_{\mu\lambda}$.
We have
\[
	w\chi_\Theta\otimes_\mathcal{A}\mathcal{H}
	=
	\bigoplus_{\lambda\in \Lambda_\Theta(1)\backslash\Lambda(1),v\in W}C[\Lambda_\Theta(1)]\otimes_{C[\Lambda_\Theta^+(1)]}\bigoplus_{\mu\in \Lambda_\Theta(1)}\sigma_{\mu\lambda,v}.
\]
Applying the above lemma to $\bigoplus_{\mu\in \Lambda_\Theta(1)}\sigma_{\mu\lambda,v}$, we get the freeness.
Since $\mathcal{H}$ is finitely generated as a left module over $\mathcal{A}$, $w\chi_\Theta\otimes_\mathcal{A}\mathcal{H}$ is finitely generated as a $C[\Lambda_\Theta(1)]$-module.
Hence the rank is finite.
\end{proof}
Next, we prove the injectivity of the intertwining operator obtained in Proposition~\ref{prop:construction of intertwining operators}.
We remark the following easy lemma.
\begin{lem}\label{lem:not a zero divisor}
Let $\alpha\in\Delta$ and $\lambda\in \Lambda_\Theta(1)$.
An element in $\tau_\lambda + \sum_{\langle\nu(\mu),\alpha\rangle < \langle\nu(\lambda),\alpha\rangle}C\tau_\mu$ is not a zero-divisor in $C[\Lambda_\Theta(1)]$.
\end{lem}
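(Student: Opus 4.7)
The plan is to exploit the $\Z$-grading on $C[\Lambda_\Theta(1)]$ induced by the group homomorphism $\langle\nu(\cdot),\alpha\rangle\colon \Lambda_\Theta(1)\to\Z$. Concretely, set
\[
C[\Lambda_\Theta(1)]_n = \bigoplus_{\mu\in\Lambda_\Theta(1),\ \langle\nu(\mu),\alpha\rangle = n}C\tau_\mu,
\]
so that $C[\Lambda_\Theta(1)] = \bigoplus_{n\in\Z}C[\Lambda_\Theta(1)]_n$ and $C[\Lambda_\Theta(1)]_m\cdot C[\Lambda_\Theta(1)]_n\subset C[\Lambda_\Theta(1)]_{m+n}$. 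This is indeed a grading because $\langle\nu(\cdot),\alpha\rangle$ is a homomorphism; note that we are \emph{not} claiming that $C[\Lambda_\Theta(1)]$ is a domain (it is not, since $\Lambda_\Theta(1)$ may contain torsion), only that it is graded.

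Now let $x = \tau_\lambda + \sum_{\langle\nu(\mu),\alpha\rangle<d}c_\mu\tau_\mu$, where $d = \langle\nu(\lambda),\alpha\rangle$. By construction the component of $x$ in grade $d$ is exactly $\tau_\lambda$, and all other components lie in strictly lower grades. Suppose $y\in C[\Lambda_\Theta(1)]$ is nonzero; write $y = \sum_{k}y_k$ with $y_k\in C[\Lambda_\Theta(1)]_k$, and let $N$ be the largest integer with $y_N\neq 0$. The grade-$(d+N)$ component of the product $xy$ is precisely $\tau_\lambda\cdot y_N$, while every other contribution to $xy$ lies in a strictly lower grade.

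The key observation is that $\tau_\lambda$ is a \emph{unit} in $C[\Lambda_\Theta(1)]$, with inverse $\tau_{\lambda^{-1}}$, so left multiplication by $\tau_\lambda$ is a $C$-linear automorphism of $C[\Lambda_\Theta(1)]$. Hence $\tau_\lambda y_N\neq 0$, which forces $xy\neq 0$. The identical argument applied on the other side (using that right multiplication by $\tau_\lambda$ is also an automorphism) shows $yx\neq 0$. Therefore $x$ is neither a left nor a right zero-divisor.

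There is no serious obstacle here; the only thing to notice is that even though $C[\Lambda_\Theta(1)]$ can fail to be an integral domain, the leading-term argument still goes through because the leading term of $x$ is the unit $\tau_\lambda$ rather than a general element of the grade-zero piece.
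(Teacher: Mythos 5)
Your proof is correct and takes essentially the same approach as the paper's: both arguments use the $\Z$-grading of $C[\Lambda_\Theta(1)]$ induced by $\lambda\mapsto\langle\nu(\lambda),\alpha\rangle$ and observe that the top-degree homogeneous component of the product equals $\tau_\lambda$ times the top-degree component of the other factor, which is nonzero because $\mu\mapsto\lambda\mu$ is a bijection of the group (equivalently, $\tau_\lambda$ is a unit). The paper's version is more terse, writing the leading term explicitly as $\sum_{\langle\nu(\mu),\alpha\rangle = l}c_\mu\tau_{\lambda\mu}$ rather than invoking the grading formalism, but the underlying idea is identical.
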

\begin{proof}
Let $E$ be such element and $F = \sum_{\mu\in \Lambda_\Theta(1)}c_\mu\tau_\mu\in C[\Lambda_\Theta(1)]$.
Assume that $F\ne 0$ and put $l =  \max\{\langle\nu(\mu),\alpha\rangle\mid c_\mu\ne 0\}$.
Then $EF \in \sum_{\langle\nu(\mu),\alpha\rangle = l}c_{\mu}\tau_{\lambda\mu} + \sum_{\langle\nu(\mu),\alpha\rangle < \langle\nu(\lambda),\alpha\rangle + l}C\tau_\mu$.
Hence $EF\ne 0$.
\end{proof}
We apply this lemma to the term appearing in Proposition~\ref{prop:compositions of intertwining operators}.
So if we can take a suitable $\lambda$, then it is proved that the intertwining operator obtained in Proposition~\ref{prop:construction of intertwining operators} is injective.
To take $\lambda$, we use the following lemma.

\begin{lem}\label{lem:can take lambda s.t. chi_Theta(lambda) ne 0}
Let $w\in W$ and $\alpha\in \Delta$ such that $s_\alpha w > w$.
If $s_\alpha w(\Theta)\subset \Sigma^+$, then $w^{-1}(\alpha)\not\in\Z\Theta$.
\end{lem}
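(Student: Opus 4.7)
The plan is to argue by contradiction: suppose $\beta := w^{-1}(\alpha)$ lies in $\Z\Theta$ and derive an impossibility. First I would unpack the two hypotheses into usable linear data. The condition $s_\alpha w > w$ with $\alpha \in \Delta$ simple is equivalent, by standard Coxeter theory, to $w^{-1}(\alpha) \in \Sigma^+$, so $\beta > 0$. Because $\beta$ is a root of $\Sigma$, it must be either a non-negative or a non-positive integer combination of the simple roots $\Delta$. Combined with $\beta \in \Z\Theta$ (i.e.\ the coefficients at the simple roots outside $\Theta$ are zero) and $\beta > 0$, this forces a representation $\beta = \sum_{\gamma \in \Theta} n_\gamma \gamma$ with all $n_\gamma \ge 0$.

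Then I would apply the element $s_\alpha w$ to this identity and compare two evaluations. On the one hand $s_\alpha w(\beta) = s_\alpha(\alpha) = -\alpha$. On the other hand, linearity gives
\[
	s_\alpha w(\beta) = \sum_{\gamma \in \Theta} n_\gamma\, s_\alpha w(\gamma),
\]
and, using the hypothesis $s_\alpha w(\Theta)\subset\Sigma^+$ together with $n_\gamma \ge 0$, the right-hand side is a non-negative $\Z$-linear combination of positive roots. Expanded in the simple-root basis $\Delta$, such a combination has only non-negative coefficients, while $-\alpha$ has a strictly negative $\alpha$-coefficient. The contradiction closes the argument.

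The proof is essentially a one-line computation once both hypotheses are translated into linear statements, so no real obstacle arises. The only mild subtlety worth highlighting is the passage from ``$\beta \in \Z\Theta$ and $\beta > 0$'' to ``$\beta$ is a non-negative combination of elements of $\Theta$'', which relies on the fact that a root of $\Sigma$ lying in $\Z\Delta'$ for a subset $\Delta' \subset \Delta$ belongs to the sub-root-system generated by $\Delta'$ and is therefore sign-definite in the $\Theta$-coordinates.
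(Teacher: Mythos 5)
Your proof is correct and takes essentially the same route as the paper: the paper writes $-\alpha = s_\alpha w(w^{-1}(\alpha)) \in s_\alpha w(\Z\Theta\cap\Sigma^+) \subset \Sigma^+$ and concludes, which compresses exactly your computation (including the point that a positive root in $\Z\Theta$ is a non-negative integer combination of $\Theta$, hence lands in $\Sigma^+$ under $s_\alpha w$). Your version just spells out the step the paper leaves implicit.
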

\begin{proof}
By the condition, $w^{-1}(\alpha) > 0$.
If $w^{-1}(\alpha)\in \Z\Theta$, then $w^{-1}(\alpha)\in \Z\Theta \cap \Sigma^+$.
Hence $-\alpha = s_\alpha w(w^{-1}(\alpha))\in s_\alpha w(\Z\Theta\cap\Sigma^+)\subset\Sigma^+$.
This is a contradiction.
\end{proof}

\begin{prop}\label{prop:injectivity of intertwining operator}
Let $w\in W$ and $s\in S$ such that $s w(\Theta)\subset\Sigma^+$.
Then the homomorphism $\Phi_{s w, w}\colon w\chi_\Theta\otimes_\mathcal{A}\mathcal{H}\to s w\chi_\Theta \otimes_\mathcal{A}\mathcal{H}$ obtained in Proposition~\ref{prop:construction of intertwining operators} is injective.
\end{prop}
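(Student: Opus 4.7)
The plan is to prove injectivity of $\Phi_{sw,w}$ by composing it with a map $\Psi$ in the opposite direction from Proposition~\ref{prop:hard intertwining operator} and showing the composition is injective; by freeness (Proposition~\ref{prop:freeness}), this will reduce to showing that a certain scalar in $C[\Lambda_\Theta(1)]$ is not a zero-divisor. Concretely, I would pick $\lambda=\lambda_0^{2N}$ for $\lambda_0$ central in $\Lambda(1)$ with $\nu(\lambda_0)$ lying in the $w$-translate of the $\Theta$-face of the dominant chamber and $\langle\nu(\lambda_0),\alpha\rangle\ge 1$, and $N\ge 2$. Setting $\lambda_1=\lambda_2=\lambda_0^N$, the hypotheses of Propositions~\ref{prop:hard intertwining operator} and~\ref{prop:compositions of intertwining operators} are met, and $n_w^{-1}(\lambda)\in\Lambda_\Theta^+(1)$, so that $(w\chi_\Theta)(E(\lambda))=\tau_{n_w^{-1}(\lambda)}$ is a unit in $C[\Lambda_\Theta(1)]$.

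By Proposition~\ref{prop:compositions of intertwining operators} (applied via the factorization of $\chi_\Theta$ through $\widetilde\chi$), the composition $\Psi\circ\Phi_{sw,w}$ acts on $w\chi_\Theta\otimes_\mathcal{A}\mathcal{H}$ by multiplication by
\[
\chi:=(w\chi_\Theta)\bigl(E(\lambda)-d(s,\lambda)E(\lambda\mu_{n_s^{-1}}(-1))c_{s,-1}c_s\bigr)\in C[\Lambda_\Theta(1)],
\]
and it suffices to show that $\chi$ is not a zero-divisor. Writing $c_{s,-1}c_s\in C[Z_\kappa]$ as $\sum_t a_tT_t$, the correction term is a $C$-linear combination of $\tau_\mu$ with $\mu=n_w^{-1}(\lambda\mu_{n_s^{-1}}(-1)t)$, each sharing the common $\nu$-value $\nu(n_w^{-1}(\lambda))-w^{-1}(\check\alpha)$; only those $\mu\in\Lambda_\Theta^+(1)$ survive under $w\chi_\Theta$.

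I would then split into cases. If there is some $\beta\in\Theta$ with $\langle w^{-1}(\check\alpha),\beta\rangle\ne 0$, no surviving $\mu$ lies in $\Lambda_\Theta(1)$; the correction vanishes under $w\chi_\Theta$, and $\chi=\tau_{n_w^{-1}(\lambda)}$ is a unit. Otherwise $\langle w^{-1}(\check\alpha),\beta\rangle=0$ for every $\beta\in\Theta$, and I would invoke Lemma~\ref{lem:not a zero divisor} with some $\gamma\in\Delta\setminus\Theta$ satisfying $\langle w^{-1}(\check\alpha),\gamma\rangle>0$: then all correction terms $\tau_\mu$ have strictly smaller $\gamma$-pairing than $\tau_{n_w^{-1}(\lambda)}$, so $\chi$ is not a zero-divisor.

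The content of the argument is the existence of $\gamma$ in the second case, which is the main obstacle. By Lemma~\ref{lem:can take lambda s.t. chi_Theta(lambda) ne 0}, $w^{-1}(\alpha)\notin\Z\Theta$. Expanding $w^{-1}(\alpha)=\sum_{\delta\in\Delta}n_\delta\delta$ with $n_\delta\ge 0$ and some $n_\delta>0$ for $\delta\notin\Theta$, the case hypothesis kills the $\Theta$-part and gives
\[
2=\langle w^{-1}(\check\alpha),w^{-1}(\alpha)\rangle=\sum_{\delta\in\Delta\setminus\Theta}n_\delta\langle w^{-1}(\check\alpha),\delta\rangle,
\]
forcing some $\gamma\in\Delta\setminus\Theta$ with $\langle w^{-1}(\check\alpha),\gamma\rangle>0$. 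With this root-theoretic input in hand, the remainder of the proof is a direct application of the preceding results of this section.
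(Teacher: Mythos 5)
Your proof is correct and takes essentially the same route as the paper: the paper likewise picks $\lambda=\lambda_1^2$ with $\lambda_1$ central, $n_w^{-1}(\lambda_1)\in\Lambda_\Theta^+(1)$ and $\langle\nu(\lambda_1),\alpha\rangle\ge 2$, invokes Propositions~\ref{prop:hard intertwining operator} and~\ref{prop:compositions of intertwining operators}, and concludes via Proposition~\ref{prop:freeness} and Lemma~\ref{lem:not a zero divisor}. Your case split and root-theoretic computation spell out the existence of a suitable $\gamma$, which the paper leaves implicit; the split is however avoidable, since $w^{-1}(\check\alpha)$ is a positive coroot and $\langle w^{-1}(\check\alpha),w^{-1}(\alpha)\rangle=2>0$ with $w^{-1}(\alpha)$ a nonnegative combination of simple roots already forces $\langle w^{-1}(\check\alpha),\gamma\rangle>0$ for some $\gamma\in\Delta$, so Lemma~\ref{lem:not a zero divisor} applies in all cases at once.
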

\begin{proof}
Take $\alpha\in\Delta$ such that $s = s_\alpha$.
By the above lemma, $w^{-1}(\alpha)\not\in\Z\Theta$.
Hence we can take $\lambda_1$ from the center of $\Lambda(1)$ such that $n_w^{-1}(\lambda_1)\in\Lambda^+_\Theta(1)$ and $\langle w^{-1}(\alpha),\nu(n_w^{-1}(\lambda_1))\rangle \ge 2$.
Then $\lambda = \lambda_1^2$ satisfies the condition of Proposition~\ref{prop:hard intertwining operator} and \ref{prop:compositions of intertwining operators}.
Hence we have a homomorphism $s w\chi_\Theta\otimes_\mathcal{A}\mathcal{H}\to w\chi_\Theta\otimes_\mathcal{A}\mathcal{H}$ and the composition with $\Phi_{s w,w}$ is given in Proposition~\ref{prop:compositions of intertwining operators}.
By Proposition~\ref{prop:freeness} and Lemma~\ref{lem:not a zero divisor}, the composition is injective.
Hence $\Phi_{sw,w}$ is injective.
\end{proof}
For $w\in W$, put $\Delta_w = \{\alpha\in\Delta\mid w(\alpha) > 0\}$.
For $\Theta'\subset\Delta$, let $w_{\Theta'}$ be the longest element in $W_{\Theta'}$.
Notice that $\Delta_{w_\Delta w_{\Theta'}} = \Theta'$.
We have $\Theta\subset\Delta_w$ if and only if $w(\Theta)\subset\Sigma^+$.
We prove the following theorem.
\begin{thm}\label{thm:std mod depends only on Delta_w}
Let $w,w'\in W$.
Assume that $\Delta_w = \Delta_{w'}$ and $\Theta\subset\Delta_w$.
Then $w\chi_\Theta\otimes_\mathcal{A}\mathcal{H}\simeq w'\chi_\Theta\otimes_\mathcal{A}\mathcal{H}$.
\end{thm}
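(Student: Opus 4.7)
The plan is to reduce to the elementary case $w' = sw$ (with $s$ simple) via a chain argument, and in the elementary case to show the intertwining operator $\Phi_{sw,w}$ of Proposition~\ref{prop:construction of intertwining operators} is an isomorphism by exhibiting a left inverse whose composition is multiplication by a unit.

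Assume $s = s_\alpha \in S$ with $sw > w$ and $\Theta \subset \Delta_w = \Delta_{sw}$. By Propositions~\ref{prop:injectivity of intertwining operator} and~\ref{prop:freeness}, $\Phi_{sw,w}$ is an injection between free $C[\Lambda_\Theta(1)]$-modules of the same finite rank, so exhibiting a left inverse upgrades this to an isomorphism via a standard determinant argument. I would choose $\lambda = \lambda_1^2$ in the center of $\Lambda(1)$ with $n_w^{-1}(\lambda_1) \in \Lambda_\Theta^+(1)$, $\langle\nu(\lambda_1),\alpha\rangle \ge 2$, and with $\nu(n_w^{-1}(\lambda))$ generic on the face $\Theta$, so that its $W$-stabilizer is exactly $W_\Theta$ (possible using Lemma~\ref{lem:can take lambda s.t. chi_Theta(lambda) ne 0} and centrality). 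Let $\Psi$ be the intertwiner of Proposition~\ref{prop:hard intertwining operator} for this $\lambda$; by Proposition~\ref{prop:compositions of intertwining operators}, $\Psi\circ\Phi_{sw,w}$ is multiplication by
\[
  e := (w\chi_\Theta)\bigl(E(\lambda) - d(s,\lambda)\,E(\lambda\mu_{n_s^{-1}}(-1))\,c_{s,-1}c_s\bigr).
\]
The first summand is $\tau_{n_w^{-1}(\lambda)}$, a unit. For the second to contribute we would need $d(s,\lambda) = 1$, that is some $w_1 \in W$ with $n_{w_1}(\lambda)$ dominant and $w_1(\alpha) \in \Delta$. Genericity forces $w_1 = u w^{-1}$ with $u \in W_\Theta$, so $\gamma := u w^{-1}(\alpha) \in \Delta$; since $w^{-1}(\alpha) \notin \Z\Theta$ (Lemma~\ref{lem:can take lambda s.t. chi_Theta(lambda) ne 0}) and $W_\Theta$ preserves signs on $\Sigma\setminus \Z\Theta$, one has $\gamma \in \Delta \setminus \Theta$. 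Now assume for contradiction $w^{-1}(\check\alpha) \in (\R\Theta)^\perp$; since $W_\Theta$ acts trivially on $(\R\Theta)^\perp$, one obtains $\check\gamma = u w^{-1}(\check\alpha) = w^{-1}(\check\alpha)$, hence $w^{-1}(\alpha) = \gamma \in \Delta$, contradicting $\Delta_w = \Delta_{sw}$. Therefore some $\beta \in \Theta$ satisfies $\langle w^{-1}(\check\alpha),\beta\rangle \ne 0$, and
\[
  \langle \nu(n_w^{-1}(\lambda\mu_{n_s^{-1}}(-1))),\beta\rangle = -\langle w^{-1}(\check\alpha),\beta\rangle \ne 0,
\]
so $n_w^{-1}(\lambda\mu_{n_s^{-1}}(-1)) \notin \Lambda_\Theta^+(1)$; since $c_{s,-1}c_s \in C[Z_\kappa] \subset \ker\nu$ does not affect $\nu$, the second term of $e$ vanishes. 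Thus $e = \tau_{n_w^{-1}(\lambda)}$ is a unit, finishing the elementary case.

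To deduce the general case, I argue that any $w, w'$ with $\Delta_w = \Delta_{w'}$ are connected by a chain $w = w_0, w_1, \ldots, w_k = w'$ where $w_{i+1} = s_i w_i$ for a simple reflection $s_i$ and $\Delta_{w_i}$ is constant; iterating the elementary isomorphism along the chain yields the theorem. The combinatorial claim — that the descent class $\{v \in W : \Delta_v = \Delta_w\}$ is connected under left multiplication by $\Delta$-preserving simple reflections — follows by induction on $\ell(w)+\ell(w')$ using that any non-minimal element of the class admits a $\Delta$-preserving left descent. The main obstacle is the vanishing of the second summand of $e$ in the elementary step; the crucial input is the rigidity that $W_\Theta$ acts trivially on $(\R\Theta)^\perp$, which combined with the hypothesis $\Delta_w = \Delta_{sw}$ forces the key contradiction.
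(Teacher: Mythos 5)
Your approach is genuinely different from the paper's and is essentially correct, so let me compare the two. The paper reduces to $w' = w_\Delta w_{\Delta_w}$ via Lemma~\ref{lem:nice choice of s}, then proves surjectivity of the (already-known-to-be-injective) map $\Phi_{w_\Delta w_{\Delta_w},w}$ by a direct computation: using the crucial extra condition in Lemma~\ref{lem:nice choice of s} that $(s_{\alpha_{i+1}}\cdots s_{\alpha_l}w)^{-1}(\alpha_i)$ is \emph{not simple}, it derives the length inequality $\ell(w_\Delta w^{-1}s_l) + \ell(w) > \ell(w_\Delta w^{-1}s_lw)$ to show $1\otimes T_{n_{s_1\cdots s_{l-1}}}^*T_{n_w} = 0$, concludes $1\otimes T_{n_{s_1\cdots s_l}}^*T_{n_w} = 1\otimes T_{n_{w_\Delta w_{\Delta_w}}}$, and finally invokes Lemma~\ref{lem:certain element is integral} to recover $1\otimes 1$ in the image. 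Your proof instead invokes Propositions~\ref{prop:hard intertwining operator} and~\ref{prop:compositions of intertwining operators} to build an explicit (near-)inverse $\Psi$, and then shows the composition is a unit; the mechanism by which the hypothesis $\Delta_w = \Delta_{sw}$ enters is conceptually clearer in your version — it forces $w^{-1}(\check\alpha)$ off $(\R\Theta)^{\perp}$, so the $\nu$-shift by $-w^{-1}(\check\alpha)$ takes $n_w^{-1}(\lambda\mu_{n_s^{-1}}(-1))$ out of $\Lambda_\Theta(1)$ and the correction term $d(s,\lambda)E(\lambda\mu_{n_s^{-1}}(-1))c_{s,-1}c_s$ is killed by $w\chi_\Theta$. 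Both proofs ultimately rest on the same combinatorial fact that $w^{-1}(\alpha)$ is non-simple (equivalent to $\Delta_w = \Delta_{sw}$), but yours isolates a transparent ``why'': in the boundary case $w^{-1}(\alpha)\in\Delta$ treated in Corollary~\ref{cor:isom between std mod for simple module}, that correction term genuinely survives and the map is no longer an isomorphism.

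One point to fix. Your conclusion that $\Phi_{sw,w}$ is an isomorphism relies on a ``standard determinant argument'' after asserting that $w\chi_\Theta\otimes_\mathcal{A}\mathcal{H}$ and $sw\chi_\Theta\otimes_\mathcal{A}\mathcal{H}$ are free $C[\Lambda_\Theta(1)]$-modules of the \emph{same} rank. Proposition~\ref{prop:freeness} gives freeness of finite rank, but it does not identify the rank, and its proof does not obviously produce a $w$-independent count; this equality is not available to you as stated. You don't need it, though: Proposition~\ref{prop:compositions of intertwining operators} asserts that \emph{both} compositions $\Psi\circ\Phi$ and $\Phi\circ\Psi$ are multiplication by $(w\widetilde{\chi})\bigl(E(\lambda) - d(s,\lambda)E(\lambda\mu_{n_s^{-1}}(-1))c_{s,-1}c_s\bigr)$ (the paper's proof computes both), so once you have shown the image of that element in $C[\Lambda_\Theta(1)]$ is the unit $\tau_{n_w^{-1}(\lambda)}$, $\Phi$ and $\Psi$ are two-sided inverses of each other up to a unit and no rank comparison is needed. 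Also, your appeal to Lemma~\ref{lem:can take lambda s.t. chi_Theta(lambda) ne 0} to justify existence of a \emph{central} $\lambda$ with $\nu(n_w^{-1}(\lambda))$ generic on the $\Theta$-face is under-argued — that lemma only gives $w^{-1}(\alpha)\notin\Z\Theta$ — but it is salvageable since the image of $\alggrp{S}(F)$ in the center of $\Lambda(1)$ has $\nu$-image of full rank in $V$; state that explicitly.
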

We need lemmas to prove the theorem.
\begin{lem}\label{lem:certain element is integral}
Let $w\in W$ and $\lambda\in \Lambda$ such that for any $\alpha\in\Sigma^+$, if $w(\alpha) < 0$, then $\langle\nu(\lambda),\alpha\rangle > 0$.
Then we have $T_{n_{w}}^{-1}E(n_w(\lambda))\in \mathcal{H}$.
\end{lem}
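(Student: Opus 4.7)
The plan is to establish the clean identity
\[
T_{n_w}^{-1}E(n_w(\lambda)) = E_{w^{-1}(-\Delta)}(\lambda n_w^{-1}),
\]
whose right-hand side, being a Bernstein basis element, manifestly lies in $\mathcal{H}$.

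For the first ingredient, I would apply Proposition~\ref{prop:multiplication formula} twice to expand $E(n_w\lambda) = E_{-\Delta}(n_w\lambda)$ in two ways, using the identity $n_w\lambda = n_w(\lambda)\cdot n_w$. Factoring as $n_w\cdot\lambda$ and using $E_{-\Delta}(n_w) = T_{n_w}$ from Proposition~\ref{prop:example of E} gives
\[
E(n_w\lambda) = q_{n_w}^{-1/2}q_\lambda^{-1/2}q_{n_w\lambda}^{1/2}\,T_{n_w}\,E_{w^{-1}(-\Delta)}(\lambda),
\]
while factoring as $n_w(\lambda)\cdot n_w$ and using $q_{n_w(\lambda)} = q_\lambda$ (since $\nu$ is $W$-equivariant and the length formula \eqref{eq:length formula} depends only on $|\langle\nu,\beta\rangle|$) gives
\[
E(n_w\lambda) = q_\lambda^{-1/2}q_{n_w}^{-1/2}q_{n_w\lambda}^{1/2}\,E(n_w(\lambda))\,T_{n_w}.
\]
Comparing yields $E(n_w(\lambda))T_{n_w} = T_{n_w}E_{w^{-1}(-\Delta)}(\lambda)$, and hence $T_{n_w}^{-1}E(n_w(\lambda)) = E_{w^{-1}(-\Delta)}(\lambda)\,T_{n_w}^{-1}$.

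For the second ingredient, I would apply Proposition~\ref{prop:multiplication formula} once more to the product $\lambda\cdot n_w^{-1}$, but with respect to the orientation $w^{-1}(-\Delta)$. A small extension of Proposition~\ref{prop:example of E}, asserting $E_{v(-\Delta)}(\widetilde{v}) = T_{\widetilde{v}}^*$ for any lift $\widetilde{v}$ of $v\in W$ (routine from the compatibility of the Bernstein basis with $Z_\kappa$-multiplication), gives $E_{w^{-1}(-\Delta)}(n_w^{-1}) = T_{n_w^{-1}}^* = q_{n_w}T_{n_w}^{-1}$. Substituting,
\[
E_{w^{-1}(-\Delta)}(\lambda n_w^{-1}) = q_\lambda^{-1/2}q_{n_w}^{1/2}q_{\lambda n_w^{-1}}^{1/2}\,E_{w^{-1}(-\Delta)}(\lambda)\,T_{n_w}^{-1},
\]
which, combined with the previous identity, produces
\[
T_{n_w}^{-1}E(n_w(\lambda)) = q_\lambda^{1/2}q_{n_w}^{-1/2}q_{\lambda n_w^{-1}}^{-1/2}\,E_{w^{-1}(-\Delta)}(\lambda n_w^{-1}).
\]

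The remaining step is to verify that the scalar equals $1$, i.e.\ $\ell(\lambda) = \ell(n_w) + \ell(\lambda n_w^{-1})$. Using the length formula \eqref{eq:length formula} on both sides, the difference collapses to
\[
\ell(\lambda) - \ell(\lambda n_w^{-1}) = \sum_{\beta\in\Sigma^+,\,w(\beta)<0}\bigl(|\langle\nu(\lambda),\beta\rangle| - |\langle\nu(\lambda),\beta\rangle - 1|\bigr),
\]
and the hypothesis forces each summand to equal $1$ (since $\langle\nu(\lambda),\beta\rangle\in\Z_{\ge 1}$ for each such $\beta$), giving precisely $\#\{\beta\in\Sigma^+ : w(\beta)<0\} = \ell(n_w)$. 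This length-additivity, which is exactly what the hypothesis of the lemma is designed to guarantee, is the main delicate point; everything else is careful bookkeeping of the $q$-normalization factors and the small upgrade of Proposition~\ref{prop:example of E} to arbitrary lifts. With the scalar equal to $1$, we conclude $T_{n_w}^{-1}E(n_w(\lambda)) = E_{w^{-1}(-\Delta)}(\lambda n_w^{-1}) \in \mathcal{H}$.
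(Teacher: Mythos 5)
Your proof is correct and follows essentially the same route as the paper's: both identify $T_{n_w}^{-1}E(n_w(\lambda))$ with a $q$-power multiple of the Bernstein element $E_{?}(\lambda n_w^{-1})$ via Proposition~\ref{prop:multiplication formula} and then use the length formula \eqref{eq:length formula} together with the hypothesis to show $\ell(\lambda) = \ell(\lambda n_w^{-1}) + \ell(n_w)$, forcing the scalar to be $1$. Your detour through two expansions of $E(n_w\lambda)$ is slightly longer than the paper's single application of the product formula to $n_w^{-1}\cdot n_w(\lambda) = \lambda n_w^{-1}$, but the substance — including the "small upgrade" of Proposition~\ref{prop:example of E} to the lift $n_w^{-1}$ — is the same.
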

\begin{proof}
We have $T_{n_w}^{-1} = q_{n_w}^{-1}T_{n_w^{-1}}^* = q_{n_w}^{-1}E_{w(-\Delta)}(n_w^{-1})$.
Hence $T_{n_w}^{-1}E(n_w(\lambda)) = q_{n_w}^{-1/2}q_{n_w(\lambda)}^{1/2}q_{\lambda n_w^{-1}}^{-1/2}E_{w(-\Delta)}(\lambda n_w^{-1})$ by Proposition~\ref{prop:multiplication formula}.
By \eqref{eq:length formula}, we have
\begin{align*}
\ell(\lambda n_w^{-1})
& = \sum_{\alpha\in\Sigma^+,w(\alpha) > 0}\lvert\langle\nu(\lambda),\alpha\rangle\rvert + \sum_{\alpha\in\Sigma^+,w(\alpha) < 0}\lvert\langle\nu(\lambda),\alpha\rangle - 1\rvert\\
& = \sum_{\alpha\in\Sigma^+,w(\alpha) > 0}\lvert\langle\nu(\lambda),\alpha\rangle\rvert + \sum_{\alpha\in\Sigma^+,w(\alpha) < 0}(\lvert\langle\nu(\lambda),\alpha\rangle\rvert - 1)\\
& = \ell(\lambda) - \ell(w).
\end{align*}
Hence we have $q_{n_w(\lambda)} = q_\lambda = q_{\lambda n_w^{-1}}q_{n_w}$.
Therefore we get $T_{n_w}^{-1}E(n_w(\lambda)) = E_{w(-\Delta)}(\lambda n_w^{-1})\in \mathcal{H}$.
\end{proof}

\begin{lem}[{\cite[Lemma~5.17, Lemma~5.18]{MR2728487}}]\label{lem:nice choice of s}
Let $w\in W$ such that $w\ne w_\Delta w_{\Delta_w}$.
Then there exists $\alpha\in\Delta$ such that $s_\alpha w > w$, $w^{-1}(\alpha)$ is not a simple root and $\Delta_w = \Delta_{s_\alpha w}$.
\end{lem}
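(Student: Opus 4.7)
The plan is to exhibit $\alpha$ as a right descent of the auxiliary element $v := w_\Delta w_\Theta w^{-1}$, where $\Theta := \Delta_w$. The main point is that $w_\Delta w_\Theta$ is the unique element of maximum length among $\{w' \in W : \Delta_{w'} = \Theta\}$. To see this, let $\Sigma_\Theta^+ := \Sigma^+ \cap \Z\Theta$. A direct count of inversions gives $\ell(w_\Delta w_\Theta) = \#(\Sigma^+ \setminus \Sigma_\Theta^+)$: indeed $w_\Theta$ sends $\Sigma_\Theta^+$ to $\Sigma_\Theta^-$ and permutes $\Sigma^+ \setminus \Sigma_\Theta^+$ (since $w_\Theta$ acts trivially modulo $\Z\Theta$), after which $w_\Delta$ flips all signs. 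Conversely, for any $w'$ with $\Delta_{w'} = \Theta$ one has $w'(\Sigma_\Theta^+) \subset \Sigma^+$ (each positive root in $\Sigma_\Theta^+$ is a nonnegative integer combination of $\Theta \subset \Delta_{w'}$, each mapped by $w'$ into $\Sigma^+$), so the inversion set of $w'$ is contained in $\Sigma^+ \setminus \Sigma_\Theta^+$ and $\ell(w') \le \ell(w_\Delta w_\Theta)$, with equality only when $w' = w_\Delta w_\Theta$. Since $w \ne w_\Delta w_\Theta$ by hypothesis, $\ell(w) < \ell(w_\Delta w_\Theta)$, and the triangle inequality yields $\ell(v) \ge \ell(w_\Delta w_\Theta) - \ell(w) > 0$.

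Hence $v$ admits a right descent $\alpha \in \Delta$: $v(\alpha) < 0$, equivalently $w_\Theta(w^{-1}(\alpha)) > 0$. I would then case-split on whether $w^{-1}(\alpha) \in \Sigma_\Theta$. In the ``bad'' case $w^{-1}(\alpha) \in \Sigma_\Theta$, the element $w_\Theta$ reverses its sign, so $w^{-1}(\alpha) \in \Sigma_\Theta^-$; then $\alpha = w(w^{-1}(\alpha)) \in w(-\Sigma_\Theta^+) \subset -\Sigma^+$ by the positivity property above, contradicting $\alpha \in \Sigma^+$. Therefore $w^{-1}(\alpha) \in \Sigma^+ \setminus \Sigma_\Theta$.

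From this the three required properties follow at once. First, $w^{-1}(\alpha) > 0$ yields $s_\alpha w > w$. Second, $w^{-1}(\alpha)$ is not simple: otherwise $w^{-1}(\alpha) \in \Delta \setminus \Theta = \Delta \setminus \Delta_w$, and $w$ would send it to a negative root, contradicting $w(w^{-1}(\alpha)) = \alpha > 0$. Third, $\Delta_{s_\alpha w} = \Delta_w$: for any $\beta \in \Delta$, the sign of $s_\alpha w(\beta)$ agrees with that of $w(\beta)$ unless $w(\beta) = \pm \alpha$, and both alternatives are excluded ($w(\beta) = \alpha$ would give $\beta = w^{-1}(\alpha) \notin \Delta$, while $w(\beta) = -\alpha$ would give $\beta = -w^{-1}(\alpha) < 0$). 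The main conceptual obstacle is guessing the right auxiliary element $v$; once this is in place the verification is a short case analysis.
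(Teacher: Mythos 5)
Your proposal is correct and is essentially the paper's proof in different clothing: your condition ``$\alpha$ is a right descent of $v = w_\Delta w_{\Delta_w} w^{-1}$'' is literally the same as the paper's ``$s_\alpha\, ww_{\Delta_w}^{-1} > ww_{\Delta_w}^{-1}$'' (both unwind to $w_{\Delta_w}w^{-1}(\alpha) > 0$), and the subsequent case analysis — ruling out $w^{-1}(\alpha) \in \Sigma_{\Delta_w}$, then observing $w^{-1}(\alpha)$ is a non-simple positive root and that $w(\beta) \ne \pm\alpha$ for $\beta \in \Delta$ — is the same. The digression about $w_\Delta w_{\Delta_w}$ being the unique maximal element with prescribed $\Delta_{w'}$ is correct but unnecessary here, since $v \ne 1$ follows immediately from $w \ne w_\Delta w_{\Delta_w}$; the paper records that maximality separately as Remark~\ref{rem:on weyl elements}.
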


\begin{proof}
There exists $\alpha\in\Delta$ such that $s_\alpha ww_{\Delta_w}^{-1} > ww_{\Delta_w}^{-1}$.
If $s_\alpha w < w$, then $w^{-1}(\alpha) < 0$.
Since $w_{\Delta_w}w^{-1}(\alpha) > 0$, $w^{-1}(\alpha)\in \Sigma^-\cap \Z\Delta_w$.
Hence $\alpha\in w(\Sigma^-\cap \Z\Delta_w)\subset\Sigma^-$, this is a contradiction.
Therefore $s_\alpha w > w$.
If $w^{-1}(\alpha)$ is a simple root, then $w^{-1}(\alpha)\in\Delta_w$.
Hence $w_{\Delta_w}w^{-1}(\alpha) < 0$.
This is a contradiction.
Hence $w^{-1}(\alpha)$ is not a simple root.
Therefore, for any $\beta\in\Delta$, $w(\beta)\ne \pm \alpha$.
Hence we have $w(\beta) > 0$ if and  only if $s_\alpha w(\beta) > 0$.
Therefore $\Delta_w = \Delta_{s_\alpha w}$.
\end{proof}
\begin{rem}\label{rem:on weyl elements}
From this lemma, $w_\Delta w_{\Theta'}$ is the maximal element in $\{w\in W\mid \Delta_w = \Theta'\}$.
If $\Theta''\supset \Theta$, then $w_\Delta w_{\Theta''} < w_\Delta w_{\Theta'}$.
Hence $w_\Delta w_{\Theta'}$ is also the maximal element in $\{w\in W\mid \Delta_w\supset \Theta'\}$.
\end{rem}

\begin{proof}[Proof of Theorem~\ref{thm:std mod depends only on Delta_w}]
We may assume $w' = w_\Delta w_{\Delta_w}$.
By Lemma~\ref{lem:nice choice of s}, there exists $\alpha_1,\dots,\alpha_l$ such that
\begin{itemize}
\item $s_{\alpha_i}\dotsm s_{\alpha_l}w > s_{\alpha_{i + 1}}\dotsm s_{\alpha_l}w$.
\item $(s_{\alpha_{i + 1}}\dotsm s_{\alpha_l}w)^{-1}(\alpha_i)$ is not simple.
\item $w_\Delta w_{\Delta_w} = s_{\alpha_1}\dotsm s_{\alpha_l}w$.
\end{itemize}
Set $s_i = s_{\alpha_i}$.
By Proposition~\ref{prop:construction of intertwining operators}, we have the homomorphism $\Phi_{w_\Delta w_{\Delta_w},w}\colon w\chi_\Theta\otimes_\mathcal{A}\mathcal{H}\to w_\Delta w_{\Delta_w}\chi_\Theta\otimes_\mathcal{A}\mathcal{H}$ defined by $\Phi_{w_\Delta w_{\Delta_w},w}(1\otimes 1) = 1\otimes T_{n_{s_1\dotsm s_l}}^*$.
This is injective by Proposition~\ref{prop:injectivity of intertwining operator}.
We prove that $\Phi_{w_\Delta w_{\Delta_w},w}$ is surjective.
\begin{claim}
We have $1\otimes T_{n_{s_1\dotsm s_{l - 1}}}^*T_{n_w} = 0$ in $w_\Delta w_{\Delta_w}\chi_\Theta\otimes_\mathcal{A}\mathcal{H}$.
\end{claim}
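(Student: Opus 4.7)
Proof proposal. The plan is to reinterpret the claim via the intertwining operator built earlier and then to verify the vanishing through an explicit Bernstein-basis computation. Setting $v := s_1\dotsm s_{l-1}$, the reduced factorization $w_0 := w_\Delta w_{\Delta_w} = v\cdot (s_l w)$ yields, by Proposition~\ref{prop:construction of intertwining operators}, a right $\mathcal{H}$-module homomorphism
\[
\Phi_{w_0, s_l w}\colon s_l w\chi_\Theta \otimes_\mathcal{A} \mathcal{H} \to w_0\chi_\Theta \otimes_\mathcal{A} \mathcal{H}, \qquad 1\otimes 1\mapsto 1\otimes T_{n_v}^*.
\]
By right $\mathcal{H}$-linearity, $1\otimes T_{n_v}^* T_{n_w} = \Phi_{w_0, s_l w}(1\otimes T_{n_w})$, so the claim is equivalent to the vanishing $\Phi_{w_0, s_l w}(1\otimes T_{n_w}) = 0$.

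Next I would expand $T_{n_v}^* T_{n_w}\in \mathcal{H}$ in the Bernstein basis $\{E(\widetilde{u})\}_{\widetilde{u}\in\widetilde{W}(1)}$ and, via Proposition~\ref{prop:multiplication formula}, write each surviving basis element as $E(\lambda_{\widetilde{u}})\cdot(\text{Weyl tail})$ for a translation $\lambda_{\widetilde{u}}\in \Lambda(1)$. The tensor balancing then gives $1\otimes E(\lambda_{\widetilde{u}})\cdot(\text{tail}) = (w_0\chi_\Theta)(E(\lambda_{\widetilde{u}}))\otimes(\text{tail})$ in $w_0\chi_\Theta\otimes_\mathcal{A}\mathcal{H}$, reducing the claim to the assertion that $(w_0\chi_\Theta)(E(\lambda_{\widetilde{u}})) = 0$, i.e.\ $n_{w_0}^{-1}(\lambda_{\widetilde{u}})\notin \Lambda_\Theta^+(1)$, for every $\lambda_{\widetilde{u}}$ appearing.

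The geometric content is as follows. Because $w_0 = vs_l w$ inserts the extra simple reflection $s_l$ between $v$ and $w$, and because $w^{-1}(\alpha_l)$ is positive but not simple (by the selection of the $\alpha_i$'s via Lemma~\ref{lem:nice choice of s}), every $\lambda_{\widetilde{u}}$ arising in the Bernstein support of $T_{n_v}^* T_{n_w}$—whose $\nu$-image is controlled by the Bruhat interval below $n_v n_w$ together with the simple-reflection corrections in $T_{n_v}^* = T_{n_{s_1}}^* T_{n_{s_2\dotsm s_{l-1}}}^*\cdot(\text{multiplication-formula $q$-factor})$—has its $n_{w_0}^{-1}$-conjugate pushed out of the $\Theta$-singular dominant chamber by the twist of $s_l$ combined with the non-simplicity of $w^{-1}(\alpha_l)$. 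Length-counting arguments in the spirit of Lemma~\ref{lem:lemma on length function} and the proof of Lemma~\ref{lem:simple Bernstein relations} kill all but a controlled family of summands at $q_s\mapsto 0$, and for the surviving ones the chamber condition can be checked directly.

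The main obstacle is the last combinatorial step: confirming uniformly, across the full Bernstein support of $T_{n_v}^* T_{n_w}$, that the resulting $\lambda_{\widetilde{u}}$'s fail to lie in $n_{w_0}(\Lambda_\Theta^+(1))$. I would proceed by induction on $\ell(v) = l-1$, peeling off the outer factor $T_{n_{s_1}}^*$ via iterated Bernstein relations (Lemma~\ref{lem:Bernstein relations}) and the multiplication formula~(Proposition~\ref{prop:multiplication formula}), reducing to a computation controlled by the inductive data $(s_2 \dotsm s_{l-1}, s_l w)$, where the chamber-membership criterion can be tracked recursively using the relation $\Delta_{s_l w} = \Delta_w\supseteq \Theta$.
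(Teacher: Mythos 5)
Your reformulation $1\otimes T_{n_v}^*T_{n_w}=\Phi_{w_0,s_lw}(1\otimes T_{n_w})$ with $v=s_1\dotsm s_{l-1}$, $w_0=w_\Delta w_{\Delta_w}$ is correct but is only a relabeling of the same element; you never use $\Phi_{w_0,s_lw}$ for anything (invoking its injectivity would only replace the claim by the statement $1\otimes T_{n_w}=0$ in $s_lw\chi_\Theta\otimes_\mathcal{A}\mathcal{H}$, which is no simpler to attack). The substance of the proposal is the direct Bernstein expansion of $T_{n_v}^*T_{n_w}$ inside $w_0\chi_\Theta\otimes_\mathcal{A}\mathcal{H}$, and this is exactly where you stop: there is no criterion controlling which $\lambda$'s occur, and no argument that each surviving one is annihilated either by $w_0\chi_\Theta$ or by a positive power of $q$ (those two mechanisms would both have to be tracked, and they are never disentangled). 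In particular the hypothesis that $w^{-1}(\alpha_l)$ is positive but not simple is invoked only as a geometric slogan and is never converted into a usable inequality.

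The paper's proof goes the other way: it pushes the element forward under the \emph{injective} intertwiner $\Phi_{w_\Delta,w_0}$ (Propositions~\ref{prop:construction of intertwining operators} and~\ref{prop:injectivity of intertwining operator}) into $w_\Delta\chi_\Theta\otimes_\mathcal{A}\mathcal{H}$, where the element becomes $1\otimes T_{n_{w_\Delta w^{-1}s_l}}^*T_{n_w}$. By Proposition~\ref{prop:multiplication formula} this product of two Bernstein elements is a single basis element with coefficient $q_{n_{w_\Delta w^{-1}s_l}}^{1/2}q_{n_w}^{1/2}q_{n_{w_\Delta w^{-1}s_lw}}^{-1/2}$; the length inequality $\ell(w_\Delta w^{-1}s_lw)=\ell(w_\Delta)-\ell(s_{w^{-1}(\alpha_l)})<\ell(w_\Delta)-1=\ell(w_\Delta w^{-1}s_l)+\ell(w)$, which uses the non-simplicity of $w^{-1}(\alpha_l)$ precisely through $\ell(s_{w^{-1}(\alpha_l)})>1$, shows this coefficient is a strictly positive power of $q$, hence zero at $q\mapsto 0$. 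Thus $T_{n_{w_\Delta w^{-1}s_l}}^*T_{n_w}$ already vanishes in $\mathcal{H}\otimes C$ before any tensoring, and no $\Theta$-dependent chamber analysis is needed. The missing idea in your proposal is this change of target module (map up to $w_\Delta\chi_\Theta\otimes_\mathcal{A}\mathcal{H}$ rather than stay at $w_0$), which collapses the whole verification into a single length computation.
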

It is sufficient to prove that $\Phi_{w_\Delta ,w_\Delta w_{\Delta_w}}(1\otimes T_{n_{s_1\dotsm s_{l - 1}}}^*T_{n_{w}}) = 0$ by Proposition~\ref{prop:injectivity of intertwining operator}.
Put $s = s_l$.
Then $\Phi_{w_\Delta ,w_\Delta w_{\Delta_w}}(1\otimes T_{n_{s_1\dotsm s_{l - 1}}}^*T_{n_{w}}) = 1\otimes T_{n_{w_\Delta w^{-1}s}}^*T_{n_{w}}$.
We have $T_{n_{w_\Delta w^{-1}s}}^* = E_{w_\Delta w^{-1}s(-\Delta)}(n_{w_\Delta w^{-1}s})$ and $T_{n_{w}} = E_{-\Delta}(n_{w})$.
Therefore we have $T_{n_{w_\Delta w^{-1}s}}^*T_{n_{w}} = q_{n_{w_\Delta w^{-1}s}}^{1/2}q_{n_w}^{1/2}q_{n_{w_\Delta w^{-1}sw}}^{-1/2}E_{w^{-1}s(-\Delta)}(n_{w_\Delta w^{-1}s}n_{w})$.
It is sufficient to prove that $\ell(w_\Delta w^{-1}s) + \ell(w) > \ell(w_\Delta w^{-1}sw)$.
Since the positive root $w^{-1}(\alpha_l)$ is not simple, $\ell(s_{w^{-1}(\alpha_l)}) > 1$.
Hence
\begin{align*}
\ell(w_\Delta w^{-1}sw) & = \ell(w_\Delta s_{w^{-1}(\alpha_l)})\\
& = \ell(w_\Delta) - \ell(s_{w^{-1}(\alpha_l)})\\
& < \ell(w_\Delta) - 1\\
& = (\ell(w_\Delta w^{-1}) - 1) + \ell(w)\\
& = \ell(w_\Delta w^{-1}s) + \ell(w).
\end{align*}
We get the claim.

By the claim, $1\otimes T_{n_{s_1\dotsm s_{l - 1}}}^*c_{n_{s_l}}T_{n_w} = 1\otimes T_{n_{s_1\dotsm s_{l - 1}}}^*T_{n_w}(n_{w}^{-1}\cdot c_{n_{s_l}}) = 0$.
Notice that $s_lw > w$.
Hence
\begin{align*}
1\otimes T_{n_{s_1\dotsm s_l}}^*T_{n_{w}} & =
1\otimes T_{n_{s_1\dotsm s_{l - 1}}}^*(T_{n_{s_l}} - c_{n_{s_l}})T_{n_{w}}\\
& = 1\otimes T_{n_{s_1\dotsm s_{l - 1}}}^*T_{n_{s_lw}} - 1\otimes T_{n_{s_1\dotsm s_{l - 1}}}^*c_{n_{s_l}}T_{w}\\
& = 1\otimes T_{n_{s_1\dotsm s_{l - 1}}}^*T_{n_{s_lw}}.
\end{align*}
By induction on $l$, we get
\[
	1\otimes T_{n_{s_1\dotsm s_{l}}}^*T_{n_{w}} = 1\otimes T_{n_{s_1\dotsm s_lw}} = 1\otimes T_{n_{w_\Delta w_{\Delta_w}}}.
\]
Hence $1\otimes T_{n_{w_\Delta w_{\Delta_w}}}\in \Imm\Phi_{w_\Delta w_{\Delta_w},w}$.

Let $\lambda\in\Lambda(1)$ such that $\langle \nu(\lambda),\alpha\rangle = 0$ for all $\alpha\in\Delta_w$ and $\langle\nu(\lambda),\alpha\rangle > 0$ for all $\alpha\in\Delta\setminus\Delta_w$.
Since $\Theta\subset\Delta_w$, we have $\lambda\in\Lambda_\Theta^+(1)$.
Hence $\chi_\Theta(E(\lambda)) = \tau_\lambda$ is invertible.
Assume that $\alpha\in\Sigma^+$ satisfies $w_{\Delta}w_{\Delta_w}(\alpha) < 0$.
Then $\alpha\in\Sigma^+\setminus\Z_{\ge 0} \Delta_w$.
Therefore $\langle \nu(\lambda),\alpha\rangle > 0$.
Hence $\lambda$ satisfies the condition of Lemma~\ref{lem:certain element is integral}.
Therefore $T_{n_{w_\Delta w_{\Delta_w}}}^{-1}E(n_{w_\Delta w_{\Delta_w}}(\lambda))\in \mathcal{H}$.
Hence
\[
	\tau_\lambda^{-1}\otimes T_{n_{w_\Delta w_{\Delta_w}}}T_{n_{w_\Delta w_{\Delta_w}}}^{-1}E(n_{w_\Delta w_{\Delta_w}}(\lambda)) = \tau_\lambda^{-1}\otimes E(n_{w_\Delta w_{\Delta_w}}(\lambda)) = 1\otimes 1
\]
is in $\Imm\Phi_{w_\Delta w_{\Delta_w},w}$.
Since $1\otimes 1$ generates $w_\Delta w_{\Delta_w}\chi_\Theta\otimes_\mathcal{A}\mathcal{H}$, $\Phi_{w_\Delta w_{\Delta_w},w}$ is surjective.
\end{proof}
\begin{cor}\label{cor:isom between std mod for simple module}
Let $\Theta\subset\Delta$ be a subset and $X$ an irreducible $C[\Lambda_\Theta(1)]$-module.
We regard $X$ as an irreducible $\mathcal{A}$-module via $\chi_\Theta$.
Set
\[
	\Delta(X) = \{\alpha\in\Delta\mid \langle\Theta,\check{\alpha}\rangle = 0,\ \text{$\tau_\lambda\in C[\Lambda_\Theta(1)]$ is identity on $X$ for $\lambda\in\Lambda_{s_\alpha}'(1)$}\}\cup \Theta.
\]
Assume that $w,w'\in W$ satisfies $\Delta_w\cap \Delta(X) = \Delta_{w'}\cap \Delta(X)$.
Then $wX\otimes_\mathcal{A}\mathcal{H}\simeq w'X\otimes_\mathcal{A}\mathcal{H}$.
\end{cor}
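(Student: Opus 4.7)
The plan is to combine Theorem~\ref{thm:std mod depends only on Delta_w} with a refined application of the intertwining operators from Section~\ref{sec:Intertwining operators}. I first observe the tautological identification $wX\otimes_\mathcal{A}\mathcal{H}\cong X\otimes_{C[\Lambda_\Theta(1)]}(w\chi_\Theta\otimes_\mathcal{A}\mathcal{H})$ coming from the fact that $wX$ factors through $w\chi_\Theta$. The proof of Proposition~\ref{prop:construction of intertwining operators} verifies the identity $1\otimes T_{n_s}^{*}E(\lambda)=(w\widetilde{\chi})(E(\lambda))\otimes T_{n_s}^{*}$, which is precisely the condition ensuring that $\Phi_{sw,w}$ is left $C[\Lambda_\Theta^+(1)]$-linear; by Proposition~\ref{prop:freeness} this extends uniquely to a $(C[\Lambda_\Theta(1)],\mathcal{H})$-bimodule map, and therefore descends upon $X\otimes_{C[\Lambda_\Theta(1)]}(-)$ to an $\mathcal{H}$-module morphism $wX\otimes_\mathcal{A}\mathcal{H}\to swX\otimes_\mathcal{A}\mathcal{H}$.

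First I would reduce the hypothesis. By Theorem~\ref{thm:std mod depends only on Delta_w} I may replace $w$ and $w'$ by the maximal representatives $w_\Delta w_{\Delta_w}$ and $w_\Delta w_{\Delta_{w'}}$ of their $\Delta_\cdot$-classes. Writing $\Delta_w=\Theta'\sqcup\Xi$ and $\Delta_{w'}=\Theta'\sqcup\Xi'$ with $\Theta'=\Delta_w\cap\Delta(X)=\Delta_{w'}\cap\Delta(X)$ and $\Xi,\Xi'\subseteq\Delta\setminus\Delta(X)$, an induction on $|\Xi\triangle\Xi'|$ reduces us to the case $\Delta_{w'}=\Delta_w\cup\{\alpha\}$ (or the reverse) for a single simple root $\alpha\in\Delta\setminus\Delta(X)$. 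Setting $s=s_\alpha$, I then apply both $\Phi_{sw,w}$ and the opposite intertwining operator $\Psi$ of Proposition~\ref{prop:hard intertwining operator}; by Proposition~\ref{prop:compositions of intertwining operators} their composition acts, after choosing a suitable $\lambda$ with $n_w^{-1}(\lambda)\in\Lambda_\Theta^+(1)$, as multiplication by
\[
(w\widetilde{\chi})\bigl(E(\lambda)-d(s,\lambda)E(\lambda\mu_{n_s^{-1}}(-1))c_{s,-1}c_s\bigr).
\]

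To show this acts invertibly on the $X$-tensored module I would argue as follows. Since $\alpha\notin\Delta(X)$, the irreducible $C[\Lambda_\Theta(1)]$-module $X$ is non-trivial on $\Lambda'_{s_\alpha}(1)$. Because $Z_\kappa$ is finite of order prime to $p$, the algebra $C[Z_\kappa]$ is semisimple and $X$ decomposes as a direct sum of characters of $Z_\kappa$; a Clifford-type argument using that $X$ is irreducible shows that these $Z_\kappa$-characters all lie in one orbit, hence are simultaneously trivial or non-trivial on $\Lambda'_{s_\alpha}(1)\cap Z_\kappa$. In the generic sub-case where they are non-trivial, Lemma~\ref{lem:c_s,c_{s,-1},mu_n_s and Lambda_s'(1) cap Z_k} forces $c_{s,-1}c_s$ to act as zero on $X$, so the composition reduces to multiplication by $\tau_{n_w^{-1}(\lambda)}$, which is invertible. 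Combined with the injectivity of $\Phi_{sw,w}$ given by Proposition~\ref{prop:injectivity of intertwining operator}, this yields the desired isomorphism and closes the induction.

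The main obstacle is precisely the degenerate sub-case in which $X$ is trivial on $\Lambda'_{s_\alpha}(1)\cap Z_\kappa$ but non-trivial on the quotient $\Lambda'_{s_\alpha}(1)/(\Lambda'_{s_\alpha}(1)\cap Z_\kappa)\cong\mathbb{Z}\check{\alpha}$ from Lemma~\ref{lem:structure of Lambda'_s}. There $c_{s,-1}c_s$ acts as the identity, and one must instead verify that the combined coefficient $\tau_{n_w^{-1}(\lambda)}-d(s,\lambda)\tau_{n_w^{-1}(\lambda\mu_{n_s^{-1}}(-1))}$ remains a non-zero-divisor on $X$, exploiting that a lift of $\check\alpha$ acts on $X$ by a scalar different from $1$. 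A secondary difficulty arises when $\alpha$ is not orthogonal to $\Theta$, since then $\Lambda'_{s_\alpha}(1)\not\subseteq\Lambda_\Theta(1)$; in this case I expect $(w\chi_\Theta)(E(\lambda\mu_{n_s^{-1}}(-1)))=0$ by a direct dominance check, so the second term disappears and the composition is again multiplication by $\tau_{n_w^{-1}(\lambda)}$.
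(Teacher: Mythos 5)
Your proposal follows essentially the same route as the paper's proof: reduce via Theorem~\ref{thm:std mod depends only on Delta_w} and a one-step induction to the situation $\Delta_{w'}=\Delta_w\amalg\{\alpha\}$ with $\alpha\notin\Delta(X)$, then show that the composition from Proposition~\ref{prop:compositions of intertwining operators} is invertible on $X$ by the three-way case split on whether $\langle\Theta,\check\alpha\rangle=0$ and whether $X$ is trivial on $\Lambda'_{s_\alpha}(1)\cap Z_\kappa$, using Proposition~\ref{prop:injectivity of intertwining operator} to upgrade injectivity plus a non-zero composite to an isomorphism. Two small points where your write-up diverges from what the paper actually does: (1) the intertwining operator is applied with the simple reflection $s'=s_{w(\alpha)}$ (so that $s'w=ws_\alpha$), not $s=s_\alpha$ as you write, since $s_\alpha w\neq ws_\alpha$ in general — this is only a relabelling, but $\Phi_{s_\alpha w,w}$ is not the map you want; and (2) in your ``degenerate'' case, rather than checking the element $\tau_{n_w^{-1}(\lambda)}-d(s,\lambda)\tau_{n_w^{-1}(\lambda\mu_{n_s^{-1}}(-1))}$ is a non-zero-divisor, the paper takes a shortcut: since $c_s=c_{s,-1}=-1$ on $X$, the composite reduces to multiplication by $\tau_\lambda(1-\tau_{\mu_{n_s^{-1}}(-1)})$, which is non-zero on $X$ because $\Lambda'_{s_\alpha}(1)$ is generated by $\Lambda'_{s_\alpha}(1)\cap Z_\kappa$ and $\mu_{n_s^{-1}}(-1)$ (Lemma~\ref{lem:structure of Lambda'_s}) while $X$ is trivial on the former and non-trivial on the whole; the paper then simply invokes Schur's lemma (non-zero endomorphism of an irreducible is an isomorphism), avoiding the non-zero-divisor language entirely. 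Your ``scalar different from $1$'' intuition is consistent with this once one notes that the commutators of $\mu_{n_s^{-1}}(-1)$ with $\Lambda_\Theta(1)$ land in $\Lambda'_{s_\alpha}(1)\cap Z_\kappa$, which acts trivially, so $\tau_{\mu_{n_s^{-1}}(-1)}$ is indeed $C[\Lambda_\Theta(1)]$-linear on $X$ and hence a scalar by Schur. So the approach is sound and matches the paper; the degenerate case would need the few lines of verification you deferred, and the simple-reflection notation should be corrected.
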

\begin{proof}
First notice that if $\alpha\in\Delta$ and $\lambda\in \Lambda'_{s_\alpha}(1)$, then $\nu(\lambda)\in \R\check{\alpha}$.
Hence if $\langle \Theta,\check{\alpha}\rangle = 0$, $\langle\Theta,\nu(\lambda)\rangle = 0$.
Therefore $\lambda\in \Lambda_\Theta(1)$.
Hence $\Lambda'_{s_\alpha}(1)\subset \Lambda_\Theta(1)$.
So the definition makes sense.

We may assume that there exists $\alpha\in \Delta_w$ such that $\Delta_{w}= \Delta_{w'}\amalg\{\alpha\}$.
By Theorem~\ref{thm:std mod depends only on Delta_w}, we may assume $w = w_\Delta w_{\Delta_w}$ and $w' = ws_\alpha$.
Put $s = s_\alpha$, $\alpha' = w(\alpha)\in \Delta$ and $s' = s_{\alpha'}$.
By the assumption, we have $\alpha\notin\Theta$.
Hence we can take a dominant $\lambda_1\in \Lambda_\Theta(1)$ from the center of $\Lambda(1)$ such that $\langle\nu(\lambda_1),\alpha\rangle \ge 2$.
Put $\lambda = \lambda_1^2$.
Then $n_w(\lambda)$ satisfies the conditions of Proposition~\ref{prop:compositions of intertwining operators} for $w$ and $s'$.
Hence we have a homomorphism $s'w\widetilde{\chi}\otimes_\mathcal{A}\mathcal{H}\to w\widetilde{\chi}\otimes_\mathcal{A}\mathcal{H}$ and $w\widetilde{\chi}\otimes_\mathcal{A}\mathcal{H}\to s'w\widetilde{\chi}\otimes_\mathcal{A}\mathcal{H}$.
These homomorphisms induce $s'wX\otimes_\mathcal{A}\mathcal{H}\to wX\otimes_\mathcal{A}\mathcal{H}$ and $wX\otimes_\mathcal{A}\mathcal{H}\to s'wX\otimes_\mathcal{A}\mathcal{H}$, respectively.
The compositions are induced by $X\ni x\mapsto x\chi_\Theta(E(\lambda) - E(\lambda \mu_{n_s}(-1))c_sc_{s,-1})\in X$.
Let $\varphi\colon X\to X$ be this homomorphism.
We prove that it is an isomorphism.

By the condition, $\alpha\notin \Delta(X)$.
Hence $\langle\Theta,\check{\alpha}\rangle\ne 0$ or $\{\tau_\lambda \mid \lambda\in\Lambda_s'(1)\}$ is not trivial on $X$.
If $\langle\Theta,\check{\alpha}\rangle\ne 0$, then $\lambda \mu_{n_s^{-1}}(-1)\notin \Lambda_\Theta(1)$.
Hence $\chi_\Theta(E(\lambda \mu_{n_s^{-1}}(-1))) = 0$.
Therefore $\varphi(x) = x\chi_\Theta(E(\lambda)) = x\tau_\lambda$ on $X$, which is invertible.

Assume that $\langle\Theta,\check{\alpha}\rangle = 0$.
Then $\lambda \mu_{n_s}(-1)\in \Lambda^+_\Theta(1)$.
We have $\varphi(x) = x\tau_\lambda(1 - \tau_{\mu_{n_s^{-1}}(-1)}c_sc_{s,-1})$, here we regard $C[Z_\kappa]$ as a subalgebra of $C[\Lambda_\Theta(1)]$.

Assume that there exists $t\in \Lambda_s'(1)\cap Z_\kappa$ such that $\tau_t$ is not identity on $X$.
Namely, $Y = \{x\in X\mid \text{$x\tau_t = x$ for any $t\in \Lambda_s'(1)\cap Z_\kappa$}\}\ne X$.
Since $\Lambda_s'(1)\cap Z_\kappa\subset \Lambda(1)$ is a normal subgroup, the subspace $Y$ is $C[\Lambda_\Theta(1)]$-stable.
Hence, by the irreducibility of $X$, $Y = 0$.
Since $\#(\Lambda'_s(1)\cap Z_\kappa)$ is prime to $p$, we have a decomposition $X = \bigoplus_{\chi_\kappa} X_{\chi_\kappa}$ where $\chi_\kappa$ runs characters of $\Lambda_s'(1)\cap Z_\kappa$.
Then the trivial character does not appear in $X$.
Therefore, by Lemma~\ref{lem:c_s,c_{s,-1},mu_n_s and Lambda_s'(1) cap Z_k}, $c_sc_{s,-1}$ is zero on $X$.
Hence $\tau_\lambda - \tau_{\lambda \mu_{n_s}(-1)}c_sc_{s,-1} = \tau_\lambda$ on $X$, which is invertible.

Finally, assume that $\langle\Theta,\check{\alpha}\rangle = 0$ and $E(t)$ for $t\in \Lambda_s'(1)\cap Z_\kappa$ acts trivially on $X$.
Then $c_s = c_{s,-1} = -1$ on $X$.
Hence $\varphi(x) = x\tau_\lambda (1 - \tau_{\mu_{n_s}(-1)})$.
Since the group $\Lambda_s'(1)$ is generated by $\Lambda_s'(1)\cap Z_\kappa$ and $\mu_{n_s^{-1}}(-1)$, $\tau_{\mu_{n_s^{-1}}(-1)}$ is not identity.
Hence $\tau_\lambda(1 - \tau_{\mu_{n_s^{-1}}}(-1))\ne 0$ on $X$.
Since $\varphi$ is an homomorphism between irreducible modules, it is an isomorphism.
\end{proof}

\section{Classification theorem}\label{sec:Classification theorem}
\subsection{pro-$p$-Iwahori Hecke algebra of a Levi subgroup}
\label{subsec:pro-$p$-Iwahori Hecke algebra of a Levi subgroup}
Let $\alggrp{M}$ be a Levi subgroup of a standard parabolic subgroup $\alggrp{P}$ such that $\alggrp{Z}\subset\alggrp{M}$.
There is a pro-$p$-Iwahori Hecke algebra for $\alggrp{M}$.
This is an algebra over $C[q_{\alggrp{M},s}^{1/2}]$ and under the specialization $q_{\alggrp{M},s}\mapsto 0$, $\mathcal{H}_{\alggrp{M}}$ is isomorphic to the pro-$p$-Iwahori Hecke algebra of $\alggrp{M}$ with the pro-$p$-Iwahori subgroup $I_{\alggrp{M}}(1) = I(1)\cap \alggrp{M}(F)$.
Later we will construct an algebra homomorphism $C[q_{\alggrp{M},s}^{1/2}]\to C[q_s^{1/2}]$ and we denote the base change of the pro-$p$-Iwahori Hecke algebra of $\alggrp{M}$ to $C[q_s^{1/2}]$ by $\mathcal{H}_{\alggrp{M}}$.
Its Iwahori-Matsumoto basis is denoted by $T^{\alggrp{M}}_{\widetilde{w}}$ and the Bernstein basis is denoted by $E^{\alggrp{M}}(\widetilde{w})$.
The objects for $\alggrp{M}$ is denoted with the suffix `$\alggrp{M}$'.
The finite Weyl group of $\alggrp{M}$ is denoted by $W_{\alggrp{M}}$, etc.
We also write `$\alggrp{P}$' instead of $\alggrp{M}$, $W_{\alggrp{P}} = W_{\alggrp{M}}$, etc.

As in \cite[2C4]{MR2728487}, an element $\widetilde{w}\in \widetilde{W}_{\alggrp{M}}(1)$ is called $\alggrp{M}$-positive (resp.\ $\alggrp{M}$-negative) if $\widetilde{w}((\Sigma^+\setminus\Sigma_M^+)\times\{0\})\subset\Sigma_\aff^+$ (resp.\ $\widetilde{w}^{-1}((\Sigma^+\setminus\Sigma_M^+)\times\{0\})\subset\Sigma_\aff^+$).
It is easy to check that, for $\lambda\in\Lambda(1)$ and $w\in W_{\alggrp{M}}$, $\lambda n_w\in \widetilde{W}_{\alggrp{M}}(1)$ is $\alggrp{M}$-positive (resp.\ $\alggrp{M}$-negative) if and only if $\langle\nu(\lambda),\alpha\rangle\le 0$ (resp.\ $\langle\nu(\lambda),\alpha\rangle\ge 0$) for any $\alpha\in\Sigma^+\setminus\Sigma_{\alggrp{M}}^+$.
In particular, for $w_1,w_2\in W_{\alggrp{M}}$ and $\widetilde{w}\in W_{\alggrp{M}}(1)$, $\widetilde{w}$ is $\alggrp{M}$-positive (resp.\ $\alggrp{M}$-negative) if and only if $n_{w_1}\widetilde{w}n_{w_2}$ is $\alggrp{M}$-positive (resp.\ $\alggrp{M}$-negative).
The product of $\alggrp{M}$-positive (resp.\ $\alggrp{M}$-negative) elements is $\alggrp{M}$-positive (resp.\ $\alggrp{M}$-negative).

\begin{lem}\label{lem:Bruhat order, M-positive/negative}
If $\widetilde{w}\in W_{\alggrp{M}}(1)$ is $\alggrp{M}$-positive (resp.\ $\alggrp{M}$-negative) and $\widetilde{v} \le \widetilde{w}$ in $\widetilde{W}_{\alggrp{M}}(1)$, then $\widetilde{v}$ is $\alggrp{M}$-positive (resp.\ $\alggrp{M}$-negative).
\end{lem}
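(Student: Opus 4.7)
The plan is first to reduce the $\alggrp{M}$-negative case to the $\alggrp{M}$-positive case, and then to reduce the latter to handling a single cover relation in the Bruhat order. The negative case follows immediately from the positive case because $\widetilde{w}$ is $\alggrp{M}$-negative if and only if $\widetilde{w}^{-1}$ is $\alggrp{M}$-positive (directly from the definition), and the Bruhat order on $\widetilde{W}_{\alggrp{M}}$, and hence on $\widetilde{W}_{\alggrp{M}}(1)$, is invariant under inversion. Since $\alggrp{M}$-positivity depends only on the translation part $\nu(\widetilde{w})$---it is equivalent to $\langle \nu(\widetilde{w}), \beta\rangle \le 0$ for every $\beta \in \Sigma^+ \setminus \Sigma_M^+$---and since by the standard chain property of the Bruhat order on an extended affine Weyl group we may assume $\widetilde{v} = t\widetilde{w}$ for a single affine reflection $t = s_{(\alpha_0, k_0)}$ (with $\alpha_0 \in \Sigma_M$ and $k_0 \in \Z$) satisfying $\ell(t\widetilde{w}) = \ell(\widetilde{w}) - 1$, it remains only to show that in this situation $t\widetilde{w}$ is again $\alggrp{M}$-positive.

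The direct computation gives
\[
\nu(t\widetilde{w}) = s_{\alpha_0}\nu(\widetilde{w}) - k_0 \check{\alpha}_0 = \nu(\widetilde{w}) - K \check{\alpha}_0, \qquad K := k_0 + \langle \nu(\widetilde{w}), \alpha_0\rangle,
\]
so for any $\beta \in \Sigma^+ \setminus \Sigma_M^+$, setting $d := \langle \check{\alpha}_0, \beta\rangle$ and substituting the decomposition $\beta = s_{\alpha_0}\beta + d\alpha_0$ produces the two forms
\[
\langle \nu(t\widetilde{w}), \beta\rangle = \langle \nu(\widetilde{w}), \beta\rangle - dK = \langle \nu(\widetilde{w}), s_{\alpha_0}\beta\rangle - k_0 d.
\]
Both $\langle \nu(\widetilde{w}), \beta\rangle$ and $\langle \nu(\widetilde{w}), s_{\alpha_0}\beta\rangle$ are nonpositive because $\widetilde{w}$ is $\alggrp{M}$-positive and $s_{\alpha_0}$ permutes $\Sigma^+\setminus\Sigma_M^+$ (as $\alpha_0\in \Sigma_M$). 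The shortening condition $t\widetilde{w}<\widetilde{w}$, expressed through the positive affine root representing $t$, forces $k_0$ and $K$ to have opposite (nonstrict) signs: if $k_0 \ge 0$ (with $\alpha_0 > 0$ when $k_0 = 0$) then $K \le 0$, and symmetrically if $k_0 \le 0$ (with $\alpha_0 < 0$ when $k_0 = 0$) then $K \ge 0$. Hence for any possible sign of $d$, at least one of the products $k_0 d$ or $dK$ is nonnegative, and the matching identity above then yields $\langle \nu(t\widetilde{w}), \beta\rangle \le 0$.

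The only delicate step is the translation of the length-shortening condition $\ell(t\widetilde{w}) < \ell(\widetilde{w})$ into the sign constraint relating $k_0$ and $K$, together with the handling of the $K = 0$ case (where $\nu(t\widetilde{w}) = \nu(\widetilde{w})$ and the conclusion is immediate). Once the two identities for $\langle \nu(t\widetilde{w}), \beta\rangle$ are in hand, and the invariance of $\Sigma^+\setminus\Sigma_M^+$ under $W_{\alggrp{M}}$ is used, the remaining verification is mechanical.
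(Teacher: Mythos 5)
Your proof is correct, but it takes a genuinely different route from the paper's. The paper proceeds by induction on the length of the finite-Weyl-group component of $\widetilde{w}$: it writes $\widetilde{w}=\lambda n_w$ with $w\in W_{\alggrp{M}}$, invokes the lifting property of the Bruhat order to reduce to $\lambda n_{ws}$ for $s\in S_{\alggrp{M}}$ with $ws<w$, and takes the base case $\widetilde{w}\in\Lambda(1)$ as a citation from Ollivier. You instead observe that $\alggrp{M}$-positivity is detected solely by the translation vector $\nu$ (which factors through $\widetilde{W}_{\alggrp{M}}$), reduce to a single Bruhat cover $\widetilde{v}=t\widetilde{w}$ via the chain property, and then compute $\nu(t\widetilde{w})$ explicitly in two equivalent forms
\[
\langle\nu(t\widetilde{w}),\beta\rangle=\langle\nu(\widetilde{w}),\beta\rangle-dK=\langle\nu(\widetilde{w}),s_{\alpha_0}\beta\rangle-dk_0,
\]
using that $W_{\alggrp{M}}$ preserves $\Sigma^+\setminus\Sigma_{\alggrp{M}}^+$ so both unshifted terms are $\le 0$, and that the criterion for left multiplication by a reflection to shorten length (namely that $\widetilde{w}^{-1}$ sends the positive affine root of $t$, which equals $(w^{-1}\alpha_0,K)$, to a negative affine root) forces $k_0$ and $K$ to have opposite (nonstrict) signs. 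This is a fully self-contained, root-theoretic proof that avoids the external citation for the $\Lambda(1)$ base case; the cost is that it relies on the chain property of the Bruhat order and the reflection criterion for covers, whereas the paper's induction works entirely with simple reflections acting on the right. One small point worth making explicit in your write-up: the chain from $\widetilde{v}$ to $\widetilde{w}$ lives in $\widetilde{W}_{\alggrp{M}}$ rather than in $\widetilde{W}_{\alggrp{M}}(1)$ (lifting covers through $\widetilde{W}(1)\to\widetilde{W}$ is not addressed and need not be), but this is harmless precisely because, as you note, $\alggrp{M}$-positivity factors through $\widetilde{W}_{\alggrp{M}}$.
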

\begin{proof}
If $\widetilde{w}\in \Lambda(1)$, then it is \cite[Fact ii]{arXiv:1211.5366}.
In general, let $\lambda\in\Lambda(1)$ and $w\in W_{\alggrp{M}}$ such that $\widetilde{w} = \lambda n_{w}$.
We prove the lemma by induction on $\ell_{\alggrp{M}}(w)$.
Let $s\in S_{\alggrp{M}}$ such that $ws < w$.
Then $\widetilde{v}\le \lambda n_{w}$ implies $\widetilde{v}$ or $\widetilde{v}n_s$ is less than or equal to $\lambda n_{ws}$.
Hence $\widetilde{v}$ or $\widetilde{v}n_s$ is $\alggrp{M}$-positive.
Therefore $\widetilde{v}$ is $\alggrp{M}$-positive.
Since we have $\widetilde{v}\le \widetilde{w}$ if and only if $\widetilde{v}^{-1}\le \widetilde{w}^{-1}$, if $\widetilde{w}\in W_{\alggrp{M}}(1)$ is $\alggrp{M}$-negative, then $\widetilde{v}$ is $\alggrp{M}$-negative.
\end{proof}

Assume that an algebra homomorphism $C[q_{\alggrp{M},s}^{1/2}]\to C[q_s^{1/2}]$ is given.
Let $\mathcal{H}_{\alggrp{M}}^+$ (resp.\ $\mathcal{H}_{\alggrp{M}}^-$) be the sub $C[q_{s}^{1/2}]$-module of $\mathcal{H}_{\alggrp{M}}$ spanned by $\{T^{\alggrp{M}}_{\widetilde{w}}\}$ where $\widetilde{w}\in \widetilde{W}_{\alggrp{M}}(1)$ is $\alggrp{M}$-positive (resp.\ $\alggrp{M}$-negative).
By the above lemma, $\mathcal{H}_{\alggrp{M}}^{\pm}$ is a subalgebra.
\begin{lem}
For any set of simple roots $\Delta'$,
\[
	\{E^{\alggrp{M}}_{\Delta'}(\widetilde{w})\mid \text{\normalfont $\widetilde{w}\in \widetilde{W}_{\alggrp{M}}(1)$ is $\alggrp{M}$-positive (resp.\ $\alggrp{M}$-negative)}\}.
\]
is a basis of $\mathcal{H}_{\alggrp{M}}^+$ (resp.\ $\mathcal{H}_{\alggrp{M}}^-$).
\end{lem}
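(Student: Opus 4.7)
The plan is to proceed by a routine unitriangular change-of-basis argument, using the unitriangularity relation for the Bernstein basis recalled in Section~2 together with Lemma~\ref{lem:Bruhat order, M-positive/negative}.

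Applied to the pro-$p$-Iwahori Hecke algebra $\mathcal{H}_{\alggrp{M}}$ (which is itself such a Hecke algebra, for the group $\alggrp{M}$), the unitriangularity statement recalled in the paragraph beginning ``Vign\'eras~\cite{Vigneras-prop} defined...'' gives
\[
E^{\alggrp{M}}_{\Delta'}(\widetilde{w})\in T^{\alggrp{M}}_{\widetilde{w}}+\sum_{\widetilde{v}<\widetilde{w}}C[q_s^{1/2}]\,T^{\alggrp{M}}_{\widetilde{v}}
\]
for every $\widetilde{w}\in\widetilde{W}_{\alggrp{M}}(1)$, where the order is the Bruhat order on $\widetilde{W}_{\alggrp{M}}(1)$. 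I would check at the outset that the hypothesis on the $c^{\alggrp{M}}_{\widetilde{s}}$ needed for this triangularity is inherited from the global choice of $c_{\widetilde{s}}$, so the relation above is available.

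\textbf{Containment.} If $\widetilde{w}$ is $\alggrp{M}$-positive, then by Lemma~\ref{lem:Bruhat order, M-positive/negative} every $\widetilde{v}\le\widetilde{w}$ is also $\alggrp{M}$-positive, so every $T^{\alggrp{M}}_{\widetilde{v}}$ appearing in the expansion above lies in $\mathcal{H}^+_{\alggrp{M}}$ by the definition of $\mathcal{H}^+_{\alggrp{M}}$. Hence $E^{\alggrp{M}}_{\Delta'}(\widetilde{w})\in\mathcal{H}^+_{\alggrp{M}}$. The same argument with ``positive'' replaced by ``negative'' handles $\mathcal{H}^-_{\alggrp{M}}$.

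\textbf{Basis.} Linear independence is automatic, since the proposed set is a subfamily of the basis $\{E^{\alggrp{M}}_{\Delta'}(\widetilde{w})\}_{\widetilde{w}\in\widetilde{W}_{\alggrp{M}}(1)}$ of $\mathcal{H}_{\alggrp{M}}$. For spanning, I would invert the unitriangular relation by induction on $\ell_{\alggrp{M}}(\widetilde{w})$ to obtain
\[
T^{\alggrp{M}}_{\widetilde{w}}=E^{\alggrp{M}}_{\Delta'}(\widetilde{w})+\sum_{\widetilde{v}<\widetilde{w}}a_{\widetilde{v},\widetilde{w}}\,E^{\alggrp{M}}_{\Delta'}(\widetilde{v}),\qquad a_{\widetilde{v},\widetilde{w}}\in C[q_s^{1/2}].
\]
Applying Lemma~\ref{lem:Bruhat order, M-positive/negative} again, if $\widetilde{w}$ is $\alggrp{M}$-positive then every index $\widetilde{v}$ appearing on the right is $\alggrp{M}$-positive as well. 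Since $\mathcal{H}^+_{\alggrp{M}}$ is by definition the $C[q_s^{1/2}]$-span of $\{T^{\alggrp{M}}_{\widetilde{w}}\}$ over $\alggrp{M}$-positive $\widetilde{w}$, this proves that the set in the statement spans $\mathcal{H}^+_{\alggrp{M}}$; the case of $\mathcal{H}^-_{\alggrp{M}}$ is identical.

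There is no genuine obstacle here — the whole content has been pre-packaged into the unitriangularity of the Bernstein basis and the Bruhat-compatibility of $\alggrp{M}$-positivity (Lemma~\ref{lem:Bruhat order, M-positive/negative}). The only point requiring a moment of care is verifying that the triangularity hypothesis on the parameters $c^{\alggrp{M}}_{\widetilde{s}}$ is met, which is the reason for the standing assumption on the $c_{\widetilde{s}}$ coming from the group.
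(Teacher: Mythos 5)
Your proof is correct and takes essentially the same approach as the paper: both arguments combine the unitriangularity $E^{\alggrp{M}}_{\Delta'}(\widetilde{w})\in T^{\alggrp{M}}_{\widetilde{w}}+\sum_{\widetilde{v}<\widetilde{w}}C[q_s^{1/2}]T^{\alggrp{M}}_{\widetilde{v}}$ with Lemma~\ref{lem:Bruhat order, M-positive/negative}. You simply spell out the change-of-basis argument in more detail than the one-line proof in the paper.
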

\begin{proof}
Since $E^{\alggrp{M}}_{\Delta'}(\widetilde{w}) \in T^{\alggrp{M}}_{\widetilde{w}} + \sum_{\widetilde{v} < \widetilde{w}}C[q_{s}^{1/2}]T^{\alggrp{M}}_{\widetilde{v}}$, this follows from Lemma~\ref{lem:Bruhat order, M-positive/negative}.
\end{proof}

We have the Bernstein basis $\{E(\widetilde{w})\mid \widetilde{w}\in \widetilde{W}(1)\}$.
We need another basis $\{E_-(\widetilde{w})\mid \widetilde{w}\in \widetilde{W}(1)\}$ defined by
\[
	E_-(n_w \lambda) = q_{n_w \lambda}^{1/2}q_{n_w}^{-1/2}T_{n_w}^*\theta(\lambda)
\]
for $w\in W$ and $\lambda\in\Lambda(1)$.

Recall that we have an anti-involution $\iota$ of $\mathcal{H}$ defined by $T_{\widetilde{w}}\mapsto T_{\widetilde{w}^{-1}}$.
To see that this is an anti-involution, we need to check that the braid relations and quadratic relations are preserved by $\iota$.
For the braid relations, let $w_1,w_2\in \widetilde{W}(1)$ such that $\ell(w_1w_2) = \ell(w_1) + \ell(w_2)$.
Then we have $\ell((w_1w_2)^{-1}) = \ell(w_2^{-1}) + \ell(w_1^{-1})$.
Hence $T_{(w_1w_2)^{-1}} = T_{w_2^{-1}}T_{w_1^{-1}}$.
Namely the braid relations are preserved by $\iota$.
For the quadratic relations, we have $T_{n_s^{-1}}^2 = q_sT_{n_s^{-1}} + T_{n_s^{-1}}c_s$ by the quadratic relation for $n_s^{-1}$ and $c_sT_{n_s^{-1}} = T_{n_s^{-1}}c_s$~\cite[Remark~4.9 (b)]{Vigneras-prop}.
This means that $\iota$ preserves the quadratic relations.
\begin{lem}
$E_-(\widetilde{w}) = \iota(E_{\Delta}(\widetilde{w}^{-1}))$ for $\widetilde{w}\in \widetilde{W}(1)$.
In particular, $\{E_-(\widetilde{w})\mid \widetilde{w}\in\widetilde{W}(1)\}$ is a $C[q_s^{1/2}]$-basis of $\mathcal{H}$.
\end{lem}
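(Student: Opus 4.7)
The plan is to write $\widetilde{w} = n_w \lambda$ with $w \in W$ and $\lambda \in \Lambda(1)$, expand $E_\Delta(\widetilde{w}^{-1}) = E_\Delta(\lambda^{-1} n_w^{-1})$ via the multiplication formula (Proposition~\ref{prop:multiplication formula}), and apply $\iota$ factor-by-factor to recover the defining expression for $E_-(\widetilde{w})$.

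The key intermediate result is the identity $\iota(\theta(\mu)) = \theta'(\mu^{-1})$ for all $\mu \in \Lambda(1)$, where I set $\theta'(\mu) := q_\mu^{-1/2} E_\Delta(\mu)$ --- the analogue of $\theta$ formed from $E_\Delta$ instead of $E_{-\Delta}$. By Proposition~\ref{prop:multiplication formula} both $\theta$ and $\theta'$ are multiplicative on $\Lambda(1)$, and since $\iota$ is an anti-homomorphism, multiplicativity reduces the identity to the case where $\mu$ is anti-dominant with respect to $\Delta$. In that case $\theta(\mu) = q_\mu^{-1/2} T_\mu$, so $\iota(\theta(\mu)) = q_\mu^{-1/2} T_{\mu^{-1}}$; and since $\mu^{-1}$ is dominant, Proposition~\ref{prop:example of E}(1) gives $E_\Delta(\mu^{-1}) = T_{\mu^{-1}}$, hence $\theta'(\mu^{-1}) = q_\mu^{-1/2} T_{\mu^{-1}}$, matching. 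I will also use the auxiliary fact that $E_\Delta(n_w^{-1}) = T_{n_w^{-1}}^*$ for the specific lift $n_w^{-1}$; writing $n_w^{-1} = n_{w^{-1}} t$ with $t \in Z_\kappa$, this follows from Proposition~\ref{prop:multiplication formula} together with Proposition~\ref{prop:example of E}(3) and the fact that $E_{\Delta'}(t) = T_t$ for any $\Delta'$ (valid since $\nu(t) = 0$ makes $t$ simultaneously dominant and anti-dominant).

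Putting everything together, Proposition~\ref{prop:multiplication formula} applied to $\widetilde{w}^{-1} = \lambda^{-1} n_w^{-1}$ (using that $\lambda$ acts trivially on $\Delta$, $q_{\lambda^{-1}} = q_\lambda$, $q_{n_w^{-1}} = q_{n_w}$, and $q_{\widetilde{w}^{-1}} = q_{n_w\lambda}$) gives
\[
E_\Delta(\widetilde{w}^{-1}) = q_{n_w}^{-1/2}\, q_{n_w\lambda}^{1/2}\, \theta'(\lambda^{-1})\, T_{n_w^{-1}}^*.
\]
Applying $\iota$, together with the computation $\iota(T_{n_w^{-1}}^*) = \iota(q_{n_w} T_{n_w}^{-1}) = q_{n_w} T_{n_w^{-1}}^{-1} = T_{n_w}^*$ and the consequence $\iota(\theta'(\lambda^{-1})) = \theta(\lambda)$ of the key identity (via $\iota^2 = \mathrm{id}$), yields
\[
\iota(E_\Delta(\widetilde{w}^{-1})) = q_{n_w}^{-1/2}\, q_{n_w\lambda}^{1/2}\, T_{n_w}^*\, \theta(\lambda) = E_-(\widetilde{w}).
\]
The ``in particular'' assertion is then immediate: $\iota$ is a $C[q_s^{1/2}]$-linear bijection of $\mathcal{H}$ and $\widetilde{w} \mapsto \widetilde{w}^{-1}$ is a bijection of $\widetilde{W}(1)$, so the image of the Bernstein basis $\{E_\Delta(\widetilde{w})\}_{\widetilde{w}}$ under $\widetilde{w} \mapsto \iota(E_\Delta(\widetilde{w}^{-1}))$ is again a basis. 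The main technical point is the identity $\iota(\theta(\mu)) = \theta'(\mu^{-1})$; once this is established, everything else is routine manipulation of the multiplication formula and the definitions.
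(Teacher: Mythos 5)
Your proof is correct and runs along essentially the same lines as the paper's: both proofs split $\widetilde{w}$ into a finite-Weyl-group part and a $\Lambda(1)$-part via the multiplication formula, reduce the $\Lambda(1)$-part to anti-dominant elements using multiplicativity of $\theta$, and treat the finite part through the identity $\iota(T^*_{n_w^{-1}}) = T^*_{n_w}$. Your reorganization around the clean intermediate identity $\iota(\theta(\mu)) = \theta'(\mu^{-1})$ is a mild conceptual improvement over the paper's direct expansion of $E_\Delta(\lambda_1\lambda_2^{-1})$, and you are slightly more careful than the paper in justifying $E_\Delta(n_w^{-1}) = T^*_{n_w^{-1}}$ (the paper cites Proposition~\ref{prop:example of E}, which is stated only for the chosen representative $n_w$, without spelling out the $Z_\kappa$-correction), but the underlying calculation is the same.
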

\begin{proof}
Let $\lambda \in \Lambda(1)$ and take anti-dominant $\lambda_1,\lambda_2\in \Lambda(1)$ such that $\lambda = \lambda_2\lambda_1^{-1}$.
Then we have
\[
E_\Delta(\lambda^{-1}) = E_\Delta(\lambda_1\lambda_2^{-1}) = q_{\lambda_1}^{-1/2}q_{\lambda_2}^{-1/2}q_{\lambda}^{1/2}E_\Delta(\lambda_1)E_\Delta(\lambda_2^{-1})
\]
by Proposition~\ref{prop:multiplication formula}.
By Proposition~\ref{prop:example of E}, we have $E_\Delta(\lambda_1) = T_{\lambda_1}^* = q_{\lambda_1}T_{\lambda_1^{-1}}^{-1}$ and $E_\Delta(\lambda_2^{-1}) = T_{\lambda_2^{-1}}$.
Hence we get
\[
	E_\Delta(\lambda^{-1}) = q_{\lambda_1}^{1/2}q_{\lambda_2}^{-1/2}q_{\lambda}^{1/2}T_{\lambda_1^{-1}}^{-1}T_{\lambda_2^{-1}}.
\]
Therefore
\[
	\iota(E_\Delta(\lambda^{-1})) = q_{\lambda_1}^{1/2}q_{\lambda_2}^{-1/2}q_\lambda^{1/2}T_{\lambda_2}T_{\lambda_1}^{-1}.
\]
By the definition of $\theta(\lambda)$ and Proposition~\ref{prop:example of E}, we have $T_{\lambda_2} = q_{\lambda_2}^{1/2}\theta(\lambda_2)$ and $T_{\lambda_1} = q^{1/2}_{\lambda_1}\theta(\lambda_1)$.
Hence
\[
	\iota(E_\Delta(\lambda^{-1})) = q_{\lambda}^{1/2}\theta(\lambda_2)\theta(\lambda_1)^{-1} = q_\lambda^{1/2}\theta(\lambda_2\lambda_1^{-1}) = q_\lambda^{1/2}\theta(\lambda).
\]
Now let $w\in W$.
Then we have $E_\Delta(\lambda^{-1} n_w^{-1}) = q_\lambda^{-1/2}q_{w}^{-1/2}q_{n_w \lambda}^{1/2}E_\Delta(\lambda^{-1}) E_\Delta(n_w^{-1})$ (Proposition~\ref{prop:multiplication formula}).
We have $E_\Delta(n_w^{-1}) = T_{n_w^{-1}}^* = q_{n_w}T_{n_w}^{-1}$ by Proposition~\ref{prop:example of E}.
Hence $\iota(E_\Delta(n_w^{-1})) = q_{n_w}T_{n_w^{-1}}^{-1} = T_{n_w}^*$.
Therefore we get
\begin{align*}
\iota(E_\Delta((n_w\lambda)^{-1})) & = q_{\lambda}^{-1/2}q_{n_w}^{-1/2}q_{n_w\lambda}^{1/2}\iota(E_\Delta(n_w^{-1}))\iota(E_\Delta(\lambda^{-1}))\\
& = q_{n_w}^{-1/2}q_{n_w\lambda}^{1/2}T_{n_w}^*\theta(\lambda) = E_-(n_w\lambda).\qedhere
\end{align*}
\end{proof}
Hence  $\{E_-(\widetilde{w})\mid \widetilde{w}\in \widetilde{W}(1)\}$ is a $C[q_s^{1/2}]$-basis of $\mathcal{H}$.
We have $E_{\Delta}(\widetilde{w}^{-1}) \in T_{\widetilde{w}^{-1}} + \sum_{\widetilde{v}  <\widetilde{w}}C[q_s^{1/2}]T_{\widetilde{v}^{-1}}$.
Applying $\iota$, we get
\[
	E_-(\widetilde{w})\in T_{\widetilde{w}} + \sum_{\widetilde{v} < \widetilde{w}}C[q_s^{1/2}]T_{\widetilde{v}} = E(\widetilde{w}) + \sum_{\widetilde{v} < \widetilde{w}}C[q_s^{1/2}]E(\widetilde{v}).
\]
By the definition, for $w\in W$ and $\lambda,\lambda_1\in\Lambda(1)$, we have
\[
	E_-(n_w\lambda)E(\lambda_1) = q_{n_w\lambda}^{1/2}q_{\lambda_1}^{1/2}q_{n_w \lambda \lambda_1}^{-1/2}E_-(n_w \lambda \lambda_1).
\]
In particular, in $\mathcal{H}\otimes C$, we get
\begin{equation}\label{eq:multiplication of E_-}
	E_-(n_w\lambda)E(\lambda_1)
	=
	\begin{cases}
	E_-(n_w \lambda \lambda_1) & (\ell(n_w \lambda) + \ell(\lambda_1) = \ell(n_w \lambda \lambda_1)),\\
	0 & (\text{otherwise}).
	\end{cases}
\end{equation}

We want to construct an embedding $\mathcal{H}_{\alggrp{M}}^{\pm}\hookrightarrow \mathcal{H}$.
To do this, we have to relate $\{q_s\mid s\in S_\aff/\mathord{\sim}\}$ with $\{q_{\alggrp{M},s}\mid s\in S_{\alggrp{M},\aff}/\mathord{\sim}\}$.
For $\widetilde{\alpha}\in \Sigma_\aff$, take a simple affine root $\widetilde{\beta}\in \widetilde{W}\widetilde{\alpha}$ from the $\widetilde{W}$-orbit through $\widetilde{\alpha}$ and put $q(\widetilde{\alpha}) = q_{s_{\widetilde{\beta}}}$.
Then it does not depend on $\widetilde{\beta}$.
For a finite subset $A\subset\Sigma_\aff$, put $q(A) = \prod_{\widetilde{\alpha}\in A}q(\widetilde{\alpha})$.
By induction on $\ell(\widetilde{w})$, we have $q_{\widetilde{w}} = q(\Sigma_\aff^+\cap \widetilde{w}(\Sigma_\aff^-))$.
Now we define a homomorphism $C[q_{\alggrp{M},s}^{1/2}]\to C[q_s^{1/2}]$ by $q_{\alggrp{M},\widetilde{w}}\mapsto q(\Sigma_{\alggrp{M},\aff}^+\cap \widetilde{w}(\Sigma^-_{\alggrp{M},\aff}))$ for $\widetilde{w}\in \widetilde{W}_{\alggrp{M}}(1)$.
The motivation of this definition is the following.
\begin{lem}
Under the specialization $q_s\mapsto \#(I(1)sI(1)/I(1))$, we have $q_{\alggrp{M},\widetilde{w}}\mapsto \#(\overline{I}_{\alggrp{M}}(1)\widetilde{w}\overline{I}_{\alggrp{M}}(1)/\overline{I}_{\alggrp{M}}(1))$.
\end{lem}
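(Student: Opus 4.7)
The plan is to prove both sides of the equality are multiplicative in $\widetilde{w}$ along reduced expressions in $\widetilde{W}_{\alggrp{M}}(1)$, and then verify the equality when $\widetilde{w}$ is a lift of a simple affine reflection $s\in S_{\alggrp{M},\aff}$.

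For multiplicativity, whenever $\ell_{\alggrp{M}}(\widetilde{w}_1\widetilde{w}_2) = \ell_{\alggrp{M}}(\widetilde{w}_1) + \ell_{\alggrp{M}}(\widetilde{w}_2)$, the set $\Sigma_{\alggrp{M},\aff}^{+}\cap(\widetilde{w}_1\widetilde{w}_2)(\Sigma_{\alggrp{M},\aff}^{-})$ decomposes as the disjoint union of $\Sigma_{\alggrp{M},\aff}^{+}\cap\widetilde{w}_1(\Sigma_{\alggrp{M},\aff}^{-})$ and $\widetilde{w}_1\bigl(\Sigma_{\alggrp{M},\aff}^{+}\cap\widetilde{w}_2(\Sigma_{\alggrp{M},\aff}^{-})\bigr)$, and the $\widetilde{W}$-invariance of $q(\cdot)$ that is built into its definition then yields multiplicativity of the left-hand side. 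The cardinality $\#(\overline{I}_{\alggrp{M}}(1)\widetilde{w}\overline{I}_{\alggrp{M}}(1)/\overline{I}_{\alggrp{M}}(1))$ is similarly multiplicative by the braid relations of the pro-$p$-Iwahori Hecke algebra of $\alggrp{M}$ combined with the Bruhat decomposition of $\alggrp{M}(F)$ with respect to $\overline{I}_{\alggrp{M}}(1)$. Both sides equal $1$ on length-zero elements, so by induction on $\ell_{\alggrp{M}}(\widetilde{w})$ it suffices to treat $\widetilde{w}=\widetilde{s}$ lifting some $s\in S_{\alggrp{M},\aff}$.

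For such $\widetilde{s}$, let $\widetilde{\alpha}\in\Sigma_{\alggrp{M},\aff}$ be the affine root vanishing on the wall fixed by $s$, with underlying finite root $\alpha\in\Sigma_{\alggrp{M}}\subset\Sigma$. By Bruhat--Tits theory applied to $\alggrp{M}$, the cardinality $\#(\overline{I}_{\alggrp{M}}(1)\widetilde{s}\overline{I}_{\alggrp{M}}(1)/\overline{I}_{\alggrp{M}}(1))$ equals that of a finite quotient of the root subgroup $\alggrp{U}_{\alpha}(F)$ by its subgroup at the adjacent affine root; since $\alggrp{U}_{\alpha}\subset\alggrp{M}$ for $\alpha\in\Sigma_{\alggrp{M}}$, this quotient is the same whether computed inside $\alggrp{M}$ or $\alggrp{G}$. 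On the other hand, by the definition of $q(\widetilde{\alpha})$, pick a simple affine $\widetilde{\beta}\in\Sigma_{\aff}$ in the $\widetilde{W}$-orbit of $\widetilde{\alpha}$; the specialization turns $q(\widetilde{\alpha})=q_{s_{\widetilde{\beta}}}$ into $\#(I(1)s_{\widetilde{\beta}}I(1)/I(1))$, which by the same Bruhat--Tits argument is the analogous root-subgroup quotient cardinality for $\widetilde{\beta}$ in $\alggrp{G}$. Since the filtration $\{U_{(\gamma,j)}\}$ is $\widetilde{W}$-equivariant under conjugation by lifts in $N_{\alggrp{G}(F)}(\alggrp{S}(F))$, the two cardinalities coincide, finishing the generator case.

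The main obstacle is verifying that the specialized ``$q$-parameter'' $q(\widetilde{\alpha})$ is genuinely intrinsic to the affine root $\widetilde{\alpha}$, independent of whether one computes inside $\alggrp{M}$ or $\alggrp{G}$ and of the representative chosen in its $\widetilde{W}$-orbit. This reduces to the $\widetilde{W}$-equivariance of the root-subgroup filtration of Bruhat--Tits (as already used implicitly in Lemma~\ref{lem:Z_(a,k) is canonical}), applied both to identify $q(\widetilde{\alpha})$ with a specific root-subgroup quotient in $\alggrp{G}$ and to match it with the analogous quotient in $\alggrp{M}$.
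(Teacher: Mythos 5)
Your argument is correct and self-contained, in contrast to the paper, whose proof of this lemma is only a citation: ``The proof of Corollary~3.31 and Proposition~3.28 in [Viga] is applicable.'' Your reduction — multiplicativity on both sides when $\ell_{\alggrp{M}}$ is additive, triviality on length-zero elements, and a base case on lifts of $S_{\alggrp{M},\aff}$ — is the standard route and almost certainly mirrors what Vignéras's cited proof does; the key inputs you identify (the inversion-set decomposition $N(\widetilde{w}_1\widetilde{w}_2)=N(\widetilde{w}_1)\amalg\widetilde{w}_1 N(\widetilde{w}_2)$ together with $\widetilde{W}$-invariance of $q(\cdot)$; index multiplicativity of pro-$p$-Iwahori double cosets when lengths add; the fact that a single-wall index is an affine root-subgroup quotient lying entirely in $\alggrp{M}$ when $\alpha\in\Sigma_{\alggrp{M}}$; and $N_{\alggrp{G}(F)}(\alggrp{S}(F))$-equivariance of the Bruhat--Tits filtration, as in Lemma~\ref{lem:Z_(a,k) is canonical}) are exactly the ones the cited argument rests on. The one step you should make explicit rather than gesture at is the identification $\#(I(1)s_{\widetilde{\beta}}I(1)/I(1)) = \#(U_{\widetilde{\beta}}/U_{\widetilde{\beta}'})$ in the \emph{pro-$p$} setting (via the Iwahori factorization of $I(1)$ and its conjugate), both for $\alggrp{G}$ and for $\alggrp{M}$; this is where the specific pro-$p$ normalization enters and where a careless argument could slip. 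With that spelled out, the proof is complete, and being self-contained it is a genuine improvement over the paper's bare citation.
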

\begin{proof}
The proof of Corollary 3.31 and Proposition 3.28 in \cite{Vigneras-prop} is applicable.
\end{proof}

We have the following relation between $q_s$ and $q_{\alggrp{M},s}$.
\begin{lem}\label{lem:M-negativity and q}
If $\widetilde{w}_1,\widetilde{w}_2\in \widetilde{W}_{\alggrp{M}}(1)$ are $\alggrp{M}$-positive (resp.\ $\alggrp{M}$-negative), then we have $q_{\widetilde{w}_1\widetilde{w}_2}^{-1}q_{\widetilde{w}_1}q_{\widetilde{w}_2} = q_{\alggrp{M},\widetilde{w}_1\widetilde{w}_2}^{-1}q_{\alggrp{M},\widetilde{w}_1}q_{\alggrp{M},\widetilde{w}_2}$.
In particular, we have $\ell_{\alggrp{M}}(\widetilde{w}_1) + \ell_{\alggrp{M}}(\widetilde{w}_2) - \ell_{\alggrp{M}}(\widetilde{w}_1\widetilde{w}_2) = \ell(\widetilde{w}_1) + \ell(\widetilde{w}_2) - \ell(\widetilde{w}_1\widetilde{w}_2)$.
\end{lem}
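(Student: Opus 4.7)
The plan is to establish, for $\widetilde{w}\in\widetilde{W}_{\alggrp{M}}(1)$ that is $\alggrp{M}$-positive, an explicit formula $q_{\widetilde{w}}=q_{\alggrp{M},\widetilde{w}}\cdot f(\widetilde{w})$, in which $f(\widetilde{w})$ is a $q$-product depending only on the $\Lambda(1)$-part of $\widetilde{w}$, and then to prove the multiplicativity $f(\widetilde{w}_1\widetilde{w}_2)=f(\widetilde{w}_1)f(\widetilde{w}_2)$.

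First I will reduce to the $\alggrp{M}$-positive case via $\widetilde{w}\mapsto\widetilde{w}^{-1}$, which exchanges $\alggrp{M}$-positivity and $\alggrp{M}$-negativity while preserving both $q_{\widetilde{w}}$ and $q_{\alggrp{M},\widetilde{w}}$. Writing $\widetilde{w}=\lambda n_w$ with $w\in W_{\alggrp{M}}$, and using that $\widetilde{W}_{\alggrp{M}}$ preserves both $\Sigma_{\alggrp{M},\aff}$ and $\Sigma_\aff\setminus\Sigma_{\alggrp{M},\aff}$, the inversion set splits as
\[
\Sigma^+_\aff\cap\widetilde{w}\Sigma^-_\aff=(\Sigma^+_{\alggrp{M},\aff}\cap\widetilde{w}\Sigma^-_{\alggrp{M},\aff})\sqcup A(\widetilde{w}),
\]
with $A(\widetilde{w})\subset\Sigma_\aff\setminus\Sigma_{\alggrp{M},\aff}$. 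From $\widetilde{w}^{-1}(\beta,l)=(w^{-1}\beta,l+\langle\nu(\lambda),\beta\rangle)$, the fact that $w$ permutes $\Sigma^+\setminus\Sigma^+_{\alggrp{M}}$, and the $\alggrp{M}$-positivity condition $\langle\nu(\lambda),\beta\rangle\le 0$ for $\beta\in\Sigma^+\setminus\Sigma^+_{\alggrp{M}}$, a short case analysis on the sign of $\beta$ yields
\[
A(\widetilde{w})=\bigsqcup_{\beta\in\Sigma^+\setminus\Sigma^+_{\alggrp{M}}}\{(\beta,l)\mid 0\le l\le -\langle\nu(\lambda),\beta\rangle-1\};
\]
the potential contributions from $\beta\in\Sigma^-\setminus\Sigma^-_{\alggrp{M}}$ vanish, since those would demand $0<l\le -\langle\nu(\lambda),\beta\rangle\le 0$. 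Consequently $f(\widetilde{w}):=q(A(\widetilde{w}))$ depends only on $\nu(\lambda)$, and $q_{\widetilde{w}}=q_{\alggrp{M},\widetilde{w}}\,f(\widetilde{w})$.

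For multiplicativity, the product $\widetilde{w}_1\widetilde{w}_2=\lambda_1(n_{w_1}\lambda_2 n_{w_1}^{-1})n_{w_1}n_{w_2}$ has translation component with $\nu$-image $\nu(\lambda_1)+w_1\nu(\lambda_2)$, so for $\beta\in\Sigma^+\setminus\Sigma^+_{\alggrp{M}}$ the $l$-range in $f(\widetilde{w}_1\widetilde{w}_2)$ is $0\le l\le -\langle\nu(\lambda_1),\beta\rangle-\langle\nu(\lambda_2),w_1^{-1}\beta\rangle-1$, with both summands non-negative. Splitting this range at $l=-\langle\nu(\lambda_1),\beta\rangle$ extracts the $\beta$-factor of $f(\widetilde{w}_1)$, while the upper half, via the substitution $l=l'-\langle\nu(\lambda_1),\beta\rangle$ and the $\widetilde{W}$-invariance of $q$ on affine roots (first translation by $\lambda_1$, then the action of $w_1$), becomes $\prod_{l'=0}^{-\langle\nu(\lambda_2),w_1^{-1}\beta\rangle-1}q((w_1^{-1}\beta,l'))$; reindexing $\beta\mapsto w_1\gamma$ (valid because $w_1$ permutes $\Sigma^+\setminus\Sigma^+_{\alggrp{M}}$) then recovers $f(\widetilde{w}_2)$. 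Hence $f(\widetilde{w}_1\widetilde{w}_2)=f(\widetilde{w}_1)f(\widetilde{w}_2)$, which gives the desired identity; the length identity then follows by comparing total degrees in the $q_s$'s. The main obstacle is the case analysis pinning down $A(\widetilde{w})$: $\alggrp{M}$-positivity is exactly what kills the $\Sigma^-\setminus\Sigma^-_{\alggrp{M}}$ contribution and makes $f(\widetilde{w})$ depend only on $\nu(\lambda)$, after which multiplicativity is just a bookkeeping reorganization.
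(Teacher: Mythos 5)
Your proof is correct, and its skeleton coincides with the paper's: reduce to the $\alggrp{M}$-positive case via $\widetilde{w}\mapsto\widetilde{w}^{-1}$, and observe that the ratio $q_{\widetilde{w}}/q_{\alggrp{M},\widetilde{w}}$ is the $q$-value of the ``cross'' part of the inversion set, namely the affine roots $(\beta,l)$ with $\beta\in\Sigma^+\setminus\Sigma^+_{\alggrp{M}}$; $\alggrp{M}$-positivity kills all contributions from $\beta\in\Sigma^-\setminus\Sigma^-_{\alggrp{M}}$. Where you diverge is in proving multiplicativity of this cross factor: you compute it explicitly as a product over $\beta\in\Sigma^+\setminus\Sigma^+_{\alggrp{M}}$ of intervals $0\le l\le -\langle\nu(\lambda),\beta\rangle-1$, and then do interval-splitting, a change of variable $l\mapsto l'=l+\langle\nu(\lambda_1),\beta\rangle$, $\widetilde{W}$-invariance of $q$ applied to $\lambda_1$ and $n_{w_1}$, and reindexing $\beta\mapsto w_1\gamma$. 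The paper avoids this enumeration and instead proves a set-theoretic cocycle identity $A^+\cap\widetilde{w}_1\widetilde{w}_2 A^- = \widetilde{w}_1(A^+\cap\widetilde{w}_2 A^-)\amalg(A^+\cap\widetilde{w}_1 A^-)$ directly from the containments $\widetilde{w}_1 A^+\subset A^+$ and $A^-\subset\widetilde{w}_2 A^-$, then applies $\widetilde{W}$-invariance of $q$ once. The paper's route is shorter and less prone to off-by-one slips, while yours has the advantage of producing a closed formula $f(\widetilde{w})=\prod_{\beta\in\Sigma^+\setminus\Sigma^+_{\alggrp{M}}}\prod_{l=0}^{-\langle\nu(\lambda),\beta\rangle-1}q((\beta,l))$ depending only on $\nu$ of the $\Lambda(1)$-component; both approaches rest on the same geometric observation.
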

\begin{proof}
Put $C^\pm = \Sigma_\aff^{\pm}\setminus\Sigma_{\alggrp{M},\aff}^{\pm}$.
By the definition, we have $q_{\widetilde{w}} = q_{\alggrp{M},\widetilde{w}}q(C^+\cap \widetilde{w}C^-)$ for $\widetilde{w}\in\widetilde{W}(1)$.
Hence it is sufficient to prove that
\[
	q(C^+\cap \widetilde{w}_1\widetilde{w}_2C^-) = q(C^+\cap \widetilde{w}_1C^-)q(C^+\cap \widetilde{w}_2C^-).
\]
Define $A^{\pm} = C^\pm\cap ((\Sigma^+\setminus\Sigma_{\alggrp{M}}^+)\times \Z)$ and $B^{\pm} = C^\pm\cap ((\Sigma^-\setminus\Sigma_{\alggrp{M}}^-)\times \Z)$.
Then we have $C^\pm = A^\pm \cup B^\pm$.
For $\lambda\in \Lambda(1)$ and $w\in W_{\alggrp{M}}$, we have $\lambda n_w(\alpha,k) = (w(\alpha),n - \langle w(\alpha),\nu(\lambda)\rangle)$.
If $w\in W_{\alggrp{M}}$ and $\alpha\notin\Sigma_{\alggrp{M}}$, we have $w(\alpha) > 0$ if and only if $\alpha > 0$.
Hence if $\widetilde{w}\in W_{\alggrp{M}}(1)$, we have $\widetilde{w}(A^+\cup A^-) = A^+\cup A^-$ and $\widetilde{w}(B^+\cup B^-) = B^+\cup B^-$.
Hence $C^+\cap \widetilde{w}C^- = (A^+\cap \widetilde{w}A^-)\cup(B^+\cap \widetilde{w}B^-)$.
If $\lambda n_w$ is $\alggrp{M}$-positive then for $\alpha \in \Sigma^-\setminus\Sigma_{\alggrp{M}}^-$, we have $\langle \alpha,\nu(\lambda)\rangle \ge 0$.
Hence $\widetilde{w} = \lambda n_w$ satisfies $\widetilde{w}(B^-)\subset B^-$.
Therefore $C^+\cap \widetilde{w}C^- = A^+\cap\widetilde{w}A^-$.
We also have $\widetilde{w}(A^+)\subset A^+$ and $\widetilde{w}(A^-)\supset A^-$ which will be used later.

Let $\widetilde{w}_1,\widetilde{w}_2\in \widetilde{W}_{\alggrp{M}}(1)$ be $\alggrp{M}$-positive elements.
Then we have
\[
	A^+\cap \widetilde{w}_1\widetilde{w}_2A^-
	=
	(A^+\cap \widetilde{w}_1\widetilde{w}_2A^-\cap \widetilde{w}_1A^+)
	\amalg
	(A^+\cap \widetilde{w}_1\widetilde{w}_2A^-\cap \widetilde{w}_1A^-).
\]
Since $\widetilde{w}_1A^+\subset A^+$, we have $A^+\cap \widetilde{w}_1\widetilde{w}_2A^-\cap \widetilde{w}_1A^+ = \widetilde{w}_1\widetilde{w}_2A^-\cap \widetilde{w}_1A^+ = \widetilde{w}_1(\widetilde{w}_2A^-\cap A^+)$.
Since $A^-\subset \widetilde{w}_2A^-$, we have $A^+\cap \widetilde{w}_1\widetilde{w}_2A^-\cap \widetilde{w}_1A^- = A^+\cap \widetilde{w}_1A^-$.
Hence
\[
	A^+\cap \widetilde{w}_1\widetilde{w}_2A^-
	=
	\widetilde{w}_1(A^+\cap \widetilde{w}_2A^-)
	\amalg
	(A^+\cap \widetilde{w}_1A^-).
\]
Therefore,
\[
	q(A^+\cap \widetilde{w}_1\widetilde{w}_2A^-)
	=
	q(\widetilde{w}_1(A^+\cap \widetilde{w}_2A^-))
	q(A^+\cap \widetilde{w}_1A^-).
\]
Since $q(\widetilde{w}_1(A^+\cap \widetilde{w}_2A^-)) = q(A^+\cap \widetilde{w}_2A^-)$, we get the lemma if $\widetilde{w}_1,\widetilde{w}_2$ are $\alggrp{M}$-positive.
Since $q_{\widetilde{w}^{-1}} = q_{\widetilde{w}}$ and $q_{\alggrp{M},\widetilde{w}^{-1}} = q_{\alggrp{M},\widetilde{w}}$. the lemma holds if $\widetilde{w}_1,\widetilde{w}_2$ are $\alggrp{M}$-negative.
\end{proof}

We define a $C[q_s^{1/2}]$-module homomorphism $j_{\alggrp{M}}^+\colon \mathcal{H}_{\alggrp{M}}^+\to \mathcal{H}$ (resp.\ $j_{\alggrp{M}}^-\colon \mathcal{H}_{\alggrp{M}}^-\to \mathcal{H}$) by
\[
	j_{\alggrp{M}}^+(T^{\alggrp{M}}_{\widetilde{w}}) = T_{\widetilde{w}}
	\quad \textrm{(resp.\ }
	j_{\alggrp{M}}^-(T^{\alggrp{M},*}_{\widetilde{w}}) = T_{\widetilde{w}}^*
	\textrm{)}
\]
for $\alggrp{M}$-positive (resp.\ $\alggrp{M}$-negative) $\widetilde{w}\in \widetilde{W}_{\alggrp{M}}(1)$.

\begin{lem}
The $C[q_s^{1/2}]$-module homomorphisms $j_{\alggrp{M}}^{\pm}$ are homomorphisms between $C[q_s^{1/2}]$-algebras.
We have $j^+_{\alggrp{M}}(E^{\alggrp{M}}(\widetilde{w})) = E(\widetilde{w})$, $j_{\alggrp{M}}^+(E_-^{\alggrp{M}}(\widetilde{w})) = E_-(\widetilde{w})$ for $\alggrp{M}$-positive $\widetilde{w}\in \widetilde{W}_{\alggrp{M}}(1)$ and $j_{\alggrp{M}}^-(E^{\alggrp{M}}(\widetilde{w})) = E(\widetilde{w})$, $j_{\alggrp{M}}^-(E^{\alggrp{M}}_-(\widetilde{w})) = E_-(\widetilde{w})$ for $\alggrp{M}$-negative $\widetilde{w}\in \widetilde{W}_{\alggrp{M}}(1)$.
\end{lem}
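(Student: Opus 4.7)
The plan is to establish multiplicativity of $j_{\alggrp{M}}^+$ on the Iwahori--Matsumoto basis first, and then derive the Bernstein-basis identities as consequences. The crucial input throughout is Lemma~\ref{lem:M-negativity and q}, which guarantees that on $\alggrp{M}$-positive (or $\alggrp{M}$-negative) products the length-defect and hence the $q$-weights coincide for $\widetilde{W}(1)$ and $\widetilde{W}_{\alggrp{M}}(1)$ after the chosen base change $C[q_{\alggrp{M},s}^{1/2}]\to C[q_s^{1/2}]$.

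To start, I would verify that $\mathcal{H}_{\alggrp{M}}^{\pm}$ is indeed a subalgebra: the product of two $\alggrp{M}$-positive elements is $\alggrp{M}$-positive (already noted in the text), and by Lemma~\ref{lem:Bruhat order, M-positive/negative} any $\widetilde{v}$ occurring in $T^{\alggrp{M}}_{\widetilde{w}_1}T^{\alggrp{M}}_{\widetilde{w}_2}$ satisfies $\widetilde{v}\le \widetilde{v}_1\widetilde{v}_2$ with $\widetilde{v}_i\le \widetilde{w}_i$, and hence is still $\alggrp{M}$-positive. For multiplicativity of $j_{\alggrp{M}}^+$, I would prove $j_{\alggrp{M}}^+(T^{\alggrp{M}}_{\widetilde{w}_1}T^{\alggrp{M}}_{\widetilde{w}_2})=T_{\widetilde{w}_1}T_{\widetilde{w}_2}$ by induction on $\ell_{\alggrp{M}}(\widetilde{w}_2)$. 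In the reduced case $\ell_{\alggrp{M}}(\widetilde{w}_1\widetilde{w}_2)=\ell_{\alggrp{M}}(\widetilde{w}_1)+\ell_{\alggrp{M}}(\widetilde{w}_2)$, Lemma~\ref{lem:M-negativity and q} yields $\ell(\widetilde{w}_1\widetilde{w}_2)=\ell(\widetilde{w}_1)+\ell(\widetilde{w}_2)$, so the braid relations in both algebras give $T^{\alggrp{M}}_{\widetilde{w}_1\widetilde{w}_2}\mapsto T_{\widetilde{w}_1\widetilde{w}_2}$. Otherwise, write $\widetilde{w}_2=\widetilde{u}\cdot n_{\widetilde{s}}$ for a lift $\widetilde{s}$ of a simple affine reflection of $\alggrp{M}$ with $\ell_{\alggrp{M}}(\widetilde{u})=\ell_{\alggrp{M}}(\widetilde{w}_2)-1$; the inductive step reduces everything to the case $\widetilde{w}_2=n_{\widetilde{s}}$, where the quadratic relation forces a rewrite. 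Here the element $c_{\widetilde{s}}$ lies in $C[Z_\kappa]$ and is the same in both algebras through the natural inclusion $Z_\kappa\hookrightarrow \widetilde{W}_{\alggrp{M}}(1)\hookrightarrow \widetilde{W}(1)$, and the base change sends $q_{\alggrp{M},s}$ to $q_{n_{\widetilde{s}}}$ when one uses the $\alggrp{M}$-positivity to rewrite both sides consistently.

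For the Bernstein basis identities, I would use the explicit formula $E(\lambda n_w)=q_{\lambda n_w}^{1/2}q_{n_w}^{-1/2}\theta(\lambda)T_{n_w}$ (and its $\alggrp{M}$-analogue). Given an $\alggrp{M}$-positive $\widetilde{w}=\lambda n_w$ with $w\in W_{\alggrp{M}}$, decompose $\lambda=\lambda_1\lambda_2^{-1}$ with $\lambda_1,\lambda_2$ simultaneously anti-dominant with respect to $\Delta$ and $\Delta_{\alggrp{M}}$ (obtainable by multiplying $\lambda$ and an auxiliary element by a very anti-dominant $\alggrp{M}$-central cocharacter; the resulting $\lambda_i$ are automatically $\alggrp{M}$-positive). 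Then $T_{\lambda_1},T_{\lambda_2^{-1}}^{-1}$ and $T_{n_w}$ all lie in the image of $j_{\alggrp{M}}^+$, so using the multiplicativity just established together with the coincidence of $q$-ratios provided by Lemma~\ref{lem:M-negativity and q}, the two expressions $E^{\alggrp{M}}(\widetilde{w})$ and $E(\widetilde{w})$ correspond under $j_{\alggrp{M}}^+$. The statement for $E_-$ follows by applying the anti-involution $\iota$ together with the formula $E_-(\widetilde{w})=\iota(E_\Delta(\widetilde{w}^{-1}))$ proved earlier and the observation that $\widetilde{w}$ is $\alggrp{M}$-positive iff $\widetilde{w}^{-1}$ is $\alggrp{M}$-negative. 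The analysis of $j_{\alggrp{M}}^-$ is then entirely parallel, or can be deduced from the $j_{\alggrp{M}}^+$ case by $\iota$-conjugation.

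I expect the main obstacle to be the quadratic-relation step for a simple affine reflection $s\in S_{\alggrp{M},\aff}$ whose lift $n_{\widetilde{s}}$ has $\ell(n_{\widetilde{s}})>1$ in $\widetilde{W}(1)$: in that case $T_{n_{\widetilde{s}}}$ is already a non-trivial product in $\mathcal{H}$, and one must check that $(T_{n_{\widetilde{s}}})^2$ computed in $\mathcal{H}$ (iterating the $\alggrp{G}$-quadratic relations along a reduced expression of $n_{\widetilde{s}}$ in $\widetilde{W}$) reproduces the $\alggrp{M}$-quadratic relation $(T^{\alggrp{M}}_{n_{\widetilde{s}}})^2 = q_{\alggrp{M},s}T^{\alggrp{M}}_{n_{\widetilde{s}^2}}+c_{\widetilde{s}}T^{\alggrp{M}}_{n_{\widetilde{s}}}$ after base change. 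This is where the precise definition of the map $q_{\alggrp{M},\widetilde{w}}\mapsto q(\Sigma^+_{\alggrp{M},\aff}\cap \widetilde{w}(\Sigma^-_{\alggrp{M},\aff}))$ is tailor-made to make the computation work, and carrying it out is essentially the only non-formal part of the proof.
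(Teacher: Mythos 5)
Your overall strategy matches the paper's: first establish multiplicativity of $j_{\alggrp{M}}^{\pm}$ on the Iwahori--Matsumoto basis by induction on $\ell_{\alggrp{M}}(\widetilde{w}_2)$ using Lemma~\ref{lem:M-negativity and q} for the braid case, then deduce the Bernstein-basis identities. However, you explicitly flag the quadratic-relation step for $s\in S_{\alggrp{M},\aff}$ with $\ell(n_s)>1$ as ``the only non-formal part of the proof'' and do not carry it out. That is exactly the crux of the argument, so the proposal has a genuine gap at the crucial point.

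The paper resolves this step by an entirely different device from the one you anticipate. You propose to unroll a reduced expression of $n_s$ in $\widetilde{W}$ and iterate the $\alggrp{G}$-quadratic relations to reproduce the $\alggrp{M}$-quadratic relation; this would be laborious and is not what is done. Instead the paper argues that when the quadratic relation is actually invoked (i.e.\ $\widetilde{w}_1 n_s<\widetilde{w}_1$ with both sides $\alggrp{M}$-positive), the length-defect equality $\ell(\widetilde{w}_1)+\ell(\widetilde{w}_2)-\ell(\widetilde{w}_1\widetilde{w}_2)=\ell_{\alggrp{M}}(\widetilde{w}_1)+\ell_{\alggrp{M}}(\widetilde{w}_2)-\ell_{\alggrp{M}}(\widetilde{w}_1\widetilde{w}_2)$ of Lemma~\ref{lem:M-negativity and q}, combined with $\ell(n_s^2)=\ell_{\alggrp{M}}(n_s^2)=0$, forces $\ell(n_s)=1$. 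Hence $s\in S_\aff$, the base change identifies $q_{\alggrp{M},s}$ with $q_s$ tautologically, and the quadratic relation on the $\mathcal{H}$ side is the same relation verbatim --- no iteration over a long word is needed. This identification of the degenerate behavior of the affine reflection inside the induction is precisely the missing idea; without it the proposal cannot close. (Your treatments of the Bernstein bases via a decomposition of $\lambda$ into anti-dominant pieces and of $E_-$ via the anti-involution $\iota$ are reasonable variants of what the paper does via multiplicativity and Proposition~\ref{prop:multiplication formula}, so those parts are not the issue.)
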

\begin{proof}
We prove $j^+_{\alggrp{M}}(T^{\alggrp{M}}_{\widetilde{w}_1}T^{\alggrp{M}}_{\widetilde{w}_2}) = T_{\widetilde{w}_1}T_{\widetilde{w}_2}$ for $\alggrp{M}$-positive elements $\widetilde{w}_1,\widetilde{w}_2\in \widetilde{W}_{\alggrp{M}}(1)$.
By induction on $\ell_{\alggrp{M}}(\widetilde{w}_2)$, we may assume $\ell_{\alggrp{M}}(\widetilde{w}_2) = 0$ or $\widetilde{w}_2 = n_s$ for $s\in S_{\alggrp{M},\aff}$.
If $\ell_{\alggrp{M}}(\widetilde{w}_2) = 0$, or $\widetilde{w}_2 = n_s$ and $\widetilde{w}_1 n_s > \widetilde{w}_1$, then $\ell_{\alggrp{M}}(\widetilde{w}_1) + \ell_{\alggrp{M}}(\widetilde{w}_2) - \ell_{\alggrp{M}}(\widetilde{w}_1\widetilde{w}_2) = 0$.
Hence $T^{\alggrp{M}}_{\widetilde{w}_1}T^{\alggrp{M}}_{\widetilde{w}_2} = T^{\alggrp{M}}_{\widetilde{w}_1\widetilde{w}_2}$.
We have $\ell(\widetilde{w}_1) + \ell(\widetilde{w}_2) - \ell(\widetilde{w}_1\widetilde{w}_2) = 0$ by Lemma~\ref{lem:M-negativity and q}.
Therefore $T_{\widetilde{w}_1}T_{\widetilde{w}_2} = T_{\widetilde{w}_1\widetilde{w}_2}$.
We have $j^+_{\alggrp{M}}(T^{\alggrp{M}}_{\widetilde{w}_1}T^{\alggrp{M}}_{\widetilde{w}_2}) = T_{\widetilde{w}_1}T_{\widetilde{w}_2}$.

If $\widetilde{w}_2 = n_s$ and $\widetilde{w}_1 n_s < \widetilde{w}_1$, then we have $T^{\alggrp{M}}_{\widetilde{w}_1}T^{\alggrp{M}}_{n_s} = T^{\alggrp{M}}_{\widetilde{w}_1n_s^{-1}}(q_{\alggrp{M},s}T_{n_s^2} - T_{n_s}c_s) = q_{\alggrp{M},s}T^{\alggrp{M}}_{\widetilde{w}_1n_s} - T^{\alggrp{M}}_{\widetilde{w}_1}c_s$.
Therefore $j^+_{\alggrp{M}}(T^{\alggrp{M}}_{\widetilde{w}_1}T^{\alggrp{M}}_{\widetilde{w}_2}) = q_{\alggrp{M},s}T_{\widetilde{w}_1n_s} - T_{\widetilde{w}_1}c_s$.
We have $\ell(n_s^2) = \ell_{\alggrp{M}}(n_s^2) = 0$.
Hence $2\ell(n_s) = \ell(n_s) + \ell(n_s) - \ell(n_s^2) = \ell_{\alggrp{M}}(n_s) + \ell_{\alggrp{M}}(n_s) - \ell_{\alggrp{M}}(n_s^2) = 2$ by Lemma~\ref{lem:M-negativity and q}.
Therefore, $\ell(n_s) = 1$.
Namely, $s\in S_\aff$.
Therefore, by the definition of $q_{\alggrp{M},s}$, we have $q_{\alggrp{M},s} = q_s$.
We also have $\ell(\widetilde{w}_1) + \ell(n_s) - \ell(\widetilde{w}_1n_s) = \ell_{\alggrp{M}}(\widetilde{w}_1) + \ell_{\alggrp{M}}(n_s) - \ell_{\alggrp{M}}(\widetilde{w}_1n_s) = 2$.
Hence by the same calculation implies $T_{\widetilde{w}_1}T_{n_s} = q_{s}T_{\widetilde{w}_1n_s} - T_{\widetilde{w}_1}c_s$.
Therefore $j_{\alggrp{M}}^+(T^{\alggrp{M}}_{\widetilde{w}_1}T^{\alggrp{M}}_{\widetilde{w}_2}) = T_{\widetilde{w}_1}T_{\widetilde{w}_2}$.
Hence $j_{\alggrp{M}}^+$ is an algebra homomorphism.
By the same argument, $j_{\alggrp{M}}^-$ is an algebra homomorphism.

By the argument in the proof of \cite[Proposition~4.7]{MR2728487}, we have $j_{\alggrp{M}}^+(E^{\alggrp{M}}(\widetilde{w})) = E(\widetilde{w})$ for $\alggrp{M}$-positive $\widetilde{w}\in \widetilde{W}_{\alggrp{M}}(1)$ and $j_{\alggrp{M}}^-(E^{\alggrp{M}}(\lambda)) = E(\lambda)$ for $\alggrp{M}$-negative $\lambda\in\Lambda(1)$.
If $s\in S_{\alggrp{M}}$, then $s\in S$.
Hence $j_{\alggrp{M}}^-(T^{\alggrp{M}}_{n_s}) = j_{\alggrp{M}}^-(T^{\alggrp{M},*}_{n_s} + c_s) = T_{n_s}^* + c_s = T_{n_s}$.
Therefore $j_{\alggrp{M}}^-(T^{\alggrp{M}}_{n_w}) = T_{n_w}$ for $w\in W_{\alggrp{M}}$.
In other words, we have $j_{\alggrp{M}}^-(E^{\alggrp{M}}(n_w)) = E(n_w)$.
Since $j_{\alggrp{M}}^-$ is an algebra homomorphism, for $\alggrp{M}$-negative $\lambda\in\Lambda(1)$ and $w\in W_{\alggrp{M}}$, we have
\begin{align*}
j_{\alggrp{M}}^-(E^{\alggrp{M}}(\lambda n_w)) & = q_{\alggrp{M},\lambda}^{-1/2}q_{\alggrp{M},n_w}^{-1/2}q_{\alggrp{M},\lambda n_w}^{1/2}j_{\alggrp{M}}^-(E^{\alggrp{M}}(\lambda)E^{\alggrp{M}}(n_w))\\
& = q_\lambda^{-1/2}q_{n_w}^{-1/2}q_{\lambda n_w}^{1/2}E(\lambda)E(n_w)\\
& = E(\lambda n_w)
\end{align*}
by Lemma~\ref{lem:M-negativity and q}.
A similar argument implies $j_{\alggrp{M}}^-(E^{\alggrp{M}}_-(\widetilde{w})) = E_-(\widetilde{w})$ for $\alggrp{M}$-negative $\widetilde{w}\in \widetilde{W}_{\alggrp{M}}(1)$ and $j_{\alggrp{M}}^+(E^{\alggrp{M}}_-(\widetilde{w})) = E_-(\widetilde{w})$ for $\alggrp{M}$-positive $\widetilde{w}\in \widetilde{W}_{\alggrp{M}}(1)$.
\end{proof}

\begin{rem}\label{rem:role of strongly positive/negative element}
Let $\lambda_0^+\in \Lambda(1)$ be an $\alggrp{M}$-positive element such that it is in the center of $\widetilde{W}_{\alggrp{M}}(1)$ and $\langle \alpha,\nu(\lambda_0^+)\rangle < 0$ for any $\alpha\in\Sigma^+\setminus\Sigma_{\alggrp{M}}^+$.
Then $\mathcal{H}_{\alggrp{M}}$ is a localization $E^{\alggrp{M}}(\lambda_0^+)^{-1}\mathcal{H}_{\alggrp{M}}^+$ of $\mathcal{H}_{\alggrp{M}}^+$ at $E^{\alggrp{M}}(\lambda_0^+)$.

We give a proof.
First we prove that $E^{\alggrp{M}}(\lambda_0^+)$ is in the center of $\mathcal{H}_{\alggrp{M}}$.
Since $\lambda_0^+$ is in the center of $\widetilde{W}_{\alggrp{M}}(1)$, we have $n_{s_\alpha}\lambda_0^+ n_{s_\alpha}^{-1} = \lambda_0^+$ for any $\alpha\in\Phi_{\alggrp{M}}$.
Hence $s_\alpha(\nu(\lambda_0^+)) = \lambda_0^+$, namely $\langle \alpha,\nu(\lambda_0^+)\rangle = 0$.
Therefore by the length formula \eqref{eq:length formula}, we have the length of $\lambda_0^+$ as an element of $\widetilde{W}_{\alggrp{M}}(1)$ is zero.
Hence for $\widetilde{w}\in \widetilde{W}_{\alggrp{M}}(1)$, we have $E^{\alggrp{M}}(\lambda_0^+)E^{\alggrp{M}}(\widetilde{w}) = E^{\alggrp{M}}(\lambda_0^+\widetilde{w}) = E^{\alggrp{M}}(\widetilde{w}\lambda_0^+) = E^{\alggrp{M}}(\widetilde{w})E^{\alggrp{M}}(\lambda_0^+)$.
Therefore $E^{\alggrp{M}}(\lambda_0^+)$ is in the center of $\mathcal{H}_{\alggrp{M}}$ and since the length of $\lambda_0^+$ is zero, it is invertible.

By the above argument, we have $E^{\alggrp{M}}(\lambda_0^+)E^{\alggrp{M}}(\widetilde{w}) = E^{\alggrp{M}}(\lambda_0^+\widetilde{w})$ for $\widetilde{w}\in \widetilde{W}_{\alggrp{M}}(1)$.
Moreover, for any $\widetilde{w}\in\widetilde{W}_{\alggrp{M}}(1)$, there exists $n\in\Z_{\ge 0}$ such that $(\lambda_0^+)^n\widetilde{w}$ is $\alggrp{M}$-positive.
Therefore $E^{\alggrp{M}}(\widetilde{w}) = E^{\alggrp{M}}(\lambda_0^+)^{-n}E^{\alggrp{M}}((\lambda_0^+)^n\widetilde{w})$ is in the image of the homomorphism $E^{\alggrp{M}}(\lambda_0^+)^{-1}\mathcal{H}_{\alggrp{M}}^+ \to \mathcal{H}_{\alggrp{M}}$.
Hence we have $E^{\alggrp{M}}(\lambda_0^+)^{-1}\mathcal{H}_{\alggrp{M}}^+ \simeq \mathcal{H}_{\alggrp{M}}$.

In particular, if $\sigma$ is an $\mathcal{H}_{\alggrp{M}}$-module, it is determined by its restriction to $\mathcal{H}_{\alggrp{M}}^+$.
Since $E^{\alggrp{M}}(\lambda_0^+)$ is in the center of $\mathcal{H}_{\alggrp{M}}$, if $\sigma$ is irreducible, $E^{\alggrp{M}}(\lambda_0^+)$ acts by a scalar on $\sigma$.
Hence if $\sigma$ is irreducible then its restriction to $\mathcal{H}_{\alggrp{M}}^+$ is also irreducible.

This is also true for $\mathcal{H}_{\alggrp{M}}^-$.
Namely, for $\lambda_0^-$ such that it is in the center of $\widetilde{W}_{\alggrp{M}}(1)$ and $\langle\nu(\lambda_0^-),\alpha\rangle > 0$ for any $\alpha\in\Sigma^+\setminus\Sigma_{\alggrp{M}}^+$, we have $E^{\alggrp{M}}(\lambda_0^-)^{-1}\mathcal{H}_{\alggrp{M}}^-\simeq \mathcal{H}_{\alggrp{M}}$.
\end{rem}

\subsection{Parabolic induction}
\begin{defn}\label{defn:parabolic induction}
Let $\alggrp{P} = \alggrp{M}\alggrp{N}$ be a parabolic subgroup of $\alggrp{G}$.
For a representation $\sigma$ of $\mathcal{H}_{\alggrp{M}}\otimes C$, define
\[
	I_{\alggrp{P}}(\sigma) = \Hom_{\mathcal{H}_{\alggrp{M}}^-}(\mathcal{H},\sigma).
\]
Here, $\mathcal{H}_{\alggrp{M}}^-$ acts on $\mathcal{H}$ by the multiplication from the right through $j_{\alggrp{M}}^-$.
\end{defn}

Set $W^{\alggrp{M}} = \{w\in W\mid w(\Delta_{\alggrp{M}})\subset\Sigma^+\}$.
This is a complete representative of $W/W_{\alggrp{M}}$ and for $w_1\in W^{\alggrp{M}}$ and $w_2\in W_{\alggrp{M}}$, we have $\ell(w_1w_2) = \ell(w_1) + \ell(w_2)$.
\begin{lem}\label{lem:on length, M-part and not M-part}
Let $\lambda\in \Lambda(1)$, $w_1\in W^{\alggrp{M}}$ and $w_2\in W_{\alggrp{M}}$.
Assume that $\lambda$ is $\alggrp{M}$-negative.
Then $\ell(n_{w_1w_2}\lambda) = \ell(n_{w_1}) + \ell(n_{w_2}\lambda)$.
In particular, $T_{n_{w_1}}^*E_-(n_{w_2}\lambda) = E_-(n_{w_1w_2}\lambda)$.
\end{lem}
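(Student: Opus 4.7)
The plan is to first prove the length identity $\ell(n_{w_1 w_2} \lambda) = \ell(n_{w_1}) + \ell(n_{w_2}\lambda)$ and then derive the statement about $T_{n_{w_1}}^* E_-(n_{w_2} \lambda)$ formally from it. For the length identity, I will apply \eqref{eq:length formula} to $n_w \lambda = \lambda' n_w$, where $\lambda' = n_w \lambda n_w^{-1}$ satisfies $\nu(\lambda') = w \nu(\lambda)$. Substituting $\beta = w^{-1}\alpha$ in \eqref{eq:length formula} (and using $\lvert{-x-1}\rvert = \lvert x+1\rvert$ to fold the $\beta<0$ terms into a sum over positive $\beta$) yields the ``dual'' formula
\[
	\ell(n_w \lambda)
	=
	\sum_{\beta \in \Sigma^+,\ w\beta > 0} \lvert \langle \nu(\lambda),\beta\rangle\rvert
	+
	\sum_{\beta \in \Sigma^+,\ w\beta < 0} \lvert\langle \nu(\lambda),\beta\rangle + 1\rvert.
\]

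I apply this formula to $w = w_2$ and to $w = w_1 w_2$ and split $\Sigma^+ = \Sigma_{\alggrp{M}}^+ \sqcup (\Sigma^+\setminus \Sigma_{\alggrp{M}}^+)$. Because $w_1 \in W^{\alggrp{M}}$ satisfies $w_1(\Sigma_{\alggrp{M}}^\pm) \subset \Sigma^\pm$, the signs of $w_2\beta$ and $w_1 w_2\beta$ agree for $\beta \in \Sigma_{\alggrp{M}}$, so the $\Sigma_{\alggrp{M}}^+$-parts of the two length formulas are term-by-term equal. For $\beta \in \Sigma^+\setminus\Sigma_{\alggrp{M}}^+$, $w_2$ permutes this set so $w_2\beta > 0$, and $\alggrp{M}$-negativity gives $\langle\nu(\lambda),\beta\rangle \ge 0$; hence the contribution of such $\beta$ to $\ell(n_{w_2}\lambda)$ is $\langle \nu(\lambda),\beta\rangle$, while its contribution to $\ell(n_{w_1 w_2}\lambda)$ is $\langle \nu(\lambda),\beta\rangle$ if $w_1 w_2\beta > 0$ and $\langle\nu(\lambda),\beta\rangle + 1$ otherwise. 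Taking the difference gives $\#\{\beta \in \Sigma^+\setminus\Sigma_{\alggrp{M}}^+ : w_1 w_2 \beta < 0\}$, and since $w_2$ stabilizes $\Sigma^+\setminus\Sigma_{\alggrp{M}}^+$ while $w_1$ keeps $\Sigma_{\alggrp{M}}^+$ positive, this number equals $\#\{\gamma \in \Sigma^+ : w_1\gamma < 0\} = \ell(w_1) = \ell(n_{w_1})$.

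With the length identity in hand, the multiplicative statement is routine. The additivity $\ell(w_1 w_2) = \ell(w_1) + \ell(w_2)$ (standard for $w_1 \in W^{\alggrp{M}}$, $w_2 \in W_{\alggrp{M}}$), together with our choice of $\{n_w\}$ satisfying the braid relations, gives $n_{w_1 w_2} = n_{w_1} n_{w_2}$ and hence $T^*_{n_{w_1 w_2}} = T^*_{n_{w_1}} T^*_{n_{w_2}}$ via the braid relations for the starred basis. Substituting into the definition $E_-(n_w\lambda) = q_{n_w\lambda}^{1/2} q_{n_w}^{-1/2} T^*_{n_w}\theta(\lambda)$, the desired equality reduces to the numerical identity $q_{n_{w_2}\lambda}\, q_{n_{w_1 w_2}} = q_{n_{w_1 w_2}\lambda}\, q_{n_{w_2}}$, which follows from the multiplicativity of $q_\bullet$ on length-additive products, using both $q_{n_{w_1 w_2}} = q_{n_{w_1}} q_{n_{w_2}}$ and the length identity just proved. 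The main obstacle is the careful bookkeeping in the length formula, in particular keeping track, for each $\beta$, of the sign of $w\beta$ and applying $\alggrp{M}$-negativity only on $\Sigma^+\setminus\Sigma_{\alggrp{M}}^+$; the $T^*$-identity is then a formal consequence.
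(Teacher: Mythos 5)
Your proof is correct and takes essentially the same approach as the paper: both proofs apply \eqref{eq:length formula} to $n_w\lambda$, exploit $\alggrp{M}$-negativity to control the contribution of $\Sigma^+\setminus\Sigma_{\alggrp{M}}^+$, and use $\ell(w_1w_2)=\ell(w_1)+\ell(w_2)$. The paper compresses the bookkeeping by first rewriting $\ell(n_w\lambda)=\ell(\lambda)+\ell(n_w)-2\#\{\alpha\in\Sigma^+\mid w(\alpha)<0,\ \langle\nu(\lambda),\alpha\rangle<0\}$ and comparing the counted sets for $w_2$ and $w_1w_2$, while you split $\Sigma^+$ into $\Sigma_{\alggrp{M}}^+$ and its complement and tally the terms directly, and you also spell out the routine reduction of the $T^*$-identity to $q$-multiplicativity that the paper leaves implicit; the content is the same.
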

\begin{proof}
By the length formula \eqref{eq:length formula}, we have
\[
	\ell(n_{w_1w_2}\lambda) = \ell(\lambda) + \ell(n_{w_1w_2}) - 2\#\{\alpha\in\Sigma^+\mid (w_1w_2)(\alpha) < 0,\langle \nu(\lambda),\alpha\rangle < 0\}.
\]
We have $\ell(w_1w_2) = \ell(w_1) + \ell(w_2)$.
Hence for $\alpha\in\Sigma^+$, $(w_1w_2)(\alpha) < 0$ implies $w_2(\alpha) < 0$ or $\alpha = w_2^{-1}(\beta)$ for some $\beta\in\Sigma^+$ such that $w_1(\beta) < 0$.
Assume that $\alpha = w_2^{-1}(\beta)$ for some $\beta\in\Sigma^+$ such that $w_1(\beta) < 0$.
Since $w_1(\beta) < 0$, $\beta\in\Sigma^+\setminus\Sigma^+_{\alggrp{M}}$ by the definition of $W^{\alggrp{M}}$.
Hence $\alpha = w_2^{-1}(\beta) \in \Sigma^+\setminus\Sigma^+_{\alggrp{M}}$.
Since $\lambda$ is $\alggrp{M}$-negative, $\langle\nu(\lambda),\alpha\rangle\ge 0$.
Therefore,
\[
	\{\alpha\in\Sigma^+\mid (w_1w_2)(\alpha) < 0,\langle \nu(\lambda),\alpha\rangle < 0\}
	=
	\{\alpha\in\Sigma^+\mid w_2(\alpha) < 0,\langle \nu(\lambda),\alpha\rangle < 0\}.
\]
Hence
\begin{align*}
\ell(n_{w_1w_2}\lambda) & = \ell(\lambda) + \ell(n_{w_1w_2}) - 2\#\{\alpha\in\Sigma^+\mid w_2(\alpha) < 0,\langle \nu(\lambda),\alpha\rangle < 0\}\\
& = \ell(\lambda) + \ell(n_{w_2}) + \ell(n_{w_1}) - 2\#\{\alpha\in\Sigma^+\mid w_2(\alpha) < 0,\langle \nu(\lambda),\alpha\rangle < 0\}\\
& = \ell(n_{w_2}\lambda) + \ell(n_{w_1}).\qedhere
\end{align*}
\end{proof}

\begin{lem}[{See \cite[Proposition~5.2]{MR2728487}}]\label{lem:parabolic induction as vector space}
The map $\varphi\mapsto (\varphi(T_{n_w}))_{w\in W^{\alggrp{M}}}$ gives an isomorphism $I_{\alggrp{P}}(\sigma)\simeq\bigoplus_{w\in W^{\alggrp{M}}}\sigma$ as vector spaces.
\end{lem}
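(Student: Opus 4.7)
The strategy is to use the hom-tensor adjunction together with the $\mathcal{H}_{\alggrp{M}}$-structure of $\sigma$ to reduce to an explicit free-module description. Since $\sigma$ is an $\mathcal{H}_{\alggrp{M}}$-module,
\[
\Hom_{\mathcal{H}_{\alggrp{M}}^-}(\mathcal{H},\sigma)\;\cong\;\Hom_{\mathcal{H}_{\alggrp{M}}}\!\bigl(\mathcal{H}\otimes_{\mathcal{H}_{\alggrp{M}}^-}\mathcal{H}_{\alggrp{M}},\,\sigma\bigr),
\]
and it suffices to exhibit $\mathcal{H}\otimes_{\mathcal{H}_{\alggrp{M}}^-}\mathcal{H}_{\alggrp{M}}$ as a free right $\mathcal{H}_{\alggrp{M}}$-module with basis $\{T_{n_w}\otimes 1\}_{w\in W^{\alggrp{M}}}$. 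Evaluation at $T_{n_w}\otimes 1$ will then give precisely the map $\varphi\mapsto(\varphi(T_{n_w}))_{w\in W^{\alggrp{M}}}$.

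Choose $\lambda_0^-\in\Lambda(1)$ central in $\widetilde{W}_{\alggrp{M}}(1)$ and strongly $\alggrp{M}$-negative, so that $\langle\nu(\lambda_0^-),\alpha\rangle=0$ for $\alpha\in\Sigma_{\alggrp{M}}$ and $\langle\nu(\lambda_0^-),\alpha\rangle>0$ for $\alpha\in\Sigma^+\setminus\Sigma_{\alggrp{M}}^+$ (as in Remark~\ref{rem:role of strongly positive/negative element}). The vanishing of $\nu(\lambda_0^-)$ on $\Sigma_{\alggrp{M}}$ gives $\ell_{\alggrp{M}}(\lambda_0^-)=0$, so $T^{\alggrp{M}}_{\lambda_0^-}$ is central and invertible in $\mathcal{H}_{\alggrp{M}}$, and the canonical map realizes $\mathcal{H}_{\alggrp{M}}$ as the central localization $\mathcal{H}_{\alggrp{M}}^-[(T^{\alggrp{M}}_{\lambda_0^-})^{-1}]$.

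For generation, write each $\widetilde{w}\in\widetilde{W}(1)$ uniquely as $\widetilde{w}=n_{w_1}\widetilde{v}$ with $w_1\in W^{\alggrp{M}}$ and $\widetilde{v}\in\widetilde{W}_{\alggrp{M}}(1)$. For $n$ large enough, $\widetilde{v}(\lambda_0^-)^n$ is $\alggrp{M}$-negative by the strong $\alggrp{M}$-negativity of $\lambda_0^-$, so Lemma~\ref{lem:on length, M-part and not M-part} yields
\[
T_{n_{w_1}\widetilde{v}(\lambda_0^-)^n}=T_{n_{w_1}}\cdot j_{\alggrp{M}}^{-}\!\bigl(T^{\alggrp{M}}_{\widetilde{v}(\lambda_0^-)^n}\bigr)\quad\text{in }\mathcal{H}.
\]
Passing to $\mathcal{H}\otimes_{\mathcal{H}_{\alggrp{M}}^-}\mathcal{H}_{\alggrp{M}}$ and using the invertibility of $T^{\alggrp{M}}_{(\lambda_0^-)^n}$ on the right-hand side, this rewrites as $T_{\widetilde{w}}\otimes 1=T_{n_{w_1}}\otimes T^{\alggrp{M}}_{\widetilde{v}}$, proving that $\{T_{n_w}\otimes 1\}_{w\in W^{\alggrp{M}}}$ generates. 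For linear independence, a relation $\sum_{w\in W^{\alggrp{M}}}T_{n_w}\otimes h_w=0$ can be cleared of denominators by right-multiplying with $T^{\alggrp{M}}_{(\lambda_0^-)^n}$, giving $\sum_{w}T_{n_w}\cdot j_{\alggrp{M}}^{-}(h'_w)=0$ in $\mathcal{H}$ with $h'_w\in\mathcal{H}_{\alggrp{M}}^-$; expanding each $h'_w$ in the basis $\{T^{\alggrp{M}}_{\widetilde{v}}\mid \widetilde{v}\ \text{$\alggrp{M}$-negative}\}$ and applying Lemma~\ref{lem:on length, M-part and not M-part} again turns the relation into a linear combination of Iwahori--Matsumoto basis elements $T_{n_w\widetilde{v}}$ of $\mathcal{H}$ whose indices lie in distinct cosets $n_w\widetilde{W}_{\alggrp{M}}(1)$, forcing every coefficient to vanish.

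The main obstacle is the generation step: the naive length identity $\ell(\widetilde{w}(\lambda_0^-)^n)=\ell(\widetilde{w})+n\ell(\lambda_0^-)$ can fail when the $\Lambda(1)$-component of $\widetilde{w}$ is sufficiently negative on some $\beta\in\Sigma^+\setminus\Sigma_{\alggrp{M}}^+$, so the identity $T_{\widetilde{w}}\cdot j_{\alggrp{M}}^{-}(T^{\alggrp{M}}_{(\lambda_0^-)^n})=T_{\widetilde{w}(\lambda_0^-)^n}$ in $\mathcal{H}$ holds only up to error terms from the quadratic relations. One checks that these error terms lie in the kernel of $\mathcal{H}\to\mathcal{H}\otimes_{\mathcal{H}_{\alggrp{M}}^-}\mathcal{H}_{\alggrp{M}}$, i.e.\ are annihilated by a sufficiently large power of $T^{\alggrp{M}}_{\lambda_0^-}$ acting from the right, so the identity $T_{\widetilde{w}}\otimes 1=T_{n_{w_1}}\otimes T^{\alggrp{M}}_{\widetilde{v}}$ survives intact in the localization.
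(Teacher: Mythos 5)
Your overall strategy — hom–tensor adjunction, then show $\mathcal{H}\otimes_{\mathcal{H}_{\alggrp{M}}^-}\mathcal{H}_{\alggrp{M}}$ is free over $\mathcal{H}_{\alggrp{M}}$ with basis indexed by $W^{\alggrp{M}}$ — is a fine repackaging of the paper's argument: the paper works directly with the localization $\mathcal{H}E(\lambda_0^-)^{-1}$, which is the same object as $\mathcal{H}\otimes_{\mathcal{H}_{\alggrp{M}}^-}\mathcal{H}_{\alggrp{M}}$, and decomposes it as $\bigoplus_{v\in W^{\alggrp{M}}}H_vE(\lambda_0^-)^{-1}$ with each summand a copy of $\mathcal{H}_{\alggrp{M}}$. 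So the framework is sound.

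The gap is in your use of $j_{\alggrp{M}}^-$. The identity
\[
T_{n_{w_1}\widetilde{v}(\lambda_0^-)^n}=T_{n_{w_1}}\cdot j_{\alggrp{M}}^{-}\bigl(T^{\alggrp{M}}_{\widetilde{v}(\lambda_0^-)^n}\bigr)
\]
is simply false in $\mathcal{H}$, and not for the reason you flag. The map $j_{\alggrp{M}}^-$ is defined by $T^{\alggrp{M},*}_{\widetilde{w}}\mapsto T^*_{\widetilde{w}}$ (equivalently $E^{\alggrp{M}}_?(\widetilde{w})\mapsto E_?(\widetilde{w})$ for $\alggrp{M}$-negative $\widetilde{w}$), not by $T^{\alggrp{M}}_{\widetilde{w}}\mapsto T_{\widetilde{w}}$. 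Already for $\widetilde{v}=1$, $w_1=1$, $n=1$ one has $j_{\alggrp{M}}^-(T^{\alggrp{M}}_{\lambda_0^-})=T^*_{\lambda_0^-}\ne T_{\lambda_0^-}$, since $\ell_{\alggrp{M}}(\lambda_0^-)=0$ but $\ell(\lambda_0^-)>0$. This is not an error term in the quadratic relations that dies under further localization: it is a constant discrepancy between the image of the IM basis of $\mathcal{H}_{\alggrp{M}}$ under $j_{\alggrp{M}}^-$ (which is the $E_-$, or equivalently $T^*$, family in $\mathcal{H}$) and the IM basis of $\mathcal{H}$. Lemma~\ref{lem:on length, M-part and not M-part} is stated precisely in those terms: $T_{n_{w_1}}^*E_-(n_{w_2}\lambda)=E_-(n_{w_1w_2}\lambda)$, with stars and $E_-$, and it does not yield a $T_{n_{w_1}}\cdot T_{\ldots}=T_{\ldots}$ relation. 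Your linear-independence step inherits the same problem: after clearing denominators, $\sum_w T_{n_w}j_{\alggrp{M}}^-(h'_w)$ is a combination of $T_{n_w}E_-(\cdot)$, not of IM basis elements $T_{n_w\widetilde{v}}$, so the coset-separation argument you invoke doesn't apply as written. The closing paragraph acknowledges there is an obstacle but reduces it to a wrong diagnosis (failure of a length identity) and an unverified assertion ("one checks that these error terms lie in the kernel").

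The fix is exactly what the paper does in its opening line: replace $\varphi\mapsto\varphi(T_{n_w})$ by $\varphi\mapsto\varphi(T^*_{n_w})$ at the start, which is harmless because $T^*_{n_w}\in T_{n_w}+\sum_{v<w}C[Z_\kappa]T_{n_v}$ is a triangular change of basis over $W^{\alggrp{M}}$ (using Lemma~\ref{lem:w_Delta w_Delta_M is maximal} to keep the lower terms inside the right cosets). With $T^*_{n_w}$ and the $E_-$ basis, the compatibility $j_{\alggrp{M}}^-(E^{\alggrp{M}}_-(\widetilde{w}))=E_-(\widetilde{w})$ together with Lemma~\ref{lem:on length, M-part and not M-part} gives clean identities $T^*_{n_{w_1}}E_-(n_{w_2}\lambda)=E_-(n_{w_1w_2}\lambda)$, and generation and independence then go through without error terms; the vanishing you want in the localization — that $E_-(n_{vw}\lambda)E(\lambda_0^-)^n=0$ when the length is not additive — is what the paper verifies explicitly. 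If you rewrite your proposal with $T^*$ and $E_-$ throughout, your adjunction framing would be a correct and essentially equivalent presentation of the paper's argument.
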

\begin{proof}
We may replace $\varphi\mapsto \varphi(T_{n_w})$ with $\varphi\mapsto \varphi(T_{n_w}^*)$.
Take $\lambda_0^-\in \Lambda(1)$ as in Remark~\ref{rem:role of strongly positive/negative element}.
Then $\varphi$ is extended to $\mathcal{H}E(\lambda_0^-)^{-1}\to \sigma$.
Fix $v\in W^{\alggrp{M}}$.
Then by the Bernstein relation and the definition of $E_-(\widetilde{w})$, $H_v = \bigoplus_{w\in W_{\alggrp{M}},\lambda\in \Lambda(1)}CE_-(n_{vw}\lambda)\subset \mathcal{H}\otimes C$ is right $j_{\alggrp{M}}^-(\mathcal{H}_{\alggrp{M}}^-)$-stable.
We prove $\Phi_0\colon \mathcal{H}_{\alggrp{M}}^-\ni F\mapsto T_{n_v}^*j_{\alggrp{M}}^-(F)\in H_v$ induces an isomorphism $\Phi\colon\mathcal{H}_{\alggrp{M}}\otimes C\simeq H_vE(\lambda_0^-)^{-1}$.

Let $w\in W_{\alggrp{M}}$ and $\lambda\in \Lambda(1)$ such that $\lambda$ is $\alggrp{M}$-negative.
Then by Lemma~\ref{lem:on length, M-part and not M-part}, we have $T_{n_v}^*E_-(n_w\lambda) = E_-(n_{vw}\lambda)$.
Hence $\Phi_0$ is injective.
Therefore $\Phi$ is injective.

We prove surjectivity.
Let $w\in W_{\alggrp{M}}$ and $\lambda \in \Lambda(1)$.
Take $n\in\Z_{\ge 0}$ such that $\lambda (\lambda_0^-)^n$ is  $\alggrp{M}$-negative.
If $\ell(n_{vw} \lambda (\lambda_0^-)^n) = \ell(n_{vw} \lambda) + \ell((\lambda_0^-)^n)$, then $E_-(n_{vw} \lambda) = E_-(n_{vw} \lambda(\lambda_0^-)^n)E(\lambda_0^-)^{-n} = T_{n_v}^*E_-(n_{w} \lambda(\lambda_0^-)^n)E(\lambda_0^-)^{-n}\in \Imm\Phi$ by Lemma~\ref{lem:on length, M-part and not M-part}.
If $\ell(n_{vw} \lambda (\lambda_0^-)^n) > \ell(n_{vw} \lambda) + \ell((\lambda_0^-)^n)$, then $E_-(n_{vw}\lambda)E(\lambda_0^-)^n = 0$.
Hence, in $H_vE(\lambda_0^-)^{-1}$, $E(n_{vw} \lambda) = 0$.

Therefore $(\mathcal{H}\otimes C)E(\lambda_0^-)^{-1} = \bigoplus_{v\in W^{\alggrp{M}}}H_vE(\lambda_0^-)^{-1}\simeq \bigoplus_{v\in W^{\alggrp{M}}}(\mathcal{H}_{\alggrp{M}}\otimes C)$.
Since $\mathcal{H}_{\alggrp{M}} = \mathcal{H}_{\alggrp{M}}^-E^{\alggrp{M}}(\lambda_0^-)^{-1}$ by Remark~\ref{rem:role of strongly positive/negative element}, we have $I_P(\sigma) = \Hom_{\mathcal{H}_{\alggrp{M}}^-}(\mathcal{H},\sigma) = \Hom_{\mathcal{H}_{\alggrp{M}}}((\mathcal{H}\otimes C)E(\lambda_0^-)^{-1},\sigma)\simeq \bigoplus_{v\in W^{\alggrp{M}}}\Hom_{\mathcal{H}_{\alggrp{M}}}(\mathcal{H}_{\alggrp{M}}\otimes C,\sigma)\simeq \bigoplus_{v\in W^{\alggrp{M}}}\sigma$.
\end{proof}
For $w\in W^{\alggrp{M}}$, we denote the subspace $\{\varphi\in I_{\alggrp{P}}(\sigma)\mid \varphi(T_{n_v}) = 0\ (v\in W^{\alggrp{M}}\setminus\{w\})\}$ by $w\sigma$.
We have $I_{\alggrp{P}}(\sigma) = \bigoplus_{w\in W^{\alggrp{M}}}w\sigma$.
By the lemma below, $w\sigma$ is $\mathcal{A}$-stable.

\begin{rem}\label{rem:the condition to vanish at E_-(n_wlambda)}
Let $v\in W^{\alggrp{M}}$, $\varphi\in v\sigma\subset I_{\alggrp{P}}(\sigma)$ and $\lambda\in \Lambda(1)$.
From the proof of the above lemma, if $w\in \{v_0\in W^{\alggrp{M}}\mid v_0 < v\}W_{\alggrp{M}}$ then $\varphi(E_-(n_w\lambda)) = 0$.
We remark that if $w$ is in this subset and $w_1\le w$, then $w_1$ is also in this subset. (It follows from \cite[Lemma~4.20]{MR3143708}. See also the proof of Lemma~\ref{lem:w_Delta w_Delta_M is maximal}.)
\end{rem}

We can describe the $\mathcal{A}$-module structure of $I_{\alggrp{P}}(\sigma)$ using the above decomposition.
\begin{prop}
Let $\sigma$ be an $\mathcal{H}_{\alggrp{M}}\otimes C$-module.
Define an $\mathcal{A}$-module $\sigma_\mathcal{A}$ by
\begin{itemize}
\item $\sigma = \sigma_\mathcal{A}$ as vector spaces.
\item for $\alggrp{M}$-negative $\lambda\in\Lambda(1)$ we have $\sigma_\mathcal{A}(E(\lambda)) = \sigma(E^{\alggrp{M}}(\lambda))$.
\item for $\lambda\in \Lambda(1)$ which is not $\alggrp{M}$-negative, $\sigma_\mathcal{A}(E(\lambda)) = 0$.
\end{itemize}
Then $\varphi\mapsto (\varphi(T_{n_w}))_{w\in W^{\alggrp{M}}}$ gives an isomorphism $I_P(\sigma)|_{\mathcal{A}}\simeq \bigoplus_{w\in W^{\alggrp{M}}}w\sigma_\mathcal{A}$.
\end{prop}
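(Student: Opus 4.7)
The vector-space isomorphism $\varphi\mapsto(\varphi(T_{n_w}))_{w\in W^{\alggrp{M}}}$ is Lemma~\ref{lem:parabolic induction as vector space}. What remains is to verify it intertwines the right $\mathcal{A}$-actions. For $\lambda\in\Lambda(1)$ and $u\in W^{\alggrp{M}}$ we have $(\varphi\cdot E(\lambda))(T_{n_u})=\varphi(E(\lambda)T_{n_u})$, so the task reduces to understanding the product $E(\lambda)T_{n_u}$ in $\mathcal{H}$.

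The central identity is
\[
E(\lambda)T_{n_u}=T_{n_u}\cdot E_{-u^{-1}(\Delta)}(n_u^{-1}(\lambda))\qquad\text{in }\mathcal{H},
\]
which I obtain by applying Proposition~\ref{prop:multiplication formula} twice to the two factorisations $\lambda n_u=\lambda\cdot n_u=n_u\cdot n_u^{-1}(\lambda)$, both with $\Delta'=-\Delta$, using $E_{-\Delta}(n_u)=T_{n_u}$ from Proposition~\ref{prop:example of E}(3) together with the fact that $\lambda$ acts as a translation on $\Sigma$, so $\lambda^{-1}(-\Delta)=-\Delta$ while $n_u^{-1}(-\Delta)=-u^{-1}(\Delta)$. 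To relate the twisted basis element $E_{-u^{-1}(\Delta)}(n_u^{-1}(\lambda))$ to $j_{\alggrp{M}}^-(E^{\alggrp{M}}(n_u^{-1}(\lambda)))$, I exploit that $u\in W^{\alggrp{M}}$ forces $u^{-1}(\Sigma^+)\cap\Sigma_{\alggrp{M}}=\Sigma_{\alggrp{M}}^+$; consequently the positive system $-u^{-1}(\Delta)$ restricts on $\Sigma_{\alggrp{M}}$ to $-\Sigma_{\alggrp{M}}^+$, matching the standard $\alggrp{M}$-ordering. After decomposing $n_u^{-1}(\lambda)$ into anti-dominant factors with respect to $-u^{-1}(\Delta)$ and comparing with the analogous decomposition in $\mathcal{H}_{\alggrp{M}}$, the length formula \eqref{eq:length formula}, Lemma~\ref{lem:lemma on length function}, and the $q_s^{1/2}\mapsto 0$ vanishing argument from the proof of Lemma~\ref{lem:simple Bernstein relations} yield, in $\mathcal{H}\otimes C$,
\[
T_{n_u}\cdot E_{-u^{-1}(\Delta)}(n_u^{-1}(\lambda))=
\begin{cases}T_{n_u}\cdot j_{\alggrp{M}}^-\!\bigl(E^{\alggrp{M}}(n_u^{-1}(\lambda))\bigr)&\text{if $n_u^{-1}(\lambda)$ is $\alggrp{M}$-negative,}\\ 0&\text{otherwise.}\end{cases}
\]

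The right $j_{\alggrp{M}}^-(\mathcal{H}_{\alggrp{M}}^-)$-linearity of $\varphi$ then gives
\[
(\varphi\cdot E(\lambda))(T_{n_u})=\begin{cases}\varphi(T_{n_u})\cdot\sigma(E^{\alggrp{M}}(n_u^{-1}(\lambda)))&\text{if $n_u^{-1}(\lambda)$ is $\alggrp{M}$-negative,}\\ 0&\text{otherwise,}\end{cases}
\]
which matches exactly the action of $E(\lambda)$ on the $u$-th summand of $\bigoplus_ww\sigma_\mathcal{A}$ since $(u\sigma_\mathcal{A})(E(\lambda))=\sigma_\mathcal{A}(E(n_u^{-1}(\lambda)))$ takes precisely the displayed value by the definition of $\sigma_\mathcal{A}$. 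The hard part will be the dichotomy above: the ``$0$'' clause, which is what forces the decomposition $I_P(\sigma)=\bigoplus_ww\sigma$ to be $\mathcal{A}$-stable, requires checking that every surviving $T$-basis term in $T_{n_u}\cdot E_{-u^{-1}(\Delta)}(n_u^{-1}(\lambda))$ carries a strictly positive power of $q_s^{1/2}$ when $n_u^{-1}(\lambda)$ fails to be $\alggrp{M}$-negative, a delicate length estimate in the spirit of the argument behind Lemma~\ref{lem:simple Bernstein relations}.
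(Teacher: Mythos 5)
Your reduction to the identity $E(\lambda)T_{n_u}=T_{n_u}E_{-u^{-1}(\Delta)}(n_u^{-1}(\lambda))$ is fine (it does follow from Proposition~\ref{prop:multiplication formula} and Proposition~\ref{prop:example of E} as you say), but the dichotomy you then assert \emph{in $\mathcal{H}\otimes C$} is false, and no length estimate will rescue it. Take $u=1$: your claim reads $E(\lambda)=0$ in $\mathcal{H}\otimes C$ whenever $\lambda$ is not $\alggrp{M}$-negative, which is absurd since $\{E(\widetilde{w})\}$ is a basis. The other clause fails too: for $\alggrp{G}$ of semisimple rank one with $\alggrp{M}=\alggrp{Z}$, $u=s$ and $\lambda$ anti-dominant regular, Lemma~\ref{lem:simple Bernstein relations} gives $E(\lambda)T_{n_s}=T_{n_s}E(n_s^{-1}(\lambda))+E(\lambda)c_s$, and the correction term $E(\lambda)c_s=(\lambda\cdot c_s)E(\lambda)$ carries no positive power of $q_s$, so it survives the specialization $q_s\mapsto 0$. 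The point is that these correction terms are killed only \emph{after applying $\varphi$}, not in the algebra: one uses that $\varphi$ is $\mathcal{H}_{\alggrp{M}}^-$-linear together with an element $\lambda_0^-$ as in Remark~\ref{rem:role of strongly positive/negative element}, so that $\varphi(E(\mu))=\varphi(E(\mu)E(\lambda_0^-))\,\sigma(E^{\alggrp{M}}(\lambda_0^-))^{-1}=0$ whenever $\mu$ is not $\alggrp{M}$-negative, because $E(\mu)E(\lambda_0^-)=0$ by \eqref{eq:multiplication in A}. Your argument never invokes the module structure of $\varphi$ beyond the final step, so it cannot see this cancellation.

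This is exactly why the paper's proof proceeds by induction on $\ell(u)$: writing $T_{n_u}=T_{n_s}T_{n_{su}}$ with $su<u$ and using Lemma~\ref{lem:simple Bernstein relations} to move $E(\lambda)$ across $T_{n_s}$, the extra $c_s$-terms become expressions of the form $\varphi(\,\cdot\,E(\mu')T_{n_{su}})$ with $\mu'$ not $\alggrp{M}$-negative after conjugation by $n_{su}$, and these vanish by the inductive hypothesis (base case $u=1$ handled by the $\lambda_0^-$ trick above). If you want to keep your global formulation, you would have to prove the dichotomy as a congruence modulo the right ideal $\sum_{\mu\ \text{not}\ \alggrp{M}\text{-negative}}E(\mu)\mathcal{H}+\Ker\varphi$ rather than as an identity in $\mathcal{H}\otimes C$, which amounts to redoing the paper's induction.
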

\begin{proof}
We prove $(\varphi E(\lambda))(T_{n_w}) = \varphi(T_{n_w})\sigma_\mathcal{A}(E(n^{-1}_{w}(\lambda)))$ by induction on $\ell(w)$.

If $w = 1$, then $(\varphi E(\lambda))(1) = \varphi(E(\lambda))$.
If $\lambda$ is $\alggrp{M}$-negative, then $\varphi(E(\lambda)) = \varphi(1)E^{\alggrp{M}}(\lambda) = \varphi(1)\sigma_\mathcal{A}(E(\lambda))$.
Assume that $\lambda$ is not $\alggrp{M}$-negative.
Take $\lambda_0^-\in \Lambda(1)$ as in Remark~\ref{rem:role of strongly positive/negative element}.
Since $\lambda$ is not $\alggrp{M}$-negative, $\nu(\lambda)$ and $\nu(\lambda_0^-)$ do not belong to the same closed chamber.
Hence $E(\lambda)E(\lambda_0^-) = 0$ by \eqref{eq:multiplication in A}.
Therefore $\varphi(E(\lambda)) = \varphi(E(\lambda)E(\lambda_0^-))E^{\alggrp{M}}((\lambda_0^-)^{-1}) = 0 = \varphi(1)\sigma_\mathcal{A}(E(\lambda))$.

Assume that $w\ne 1$ and take $\alpha\in\Delta$ such that $s = s_\alpha$ satisfies $sw < w$.
Since $\{\beta\in\Sigma^+\mid sw(\beta) < 0\}\subset \{\beta\in \Sigma^+\mid w(\beta) < 0\}$, $sw\in W^{\alggrp{M}}$.
Since $w(-w^{-1}(\alpha)) = -\alpha < 0$ and $w\in W^{\alggrp{M}}$, we have $-w^{-1}(\alpha)\in \Sigma^+\setminus\Sigma^+_{\alggrp{M}}$.
Assume that $\langle\nu(\lambda),\alpha\rangle > 0$.
Then by Lemma~\ref{lem:simple Bernstein relations}, $E(\lambda)T_{n_s} = (T_{n_s} - c_s)E(n_s^{-1}(\lambda))$.
Hence
\begin{align*}
(\varphi E(\lambda))(T_{n_w}) & = \varphi(E(\lambda)T_{n_w})\\
& = \varphi(E(\lambda)T_{n_s}T_{n_{sw}})\\
& = \varphi((T_{n_s} - c_s)E(n_s^{-1}(\lambda))T_{n_{sw}})\\
& = (\varphi(T_{n_s} - c_s)E(n_s^{-1}(\lambda)))(T_{n_{sw}}).
\end{align*}
Applying the inductive hypothesis to $\varphi(T_{n_s} - c_s)$, $(\varphi(T_{n_s} - c_s)E(n_s^{-1}(\lambda)))(T_{n_{sw}}) = (\varphi(T_{n_s} - c_s))(T_{n_{sw}})\sigma_\mathcal{A}(E(n_w^{-1}(\lambda)))$.
Since $\langle\nu(n_w^{-1}(\lambda)),-w^{-1}(\alpha)\rangle = -\langle\nu(\lambda),\alpha\rangle < 0$, $n_w^{-1}(\lambda)$ is not $\alggrp{M}$-negative.
Hence $\sigma_\mathcal{A}(E(n_w^{-1}(\lambda))) = 0$.
Therefore $(\varphi(T_{n_s} - c_s))(T_{n_{sw}})\sigma_\mathcal{A}(E(n_w^{-1}(\lambda))) = 0 = \varphi (T_{n_{w}})\sigma_\mathcal{A}(E(n_w^{-1}(\lambda)))$.

If $\langle \nu(\lambda),\alpha\rangle = 0$, then $E(\lambda)T_{n_s} = T_{n_s}E(n_s^{-1}(\lambda))$ \cite[Lemma~5.34, 5.35]{Vigneras-prop}.
Hence
\begin{align*}
(\varphi E(\lambda))(T_{n_w}) & = \varphi(E(\lambda)T_{n_s}T_{n_{sw}})\\
& = \varphi(T_{n_s}E(n_s^{-1}(\lambda))T_{n_{sw}})\\
& = \varphi (T_{n_w})\sigma_\mathcal{A}(E(n_w^{-1}(\lambda))).
\end{align*}

Finally, assume that $\langle\nu(\lambda),\alpha\rangle < 0$.
Then $\langle \nu(n_{sw}^{-1}(\lambda)),-w^{-1}(\alpha)\rangle = \langle\nu(\lambda),\alpha\rangle < 0$.
Hence $n_{sw}^{-1}(\lambda)$ is not $\alggrp{M}$-negative.
Therefore, we have $\varphi(E(\lambda)c_sT_{n_{sw}}) = (\varphi(\lambda\cdot c_s)E(\lambda))(T_{n_{sw}}) = (\varphi(\lambda\cdot c_s))(T_{n_{sw}})\sigma_\mathcal{A}(E(n_{sw}^{-1}(\lambda))) = 0$.
By Lemma~\ref{lem:simple Bernstein relations}, $E(\lambda)(T_{n_s} - c_s) = T_{n_s}E(n_s^{-1}(\lambda))$.
Hence
\begin{align*}
(\varphi E(\lambda))(T_{n_w}) &= \varphi(E(\lambda)(T_{n_s} - c_s)T_{n_{sw}}) \\&= (\varphi T_{n_s})(E(n_s^{-1}(\lambda))T_{n_{sw}}) \\&= \varphi(T_{n_w})\sigma_\mathcal{A}(E(n_w^{-1}(\lambda))).\qedhere
\end{align*}
\end{proof}

\subsection{Another description of parabolic induction}
We give a realization of parabolic induction via a tensor product.
First we find a submodule of a pro-$p$-Iwahori Hecke algebra of a Levi subgroup (which is not $\alggrp{M}$ in general) in $I_{\alggrp{P}}(\sigma)$.

Set $\alggrp{P}' = n_{w_{\Delta}w_{\Delta_{\alggrp{M}}}}\alggrp{P}n_{w_{\Delta}w_{\Delta_{\alggrp{M}}}}^{-1}$.
This is a standard parabolic subgroup of $\alggrp{G}$ corresponding to $w_\Delta w_{\Delta_{\alggrp{M}}}(\Delta_{\alggrp{M}}) = -w_\Delta(\Delta_{\alggrp{M}})$.
Let $\alggrp{M}'$ be the Levi part of $\alggrp{P}$.
Then the map $m\mapsto n_{w_{\Delta}w_{\Delta_{\alggrp{M}}}} m n_{w_{\Delta}w_{\Delta_{\alggrp{M}}}}^{-1}$ is an isomorphism between $\alggrp{M}$ and $\alggrp{M}'$.
Moreover, this map preserves the maximal split torus, the pro-$p$-Iwahori subgroup, the minimal parabolic subgroup, the root system, the set of simple roots etc...
Since the pro-$p$-Iwahori Hecke algebra is defined by these data, this map induces an isomorphism $\mathcal{H}_{\alggrp{M}}\to \mathcal{H}_{\alggrp{M}'}$.
Explicitly it is given by $T^{\alggrp{M}}_{\widetilde{w}}\mapsto T^{\alggrp{M}'}_{n_{w_{\Delta}w_{\Delta_{\alggrp{M}}}}\widetilde{w}n_{w_{\Delta}w_{\Delta_{\alggrp{M}}}}^{-1}}$.
We also have $E_-^{\alggrp{M}}(\widetilde{w})\mapsto E_-^{\alggrp{M}'}(n_{w_{\Delta}w_{\Delta_{\alggrp{M}}}}\widetilde{w}n_{w_{\Delta}w_{\Delta_{\alggrp{M}}}}^{-1})$.
Notice that $w_\Delta w_{\Delta_{\alggrp{M}}}\in W^{\alggrp{M}}$.

\begin{lem}\label{lem:w_Delta w_Delta_M is maximal}
Let $v\in W$.
\begin{enumerate}
\item We have $W\setminus w_\Delta w_{\Delta_{\alggrp{M}}}W_{\alggrp{M}} = \{v\in W^{\alggrp{M}}\mid v < w_\Delta w_{\Delta_{\alggrp{M}}}\}W_{\alggrp{M}}$.
\item If $v < w_\Delta w_{\Delta_{\alggrp{M}}}$, then $v\notin w_\Delta w_{\Delta_{\alggrp{M}}}W_{\alggrp{M}}$.
\item If $v\notin w_\Delta w_{\Delta_{\alggrp{M}}}W_{\alggrp{M}}$ and $v'\le v$, then $v'\notin w_\Delta w_{\Delta_{\alggrp{M}}}W_{\alggrp{M}}$.
\end{enumerate}
\end{lem}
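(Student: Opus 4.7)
The plan is to leverage the characterization of $W^{\alggrp{M}}$ as $\{w\in W\mid \Delta_{\alggrp{M}}\subset \Delta_w\}$ together with Remark~\ref{rem:on weyl elements}, which says that $w_\Delta w_{\Delta_{\alggrp{M}}}$ is the unique maximal element of this set. Since $W^{\alggrp{M}}$ is the standard set of minimal length coset representatives for $W/W_{\alggrp{M}}$, every $v\in W$ decomposes uniquely as $v=v_1v_2$ with $v_1\in W^{\alggrp{M}}$, $v_2\in W_{\alggrp{M}}$, and $\ell(v)=\ell(v_1)+\ell(v_2)$. The condition $v\notin w_\Delta w_{\Delta_{\alggrp{M}}}W_{\alggrp{M}}$ is then equivalent to $v_1\ne w_\Delta w_{\Delta_{\alggrp{M}}}$, which by the maximality statement is equivalent to $v_1<w_\Delta w_{\Delta_{\alggrp{M}}}$.

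I would dispatch (2) first, as a short warm-up: if $v=w_\Delta w_{\Delta_{\alggrp{M}}}u$ for some $u\in W_{\alggrp{M}}$, then the length additivity above forces $\ell(v)\ge \ell(w_\Delta w_{\Delta_{\alggrp{M}}})$, contradicting $v<w_\Delta w_{\Delta_{\alggrp{M}}}$. For (1), the inclusion of the right-hand side into the left-hand side is immediate from the uniqueness of the decomposition in $W^{\alggrp{M}}W_{\alggrp{M}}$, since $v_1<w_\Delta w_{\Delta_{\alggrp{M}}}$ in particular means $v_1\ne w_\Delta w_{\Delta_{\alggrp{M}}}$. The converse inclusion is exactly the observation above: if $v\notin w_\Delta w_{\Delta_{\alggrp{M}}}W_{\alggrp{M}}$, write $v=v_1v_2$ with $v_1\in W^{\alggrp{M}}$ and $v_2\in W_{\alggrp{M}}$; then $v_1\ne w_\Delta w_{\Delta_{\alggrp{M}}}$, and maximality in $W^{\alggrp{M}}$ forces $v_1<w_\Delta w_{\Delta_{\alggrp{M}}}$.

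The main work is in (3), which requires the compatibility of the Bruhat order with passage to minimal coset representatives: namely, if $v'\le v$ and we write $v=v_1v_2$, $v'=v'_1v'_2$ with $v_1,v'_1\in W^{\alggrp{M}}$, then $v'_1\le v_1$. This is the standard ``lifting'' property for parabolic quotients of a Coxeter group (the fact alluded to in Remark~\ref{rem:the condition to vanish at E_-(n_wlambda)} via \cite[Lemma~4.20]{MR3143708}); in our setup it can be proved by induction on $\ell(v)-\ell(v')$, reducing to the case where $v'=sv<v$ for some simple reflection $s$ and then splitting according to whether $sv_1<v_1$ or $sv_1>v_1$. Granted this, (3) is immediate: given $v'\le v$ with $v\notin w_\Delta w_{\Delta_{\alggrp{M}}}W_{\alggrp{M}}$, the above observation gives $v_1<w_\Delta w_{\Delta_{\alggrp{M}}}$, hence $v'_1\le v_1<w_\Delta w_{\Delta_{\alggrp{M}}}$, so in particular $v'_1\ne w_\Delta w_{\Delta_{\alggrp{M}}}$ and $v'\notin w_\Delta w_{\Delta_{\alggrp{M}}}W_{\alggrp{M}}$. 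The only genuine obstacle is therefore invoking (or reproving) this coset-Bruhat lifting lemma; everything else is bookkeeping around the maximality of $w_\Delta w_{\Delta_{\alggrp{M}}}$.
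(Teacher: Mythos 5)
Your proposal is correct and follows essentially the same route as the paper's proof: parts (1) and (3) both hinge on the coset-Bruhat lifting property (\cite[Lemma~4.20]{MR3143708}), and part (2) is the same length-additivity observation phrased as a contradiction rather than the paper's direct chain $w_\Delta w_{\Delta_{\alggrp{M}}}\le w_\Delta w_{\Delta_{\alggrp{M}}}v' = v$. The one small streamlining you make is to cite the maximality of $w_\Delta w_{\Delta_{\alggrp{M}}}$ in $W^{\alggrp{M}}$ from Remark~\ref{rem:on weyl elements}, whereas the paper re-derives exactly that inequality inside the proof of (1) from $w_1\le w_\Delta$ plus Lemma~4.20; since the remark is already established independently (via Lemma~\ref{lem:nice choice of s}), this is a legitimate and slightly cleaner way to organize the argument.
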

\begin{proof}
Let $w\in W$ and take $w_1\in W^{\alggrp{M}}$ and $w_2\in W_{\alggrp{M}}$ such that $w = w_1w_2$.
If $w_1 < w_\Delta w_{\Delta_{\alggrp{M}}}$, then, since $W/W_{\alggrp{M}}\simeq W^{\alggrp{M}}$, $w_1W_{\alggrp{M}}\cap w_\Delta w_{\Delta_{\alggrp{M}}}W_{\alggrp{M}} = \emptyset$.
Hence $w\notin w_{\Delta}w_{\Delta_{\alggrp{M}}}W_{\alggrp{M}}$.
Assume that $w\notin w_{\Delta}w_{\Delta_{\alggrp{M}}}W_{\alggrp{M}}$.
Since $w_1\le w_\Delta = w_{\Delta}w_{\Delta_{\alggrp{M}}}w_{\Delta_{\alggrp{M}}}$ and $w_{\Delta}w_{\Delta_{\alggrp{M}}}\in W^{\alggrp{M}}$, we have $w_1\le w_{\Delta}w_{\Delta_{\alggrp{M}}}$ by \cite[Lemma~4.20]{MR3143708}.
If $w_1 = w_{\Delta}w_{\Delta_{\alggrp{M}}}$, then $w\in w_{\Delta}w_{\Delta_{\alggrp{M}}}W_{\alggrp{M}}$.
Hence $w_1 < w_{\Delta}w_{\Delta_{\alggrp{M}}}$.
We get (1).

Assume that $v\in w_{\Delta}w_{\Delta_{\alggrp{M}}}W_{\alggrp{M}}$ and take $v'\in W_{\alggrp{M}}$ such that $v = w_\Delta w_{\Delta_{\alggrp{M}}}v'$.
Since $w_\Delta w_{\Delta_{\alggrp{M}}}\in W^{\alggrp{M}}$ and $v'\in W_{\alggrp{M}}$, we have $\ell(w_\Delta w_{\Delta_{\alggrp{M}}}v') = \ell(w_\Delta w_{\Delta_{\alggrp{M}}}) + \ell(v')$.
Hence $w_\Delta w_{\Delta_{\alggrp{M}}}\le w_\Delta w_{\Delta_{\alggrp{M}}}v' = v$.
We get (2).

We prove (3).
Take $v_1\in W^{\alggrp{M}}$ and $v_2\in W_{\alggrp{M}}$ such that $v = v_1v_2$.
By (1), $v_1 < w_\Delta w_{\Delta_{\alggrp{M}}}$.
Take $v_1'\in W^{\alggrp{M}}$ and $v_2'\in W_{\alggrp{M}}$ such that $v' = v_1'v_2'$.
Then $v_1'v_2'\le v_1v_2$.
Since $v_1'\le v_1'v_2'$, we have $v_1'\le v_1v_2$.
By \cite[Lemma~4.20]{MR3143708}, we have $v_1'\le v_1$.
Since $v_1 < w_\Delta w_{\Delta_{\alggrp{M}}}$, we have $v_1' < w_\Delta w_{\Delta_{\alggrp{M}}}$.
Hence we get (3) by (2).
\end{proof}

\begin{prop}\label{prop:M'-mod in parabolic induction}
Let $\sigma$ be an $\mathcal{H}_{\alggrp{M}}$-module and define an $\mathcal{H}_{\alggrp{M}'}$-module $\sigma'$ by pulling back $\sigma$ by the above isomorphism. (Namely it is given by $\sigma'(E^{\alggrp{M}'}_-(\widetilde{w})) = \sigma(E^{\alggrp{M}}_-(n_{w_\Delta w_{\Delta_{\alggrp{M}}}}^{-1}\widetilde{w}n_{w_\Delta w_{\Delta_{\alggrp{M}}}}))$ for $\widetilde{w}\in \widetilde{W}_{\alggrp{M}'}(1)$.
Then $w_\Delta w_{\Delta_{\alggrp{M}}}\sigma\subset I(\alggrp{P},\sigma,\alggrp{Q})$ is $\mathcal{H}_{\alggrp{M}'}^+$-stable and isomorphic to $\sigma'$.
In particular, if $\sigma$ is irreducible and an $\mathcal{H}$-submodule $\pi$ of $I_{\alggrp{P}}(\sigma)$ has a non-zero intersection with $w_\Delta w_{\Delta_{\alggrp{M}}}\sigma$, then $\pi$ contains $w_\Delta w_{\Delta_{\alggrp{M}}}\sigma$.
\end{prop}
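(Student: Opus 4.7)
The plan is to reduce the proposition to two assertions: (a) $w\sigma$ (with $w:=w_\Delta w_{\Delta_{\alggrp{M}}}$, the longest element of $W^{\alggrp{M}}$) is stable under the right action of $\mathcal{H}_{\alggrp{M}'}^+$ on $I_{\alggrp{P}}(\sigma)$ via $j_{\alggrp{M}'}^+$, and (b) the vector-space identification $\sigma\xrightarrow{\sim}w\sigma$ of Lemma~\ref{lem:parabolic induction as vector space} intertwines this action with $\sigma'|_{\mathcal{H}_{\alggrp{M}'}^+}$. A direct root-system check using $w(\Sigma^+\setminus\Sigma_{\alggrp{M}}^+)=-(\Sigma^+\setminus\Sigma_{\alggrp{M}'}^+)$ shows that $\widetilde{y}\in\widetilde{W}_{\alggrp{M}}(1)$ is $\alggrp{M}$-negative iff $n_w\widetilde{y}n_w^{-1}$ is $\alggrp{M}'$-positive, so the conjugation isomorphism $\Phi\colon\mathcal{H}_{\alggrp{M}'}\xrightarrow{\sim}\mathcal{H}_{\alggrp{M}}$ defining $\sigma'$ restricts to $\Phi\colon\mathcal{H}_{\alggrp{M}'}^+\xrightarrow{\sim}\mathcal{H}_{\alggrp{M}}^-$. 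Granting (a)--(b), irreducibility of $\sigma$ implies irreducibility of $\sigma'$ as $\mathcal{H}_{\alggrp{M}'}$-module, hence of $\sigma'|_{\mathcal{H}_{\alggrp{M}'}^+}$ by Remark~\ref{rem:role of strongly positive/negative element}, and the final statement follows because $\pi\cap w\sigma$ is then a nonzero $\mathcal{H}_{\alggrp{M}'}^+$-submodule of the irreducible $w\sigma$.

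For (a), I decompose $\mathcal{H}=\bigoplus_{v\in W^{\alggrp{M}}}H_v$ with $H_v=\bigoplus_{u\in vW_{\alggrp{M}},\,\lambda\in\Lambda(1)}CE_-(n_u\lambda)$ as in the proof of Lemma~\ref{lem:parabolic induction as vector space}; then by Remark~\ref{rem:the condition to vanish at E_-(n_wlambda)} (and the maximality of $w$ in $W^{\alggrp{M}}$), $w\sigma$ coincides with the annihilator of $\bigoplus_{v\ne w}H_v$. It thus suffices to prove $j_{\alggrp{M}'}^+(\mathcal{H}_{\alggrp{M}'}^+)\cdot\bigoplus_{v\ne w}H_v\subset\bigoplus_{v\ne w}H_v$. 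The crucial input is the combinatorial identity $u'v\notin wW_{\alggrp{M}}$ for every $u'\in W_{\alggrp{M}'}$ and $v\in W^{\alggrp{M}}\setminus\{w\}$: the relation $W_{\alggrp{M}'}=wW_{\alggrp{M}}w^{-1}$ forces $u'v\in wW_{\alggrp{M}}\Leftrightarrow v\in wW_{\alggrp{M}}$, and $v\in W^{\alggrp{M}}$ then gives $v=w$. Combined with the Bruhat-triangular expansion $T_{\widetilde{w}_1}T_{\widetilde{w}_2}\in\sum_{\widetilde{y}\le\widetilde{v}_1\widetilde{v}_2,\widetilde{v}_i\le\widetilde{w}_i}C[q_s^{1/2}]T_{\widetilde{y}}$, the monotonicity of the projection $W\to W^{\alggrp{M}}$ for the Bruhat order (\cite[Lemma~4.20]{MR3143708}), Lemma~\ref{lem:Bruhat order, M-positive/negative}, and the triangular base change $T_{\widetilde{w}}\leftrightarrow E_-(\widetilde{w})$, this identity yields the stability.

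For (b), denote by $\varphi_m\in w\sigma$ the element with $\varphi_m(T^*_{n_w})=m$. For $h=E^{\alggrp{M}'}_-(\widetilde{u}')$ with $\widetilde{u}'=n_{u_1}\lambda_1$ $\alggrp{M}'$-positive, I compare $j_{\alggrp{M}'}^+(h)T^*_{n_w}$ with $T^*_{n_w}j_{\alggrp{M}}^-(\Phi(h))$ modulo $\bigoplus_{v\ne w}H_v$. On the one hand, $j_{\alggrp{M}}^-(\Phi(h))=E_-(n_w^{-1}\widetilde{u}'n_w)$ and Lemma~\ref{lem:on length, M-part and not M-part} gives $T^*_{n_w}\cdot E_-(n_w^{-1}\widetilde{u}'n_w)=E_-(\widetilde{u}'n_w)$. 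On the other, $j_{\alggrp{M}'}^+(h)=E_-(\widetilde{u}')=q^{1/2}_{\widetilde{u}'}q^{-1/2}_{n_{u_1}}T^*_{n_{u_1}}\theta(\lambda_1)$; using $\ell(u_1w)=\ell(u_1)+\ell(w)$ (a consequence of $\Sigma_{\alggrp{M}'}^+=w(\Sigma_{\alggrp{M}}^+)\subset w(\Sigma^+)$), commuting $\theta(\lambda_1)$ past $T^*_{n_w}$ via iterated Bernstein relations produces a main term $T^*_{n_{u_1w}}\theta(n_w^{-1}(\lambda_1))$ that equals $E_-(\widetilde{u}'n_w)$ after matching $q$-factors by Lemma~\ref{lem:M-negativity and q}, plus corrections of the form $T^*_{n_{u_1}}T^*_{n_{w'}}\theta(\mu)c$ with $w'<w$ in $W$, each of which lies in $\bigoplus_{v\ne w}H_v$ by the combinatorial identity of (a). Therefore
\begin{equation*}
(\varphi_m\cdot j_{\alggrp{M}'}^+(h))(T^*_{n_w})=\varphi_m(E_-(\widetilde{u}'n_w))=\varphi_m\bigl(T^*_{n_w}\cdot j_{\alggrp{M}}^-(\Phi(h))\bigr)=\sigma(\Phi(h))m=\sigma'(h)m,
\end{equation*}
which is (b).

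The principal obstacle is (b), where one must carefully account for every Bernstein correction produced when commuting $\theta(\lambda_1)$ through $T^*_{n_w}$ and confirm that each really lies in $\bigoplus_{v\ne w}H_v$; the combinatorial identity from (a), together with the maximality of $w$ in $W^{\alggrp{M}}$, is exactly what ensures this.
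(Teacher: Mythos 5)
Your proof follows essentially the same line as the paper's: both rest on the decomposition $\mathcal{H}=\bigoplus_{v\in W^{\alggrp M}}H_v$, the characterization of $w_\Delta w_{\Delta_{\alggrp M}}\sigma$ via Remark~\ref{rem:the condition to vanish at E_-(n_wlambda)}, the coset combinatorics of Lemma~\ref{lem:w_Delta w_Delta_M is maximal}, the Bernstein relations to commute $\theta$ past $T_{n_{w_\Delta w_{\Delta_{\alggrp M}}}}$, and Lemma~\ref{lem:on length, M-part and not M-part} for the main term. The notable organizational difference is that you isolate the stability assertion (a) as an explicit separate step; the paper's written proof only establishes the single commutation identity $\varphi(E_-(n_{w_1}\lambda_1)T_{n_{w_\Delta w_{\Delta_{\alggrp M}}}})=\varphi(T_{n_{w_\Delta w_{\Delta_{\alggrp M}}}}E_-(n_w\lambda))$, which gives the intertwining at $T_{n_{w_\Delta w_{\Delta_{\alggrp M}}}}$ but leaves the stability of $w_\Delta w_{\Delta_{\alggrp M}}\sigma$ under $j_{\alggrp M'}^+(\mathcal H_{\alggrp M'}^+)$ implicit in the same Bruhat-triangularity argument; making this explicit, as you do, is a genuine improvement in exposition. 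A small wrinkle in your step (b): you commute $\theta(\lambda_1)$ directly past $T^*_{n_w}$ and then ``match $q$-factors,'' citing Lemma~\ref{lem:M-negativity and q}. The relevant fact is actually the invariance of $\ell$ (hence of $q_{\widetilde w}$) under conjugation by $n_{w_\Delta w_{\Delta_{\alggrp M}}}$, which follows by combining Lemma~\ref{lem:on length, M-part and not M-part} for $\alggrp M$ with its mirror for $\alggrp M'$ applied to $\widetilde u' n_{w_\Delta w_{\Delta_{\alggrp M}}}$; the paper sidesteps this altogether by first proving the exact identity $T^*_{n_{w_1}}T_{n_{w_\Delta w_{\Delta_{\alggrp M}}}}=T_{n_{w_\Delta w_{\Delta_{\alggrp M}}}}T^*_{n_w}$ at the level of the Iwahori--Matsumoto basis, which factors out all $q$-bookkeeping before invoking the Bernstein relations. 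Both routes are sound; the paper's factoring trick is tidier, your version is more direct.
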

\begin{proof}
Let $w\in W_{\alggrp{M}}$, $\lambda\in \Lambda(1)$.
Put $w_1 = w_{\Delta}w_{\Delta_{\alggrp{M}}}w(w_{\Delta}w_{\Delta_{\alggrp{M}}})^{-1}$ and $\lambda_1 = n_{w_{\Delta}w_{\Delta_{\alggrp{M}}}}(\lambda)$.
We prove $\varphi(E_-(n_{w_1}\lambda_1)T_{n_{w_\Delta w_{\Delta_{\alggrp{M}}}}}) = \varphi(T_{n_{w_\Delta w_{\Delta_{\alggrp{M}}}}}E_-(n_w \lambda))$ for $\varphi\in w_\Delta w_{\Delta_{\alggrp{M}}}\sigma$

Since $w\in W_{\alggrp{M}}$ and $w_\Delta w_{\Delta_{\alggrp{M}}}\in W^{\alggrp{M}}$, we have $\ell(w_\Delta w_{\Delta_{\alggrp{M}}}) + \ell(w) = \ell(w_\Delta w_{\Delta_{\alggrp{M}}} w)$.
Hence $n_{w_\Delta w_{\Delta_{\alggrp{M}}}}n_w = n_{w_\Delta w_{\Delta_{\alggrp{M}}}w}$.
We have $w_\Delta w_{\Delta_{\alggrp{M}}}w = w_1w_\Delta w_{\Delta_{\alggrp{M}}}$ and by the same argument implies $n_{w_1w_\Delta w_{\Delta_{\alggrp{M}}}} = n_{w_1}n_{w_\Delta w_{\Delta_{\alggrp{M}}}}$.
Hence $n_{w_\Delta w_{\Delta_{\alggrp{M}}}}n_w = n_{w_1}n_{w_\Delta w_{\Delta_{\alggrp{M}}}}$.
Therefore $n_{w_1}^{-1}n_{w_\Delta w_{\Delta_{\alggrp{M}}}} = n_{w_\Delta w_{\Delta_{\alggrp{M}}}}n_w^{-1}$.
Since $w^{-1}\in W_{\alggrp{M}}$ and $w_{\Delta}w_{\Delta_{\alggrp{M}}}\in W^{\alggrp{M}}$, we also have $\ell(n_{w_\Delta w_{\Delta_{\alggrp{M}}}}n_w^{-1}) = \ell(n_{w_\Delta w_{\Delta_{\alggrp{M}}}}) + \ell(n_w^{-1})$.
Hence $T_{n_{w_\Delta w_{\Delta_{\alggrp{M}}}}n_w^{-1}} = T_{n_{w_\Delta w_{\Delta_{\alggrp{M}}}}}T_{n_w^{-1}}$.
The same argument implies $T_{n_{w_1}^{-1}n_{w_\Delta w_{\Delta_{\alggrp{M}}}}} = T_{n_{w_1}^{-1}}T_{n_{w_\Delta w_{\Delta_{\alggrp{M}}}}}$.
Hence in $\mathcal{H}$, we have $T_{n_{w_\Delta w_{\Delta_{\alggrp{M}}}}}T_{n_w^{-1}} = T_{n_{w_1}^{-1}}T_{n_{w_\Delta w_{\Delta_{\alggrp{M}}}}}$.
Therefore we get $T_{n_{w_1}^{-1}}^{-1}T_{n_{w_\Delta w_{\Delta_{\alggrp{M}}}}} = T_{n_{w_\Delta w_{\Delta_{\alggrp{M}}}}}T_{n_w^{-1}}^{-1}$.
The definition of $T_{n_w}^*$ is $T_{n_w}^* = q_{n_w}T_{n_w^{-1}}^{-1}$.
Hence $T_{n_{w_1}}^*T_{n_{w_\Delta w_{\Delta_{\alggrp{M}}}}} = T_{n_{w_\Delta w_{\Delta_{\alggrp{M}}}}}T_{n_w}^*$.
Therefore,
\begin{align*}
&T_{n_{w_\Delta w_{\Delta_{\alggrp{M}}}}}E_-(n_w\lambda) - E_-(n_{w_1}\lambda_1)T_{n_{w_\Delta w_{\Delta_{\alggrp{M}}}}}\\
&\in C[q_s^{\pm 1/2}](T_{n_{w_\Delta w_{\Delta_{\alggrp{M}}}}}T_{n_w}^* \theta(\lambda) - T_{n_{w_1}}^*\theta(\lambda_1)T_{n_{w_\Delta w_{\Delta_{\alggrp{M}}}}})\\
& = C[q_s^{\pm 1/2}]T_{n_{w_1}}^* (T_{n_{w_\Delta w_{\Delta_{\alggrp{M}}}}}\theta(\lambda) - \theta(\lambda_1)T_{n_{w_\Delta w_{\Delta_{\alggrp{M}}}}})
\end{align*}
By the Bernstein relation, we have
\[
	T_{n_{w_\Delta w_{\Delta_{\alggrp{M}}}}}\theta(\lambda) - \theta(\lambda_1)T_{n_{w_\Delta w_{\Delta_{\alggrp{M}}}}}
	\in \sum_{v < w_\Delta w_{\Delta_{\alggrp{M}}},\mu\in \Lambda(1)}C[q_s^{\pm 1/2}]T_{n_v}\theta(\mu).
\]
We have
\[
	\sum_{v < w_\Delta w_{\Delta_{\alggrp{M}}}}C[q_s^{\pm 1/2}]T_{n_v}
	\subset
	\sum_{v < w_\Delta w_{\Delta_{\alggrp{M}}}}C[q_s^{\pm 1/2}][Z_\kappa]T_{n_v}^*
\]
and $T_{n_{w_1}}^*T_{n_v}^* \in \sum_{w_1'\le w_1,v'\le v,w' \le w_1'v'}C[q_s^{\pm 1/2}][Z_\kappa]T_{n_{w'}}^*$.

Let $w',w_1',v',v$ as above, namely $v<w_\Delta w_{\Delta_{\alggrp{M}}},w_1'\le w_1,v'\le v$ and $w' \le w_1'v'$.
Then $v' < w_\Delta w_{\Delta_{\alggrp{M}}}$.
Hence $v' \in W\setminus w_\Delta w_{\Delta_{\alggrp{M}}}W_{\alggrp{M}} = W\setminus W_{\alggrp{M}'}w_\Delta w_{\Delta_{\alggrp{M}}}$ by the above lemma (2).
Since $w_1\in W_{\alggrp{M}'}$, we have $w'_1\in W_{\alggrp{M}'}$.
Hence
\[
	w_1'v' \in W\setminus W_{\alggrp{M}'}w_\Delta w_{\Delta_{\alggrp{M}}} = W\setminus w_\Delta w_{\Delta_{\alggrp{M}}}W_{\alggrp{M}}.
\]
Therefore, if $w' \le w'_1v'$, then $w'\in W\setminus w_\Delta w_{\Delta_{\alggrp{M}}}W_{\alggrp{M}} = \{v\in W^{\alggrp{M}}\mid v < w_\Delta w_{\Delta_{\alggrp{M}}}\}W_{\alggrp{M}}$ by the above lemma (1) and (3).
Hence we get
\begin{align*}
&T_{n_{w_\Delta w_{\Delta_{\alggrp{M}}}}}E_-(n_w \lambda) - E_-(n_{w_1}\lambda_1)T_{n_{w_\Delta w_{\Delta_{\alggrp{M}}}}}\\
&\in \left(\sum_{a\in W^{\alggrp{M}},a < w_{\Delta} w_{\Delta_{\alggrp{M}}},b\in W_{\alggrp{M}},\mu\in \Lambda(1)}C[q_s^{\pm 1/2}]T^*_{n_{ab}}\theta(\mu)\right)\cap \mathcal{H}\\
& = \left(\sum_{a\in W^{\alggrp{M}},a < w_{\Delta} w_{\Delta_{\alggrp{M}}},b\in W_{\alggrp{M}},\mu\in \Lambda(1)}C[q_s^{\pm 1/2}]E_-(n_{ab}\mu)\right)\cap \mathcal{H}\\
& = \sum_{a\in W^{\alggrp{M}},a < w_{\Delta} w_{\Delta_{\alggrp{M}}},b\in W_{\alggrp{M}},\mu\in \Lambda(1)}C[q_s^{1/2}]E_-(n_{ab}\mu)
\end{align*}
By Remark~\ref{rem:the condition to vanish at E_-(n_wlambda)}, $\varphi$ is zero on this space.
\end{proof}

\begin{prop}\label{prop:tensor induction and parabolic induction}
Keep the notation in Proposition~\ref{prop:M'-mod in parabolic induction}.
Then we have $\sigma'\otimes_{\mathcal{H}_{\alggrp{M}'}^+}\mathcal{H}\simeq I_{\alggrp{P}}(\sigma)$.
\end{prop}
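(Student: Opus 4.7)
The approach is to construct a natural right $\mathcal{H}$-module map $\Psi\colon \sigma'\otimes_{\mathcal{H}_{\alggrp{M}'}^+}\mathcal{H}\to I_{\alggrp{P}}(\sigma)$ and then verify it is bijective. Proposition~\ref{prop:M'-mod in parabolic induction} provides an $\mathcal{H}_{\alggrp{M}'}^+$-equivariant embedding $\sigma'\hookrightarrow w_\Delta w_{\Delta_{\alggrp{M}}}\sigma\subset I_{\alggrp{P}}(\sigma)$; by the universal property of the tensor product this extends to $\Psi$, sending $x\otimes h$ to the image of $x$ in $I_{\alggrp{P}}(\sigma)$ multiplied by $h$.

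For injectivity, the key input is freeness of $\mathcal{H}$ as a left $\mathcal{H}_{\alggrp{M}'}^+$-module of rank $|W^{\alggrp{M}'}|$. This comes from applying Lemma~\ref{lem:parabolic induction as vector space} to $\alggrp{M}'$ (giving a free right $\mathcal{H}_{\alggrp{M}'}^-$-basis $\{T^*_{n_w}\mid w\in W^{\alggrp{M}'}\}$ of $\mathcal{H}$) combined with the anti-involution $\iota$ of $\mathcal{H}$, which exchanges $\mathcal{H}_{\alggrp{M}'}^+$ and $\mathcal{H}_{\alggrp{M}'}^-$ and sends $T^*_{\widetilde{w}}\mapsto T^*_{\widetilde{w}^{-1}}$; consequently $\{T^*_{n_{w^{-1}}}\mid w\in W^{\alggrp{M}'}\}$ is a left $\mathcal{H}_{\alggrp{M}'}^+$-basis of $\mathcal{H}$, and
\[
	\sigma'\otimes_{\mathcal{H}_{\alggrp{M}'}^+}\mathcal{H}=\bigoplus_{w\in W^{\alggrp{M}'}}\sigma'\otimes T^*_{n_{w^{-1}}}
\]
is a direct sum of copies of $\sigma'$ indexed by $W^{\alggrp{M}'}$. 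Via the bijection $v\mapsto vw_\Delta w_{\Delta_{\alggrp{M}}}$ between $W^{\alggrp{M}'}$ and $W^{\alggrp{M}}$, this matches the decomposition $I_{\alggrp{P}}(\sigma)=\bigoplus_{u\in W^{\alggrp{M}}}u\sigma$ of Lemma~\ref{lem:parabolic induction as vector space} summand by summand.

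Surjectivity will be proved by descending induction on the Bruhat order of $W^{\alggrp{M}}$. The maximal summand $w_\Delta w_{\Delta_{\alggrp{M}}}\sigma$ (maximal by Remark~\ref{rem:on weyl elements}) is in the image by construction. For $u<w_\Delta w_{\Delta_{\alggrp{M}}}$ in $W^{\alggrp{M}}$, choose $v\in W$ with $w_\Delta w_{\Delta_{\alggrp{M}}}=uv$ and $\ell(w_\Delta w_{\Delta_{\alggrp{M}}})=\ell(u)+\ell(v)$. An analysis parallel to the one in the proof of Proposition~\ref{prop:M'-mod in parabolic induction} (bounding products of Iwahori--Matsumoto basis elements in the Bruhat order via \cite[Lemma~4.20]{MR3143708}) should show that right multiplication by $T^*_{n_{v^{-1}}}$ carries the subspace $w_\Delta w_{\Delta_{\alggrp{M}}}\sigma\subset I_{\alggrp{P}}(\sigma)$ into $\bigoplus_{u'\le u,\,u'\in W^{\alggrp{M}}}u'\sigma$ and projects isomorphically onto the $u$-component modulo strictly higher ones. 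Combined with the summand-level matching from the injectivity argument, this yields the desired bijectivity.

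The main obstacle will be the computation in the surjectivity step: one must describe the action of $T^*_{n_{v^{-1}}}$ on $w_\Delta w_{\Delta_{\alggrp{M}}}\sigma$ precisely enough to identify its contribution in each $u'$-summand. This parallels the computation at the end of the proof of Proposition~\ref{prop:M'-mod in parabolic induction} that bounds the difference $T_{n_{w_\Delta w_{\Delta_{\alggrp{M}}}}}\theta(\lambda)-\theta(\lambda_1)T_{n_{w_\Delta w_{\Delta_{\alggrp{M}}}}}$ by a sum over strictly smaller elements of $W$, and will likewise require careful use of the length formula \eqref{eq:length formula} together with the Bruhat-order bounds on products $T_{\widetilde{w}_1}T_{\widetilde{w}_2}$ recorded at the end of Section~\ref{subsec:Notation}.
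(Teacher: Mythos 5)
Your strategy is the same as the paper's: extend the embedding of Proposition~\ref{prop:M'-mod in parabolic induction} to an $\mathcal{H}$-map $\Phi$, decompose $\sigma'\otimes_{\mathcal{H}_{\alggrp{M}'}^+}\mathcal{H}$ into $\lvert W^{\alggrp{M}'}\rvert$ copies of $\sigma'$, and match these against the summands $v\sigma$ of $I_{\alggrp{P}}(\sigma)$ by a length/Bruhat-order computation. Two points, however, are genuine gaps. First, $\mathcal{H}$ is \emph{not} free as a left $j^+_{\alggrp{M}'}(\mathcal{H}_{\alggrp{M}'}^+)$-module with basis $\{T^*_{n_{w^{-1}}}\mid w\in W^{\alggrp{M}'}\}$: already for $\GL_2$ with $\alggrp{M}'=\alggrp{Z}$, the left span of $\{1,T^*_{n_s}\}$ over $\bigoplus_{\widetilde{\lambda}\ \text{anti-dominant}}CT_{\widetilde{\lambda}}$ lies in $\bigoplus_{\widetilde{\lambda}\ \text{anti-dominant}}(C[Z_\kappa]T_{\widetilde{\lambda}}+CT_{\widetilde{\lambda}n_s})$ and misses $T_{\widetilde{\mu}}$ for strictly dominant $\widetilde{\mu}$. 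What is true, and what the paper uses, is that the offending elements die after tensoring with $\sigma'$: for $\lambda_0^+$ central and strictly $\alggrp{M}'$-positive, $E(\lambda_0^+)$ is invertible on $\sigma'$ while $E(\lambda_0^+)E(\lambda)=0$ for $\lambda$ not $\alggrp{M}'$-positive, so $x\otimes E(\lambda)h=xE(\lambda_0^+)^{-1}\otimes E(\lambda_0^+)E(\lambda)h=0$. This localization argument (the one inside the proof of Lemma~\ref{lem:parabolic induction as vector space}, not freeness of $\mathcal{H}$) is what yields $\sigma'\otimes_{\mathcal{H}_{\alggrp{M}'}^+}\mathcal{H}\simeq\bigoplus_{w\in{}^{\alggrp{M}'}\!W}\sigma'\otimes T^*_{n_w}$, where ${}^{\alggrp{M}'}\!W=\{w^{-1}\mid w\in W^{\alggrp{M}'}\}$.

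Second, the computation you defer is the heart of the proof, and your sketch of it has the bookkeeping reversed. Writing $\varphi=\Phi(x\otimes T^*_{n_w})=\varphi'T^*_{n_w}$ with $\varphi'=\Phi(x\otimes1)\in w_\Delta w_{\Delta_{\alggrp{M}}}\sigma$, one must evaluate $\varphi(T_{n_v})=\varphi'(T^*_{n_w}T_{n_v})$ for $v\in W^{\alggrp{M}}$; the relevant factorization is therefore $w_\Delta w_{\Delta_{\alggrp{M}}}=w\cdot v$ with $w\in{}^{\alggrp{M}'}\!W$ on the \emph{left}. Your recipe factors $w_\Delta w_{\Delta_{\alggrp{M}}}=uv$ with $u\in W^{\alggrp{M}}$ on the left and multiplies by $T^*_{n_{v^{-1}}}$, which does not steer $w_\Delta w_{\Delta_{\alggrp{M}}}\sigma$ toward the summand $u\sigma$, and a length-additive factorization with $u$ on the left need not exist (in type $A_2$ with $\Delta_{\alggrp{M}}=\{\alpha\}$ and $u=s_\beta$ one has $\ell(u)+\ell(u^{-1}w_\Delta w_{\Delta_{\alggrp{M}}})=4>2$). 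The actual computation gives more than triangularity: $T^*_{n_w}T_{n_v}$ is a multiple of $E_{w(-\Delta)}(n_wn_v)$, hence vanishes unless $\ell(w)+\ell(v)=\ell(wv)$; in that case a length count shows $wv\in w_\Delta w_{\Delta_{\alggrp{M}}}W_{\alggrp{M}}$ forces $wv=w_\Delta w_{\Delta_{\alggrp{M}}}$, the lower Bruhat terms are killed by Remark~\ref{rem:the condition to vanish at E_-(n_wlambda)} together with Lemma~\ref{lem:w_Delta w_Delta_M is maximal}, and one gets $\varphi(T_{n_v})=x$ exactly when $wv=w_\Delta w_{\Delta_{\alggrp{M}}}$ and $0$ otherwise. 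So $\Phi$ is a summand-by-summand bijection under $v\mapsto w_\Delta w_{\Delta_{\alggrp{M}}}v^{-1}$ and no Bruhat induction is needed; until this computation is carried out, neither injectivity nor surjectivity is established.
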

\begin{proof}
We get $\Phi\colon\sigma'\otimes_{\mathcal{H}_{\alggrp{M}'}^+}\mathcal{H}\to I_{\alggrp{P}}(\sigma)$ by Proposition~\ref{prop:M'-mod in parabolic induction}.
Put ${}^{\alggrp{M}'}\!W = \{w^{-1}\mid w\in W^{\alggrp{M}'}\}$.
Then for $w_1\in W_{\alggrp{M}'}$ and $w_2\in {}^{\alggrp{M}'}\!W$, we have $\ell(w_1w_2) = \ell(w_1) + \ell(w_2)$.
By a similar argument of the proof of Lemma~\ref{lem:parabolic induction as vector space}, the homomorphism $(x_w)_w\mapsto \sum_w x_w\otimes T_{n_w}^*$ gives an isomorphism $\bigoplus_{w\in {}^{\alggrp{M}'}\!W}\sigma'\simeq \sigma'\otimes_{\mathcal{H}_{\alggrp{M}'}^+}\mathcal{H}$.
By the construction, $\Phi$ induces an isomorphism $\sigma'\simeq \sigma'\otimes 1\to w_\Delta w_{\Delta_{\alggrp{M}}}\sigma\subset I_{\alggrp{P}}(\sigma)$.

First we prove that $v\mapsto w_\Delta w_{\Delta_{\alggrp{M}}} v^{-1}$ gives a bijection $W^{\alggrp{M}}\to {}^{\alggrp{M}'}\!W$.
We have $w_{\Delta_{\alggrp{M}}}w_\Delta(\Delta_{\alggrp{M}'}) = w_{\Delta_{\alggrp{M}}}(-\Delta_{\alggrp{M}}) = \Delta_{\alggrp{M}}$.
Hence $(w_\Delta w_{\Delta_{\alggrp{M}}} v^{-1})^{-1}(\Delta_{\alggrp{M}'})\subset v(\Delta_{\alggrp{M}})\subset\Sigma^+$.
Therefore the map is well-defined.
The inverse map is given by $w\mapsto w^{-1}w_\Delta w_{\Delta_{\alggrp{M}}}$.

We prove that the homomorphism $\Phi$ induces an isomorphism $ \sigma'\otimes T^*_{n_{w_\Delta w_{\Delta_{\alggrp{M}}}v^{-1}}}\simeq v\sigma$ for $v\in W^{\alggrp{M}}$.
To do it, it is sufficient to prove the following:
Let $w\in {}^{\alggrp{M}'}\!W$ and $v\in W^{\alggrp{M}}$.
Take $x\in \sigma'$ and put $\varphi = \Phi(x\otimes T_{n_w}^*)$.
Then we have $\varphi(T_{n_v}) = x$ if $wv = w_\Delta w_{\Delta_{\alggrp{M}}}$ and if not, $\varphi(T_{n_v}) = 0$.

Put $\varphi' = \Phi(x\otimes 1)$.
Then $\varphi(T_{n_v}) = \varphi'(T_{n_w}^*T_{n_v})$.
We have
\[
	T_{n_w}^*T_{n_v} = E_{w(-\Delta)}(n_w)E_{-\Delta}(n_v) = (q_{n_wn_v}^{-1}q_{n_w}q_{n_v})^{1/2}E_{w(-\Delta)}(n_wn_v).
\]
Hence it is zero if $\ell(w) + \ell(v) > \ell(wv)$.
Assume that $\ell(w) + \ell(v) = \ell(wv)$.
Then $T_{n_w}^*T_{n_v} = E_{w(-\Delta)}(n_wn_v) = E_{w(-\Delta)}(n_{wv}) \in T_{n_{wv}} + \sum_{v' < wv}C[Z_\kappa]T_{n_{v'}}$.

Assume that $wv\notin w_\Delta w_{\Delta_{\alggrp{M}}}W_{\alggrp{M}}$.
Then $v' < wv$ implies $v'\in W\setminus w_\Delta w_{\Delta_{\alggrp{M}}}W_{\alggrp{M}} = \{v_0\in W^{\alggrp{M}}\mid v_0 < w_\Delta w_{\Delta_{\alggrp{M}}}\}W_{\alggrp{M}}$ by Lemma~\ref{lem:w_Delta w_Delta_M is maximal} (1) and (3).
Hence by Remark~\ref{rem:the condition to vanish at E_-(n_wlambda)}, we have $\varphi'(C[Z_\kappa]T_{n_{v'}}) = 0$.
We also have $\varphi'(T_{n_{wv}}) = 0$.
Hence $\varphi'(T_{n_w}^*T_{n_v}) = 0$.

Assume that $wv\in w_\Delta w_{\Delta_{\alggrp{M}}}W_{\alggrp{M}}$.
Take $y\in W_{\alggrp{M}}$ such that $wv = w_\Delta w_{\Delta_{\alggrp{M}}} y$.
Since $w_\Delta w_{\Delta_{\alggrp{M}}}w_\Delta\in W_{\alggrp{M}'}$ and $w\in {}^{\alggrp{M}'}\!W$, we have
\begin{equation}\label{eq:length, to compare?}
\ell(w_\Delta w_{\Delta_{\alggrp{M}}}w_\Delta w) = \ell(w_\Delta w_{\Delta_{\alggrp{M}}}w_\Delta) + \ell(w).
\end{equation}
Since $v\in W^{\alggrp{M}}$ and $y\in W_{\alggrp{M}}$, we have $\ell(yv^{-1}) = \ell(y) + \ell(v)$.
Hence by $wv = w_\Delta w_{\Delta_{\alggrp{M}}} y$, the left hand side of \eqref{eq:length, to compare?} is
\[
	\ell(w_\Delta w_{\Delta_{\alggrp{M}}}w_\Delta w) = \ell(w_\Delta yv^{-1}) = \ell(w_\Delta) - \ell(yv^{-1}) = \ell(w_\Delta) - \ell(y) - \ell(v).
\]
Recall that we assumed $\ell(wv) = \ell(w) + \ell(v)$.
Similarly, the right hand side of \eqref{eq:length, to compare?} is
\begin{align*}
\ell(w_\Delta w_{\Delta_{\alggrp{M}}}w_\Delta) + \ell(w)
& = \ell(w_{\Delta_{\alggrp{M}}}) + \ell(w)\\
& = \ell(w_{\Delta_{\alggrp{M}}}) + \ell(wv) - \ell(v)\\
& = \ell(w_{\Delta_{\alggrp{M}}}) + \ell(w_\Delta w_{\Delta_{\alggrp{M}}}y) - \ell(v)\\
& = \ell(w_{\Delta_{\alggrp{M}}}) + \ell(w_\Delta) - \ell(w_{\Delta_{\alggrp{M}}}y) - \ell(v)\\
& = \ell(w_{\Delta_{\alggrp{M}}}) + \ell(w_\Delta) - \ell(w_{\Delta_{\alggrp{M}}}) + \ell(y) - \ell(v)\\
& = \ell(w_\Delta) + \ell(y) - \ell(v)
\end{align*}
We get $\ell(y) = 0$.
Hence $wv = w_\Delta w_{\Delta_{\alggrp{M}}}$.
Therefore we have $T_{n_w}^*T_{n_v}\in T_{n_{w_\Delta w_{\Delta_{\alggrp{M}}}}} + \sum_{w' < w_\Delta w_{\Delta_{\alggrp{M}}}}C[Z_\kappa]T_{n_{w'}}$.
We have $\varphi'(T_{n_{w_\Delta w_{\Delta_{\alggrp{M}}}}}) = x$.
Moreover, for $w'\in W$ such that $w' < wv = w_\Delta w_{\Delta_{\alggrp{M}}}$, $w'\notin w_\Delta w_{\Delta_{\alggrp{M}}}W_{\alggrp{M}}$ by Lemma~\ref{lem:w_Delta w_Delta_M is maximal}.
Hence by Remark~\ref{rem:the condition to vanish at E_-(n_wlambda)}, we get $\varphi'(C[Z_\kappa]T_{n_{w'}}) = 0$.
Therefore $\varphi'(T_{n_w}^*T_{n_v}) = x$.
\end{proof}

\subsection{Extension of a module}
In this subsection, assume that an orthogonal decomposition $\Delta = \Delta_1\amalg\Delta_2$ is given.
Then we have the decomposition $S = S_1\amalg S_2$ and $S_\aff = S_{\aff,1}\amalg S_{\aff,2}$.
Let $\alggrp{M}_1$ (resp.\ $\alggrp{M}_2$) be the Levi subgroup corresponding to $\Delta_1$ (resp.\ $\Delta_2$).

To formulate the classification theorem, we need the following proposition.
\begin{prop}\label{prop:extending}
Let $\sigma$ be an $\mathcal{H}_{\alggrp{M}_1}\otimes C$-module such that $E^{\alggrp{M}_1}(\lambda)$ acts trivially for $\lambda\in\Lambda_{\alggrp{M}_2}'(1)$.
Then there exists a unique $\mathcal{H}\otimes C$-module $e(\sigma)$ such that:
\begin{itemize}
\item as an $\mathcal{H}_{\alggrp{M}_1}^-$-module, $e(\sigma) = \sigma$.
\item for $s\in S_{\aff,2}$, $T_{n_s}$ is zero on $e(\sigma)$.
\item $T_t$ acts trivially for $t\in \Lambda'_{\alggrp{M}_2}(1)\cap Z_\kappa$.
\end{itemize}
\end{prop}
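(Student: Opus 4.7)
The plan is to establish uniqueness first and then construct $e(\sigma)$ by defining the action explicitly on a generating set and verifying the defining relations. For uniqueness, the orthogonal decomposition $\Delta = \Delta_1 \amalg \Delta_2$ gives $\Sigma^+ \setminus \Sigma^+_{\alggrp{M}_1} = \Sigma^+_{\alggrp{M}_2}$, so for every $s \in S_{\aff,1}$ the reflection $n_s$ acts trivially on $\Sigma_{\alggrp{M}_2,\aff}$; likewise $\ell_{\alggrp{M}_1}(t) = 0$ for every $t \in Z_\kappa$. Both types of elements are therefore $\alggrp{M}_1$-negative, so the condition ``$e(\sigma)|_{\mathcal{H}_{\alggrp{M}_1}^-} = \sigma$'' forces $T_{n_s}$ (for $s \in S_{\aff,1}$) and $T_t$ (for $t \in Z_\kappa$) to act via $j_{\alggrp{M}_1}^-$. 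Together with the prescription $T_{n_s} = 0$ for $s \in S_{\aff,2}$, this determines the action of all Iwahori--Matsumoto generators, and hence of $\mathcal{H}$.

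For existence, I would define the action on $\sigma$ by these prescribed values and check that the braid and quadratic relations of $\mathcal{H}$ are respected. The quadratic relations are routine: for $s \in S_{\aff,1}$ they follow from the relations in $\mathcal{H}_{\alggrp{M}_1}$, and for $s \in S_{\aff,2}$ the relation $T_{n_s}^2 = c_{n_s} T_{n_s}$ in $\mathcal{H} \otimes C$ is satisfied because both sides annihilate $e(\sigma)$. The braid relations internal to $\mathcal{H}_{\alggrp{M}_1}^-$ hold by hypothesis on $\sigma$, and those internal to the $T_{n_s}$ with $s \in S_{\aff,2}$ are automatic (both sides vanish once any such $T_{n_s}$ appears as a factor). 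The remaining mixed braid relations are the heart of the matter.

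The main obstacle will be verifying the mixed relations between elements of $\widetilde{W}_{\alggrp{M}_1}(1)$ and elements of $\widetilde{W}_{\alggrp{M}_2,\aff}(1)$, i.e.\ checking that for $s \in S_{\aff,2}$ and $\widetilde{w} \in \widetilde{W}_{\alggrp{M}_1}(1)$ the products $T_{n_s} T_{\widetilde{w}}$ and $T_{\widetilde{w}} T_{n_s}$ give the same action. In $\mathcal{H}$ these differ by a correction coming from Bernstein-type commutation: by orthogonality the commutator $n_s \widetilde{w} n_s^{-1}\widetilde{w}^{-1}$ lies in $\Lambda'_{\alggrp{M}_2}(1) \cap Z_\kappa$, and the correction terms obtained from Lemma~\ref{lem:Bernstein relations} involve $c_{n_s,k} \in C[Z_{(\alpha,k),\kappa}]$ and $\mu_{n_s}(k) \in \Lambda'_{s}(1)$, all contained in $\Lambda'_{\alggrp{M}_2}(1)$ (or its subgroup in $Z_\kappa$). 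The assumption that $E^{\alggrp{M}_1}(\lambda)$ acts trivially for $\lambda \in \Lambda'_{\alggrp{M}_2}(1)$ thus makes every such correction act trivially or by zero, so both products give the same map on $\sigma$.

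Putting this together, one defines $T_{\widetilde{w}}$ on $e(\sigma)$ by $j_{\alggrp{M}_1}^-$ whenever $\widetilde{w}$ lies in the $\alggrp{M}_1$-negative part of $\widetilde{W}_{\alggrp{M}_1}(1)$ and by zero otherwise, extends by the braid relations, and reduces well-definedness to the mixed commutations just discussed; the generation statement of Lemma~\ref{lem:generators of Lambda'} ensures that the hypothesis on $\Lambda'_{\alggrp{M}_2}(1)$ really does cover all the commutators that appear, completing the construction.
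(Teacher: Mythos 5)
Your uniqueness argument rests on the claim that specifying the action of $T_{n_s}$ for $s\in S_\aff$ and $T_t$ for $t\in Z_\kappa$ determines the entire $\mathcal{H}$-module structure, and your existence argument then assigns $T_{\widetilde w}\mapsto 0$ for every $\widetilde w$ not lying in the $\alggrp{M}_1$-negative part of $\widetilde W_{\alggrp{M}_1}(1)$. Both steps break down on the length-zero elements $\Omega(1)$. Unless $\widetilde W = \widetilde W_\aff$, the set $\{T_{n_s}\}_{s\in S_\aff}\cup\{T_t\}_{t\in Z_\kappa}$ does \emph{not} generate $\mathcal{H}$: one also needs $T_u$ for $u\in\Omega(1)$, and $\Omega(1)$ is strictly larger than $Z_\kappa$ whenever $\Omega\ne 1$ (e.g.\ for $\GL_n$). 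Worse, for $u\in\Omega(1)$ whose image in $W$ lies outside $W_{\alggrp{M}_1}$, the element $T_u$ is a unit of $\mathcal{H}$ because $\ell(u)=\ell(u^{-1})=0$ forces $T_uT_{u^{-1}}=T_1=1$; hence $T_u$ cannot act by zero, contradicting your prescription. Concretely, take $\Delta_1=\emptyset$ in $\GL_2$ and $u$ the length-zero element inducing the nontrivial Weyl reflection: $T_u$ acts by an invertible operator on any extension, not by zero. The same problem occurs for any $\widetilde w=\lambda n_{w_1}n_{w_2}$ with $w_2\ne 1$ and $\ell_{\alggrp{M}_2}(\widetilde w)=0$: such $\widetilde w$ exist and $T_{\widetilde w}$ acts nontrivially on the correct extension.

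Your outline of the mixed braid-relation checks for $s\in S_{\aff,2}$ against $\widetilde W_{\alggrp{M}_1}(1)$ is in the right spirit, and the observation that the Bernstein correction terms land in $\Lambda'_{\alggrp{M}_2}(1)$ and are killed by the hypothesis on $\sigma$ is correct in substance. But the ``define on generators and verify the presentation'' scheme needs a complete generating set, and once $\Omega(1)$ enters, the bookkeeping is no longer local: one must say exactly what $T_u$ does, prove the answer is forced by the three listed conditions, and show the result is well defined. The paper's proof does this globally: it introduces the two-sided ideal $I\subset\mathcal{H}\otimes C$ generated by $\{T_{n_s}\mid s\in S_{\aff,2}\}\cup\{T_t-1\mid t\in\Lambda'_{\alggrp{M}_2}(1)\cap Z_\kappa\}$, shows that $\mathcal{H}_{\alggrp{M}_1}^-\otimes C\to(\mathcal{H}\otimes C)/I$ is surjective (the surjectivity is precisely the step that reduces arbitrary $E(\widetilde w)$, including $\widetilde w\in\Omega(1)$, to $E(\lambda n_{w_1})$ with $\lambda n_{w_1}$ $\alggrp{M}_1$-negative, via $E(\widetilde w)T^*_{n_{w_2}^{-1}}=E(\lambda n_{w_1})$ and $T^*_{n_{w_2}^{-1}}\in 1+I$), and identifies the kernel with $J\cap(\mathcal{H}_{\alggrp{M}_1}^-\otimes C)$ for the corresponding ideal $J$ of $\mathcal{H}_{\alggrp{M}_1}\otimes C$. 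Surjectivity gives uniqueness; the kernel computation together with the hypothesis on $\sigma$ gives existence. Until you give a correct formula for $T_u$, $u\in\Omega(1)$, and deduce it from the three conditions, your argument has a genuine gap, and the explicit formula you propose is in fact false.
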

We call the module $e(\sigma)$ \emph{the extension of $\sigma$}.

We summarize some facts which come from the orthogonal decomposition.
We have a decomposition as Coxeter groups $\widetilde{W}_{\aff} = \widetilde{W}_{\alggrp{M}_1,\aff}\times\widetilde{W}_{\alggrp{M}_2,\aff}$.
Put $\ell_{\alggrp{M}_i}(\widetilde{w}) = \#(\Sigma_{\alggrp{M}_i,\aff}^+\cap \widetilde{w}(\Sigma_{\alggrp{M}_i,\aff}^-))$ for $i =1,2$.
Since $\Sigma_\aff^{\pm} = \Sigma_{\alggrp{M}_1,\aff}^{\pm}\amalg \Sigma_{\alggrp{M}_2,\aff}^{\pm}$ and the action of $\widetilde{W}$ preserves this decomposition, we have $\ell(\widetilde{w}) = \ell_{\alggrp{M}_1}(\widetilde{w}) + \ell_{\alggrp{M}_2}(\widetilde{w})$.
If $\widetilde{w}\in \alggrp{W}_{\alggrp{M}_i}(1)$, then $\ell_{\alggrp{M}_i}(\widetilde{w})$ is the usual length of $\widetilde{w}$.
It is easy to see that $\ell_{\alggrp{M}_i}(\widetilde{w}^{-1}) = \ell_{\alggrp{M}_i}(\widetilde{w})$ for $i =1,2$.
If $\ell_{\alggrp{M}_1}(\widetilde{w}) = 0$, then $\widetilde{w}(\Sigma^-_{\alggrp{M}_1,\aff}) \subset \Sigma^-_{\alggrp{M}_1,\aff}$.
By $\ell_{\alggrp{M}_1}(\widetilde{w}^{-1}) = 0$, we have $\widetilde{w}^{-1}(\Sigma^-_{\alggrp{M}_1,\aff}) \subset \Sigma^-_{\alggrp{M}_1,\aff}$, hence $\widetilde{w}(\Sigma^-_{\alggrp{M}_1,\aff}) \supset \Sigma^-_{\alggrp{M}_1,\aff}$.
Therefore $\widetilde{w}(\Sigma^-_{\alggrp{M}_1,\aff}) = \Sigma^-_{\alggrp{M}_1,\aff}$.
Hence for any $\widetilde{w}_1\in \widetilde{W}(1)$, we have $\widetilde{w}_1\widetilde{w}(\Sigma^-_{\alggrp{M}_1,\aff}) = \widetilde{w}_1(\Sigma^-_{\alggrp{M}_1,\aff})$.
We get $\ell_{\alggrp{M}_1}(\widetilde{w}_1\widetilde{w}) = \ell_{\alggrp{M}_1}(\widetilde{w}_1)$.
If $\widetilde{w}\in \widetilde{W}_{\alggrp{M}_1,\aff}(1)$, then its image in $\widetilde{W}$ is contained in $\widetilde{W}_{\alggrp{M}_1,\aff}$.
The group $\widetilde{W}_{\alggrp{M}_1,\aff}$ is generated by $S_{\aff,1}$ and, since the decomposition is orthogonal, $\widetilde{W}_{\alggrp{M}_1,\aff}$ acts trivially on $\Sigma_{\alggrp{M}_2,\aff}$.
Hence $\ell_{\alggrp{M}_2}(\widetilde{w}) = 0$ for $\widetilde{w}\in \widetilde{W}_{\alggrp{M}_1,\aff}(1)$.
Set $\Omega = \{\widetilde{w}\in \widetilde{W}\mid \ell(\widetilde{w}) = 0\}$.
If $\widetilde{w}\in \Omega$, then $\ell_{\alggrp{M}_1}(\widetilde{w}) + \ell_{\alggrp{M}_2}(\widetilde{w}) = \ell(\widetilde{w}) = 0$.
Hence $\ell_{\alggrp{M}_1}(\widetilde{w}) = \ell_{\alggrp{M}_2}(\widetilde{w}) = 0$.
We have $\widetilde{W} = \widetilde{W}_\aff\Omega = \widetilde{W}_{\alggrp{M}_1,\aff}\widetilde{W}_{\alggrp{M}_2,\aff}\Omega$.
Let $\Omega(1)$ be the inverse image of $\Omega$ by $\widetilde{W}(1)\to \widetilde{W}$.
Since $\widetilde{W}_{\alggrp{M}_i,\aff}(1)\to \widetilde{W}_{\alggrp{M}_i,\aff}$ is surjective for $i =1,2$, we get $\widetilde{W} = \widetilde{W}_{\alggrp{M}_1,\aff}(1)\widetilde{W}_{\alggrp{M}_2,\aff}(1)\Omega(1)$.
Namely, for any $\widetilde{w}\in\widetilde{W}(1)$, we can find $\widetilde{w}_1\in\widetilde{W}_{\alggrp{M}_1,\aff}(1)$, $\widetilde{w}_2\in\widetilde{W}_{\alggrp{M}_2,\aff}(1)$ and $u\in\widetilde{W}(1)$ such that $\ell(u) = 0$ and $\widetilde{w} = \widetilde{w}_1\widetilde{w}_2 u$.
Since $\ell_{\alggrp{M}_1}(\widetilde{w}_2) = \ell_{\alggrp{M_1}}(u) = 0$, we have $\ell_{\alggrp{M}_1}(\widetilde{w}) = \ell_{\alggrp{M}_1}(\widetilde{w}_1)$.
We have proved (1)--(3) of the following lemma.

\begin{lem}\label{lem:properties of decomposition}
Let $\widetilde{w},\widetilde{v}\in \widetilde{W}(1)$.
\begin{enumerate}
\item We have $\ell(\widetilde{w}) = \ell_{\alggrp{M}_1}(\widetilde{w}) + \ell_{\alggrp{M}_2}(\widetilde{w})$.
\item If $\ell_{\alggrp{M}_i}(\widetilde{w}) = 0$, then $\ell_{\alggrp{M}_i}(\widetilde{w}\widetilde{v}) = \ell_{\alggrp{M}_i}(\widetilde{v}\widetilde{w}) = \ell_{\alggrp{M}_i}(\widetilde{v})$ for $i = 1,2$.
\item There exists $\widetilde{w}_1\in \widetilde{W}_{\alggrp{M}_1,\aff}(1)$, $\widetilde{w}_2\in \widetilde{W}_{\alggrp{M}_2,\aff}(1)$ and $u\in \widetilde{W}(1)$ such that $\ell(u) = 0$ and $\widetilde{w} = \widetilde{w}_1\widetilde{w}_2u$.
We have $\ell_{\alggrp{M}_1}(\widetilde{w}) = \ell_{\alggrp{M}_1}(\widetilde{w}_1)$ and $\ell_{\alggrp{M}_2}(\widetilde{w}) = \ell_{\alggrp{M}_2}(\widetilde{w}_2)$.
In particular, there exists $\widetilde{w}_1\in \widetilde{W}_{\alggrp{M}_1,\aff}(1)$ and $u\in \widetilde{W}(1)$ such that $\ell_{\alggrp{M}_1}(u) = 0$ and $\widetilde{w} = \widetilde{w}_1u$.
\item\label{enum:on length, split to M_1,M_2} Let $\widetilde{w}_1,\widetilde{w}_2\in \widetilde{W}(1)$ such that $\ell_{\alggrp{M}_1}(\widetilde{w}_1) = 0$ and $\ell_{\alggrp{M}_2}(\widetilde{w}_2) = 0$.
Then $\ell(\widetilde{w}_1\widetilde{w}_2) = \ell(\widetilde{w}_1) + \ell(\widetilde{w}_2)$.
\item\label{enum:bruhat order and length_M_1}
 If $\widetilde{w}\le \widetilde{v}$, then $\ell_{\alggrp{M}_i}(\widetilde{w})\le \ell_{\alggrp{M}_i}(\widetilde{v})$ for $i = 1,2$
\item\label{enum:length formula for M_1,M_2}
For $w\in W$, $\lambda\in \Lambda(1)$ and $i = 1,2$, we have
\[
	\ell_{\alggrp{M}_i}(\lambda n_w) = \sum_{\alpha\in\Sigma_{\alggrp{M}_i}^+,w^{-1}(\alpha) > 0}\lvert\langle\nu(\lambda),\alpha\rangle\rvert + \sum_{\alpha\in\Sigma_{\alggrp{M}_i}^+,w^{-1}(\alpha) < 0}\lvert\langle\nu(\lambda),\alpha\rangle - 1\rvert.
\]
\item\label{enum:normality of Lambda dash}
For $i = 1,2$, $\Lambda'_{\alggrp{M}_i}(1)$ is normal in $\widetilde{W}(1)$.
\end{enumerate}
\end{lem}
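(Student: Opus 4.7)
The discussion preceding the lemma already establishes parts (1)--(3), so the proof reduces to deducing (4)--(7) from these facts together with the orthogonality $\Delta_1\perp\Delta_2$, which makes $\Sigma=\Sigma_{\alggrp{M}_1}\amalg\Sigma_{\alggrp{M}_2}$ into an orthogonal decomposition preserved by $W$, and gives $\widetilde{W}_\aff=\widetilde{W}_{\alggrp{M}_1,\aff}\times\widetilde{W}_{\alggrp{M}_2,\aff}$.

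Part (4) is an immediate combination of (1) and (2): since $\ell_{\alggrp{M}_1}(\widetilde{w}_1)=0$, (2) gives $\ell_{\alggrp{M}_1}(\widetilde{w}_1\widetilde{w}_2)=\ell_{\alggrp{M}_1}(\widetilde{w}_2)$, and symmetrically $\ell_{\alggrp{M}_2}(\widetilde{w}_1\widetilde{w}_2)=\ell_{\alggrp{M}_2}(\widetilde{w}_1)$, after which summing via (1) yields the claim. For (6), I would mimic the standard derivation of \eqref{eq:length formula}: orthogonality implies that $W$ preserves $\Sigma_{\alggrp{M}_i}$, so the identity $\ell_{\alggrp{M}_i}(\widetilde{w})=\#(\Sigma_{\alggrp{M}_i,\aff}^+\cap\widetilde{w}(\Sigma_{\alggrp{M}_i,\aff}^-))$ makes sense and the same inductive count that proves \eqref{eq:length formula}---now restricted to $\Sigma_{\alggrp{M}_i}$---gives the formula.

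For (5), I first reduce to $\widetilde{W}$: by the definition of Bruhat order in the paper, $\widetilde{w}\le\widetilde{v}$ in $\widetilde{W}(1)$ entails $w\le v$ in $\widetilde{W}$, and $\ell_{\alggrp{M}_i}$ only depends on the image in $\widetilde{W}$. Using $\widetilde{W}=\widetilde{W}_\aff\rtimes\Omega$ and the fact that Bruhat order respects $\Omega$-cosets (subword property), write $w=u_w\omega$, $v=u_v\omega$ with $u_w\le u_v$ in $\widetilde{W}_\aff$; since $\Omega$-elements have zero $\ell_{\alggrp{M}_i}$-length (as noted in the preamble), it suffices to handle $\widetilde{W}_\aff$. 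Here the direct product decomposition lets me project any reduced subword onto its $\widetilde{W}_{\alggrp{M}_i,\aff}$-component, which gives a reduced subword in $\widetilde{W}_{\alggrp{M}_i,\aff}$, whence $\ell_{\alggrp{M}_i}(u_w)\le\ell_{\alggrp{M}_i}(u_v)$.

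For (7), the key input is that when $\alpha\in\Sigma_{\alggrp{M}_1}$ and $\beta\in\Sigma_{\alggrp{M}_2}$ are orthogonal, the root subgroups $\alggrp{U}_\alpha$ and $\alggrp{U}_\beta$ commute, so $G'_{\alggrp{M}_1}$ and $G'_{\alggrp{M}_2}$ commute as subgroups of $G'$. I would show $G'_{\alggrp{M}_i}$ is normalized by $N_{\alggrp{G}(F)}(\alggrp{S}(F))$: it is normalized by $\alggrp{Z}(F)$ (which normalizes each root subgroup), and for a representative $n_s$ of a simple reflection, conjugation sends $\alggrp{U}_\alpha$ to $\alggrp{U}_{s(\alpha)}$, still in $G'_{\alggrp{M}_i}$ because $W$ preserves $\Sigma_{\alggrp{M}_i}$. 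Intersecting with $\alggrp{Z}(F)$ and passing to $\Lambda(1)$ gives the normality of $\Lambda'_{\alggrp{M}_i}(1)$ in $\widetilde{W}(1)$. The main obstacle I anticipate is the bookkeeping in (5) around the compatibility of the paper's Bruhat order on $\widetilde{W}(1)$ with the subword property, but once one isolates the $\Omega$-coset this reduces to a standard fact.
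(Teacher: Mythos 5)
Your proposal is correct and follows essentially the same route as the paper. For (4) you combine (1) and (2) exactly as the paper does. For (5) you reduce to $\widetilde{W}$, split off the $\Omega$-part, and use the direct product $\widetilde{W}_\aff = \widetilde{W}_{\alggrp{M}_1,\aff}\times\widetilde{W}_{\alggrp{M}_2,\aff}$ to compare component-wise via the subword property; the paper does the same, writing $\widetilde{w}=\widetilde{w}_1\widetilde{w}_2u$, $\widetilde{v}=\widetilde{v}_1\widetilde{v}_2u'$ and concluding $\widetilde{w}_1\le\widetilde{v}_1$, $\widetilde{w}_2\le\widetilde{v}_2$, $u=u'$.

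Two small differences are worth noting. For (6), you propose to rerun the Iwahori--Matsumoto root-count directly on $\Sigma_{\alggrp{M}_i,\aff}^+\cap\widetilde{w}(\Sigma_{\alggrp{M}_i,\aff}^-)$; the paper instead factors $w=w_1w_2$ with $w_j\in W_{\alggrp{M}_j}$, uses (2) to replace $\ell_{\alggrp{M}_1}(\lambda n_{w_1w_2})$ by $\ell_{\alggrp{M}_1}(\lambda n_{w_1})$, invokes the known length formula \eqref{eq:length formula} inside $W_{\alggrp{M}_1}$, and then checks $w^{-1}(\alpha)=w_1^{-1}(\alpha)$ for $\alpha\in\Sigma_{\alggrp{M}_1}$. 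Both work; the paper's route avoids re-proving the count. For (7), the paper simply cites normality of $M_1'$ in $G$ from \cite[II.7 Remark 4]{arXiv:1412.0737}, whereas you reconstruct the argument (commutation of root subgroups in different orthogonal components, plus $N_{\alggrp{G}(F)}(\alggrp{S}(F))$ normalizing $G'_{\alggrp{M}_i}$ since $W$ preserves $\Sigma_{\alggrp{M}_i}$ and $\alggrp{Z}(F)$ normalizes each root subgroup). Your sketch is correct and self-contained; intersecting with $\alggrp{Z}(F)$ and passing to the quotient $\widetilde{W}(1)$ then gives the normality of $\Lambda'_{\alggrp{M}_i}(1)$, as you say.
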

\begin{proof}
We may assume $i = 1$ in \ref{enum:bruhat order and length_M_1}, \ref{enum:length formula for M_1,M_2} and \ref{enum:normality of Lambda dash}.

We prove \ref{enum:on length, split to M_1,M_2}.
Since $\ell_{\alggrp{M}_1}(\widetilde{w}_1) = 0$, $\ell_{\alggrp{M}_1}(\widetilde{w}_1\widetilde{w}_2) = \ell_{\alggrp{M}_1}(\widetilde{w}_2)$.
Hence
\begin{align*}
\ell(\widetilde{w}_1\widetilde{w}_2) & = \ell_{\alggrp{M}_1}(\widetilde{w}_1\widetilde{w}_2) + \ell_{\alggrp{M}_2}(\widetilde{w}_1\widetilde{w}_2)\\
& = \ell_{\alggrp{M}_1}(\widetilde{w}_2) + \ell_{\alggrp{M}_2}(\widetilde{w}_1)\\
& = (\ell_{\alggrp{M}_1}(\widetilde{w}_2) + \ell_{\alggrp{M}_2}(\widetilde{w}_2)) + (\ell_{\alggrp{M}_2}(\widetilde{w}_1) + \ell_{\alggrp{M}_2}(\widetilde{w}_1))\\
& = \ell(\widetilde{w}_1) + \ell(\widetilde{w}_2).
\end{align*}

For \ref{enum:bruhat order and length_M_1}, we prove the same statement in the group $\widetilde{W}$.
Let $\widetilde{w},\widetilde{v}\in \widetilde{W}$.
Take $\widetilde{w}_1,\widetilde{v}_1\in \widetilde{W}_{\alggrp{M}_1,\aff}$, $\widetilde{w}_2,\widetilde{v}_2\in \widetilde{W}_{\alggrp{M}_2,\aff}$ and $u,u'\in \Omega$ such that $\widetilde{w} = \widetilde{w}_1\widetilde{w}_2u$ and $\widetilde{v} =\widetilde{v}_1\widetilde{v}_2u'$.
Then $\widetilde{w}\le\widetilde{v}$ implies $\widetilde{w}_1\le \widetilde{v}_1$, $\widetilde{w}_2\le\widetilde{v}_2$ and $u = u'$.
Since $\widetilde{w}_1,\widetilde{v}_1\in \widetilde{W}_{\alggrp{M}_1,\aff}$, if $\widetilde{w}_1\le \widetilde{v}_1$ with respect to the Bruhat order of $\widetilde{W}$, then $\widetilde{w}_1\le \widetilde{v}_1$ with respect to the Bruhat order of $\widetilde{W}_{\alggrp{M}_1}$.
Hence $\ell_{\alggrp{M}_1}(\widetilde{w}_1)\le \ell_{\alggrp{M}_1}(\widetilde{v}_1)$.
On the other hand, we have $\ell_{\alggrp{M}_1}(\widetilde{w}) = \ell_{\alggrp{M}_1}(\widetilde{w}_1)$ and $\ell_{\alggrp{M}_1}(\widetilde{v}) = \ell_{\alggrp{M}_1}(\widetilde{v}_1)$.
Hence we get \ref{enum:bruhat order and length_M_1}.

We prove \ref{enum:length formula for M_1,M_2}.
If $w\in W_{\alggrp{M}_1}$, this is the length formula \eqref{eq:length formula}.
In general, take $w_1\in W_{\alggrp{M}_1}$ and $w_2\in W_{\alggrp{M}_2}$ such that $w = w_1w_2$.
Then we have $\ell_{\alggrp{M}_1}(w_2) = 0$.
Hence $\ell_{\alggrp{M}_1}(\lambda n_{w_1w_2}) = \ell_{\alggrp{M}_1}(\lambda n_{w_1})$.
Since $\lambda n_{w_1}\in \widetilde{W}_{\alggrp{M}_1}(1)$, applying the length formula \eqref{eq:length formula} we get
\[
	\ell_{\alggrp{M}_1}(\lambda n_w) = \sum_{\alpha\in\Sigma_{\alggrp{M}_1}^+,w_1^{-1}(\alpha) > 0}\lvert\langle\nu(\lambda),\alpha\rangle\rvert + \sum_{\alpha\in\Sigma_{\alggrp{M}_1}^+,w_1^{-1}(\alpha) < 0}\lvert\langle\nu(\lambda),\alpha\rangle - 1\rvert.
\]
For $\alpha\in \Sigma_{\alggrp{M}_1}$, $w_2^{-1}w_1^{-1}(\alpha) = w_1^{-1}w_2^{-1}(\alpha) = w_1^{-1}(\alpha)$ since $w_2\in W_{\alggrp{M}_2}$ and $w_1\in W_{\alggrp{M}_1}$.
Therefore $w^{-1}(\alpha) = (w_1w_2)^{-1}(\alpha) = w_1^{-1}(\alpha)$.
Since $M_1'$ is normal in $G$ \cite[II.7 Remark 4]{arXiv:1412.0737}, we have \ref{enum:normality of Lambda dash}.
\end{proof}

Now we give a proof of Proposition~\ref{prop:extending}.
For the proof, let $I$ be the two-sided ideal of $\mathcal{H}\otimes C$ generated by $\{T_{n_s}\mid s\in S_{\aff,2}\}\cup \{T_t - 1\mid t\in \Lambda'_{\alggrp{M}_2}(1)\cap Z_\kappa\}$ and $J$ the two-sided ideal of $\mathcal{H}_{\alggrp{M}_1}\otimes C$ generated by $\{E^{\alggrp{M}_1}(\lambda) - 1\mid \lambda\in\Lambda'_{\alggrp{M}_2}(1)\}$.
Then Proposition~\ref{prop:extending} follows from
\[
	(\mathcal{H}_{\alggrp{M}_1}^-\otimes C)/(J\cap (\mathcal{H}_{\alggrp{M}_1}^-\otimes C))
	\xrightarrow{\sim}
	(\mathcal{H}\otimes C)/I.
\]
We prove it.
We need a lemma.
\begin{lem}\label{lem:lemma for the proof of extending}
\begin{enumerate}
\item\label{enum:ideal I, as W_aff-orbit} We have
\begin{align*}
I & = \bigoplus_{\widetilde{w}\in \widetilde{W}_\aff(1)\backslash\widetilde{W}(1)}\left(I\cap \bigoplus_{\widetilde{v}\in \widetilde{W}_\aff(1)}CT_{\widetilde{v}\widetilde{w}}\right)\\
& = \bigoplus_{\widetilde{w}\in \widetilde{W}_\aff(1)\backslash\widetilde{W}(1)}\left(I\cap \bigoplus_{\widetilde{v}\in \widetilde{W}_\aff(1)}CE(\widetilde{v}\widetilde{w})\right).
\end{align*}
\item\label{enum:T_w^* in 1 + I} If $\widetilde{w}\in \widetilde{W}_{\alggrp{M}_2,\aff}(1)$, then $T_{\widetilde{w}}^* \in 1 + I$.
\item\label{enum:on ideal I, split to M_1,M_2} Let $\lambda\in \Lambda(1)$, $w\in W_{\alggrp{M}_1}$, $a\in \widetilde{W}_{\alggrp{M}_2,\aff}(1)$ and $b\in \widetilde{W}(1)$ such that $\lambda n_w$ is $\alggrp{M}_1$-negative, $\lambda n_w = ab$ and $\ell_{\alggrp{M}_2}(b) = 0$.
Then $E(\lambda n_w) \in T_b + \sum_{c < b}CT_c + I$.
\item\label{enum:on ideal I, ell_M_2 = 0 part} The subspace $(\bigoplus_{\widetilde{w}\in \widetilde{W}(1),\ell_{\alggrp{M}_2}(\widetilde{w}) = 0}CT_{\widetilde{w}}) \cap I$ is in the ideal of $\mathcal{H}$ generated by $\{T_t - 1\mid t\in \Lambda_{\alggrp{M}_2}'(1)\cap Z_\kappa\}$.
\end{enumerate}
\end{lem}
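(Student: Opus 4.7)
The plan is to prove the four parts sequentially, with (1) and (2) as structural observations, (3) as the analytic heart, and (4) as a filtered refinement.

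For (1), every generator of $I$—each $T_{n_s}$ with $s\in S_{\aff,2}$ and each $T_t-1$ with $t\in\Lambda'_{\alggrp{M}_2}(1)\cap Z_\kappa$—has basis index in $\widetilde W_\aff(1)$. Combined with the product estimate $T_{\widetilde w_1}T_{\widetilde w_2}\in\sum_{\widetilde v_1\le\widetilde w_1,\widetilde v_2\le\widetilde w_2,\widetilde v\le\widetilde v_1\widetilde v_2}C[q_s^{1/2}]T_{\widetilde v}$ and the convention $\widetilde v\le\widetilde w\Rightarrow\widetilde v\in\widetilde w\widetilde W_\aff(1)$ built into the Bruhat order on $\widetilde W(1)$, this yields the $T$-basis grading; the $E$-basis version follows from the triangular expansion $E(\widetilde w)\in T_{\widetilde w}+\sum_{\widetilde v<\widetilde w}C[q_s^{1/2}]T_{\widetilde v}$, which stays inside a single $\widetilde W_\aff(1)$-coset. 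For (2), the case $\widetilde w=n_s$ with $s\in S_{\aff,2}$ uses $T_{n_s}^*=T_{n_s}-c_{n_s}$ combined with $c_{n_s}\equiv-(\#Z_{\widetilde\alpha,\kappa})^{-1}\sum_{t\in Z_{\widetilde\alpha,\kappa}}T_t\pmod p$ of subsection~\ref{subsec:Bernstein relations} and the containment $Z_{\widetilde\alpha,\kappa}\subset\Lambda'_{\alggrp{M}_2}(1)\cap Z_\kappa$ (since $\widetilde\alpha$ is a root of $\alggrp{M}_2$), giving $c_{n_s}\equiv-1\pmod I$ and hence $T_{n_s}^*\equiv 1\pmod I$. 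A general $\widetilde w\in\widetilde W_{\alggrp{M}_2,\aff}(1)$ is handled via a reduced expression $\widetilde w=\widetilde s_1\cdots\widetilde s_kt$ with $t\in\Lambda'_{\alggrp{M}_2}(1)\cap Z_\kappa$ together with the braid identity $T_{\widetilde w}^*=T_{\widetilde s_1}^*\cdots T_{\widetilde s_k}^*T_t$, valid when lengths add.

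The heart is (3). The plan is to apply Proposition~\ref{prop:multiplication formula} to obtain $E(ab)=E(a)E_{a^{-1}(-\Delta)}(b)$, with trivial $q$-factor since $\ell(ab)=\ell(a)+\ell(b)$ by Lemma~\ref{lem:properties of decomposition}\ref{enum:on length, split to M_1,M_2}. Orientation-independent triangularity gives $E_{a^{-1}(-\Delta)}(b)\in T_b+\sum_{c<b}CT_c$ after specialization $q_s\to 0$. The key remaining task is to show $E(a)\equiv 1\pmod I$: using the refined expansion $E(a)\in T_a^*+\sum_{\widetilde v<a}CT_{\widetilde v}^*$ (obtained by iterating $T_{\widetilde s}=T_{\widetilde s}^*+c_{\widetilde s}$ and specializing), one has $T_a^*\equiv 1\pmod I$ by (2), while each $T_{\widetilde v}^*$ with $\widetilde v<a$ satisfies $\ell_{\alggrp{M}_1}(\widetilde v)\le\ell_{\alggrp{M}_1}(a)=0$ by Lemma~\ref{lem:properties of decomposition}\ref{enum:bruhat order and length_M_1}; decomposing such $\widetilde v$ via Lemma~\ref{lem:properties of decomposition} into a length-zero $\widetilde W_{\alggrp{M}_1,\aff}(1)$-piece (central in $Z_\kappa$), an $\widetilde W_{\alggrp{M}_2,\aff}(1)$-piece, and a length-zero remainder reduces $T_{\widetilde v}^*$ modulo $I$ either to zero (when the middle piece has positive length, by (2)) or to a central $Z_\kappa$-element. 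The main obstacle is the cancellation of the resulting $Z_\kappa$-contributions: one must track the Bernstein-basis coefficients precisely, using $c_{\widetilde s}\equiv-1\pmod I$ and the combinatorics of the orientation-change substitutions, to verify that they telescope so that the product with $T_b+\sum_{c<b}CT_c$ lands in the permitted subspace $T_b+\sum_{c<b}CT_c+I$.

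For (4), use (1) to localize into a single $\widetilde W_\aff(1)$-coset, together with the multiplicative filtration $F_n=\sum_{\ell_{\alggrp{M}_2}(\widetilde w)\le n}CT_{\widetilde w}$ (multiplicative by Lemma~\ref{lem:properties of decomposition}\ref{enum:bruhat order and length_M_1} applied to the product estimate). Working modulo the smaller ideal $J'$ generated by the $T_t-1$'s, the quadratic relation $T_{n_s}^2=c_{n_s}T_{n_s}\equiv-T_{n_s}\pmod{J'}$ (using (2)) shows that any iterated product of $T_{n_s}$-generators collapses to a single $T_{\widetilde w}$ with $\widetilde w\in\widetilde W_{\alggrp{M}_2,\aff}(1)$ of positive length, hence outside $F_0$. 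Thus any expansion of $x\in I\cap F_0$ as $\sum h_ig_ik_i$ with $g_i$ a generator of $I$, after reducing modulo $J'$ and projecting to the $F_0$-component, has no surviving $T_{n_s}$-contribution, leaving $x\in J'$.
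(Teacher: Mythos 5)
The proposal handles parts (1), (2), and (4) along lines essentially equivalent to the paper's, but there is a real gap in part (3), which you yourself flag as "the main obstacle."

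Your plan factors $E(\lambda n_w) = E(ab) = E(a)E_{a^{-1}(-\Delta)}(b)$ via Proposition~\ref{prop:multiplication formula} and then asks for $E(a)\equiv 1\pmod I$. This is the wrong normalization to chase. The element whose class modulo $I$ is $1$ is $T_a^*$, not $E(a)$; part (2) tells you precisely that. For $a = n_s$ with $s\in S_2$, Proposition~\ref{prop:example of E}(3) gives $E(n_s) = T_{n_s}$, which is a \emph{generator} of $I$, so $E(n_s)\equiv 0\pmod I$, not $1$. (Whether that particular $a$ can occur under the hypotheses of (3) is a separate combinatorial question, but the point is that your sketch offers no mechanism keeping it out, and for generic $a$ the identity $E(a)\equiv 1\pmod I$ is simply not available from first principles.) Your attempted expansion of $E(a)$ in the $T^*$-basis compounds the problem: you write that $T_{\widetilde v}^*$ with $\widetilde v < a$ reduces "to zero (when the middle piece has positive length, by (2))," but (2) gives $T_{\widetilde w_2}^*\equiv 1\pmod I$ for positive-length $\widetilde w_2\in\widetilde W_{\alggrp{M}_2,\aff}(1)$, not $\equiv 0$; the surviving $Z_\kappa$-contribution $T_{\widetilde w_1}T_u$ is not obviously trivial, and you concede you do not carry out the telescoping. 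This is exactly where the paper's proof of (3) takes a structurally different route: rather than isolating $E(a)$ and $E_{a^{-1}(-\Delta)}(b)$, it chooses a careful anti-dominant splitting $\lambda = \lambda_1^{-1}\lambda_2$ with $\ell_{\alggrp{M}_2}(\lambda_2) = 0$, writes $E(\lambda_1^{-1}) = T_{\lambda_1^{-1}}^*$, refactors $\lambda_1^{-1} = \widetilde v_1\widetilde v_2$ so that the $\widetilde W_{\alggrp{M}_2,\aff}(1)$-piece is exactly the given $a$ (after a $Z_\kappa$ normalization, handled explicitly), and then tracks everything in the $T^*$-basis to land on $E(\lambda n_w)\in T_a^*\bigl(T_b^* + \sum_{c<b}CT_c^*\bigr)$. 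That way the single factor $T_a^*$ is the only thing replaced by $1$ modulo $I$, invoking (2) exactly once, and the remaining factor is already in the desired triangular form. Until you either prove your $E(a)\equiv 1\pmod I$ claim in the relevant generality or replace it by a bookkeeping argument that keeps the $T_a^*$ factor intact, part (3) is not established.
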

\begin{proof}
Since $\{n_s\mid s\in S_{\aff,2}\}$ and $\Lambda'_{\alggrp{M}_2}(1)$ are contained in $\widetilde{W}_\aff(1)$, the statement \ref{enum:ideal I, as W_aff-orbit} is easy.
For \ref{enum:T_w^* in 1 + I}, we may assume $\widetilde{w} = n_s$ for $s\in S_{\aff,2}$ or $\widetilde{w} = t$ for $t\in \Lambda_{\alggrp{M}_2}'(1)\cap Z_\kappa$.
If $\widetilde{w} = n_s$, then $T_{n_s}^* = T_{n_s} - c_s$.
By the description of $c_s$, we have $c_s \in -1 + I$.
Hence $T_{n_s}^* \in 1 + I$.
For $t\in \Lambda_{\alggrp{M}_2}'(1)\cap Z_\kappa$, $T_t^* = T_t \in 1 + I$ by the definition of $I$.

We prove \ref{enum:on ideal I, split to M_1,M_2}.
Take anti-dominant elements $\lambda_1,\lambda_2\in \Lambda(1)$ such that $\lambda = \lambda_1^{-1}\lambda_2$.
Since $\lambda$ is $\alggrp{M}_1$-negative, namely $\langle \nu(\lambda),\alpha\rangle\ge 0$ for any $\alpha\in\Sigma^+\setminus\Sigma^+_{\alggrp{M}_1} = \Sigma^+_{\alggrp{M}_2}$, we can take $\lambda_1,\lambda_2$ such that $\langle\nu(\lambda_2),\alpha\rangle = 0$ for all $\alpha\in\Sigma_{\alggrp{M}_2}^+$.
Then by the length formula \eqref{eq:length formula}, we have $\ell_{\alggrp{M}_2}(\lambda_2) = 0$.
Take $\widetilde{v}_1\in \widetilde{W}_{\alggrp{M}_2,\aff}(1)$ and $\widetilde{v}_2\in \widetilde{W}(1)$ such that $\lambda_1^{-1} = \widetilde{v}_1\widetilde{v}_2$ and $\ell_{\alggrp{M}_2}(\widetilde{v}_2) = 0$.
Then $\lambda n_w = \widetilde{v}_1 \widetilde{v}_2 \lambda_2 n_w$.
Put $a' = \widetilde{v}_1$ and $b' = \widetilde{v}_2 \lambda_2 n_w$.
Then $a'\in \widetilde{W}_{\alggrp{M}_2,\aff}(1)$ and $\ell_{\alggrp{M}_2}(b') = 0$.
Set $u = (a')^{-1}a = b'b^{-1}$.
Then $u\in \widetilde{W}_{\alggrp{M}_2,\aff}(1)$ and $\ell_{\alggrp{M}_2}(u) = 0$.
Hence the image of $u$ in $\widetilde{W}$ is in $\{w\in \widetilde{W}_{\alggrp{M}_2,\aff}\mid \ell_{\alggrp{M}_2}(w) = 0\} = \{1\}$.
Therefore $u\in Z_\kappa$.
Hence $u\in Z_\kappa\cap W_{\alggrp{M}_2,\aff}(1) = \Lambda'_{\alggrp{M}_2}(1)\cap Z_\kappa$.
Replacing $\widetilde{v}_1$ with $\widetilde{v}_1u$ and $\widetilde{v}_2$ with $u^{-1}\widetilde{v}_2$ (these satisfy the same conditions), we have $a = \widetilde{v}_1$ and $b = \widetilde{v}_2 \lambda_2 n_w$.

By \ref{enum:on length, split to M_1,M_2} of the above lemma, we have $\ell(\lambda_1^{-1}) = \ell(\widetilde{v}_1) + \ell(\widetilde{v}_2)$.
Hence we have
\begin{align*}
E(\lambda n_w) & \in C[q_s^{\pm 1/2}]E(\lambda_1^{-1})E(\lambda_2n_w)\\
& = C[q_s^{\pm 1/2}]T_{\lambda_1^{-1}}^*E(\lambda_2 n_w)\\
& = C[q_s^{\pm 1/2}]T_a^*T_{\widetilde{v}_2}^*E(\lambda_2 n_w)\\
& \subset T_a^*\left(\sum_{c_1 \le \lambda_2 n_w}C[q_s^{\pm 1/2}]T_{\widetilde{v}_2}^*T_{c_1}^*\right)\\
& \subset T_a^*\left(\sum_{c_1 \le \lambda_2 n_w,c_2\le \widetilde{v}_2,c_3\le c_1,c_4\le c_2c_3}C[q_s^{\pm 1/2}]T_{c_4}^*\right).
\end{align*}
Take $c_1,c_2,c_3,c_4$ as above.
Since $\ell_{\alggrp{M}_2}(\lambda_2n_w) = \ell_{\alggrp{M}_2}(\widetilde{v}_2) = 0$, we have $\ell_{\alggrp{M}_2}(c_1) = \ell_{\alggrp{M}_2}(c_2) = 0$.
Hence $\ell_{\alggrp{M}_2}(c_3) = \ell_{\alggrp{M}_2}(c_4) = 0$.
Since $\ell_{\alggrp{M}_1}(a) = 0$, we have $\ell(ac_4) = \ell(a) + \ell(c_4)$ by \ref{enum:on length, split to M_1,M_2} of the above lemma.
Therefore,
\[
	E(\lambda n_w) \in \left(\sum_{\ell_{\alggrp{M}_2}(c) = 0}C[q_s^{\pm 1/2}]T_{ac}^*\right) \cap \mathcal{H} = \sum_{\ell_{\alggrp{M}_2}(c) = 0}C[q_s^{1/2}]T_{ac}^*
\]
Take $d_c\in C[q_s^{1/2}]$ such that $E(\lambda n_w) = \sum_{\ell_{\alggrp{M}_2}(c) = 0}d_cT_{ac}^*$.
Since $E(\lambda n_w) = E(ab) \in T^*_{ab} + \sum_{\widetilde{w} < ab}C[q_s^{1/2}]T_{\widetilde{w}}^*$, $d_b = 1$ and if $d_c\ne 0$ and $c\ne b$, then $ac < ab$.
We have $\ell_{\alggrp{M}_1}(a) = 0$, hence $\ell(ac) = \ell(a) + \ell(c)$ and $\ell(ab) = \ell(a) + \ell(b)$ by \ref{enum:on length, split to M_1,M_2} of the above lemma.
Therefore $d_c\ne 0$ implies $c \le b$.
We have $E(\lambda n_w) \in T_a^*(T^*_b + \sum_{c < b}CT_c^*) = T_a^*(T_b + \sum_{c < b}CT_c)$ in $\mathcal{H}\otimes C$.
We get \ref{enum:on ideal I, split to M_1,M_2} from \ref{enum:T_w^* in 1 + I}.

We prove \ref{enum:on ideal I, ell_M_2 = 0 part}.
It is sufficient to prove that the ideal generated by $\{T_{n_s}\mid s\in S_{\aff,2}\}$ is equal to $\bigoplus_{\ell_{\alggrp{M}_2}(\widetilde{w}) > 0}CT_{\widetilde{w}}$.
Let $I'$ be a such ideal.
Then $I'\supset \bigoplus_{\ell_{\alggrp{M}_2}(\widetilde{w}) > 0}CT_{\widetilde{w}}$ is obvious.
For the reverse inclusion, we prove that $\bigoplus_{\ell_{\alggrp{M}_2}(\widetilde{w}) > 0}CT_{\widetilde{w}}$ is an ideal.

Let $\widetilde{w}\in \widetilde{W}(1)$ such that $\ell_{\alggrp{M}_2}(\widetilde{w})> 0$ and $\widetilde{v}\in\widetilde{W}(1)$.
We prove $T_{\widetilde{w}} T_{\widetilde{v}} \in \bigoplus_{\ell_{\alggrp{M}_2}(\widetilde{w}') > 0}CT_{\widetilde{w}'}$.
We may assume $\widetilde{v} = n_s$ for $s\in S_\aff$.
If $\widetilde{w}n_s > \widetilde{w}$, then $T_{\widetilde{w}} T_{n_s} = T_{\widetilde{w}n_s}$ and $\ell_{\alggrp{M}_2}(\widetilde{w}n_s) = \ell_{\alggrp{M}_2}(\widetilde{w}) + \ell_{\alggrp{M}_2}(n_s) > \ell_{\alggrp{M}_2}(\widetilde{w}) > 0$.
If $\widetilde{w}n_s < \widetilde{w}$, then $T_{\widetilde{w}} T_{n_s} = T_{\widetilde{w}}c_s$.
\end{proof}

\begin{proof}[Proof of Proposition~\ref{prop:extending}]
We prove
\[
	\mathcal{H}_{\alggrp{M}_1}^-\otimes C
	\to
	(\mathcal{H}\otimes C)/I
\]
is surjective and the kernel is $J\cap (\mathcal{H}_{\alggrp{M}_1}^-\otimes C)$.

First we prove that the homomorphism is surjective.
Let $\widetilde{w}\in \widetilde{W}(1)$.
We prove that $T_{\widetilde{w}}$ is in the image.
If $\ell_{\alggrp{M}_2}(\widetilde{w}) > 0$, then $T_{\widetilde{w}}\in I$.
Hence we have nothing to prove.
Assume that $\ell_{\alggrp{M}_2}(\widetilde{w}) = 0$.
Since $T_{\widetilde{w}} \in E(\widetilde{w}) + \sum_{\widetilde{v} < \widetilde{w}}CE(\widetilde{v})$ and $\widetilde{v} < \widetilde{w}$ implies $\ell_{\alggrp{M}_2}(\widetilde{v}) = 0$ by Lemma~\ref{lem:properties of decomposition} \ref{enum:bruhat order and length_M_1}, it is sufficient to prove that $E(\widetilde{w})$ is in the image.
Take $\lambda\in\Lambda(1)$, $w_1\in W_{\alggrp{M}_1}$ and $w_2\in W_{\alggrp{M}_2}$ such that $\widetilde{w} = \lambda n_{w_1}n_{w_2}$.
Then we have
\[
	E(\widetilde{w}) = q_{\lambda n_{w_1}n_{w_2}}^{1/2}q_{\lambda n_{w_1}}^{-1/2}q_{n_{w_2}}^{-1/2}E(\lambda n_{w_1})T_{n_{w_2}}.
\]
By the assumption, we also have $\ell_{\alggrp{M}_2}(\lambda n_{w_1}n_{w_2}) = 0$.
We have $\ell_{\alggrp{M}_1}(n_{w_2}) = 0$.
Hence by Lemma~\ref{lem:properties of decomposition} \ref{enum:on length, split to M_1,M_2}, we have $\ell(\lambda n_{w_1}) = \ell(\lambda n_{w_1}n_{w_2}) + \ell(n_{w_2}^{-1}) = \ell(\lambda n_{w_1}n_{w_2}) + \ell(n_{w_2})$.
Therefore we have $q_{\lambda n_{w_1}} = q_{\lambda n_{w_1}n_{w_2}}q_{n_{w_2}}$.
Hence we get
\[
	E(\widetilde{w}) = q_{n_{w_2}}^{-1}E(\lambda n_{w_1})T_{n_{w_2}}.
\]
Therefore $E(\widetilde{w})T_{n_{w_2}^{-1}}^* = E(\lambda n_{w_1})$.
Since $w_2\in W_{\alggrp{M}_2}$, $T_{n_{w_2}^{-1}}^*\in 1 + I$ by the above lemma \ref{enum:T_w^* in 1 + I}.
Hence $E(\widetilde{w}) \in E(\lambda n_{w_1}) + I$.
By $\ell_{\alggrp{M}_2}(\lambda n_{w_1}n_{w_2}) = 0$ and Lemma~\ref{lem:properties of decomposition} \ref{enum:length formula for M_1,M_2}, $\langle\nu(\lambda),\alpha\rangle = 0$ or $1$ for $\alpha\in\Sigma_{\alggrp{M}_2}^+ = \Sigma^+\setminus\Sigma_{\alggrp{M}_1}^+$.
In particular, $\lambda n_{w_1}$ is $\alggrp{M}_1$-negative.
Hence $E(\widetilde{w})$ is in the image of the homomorphism.

The two-sided ideal $J$ is generated by $\{E^{\alggrp{M}_1}(\lambda) - 1\mid \lambda\in\Lambda'_{\alggrp{M}_2}(1)\}$.
If $\lambda\in \Lambda'_{\alggrp{M}_2}(1)$, then $\ell_{\alggrp{M}_1}(\lambda) = 0$.
By Lemma~\ref{lem:properties of decomposition} \ref{enum:normality of Lambda dash}, $\widetilde{w}\lambda \widetilde{w}^{-1}\in \Lambda'_{\alggrp{M}_2}(1)$ for $\widetilde{w}\in \widetilde{W}_{\alggrp{M}_1}(1)$ and in particular, we have $\ell_{\alggrp{M}_1}(\widetilde{w}\lambda \widetilde{w}^{-1}) = 0$.
Hence $E^{\alggrp{M}_1}(\widetilde{w})E^{\alggrp{M}_1}(\mu) = E^{\alggrp{M}_1}(\widetilde{w}\mu) = E^{\alggrp{M}_1}(\widetilde{w}\mu\widetilde{w}^{-1})E^{\alggrp{M}_1}(\widetilde{w})$.
Therefore $\sum_{\widetilde{w}\in \widetilde{W}_{\alggrp{M}_1}(1),\mu\in\Lambda'_{\alggrp{M}_2}(1)}C(E^{\alggrp{M}_1}(\mu) - 1)E^{\alggrp{M}_1}(\widetilde{w}) = \sum_{\widetilde{w}\in \widetilde{W}_{\alggrp{M}_1}(1),\mu\in\Lambda'_{\alggrp{M}_2}(1)}C(E^{\alggrp{M}_1}(\mu\widetilde{w}) - E^{\alggrp{M}_1}(\widetilde{w}))$ is a two-sided ideal and it is $J$.
Hence if $\sum_{\widetilde{w}\in\widetilde{W}_{\alggrp{M}_1}(1)}c_{\widetilde{w}}E^{\alggrp{M}_1}(\widetilde{w})\in J$, then $\sum_{\mu\in\Lambda'_{\alggrp{M}_2}(1)}c_{\mu\widetilde{w}} = 0$ for any $\widetilde{w}\in\widetilde{W}_{\alggrp{M}_1}(1)$.

Assume that $\sum_{\widetilde{w}\in\widetilde{W}_{\alggrp{M}_1}(1)}c_{\widetilde{w}}E^{\alggrp{M}_1}(\widetilde{w})\in J$.
For each $x\in \Lambda'_{\alggrp{M}_2}(1)\backslash \widetilde{W}_{\alggrp{M}_1}(1)$, fix a representative $\widetilde{w}(x)\in \widetilde{W}_{\alggrp{M}_1}(1)$ which is $\alggrp{M}_1$-negative.
Then we have
\begin{align*}
&\sum_{\widetilde{w}\in\widetilde{W}_{\alggrp{M}_1}(1)}c_{\widetilde{w}}E^{\alggrp{M}_1}(\widetilde{w})\\
& = \sum_{x\in \Lambda'_{\alggrp{M}_2}(1)\backslash\widetilde{W}_{\alggrp{M}_1}(1)}\sum_{\mu\in \Lambda'_{\alggrp{M}_2}(1)}c_{\mu \widetilde{w}(x)}(E^{\alggrp{M}_1}(\mu\widetilde{w}(x)) - E^{\alggrp{M}_1}(\widetilde{w}(x))).
\end{align*}
Therefore $(\mathcal{H}_{\alggrp{M}}^-\otimes C)\cap J$ is contained in the subspace generated by $\{E^{\alggrp{M}_1}(\widetilde{w}_1) - E^{\alggrp{M}_1}(\widetilde{w}_2)\mid \widetilde{w}_1\widetilde{w}_2^{-1}\in \Lambda'_{\alggrp{M}_2}(1),\ \text{$\widetilde{w}_1,\widetilde{w}_2$ are $\alggrp{M}_1$-negative}\}$.

Let $J'$ be the kernel of $\mathcal{H}_{\alggrp{M}_1}^-\otimes C\to (\mathcal{H}\otimes C)/I$.
We prove $J\cap (\mathcal{H}_{\alggrp{M}_1}^-\otimes C) \subset J'$.
By the above argument, to prove it, it is sufficient to prove that $E(\lambda_1 n_w) - E(\lambda_2 n_w)\in I$ if $\lambda_1n_w,\lambda_2n_w$ are $\alggrp{M}_1$-negative and $\lambda_1\lambda_2^{-1}\in \Lambda'_{\alggrp{M}_2}(1)$.

Let $\mu \in \Lambda'_{\alggrp{M}_2}(1)$ be a dominant element.
By the length formula \eqref{eq:length formula}, we have
\begin{align*}
& \ell(\mu \lambda n_w)\\ & = \sum_{\alpha\in\Sigma^+, w^{-1}(\alpha) > 0}\lvert\langle\nu(\mu),\alpha\rangle + \langle\nu(\lambda),\alpha\rangle\rvert + \sum_{\alpha\in\Sigma^+, w^{-1}(\alpha) < 0}\lvert\langle\nu(\mu),\alpha\rangle + \langle\nu(\lambda),\alpha\rangle - 1\rvert
\end{align*}
If $w^{-1}(\alpha) < 0$, then since $w\in W_{\alggrp{M}_1}$, we have $\alpha\in\Sigma^+_{\alggrp{M}_1}$.
Since $\mu\in \Lambda'_{\alggrp{M}_2}(1)$, we have $\langle \nu(\mu),\alpha\rangle = 0$.
Therefore we have $\lvert\langle\nu(\mu),\alpha\rangle + \langle\nu(\lambda),\alpha\rangle - 1\rvert = \lvert\langle\nu(\lambda),\alpha\rangle - 1\rvert = \lvert\langle\nu(\mu),\alpha\rangle\rvert + \lvert\langle\nu(\lambda),\alpha\rangle - 1\rvert$.
If $w^{-1}(\alpha) > 0$ and $\alpha\in\Sigma^+_{\alggrp{M}_1}$, then again we have $\langle \nu(\mu),\alpha\rangle = 0$.
Hence $\lvert\langle\nu(\mu),\alpha\rangle + \langle\nu(\lambda),\alpha\rangle\rvert = \lvert\langle\nu(\lambda),\alpha\rangle\rvert = \lvert\langle\nu(\mu),\alpha\rangle\rvert + \lvert\langle\nu(\lambda),\alpha\rangle\rvert$.
If $\alpha\in\Sigma^+_{\alggrp{M}_2}$ (hence $w^{-1}(\alpha) > 0$), we have $\langle \nu(\lambda),\alpha\rangle \ge 0$ since $\lambda$ is $\alggrp{M}_1$-negative.
Since $\mu$ is dominant, we have $\langle \nu(\mu),\alpha\rangle \ge 0$.
Hence $\lvert\langle\nu(\mu),\alpha\rangle + \langle\nu(\lambda),\alpha\rangle\rvert = \lvert\langle\nu(\mu),\alpha\rangle\rvert + \lvert\langle\nu(\lambda),\alpha\rangle\rvert$.
Therefore we get
\begin{align*}
& \ell(\mu \lambda n_w)\\ & = \sum_{\alpha\in\Sigma^+}\lvert\langle\nu(\mu),\alpha\rangle\rvert + \sum_{\alpha\in\Sigma^+, w^{-1}(\alpha) > 0}\lvert\langle\nu(\lambda)\alpha\rangle\rvert + \sum_{\alpha\in\Sigma^+, w^{-1}(\alpha) < 0}\lvert\langle\nu(\lambda),\alpha\rangle - 1\rvert\\
& = \ell(\mu) + \ell(\lambda n_w).
\end{align*}
Hence $E(\mu \lambda n_w) = E(\mu)E(\lambda n_w)$.
Since $E(\mu) = T_\mu^* \in 1 + I$ by Proposition~\ref{prop:example of E} and the above lemma, we have $E(\mu \lambda n_w) - E(\lambda n_w)\in I$.
Let $\lambda_1,\lambda_2$ be $\alggrp{M}_1$-negative elements such that $\lambda_1^{-1}\lambda_2\in \Lambda'_{\alggrp{M}_2}(1)$.
Since $\Lambda'_{\alggrp{M}_2}(1)$ is normal in $\Lambda(1)$ by Lemma~\ref{lem:properties of decomposition} \ref{enum:normality of Lambda dash}, we also have $\lambda_2\lambda_1^{-1}\in \Lambda'_{\alggrp{M}_2}(1)$.
Take a dominant $\mu_0\in \Lambda'_{\alggrp{M}_2}(1)$ such that $\mu_0\lambda_2 \lambda_1^{-1}$ is dominant.
Then for $w\in W_{\alggrp{M}_1}$,
\begin{align*}
&E(\lambda_1n_w) - E(\lambda_2n_w)\\
& = (E(\lambda_1 n_w) - E((\mu_0 \lambda_2 \lambda_1^{-1})\lambda_1n_w)) + (E(\mu_0\lambda_2 n_w) - E(\lambda_2 n_w))\in I
\end{align*}

Finally, we prove $J'\subset J\cap (\mathcal{H}_{\alggrp{M}_1}^-\otimes C)$.
Take $\sum_{\widetilde{w}\in \widetilde{W}_{\alggrp{M}_1}(1)}c_{\widetilde{w}}E^{\alggrp{M}_1}(\widetilde{w})\in J'$.
Then for any $\widetilde{v}\in \widetilde{W}(1)$, we have $\sum_{\widetilde{w}\in \widetilde{W}_\aff(1)}c_{\widetilde{w}\widetilde{v}}E(\widetilde{w}\widetilde{v})\in I$ by the above lemma (1).
Hence we may assume $c_{\widetilde{w}} = 0$ if $\widetilde{w}\notin \widetilde{W}_\aff(1)\widetilde{v}$ for a fixed $\widetilde{v}\in\widetilde{W}(1)$.
Since $\widetilde{W}_\aff(1)\widetilde{W}_{\alggrp{M}_1}(1)\supset \widetilde{W}_\aff(1)\Lambda(1) = \widetilde{W}(1)$, we may assume $\widetilde{v}\in \widetilde{W}_{\alggrp{M}_1}(1)$.
Moreover, we may assume $\widetilde{v}$ is $\alggrp{M}_1$-negative.
Take $\lambda'\in \Lambda(1)$ and $w'\in W_{\alggrp{M}_1}$ such that $\widetilde{v} = \lambda' n_{w'}$.
Let $\lambda\in \Lambda(1)$, $w\in W$ such that $\lambda n_w\in \widetilde{W}_\aff(1)\widetilde{v}$.
Then $\lambda' n_{w'}n_w^{-1}\lambda^{-1} \in \widetilde{W}_\aff(1)$.
We have $\lambda' n_{w'}n_w^{-1}\lambda^{-1} = \lambda'\lambda^{-1}(\lambda n_{w'}n_w^{-1}\lambda^{-1})$ and $\lambda n_{w'}n_w^{-1}\lambda^{-1}\in \widetilde{W}_\aff(1)$ since $\widetilde{W}_\aff(1)$ is normal in $\widetilde{W}(1)$ (see subsection~\ref{subsec:Notation}).
Hence $\lambda'\lambda^{-1}\in \widetilde{W}_\aff(1)\cap \Lambda(1) = \Lambda'(1)$.
By Lemma~\ref{lem:generators of Lambda'}, we have $\Lambda'(1) = \Lambda'_{\alggrp{M}_1}(1)\Lambda'_{\alggrp{M}_2}(1)$.
Take $\lambda_1\in \Lambda'_{\alggrp{M}_1}(1)$ and $\lambda_2\in\Lambda'_{\alggrp{M}_2}(1)$ such that $\lambda'\lambda^{-1} = \lambda_1\lambda_2$.
Since $\lambda_2\in \Lambda'_{\alggrp{M}_2}(1)$, we have $\ell_{\alggrp{M}_1}(\lambda_2) = 0$.
Hence $E^{\alggrp{M}_1}(\lambda n_w) - E^{\alggrp{M}_1}(\lambda_2\lambda n_w) = (1 - E^{\alggrp{M}_1}(\lambda_2))E^{\alggrp{M}_1}(\lambda n_w)\in J$.
Moreover, since $\lambda_1\in \Lambda'_{\alggrp{M}_1}(1)$, we have $\langle \alpha,\nu(\lambda_1)\rangle = 0$ for any $\alpha\in\Sigma^+_{\alggrp{M}_2}$.
By the orthogonal decomposition $\Delta = \Delta_1\amalg \Delta_2$, we have $\Sigma^+\setminus\Sigma_{\alggrp{M}_1}^+ = \Sigma^+_{\alggrp{M}_2}$.
Hence for any $\alpha\in\Sigma^+\setminus\Sigma_{\alggrp{M}_1}^+$, $\langle \alpha,\nu(\lambda')\rangle = \langle \alpha,\nu(\lambda_1\lambda_2\lambda)\rangle = \langle \alpha,\nu(\lambda_2\lambda)\rangle$.
Therefore, since $\lambda'$ is $\alggrp{M}_1$-negative, $\lambda_2\lambda$ is also $\alggrp{M}_1$-negative.
Hence we have $E^{\alggrp{M}_1}(\lambda n_w) - E^{\alggrp{M}_1}(\lambda_2\lambda n_w)\in J\cap (\mathcal{H}_{\alggrp{M}_1}^-\otimes C)$.
Therefore, to prove $\sum_{\widetilde{w}\in \widetilde{W}_{\alggrp{M}_1}(1)}c_{\widetilde{w}}E^{\alggrp{M}_1}(\widetilde{w})\in J\cap (\mathcal{H}_{\alggrp{M}_1}^-\otimes C)$, it is sufficient to prove that $\sum_{\widetilde{w}\in \widetilde{W}_{\alggrp{M}_1}(1)}c_{\widetilde{w}}E^{\alggrp{M}_1}(\widetilde{w}) - c_{\lambda n_w}(E^{\alggrp{M}_1}(\lambda n_w) - E^{\alggrp{M}_1}(\lambda_2\lambda n_w))\in J\cap (\mathcal{H}_{\alggrp{M}_1}^-\otimes C)$.
Notice that $\lambda'(\lambda_2 \lambda)^{-1} = \lambda_1\in\Lambda'_{\alggrp{M}_1}(1)$.
Therefore, we may assume that $c_{\lambda n_w}\ne 0$ implies $\lambda'\lambda^{-1}\in \Lambda'_{\alggrp{M}_1}(1)$.
Or, equivalently, since $\Lambda'_{\alggrp{M}_1}(1)$ is normal in $\Lambda(1)$, we may assume $\lambda^{-1}\lambda'\in \Lambda'_{\alggrp{M}_1}(1)$.
Hence $(\lambda n_w)^{-1}\lambda' n_{w'}\in \widetilde{W}_{\alggrp{M}_1,\aff}(1)$.
Take $a_0\in \widetilde{W}_{\alggrp{M}_2,\aff}(1)$ and $b_0\in \widetilde{W}(1)$ such that $\lambda' n_{w'} = a_0b_0$ and $\ell_{\alggrp{M}_2}(b_0) = 0$.
If $c_{\lambda n_w} \ne 0$, then $\ell_{\alggrp{M}_2}((\lambda' n_{w'})^{-1}\lambda n_{w}) = 0$.
Namely $\ell_{\alggrp{M}_2}(b_0^{-1}a_0^{-1}\lambda n_w) = 0$.
Since $\ell_{\alggrp{M}_2}(b_0) = 0$, $\ell_{\alggrp{M}_2}(a_0^{-1}\lambda n_w) = 0$.

Put $l = \max\{\ell(a_0^{-1}\lambda n_w)\mid c_{\lambda n_w}\ne 0\}$.
We prove $\sum c_{\lambda n_w}E^{\alggrp{M}_1}(\lambda n_w)\in J\cap (\mathcal{H}_{\alggrp{M}_1}^-\otimes C)$ by induction on $l$.
We have $\sum c_{\lambda n_w}E(\lambda n_w)\in I$.
By (3) of the above lemma, we have $E(\lambda n_w)\in T_{a_0^{-1}\lambda n_w} + \sum_{b < a_0^{-1}\lambda n_w}CT_b + I$.
Hence
\[
	\sum_{\ell(a_0^{-1}\lambda n_w) = l}c_{\lambda n_w}T_{a_0^{-1}\lambda n_w} - \sum_{\ell(b) < l,\ell_{\alggrp{M}_2}(b) = 0}c_bT_b \in I
\]
for some $c_b\in C$.
The left hand side is in $\bigoplus_{\ell_{\alggrp{M}_2}(\widetilde{w}) = 0}CT_{\widetilde{w}}$.
Hence it is contained in the ideal $I_0$ generated by $\{T_t - 1\mid t\in \Lambda_{\alggrp{M}_2}'(1)\cap Z_\kappa\}$ by the above lemma \ref{enum:on ideal I, ell_M_2 = 0 part}.
The ideal $I_0$ satisfies $I_0 = \bigoplus_{n\ge 0}\left(\bigoplus_{\ell(w) = n}CT_w\cap I_0\right)$.
Hence we have $\sum_{\ell(a_0^{-1}\lambda n_w) = l}c_{\lambda n_w}T_{a_0^{-1}\lambda n_w}\in I_0$.
Therefore, if $\ell(a_0^{-1}\lambda n_w) = l$, then we have $\sum_{t\in \Lambda'_{\alggrp{M}_2}(1)\cap Z_\kappa}c_{t\lambda n_w} = 0$.
Set $X = \{\lambda n_w\in \widetilde{W}_{\alggrp{M}_1}(1)\mid  \ell(a_0^{-1}\lambda n_w) = l\}$ and $Y = (\Lambda'_{\alggrp{M}_2}(1)\cap Z_\kappa)\backslash X$.
For each $y\in Y$ fix a representative $x(y)\in X$.
Then we have
\begin{align*}
&\sum_{\ell(a_0^{-1}\lambda n_w) = l}c_{\lambda n_w}E^{\alggrp{M}_1}(\lambda n_w)\\
& = \sum_{y\in Y}\sum_{t\in \Lambda'_{\alggrp{M}_2}(1)\cap Z_\kappa}c_{t x(y)}(E^{\alggrp{M}_1}(t x(y)) - E^{\alggrp{M}_1}(x(y)))\\
& = \sum_{y\in Y}\sum_{t\in \Lambda'_{\alggrp{M}_2}(1)\cap Z_\kappa}c_{t x(y)}(T^{\alggrp{M}_1}_t - 1)E^{\alggrp{M}_1}(x(y))
\in I_0\subset J.
\end{align*}
Since we have proved $J\cap \mathcal{H}_{\alggrp{M}_1}^-\otimes C\subset J'$, we have $\sum_{\ell(a_0^{-1}\lambda n_w) = l}c_{\lambda n_w}E^{\alggrp{M}_1}(\lambda n_w)\in J'$.
Therefore $\sum_{\ell(a_0^{-1}\lambda n_w) < l}c_{\lambda n_w}E^{\alggrp{M}_1}(\lambda n_w)\in J'$.
By inductive hypothesis, we have $\sum_{\ell(a_0^{-1}\lambda n_w) < l}c_{\lambda n_w}E^{\alggrp{M}_1}(\lambda n_w)\in J\cap \mathcal{H}_{\alggrp{M}_1}^-\otimes C$.
Hence we have $\sum c_{\lambda n_w}E^{\alggrp{M}_1}(\lambda n_w)\in J\cap \mathcal{H}_{\alggrp{M}_1}^-\otimes C$.
\end{proof}

\begin{prop}\label{prop:compatibility of extensions}
Let $\alggrp{P} = \alggrp{M}\alggrp{N}$ be a parabolic subgroup such that $\Delta_{\alggrp{M}}\supset\Delta_1$ and $\sigma$ as in Proposition~\ref{prop:extending}.
Denote the extension of $\sigma$ to $\mathcal{H}$ (resp.\ $\mathcal{H}_{\alggrp{M}}$) by $e(\sigma)$ (resp.\ $e_{\alggrp{M}}(\sigma)$).
Then $e(\sigma)|_{\mathcal{H}_{\alggrp{M}}^-}\simeq e_{\alggrp{M}}(\sigma)|_{\mathcal{H}_{\alggrp{M}}^-}$.
\end{prop}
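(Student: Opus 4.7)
The plan is to apply the uniqueness part of Proposition~\ref{prop:extending} inside $\alggrp{M}$, using the orthogonal decomposition $\Delta_{\alggrp{M}} = \Delta_1 \amalg \Delta_{\alggrp{M},2}$ with $\Delta_{\alggrp{M},2} := \Delta_{\alggrp{M}} \cap \Delta_2$. Thus $e_{\alggrp{M}}(\sigma)$ is the unique $\mathcal{H}_{\alggrp{M}}$-module characterized by: (i) its restriction to $\mathcal{H}_{\alggrp{M}_1}^-$ is $\sigma$; (ii) $T^{\alggrp{M}}_{n_s} = 0$ for each simple affine reflection $s$ of $\widetilde{W}_{\alggrp{M},\aff}$ coming from $\Delta_{\alggrp{M},2}$; and (iii) $T^{\alggrp{M}}_t = 1$ for $t \in \Lambda'_{\alggrp{M}_{\Delta_{\alggrp{M},2}}}(1) \cap Z_\kappa$. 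I will construct an $\mathcal{H}_{\alggrp{M}}$-module $\widetilde{e}$ extending the $\mathcal{H}_{\alggrp{M}}^-$-module $e(\sigma)|_{\mathcal{H}_{\alggrp{M}}^-}$ and verify (i)--(iii); by the uniqueness, $\widetilde{e} \simeq e_{\alggrp{M}}(\sigma)$, whence the proposition follows by restricting to $\mathcal{H}_{\alggrp{M}}^-$.

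For the construction I choose $\lambda_0 \in \Lambda(1)$ in the center of $\widetilde{W}_{\alggrp{M}}(1)$, $\alggrp{M}$-negative, with $\langle \nu(\lambda_0), \alpha\rangle > 0$ for $\alpha \in \Sigma^+ \setminus \Sigma_{\alggrp{M}}^+$. By Remark~\ref{rem:role of strongly positive/negative element} the extension $\widetilde{e}$ exists and is unique provided $E^{\alggrp{M}}(\lambda_0)$ acts invertibly. Centrality in $\widetilde{W}_{\alggrp{M}}(1)$ forces $\nu(\lambda_0) \perp \Sigma_{\alggrp{M}}$, so $\lambda_0$ is also $\alggrp{M}_1$-negative and central in $\widetilde{W}_{\alggrp{M}_1}(1)$; hence $E^{\alggrp{M}_1}(\lambda_0)$ is central and of length zero in $\mathcal{H}_{\alggrp{M}_1}$, so acts invertibly on $\sigma$. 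Since $\lambda_0$ is both $\alggrp{M}_1$- and $\alggrp{M}$-negative, $j^-_{\alggrp{M}_1}(E^{\alggrp{M}_1}(\lambda_0)) = E(\lambda_0) = j^-_{\alggrp{M}}(E^{\alggrp{M}}(\lambda_0))$, and Proposition~\ref{prop:extending} gives $e(\sigma)|_{\mathcal{H}_{\alggrp{M}_1}^-} = \sigma$, so $E^{\alggrp{M}}(\lambda_0)$ indeed acts invertibly on $e(\sigma)$.

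Properties (i) and (iii) are then mild. For (iii): $t \in \Lambda'_{\alggrp{M}_{\Delta_{\alggrp{M},2}}}(1) \cap Z_\kappa \subset \Lambda'_{\alggrp{M}_2}(1) \cap Z_\kappa$, so $T^{\alggrp{M}}_t \in \mathcal{H}_{\alggrp{M}}^-$ maps to $T_t$, which acts as $1$ on $e(\sigma)$ by Proposition~\ref{prop:extending}. For (i): the subalgebra of $\mathcal{H}_{\alggrp{M}_1}$ spanned by $T^{\alggrp{M}_1}_{\widetilde{w}}$ with $\widetilde{w}$ being $\alggrp{M}_1$-negative in $\alggrp{G}$ is contained in the corresponding subalgebra for $\alggrp{M}_1 \subset \alggrp{M}$, and the composition $j^-_{\alggrp{M}} \circ j^-_{\alggrp{M}_1,\alggrp{M}}$ agrees with $j^-_{\alggrp{M}_1,\alggrp{G}}$ on it (both send $T^{\alggrp{M}_1}_{\widetilde{w}} \mapsto T_{\widetilde{w}}$); the two $\mathcal{H}_{\alggrp{M}_1}$-actions on $\widetilde{e}$ (from $\widetilde{e}$'s $\mathcal{H}_{\alggrp{M}}$-structure and from $\sigma$'s original $\mathcal{H}_{\alggrp{M}_1}$-structure) therefore agree on this common subalgebra, and coincide after inverting an appropriate central $E^{\alggrp{M}_1}(\lambda_1)$ with strict positivity on $\Sigma^+ \setminus \Sigma^+_{\alggrp{M}_1}$.

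The main obstacle is verifying (ii) when $s$ is affine (not finite), since such an $s$ need not be a simple affine reflection of $\widetilde{W}_\aff$, and Proposition~\ref{prop:extending} does not apply to it directly. One checks that $n_s$ is $\alggrp{M}$-negative, so $j^-_{\alggrp{M}}(T^{\alggrp{M}}_{n_s}) = T_{n_s}$, and the task reduces to showing $T_{n_s}$ lies in the two-sided ideal $I$ from the proof of Proposition~\ref{prop:extending}. Writing $n_s = \mu n_{s_\alpha}$ with $\alpha$ the highest root of an irreducible component of $\Sigma_{\alggrp{M}_{\Delta_{\alggrp{M},2}}} \subset \Sigma_{\alggrp{M}_2}$ and $\nu(\mu) = \check{\alpha}$, the orthogonality $\alpha \perp \Sigma_{\alggrp{M}_1}$ together with Lemma~\ref{lem:properties of decomposition}\ref{enum:length formula for M_1,M_2} gives $\ell_{\alggrp{M}_1}(n_s) = 0$ and $\ell_{\alggrp{M}_2}(n_s) \ge 1$. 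The proof of Lemma~\ref{lem:lemma for the proof of extending}\ref{enum:on ideal I, ell_M_2 = 0 part} identifies the ideal generated by $\{T_{n_{s'}} \mid s' \in S_{\aff,2}\}$ with $\bigoplus_{\ell_{\alggrp{M}_2}(\widetilde{w}) > 0} C T_{\widetilde{w}}$, so $T_{n_s}$ lies in this ideal and hence in $I$, completing the verification.
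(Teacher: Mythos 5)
Your strategy is genuinely different from the paper's: you invoke the uniqueness in Proposition~\ref{prop:extending} applied inside $\alggrp{M}$ and try to verify the three defining properties for the localization-extension $\widetilde{e}$ of $e(\sigma)|_{\mathcal{H}_{\alggrp{M}}^-}$, whereas the paper computes $e(\sigma)(T^*_{\widetilde{w}})$ directly for $\alggrp{M}$-negative $\widetilde{w}$ by stripping off the $W_{\alggrp{M}_2}$-part and translating by a central element of $\Lambda'_{\alggrp{M}'_2}(1)$ until the element is $\alggrp{M}_1$-negative. The strategy is reasonable and your treatment of (i), (iii) and of the invertibility of $E^{\alggrp{M}}(\lambda_0)$ is essentially sound, but the key step (ii) rests on a false claim. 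For a non-finite simple affine reflection $s$ of $\widetilde{W}_{\alggrp{M},\aff}$ attached to a component of the root system generated by $\Delta_{\alggrp{M}}\cap\Delta_2$ with highest root $\theta$ (highest \emph{inside} $\Sigma_{\alggrp{M}}$), one has $n_s=\mu n_{s_\theta}$ with $\nu(\mu)=\check{\theta}$, and $\alggrp{M}$-negativity of $n_s$ requires $\langle\check{\theta},\beta\rangle\ge 0$ for all $\beta\in\Sigma^+\setminus\Sigma^+_{\alggrp{M}}$. This fails whenever $\theta$ is not orthogonal to $\Delta\setminus\Delta_{\alggrp{M}}$: e.g.\ for $\alggrp{G}=\GL_3$ with $\Delta_1=\emptyset$ and $\Delta_{\alggrp{M}}=\{\alpha_1\}$ one has $\langle\check{\alpha}_1,\alpha_2\rangle=-1<0$. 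So $T^{\alggrp{M}}_{n_s}\notin\mathcal{H}_{\alggrp{M}}^-$ and its action on $\widetilde{e}$ cannot be read off from the action of $T_{n_s}$ on $e(\sigma)$ through $j^-_{\alggrp{M}}$.

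There is a second, independent problem with the same sentence: $j^-_{\alggrp{M}}$ is defined on the $T^*$-basis, $T^{\alggrp{M},*}_{\widetilde{w}}\mapsto T^*_{\widetilde{w}}$, so even for an $\alggrp{M}$-negative lift of a simple affine reflection of $\widetilde{W}_{\alggrp{M},\aff}$ one gets $j^-_{\alggrp{M}}(T^{\alggrp{M}}_{n_s})=T^*_{n_s}+c_{n_s}$, which equals $T_{n_s}$ only if $\ell(n_s)=1$ in $\widetilde{W}$; in the example above $\ell(n_s)=3$. Your computations that $\ell_{\alggrp{M}_1}(n_s)=0$, $\ell_{\alggrp{M}_2}(n_s)\ge 1$ and that $\bigoplus_{\ell_{\alggrp{M}_2}(\widetilde{w})>0}CT_{\widetilde{w}}\subset I$ are correct, but they concern the element $T_{n_s}\in\mathcal{H}$, not the action of $T^{\alggrp{M}}_{n_s}\in\mathcal{H}_{\alggrp{M}}$ on $\widetilde{e}$. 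To repair (ii) you would need to multiply $T^{\alggrp{M}}_{n_s}$ by a large central power of $E^{\alggrp{M}}(\lambda_0)$ so as to land in $\mathcal{H}_{\alggrp{M}}^-$, expand the product in the $T^{\alggrp{M},*}$- (or $E^{\alggrp{M}}_-$-) basis, apply $j^-_{\alggrp{M}}$ term by term, and show the image lies in $I$ --- at which point one is carrying out essentially the length and translation arguments of the paper's own proof. As written, the verification of (ii) does not go through.
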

\begin{proof}
Let $\widetilde{w}\in \widetilde{W}_{\alggrp{M}}(1)$ be a $\alggrp{M}$-negative element and we prove $e_{\alggrp{M}}(\sigma)(T_{\widetilde{w}}^{\alggrp{M},*}) = e(\sigma)(T_{\widetilde{w}}^*)$.
Let $\lambda\in \Lambda(1)$, $w_1\in W_{\alggrp{M}_1}$ and $w_2\in W_{\alggrp{M}_2}\cap W_{\alggrp{M}}$ such that $\widetilde{w} = \lambda n_{w_1}n_{w_2}$.
First we prove that $e(\sigma)(T_{\widetilde{w}}^*) = e(\sigma)(T_{\lambda n_{w_1}}^*)$ by induction on $\ell(w_2)$.
Take $s\in S_2$ such that $w_2 s < w_2$.
If $\lambda n_{w_1}n_{w_2}n_s > \lambda n_{w_1}n_{w_2}$, then $T_{\lambda n_{w_1}n_{w_2}}^*T_{n_s^{-1}}^* = T_{\lambda n_{w_1}n_{w_2s}}^*$.
If $\lambda n_{w_1}n_{w_2}n_s < \lambda n_{w_1}n_{w_2}$, then $T_{\lambda n_{w_1}n_{w_2}}^* = T_{\lambda n_{w_1}n_{w_2s}}^*T_{n_s}^*$.
By Lemma~\ref{lem:lemma for the proof of extending} \ref{enum:T_w^* in 1 + I}, we have $e(\sigma)(T_{n_s}^*) = e(\sigma)(T_{n_s^{-1}}^*) = 1$.
Hence in any case, we have $e(\sigma)(T^*_{\lambda n_{w_1}n_{w_2}}) = e(\sigma)(T^*_{\lambda n_{w_1}n_{w_2s}})$.
It completes the induction.
From the same argument, we have $e_{\alggrp{M}}(T^{\alggrp{M},*}_{\widetilde{w}}) = e_{\alggrp{M}}(T^{\alggrp{M},*}_{\lambda n_{w_1}})$.

Let $\alggrp{M}'_2$ be the Levi part of the parabolic subgroup corresponding to $\Delta_{\alggrp{M}}\cap \Delta_2$.
Take $\mu\in \Lambda_{\alggrp{M}'_2}'(1)$ such that $\langle \alpha,\nu(\mu)\rangle$ is sufficiently large for $\alpha\in\Sigma_{\alggrp{M}'_2}^+$.
Since $\widetilde{w}$ is $\alggrp{M}$-negative, $\langle \alpha,\nu(\lambda)\rangle\ge 0$ for $\alpha\in\Sigma^+\setminus\Sigma_{\alggrp{M}}^+ = \Sigma^+_{\alggrp{M}_2}\setminus\Sigma^+_{\alggrp{M}'_2}$.
Hence we can take $\mu$ such that $\langle \alpha,\nu(\mu \lambda)\rangle\ge 0$ for any $\alpha\in\Sigma^+_{\alggrp{M}_2} = \Sigma^+\setminus\Sigma_{\alggrp{M}_1}^+$, namely $\mu \lambda n_{w_1}$ is $\alggrp{M}_1$-negative.
The element $\mu$ is in $W_{\alggrp{M}'_2,\aff}(1)$.
Hence using the same argument in the above, we have $e_{\alggrp{M}}(\sigma)(T^{\alggrp{M},*}_{\lambda n_{w_1}}) = e_{\alggrp{M}}(\sigma)(T^{\alggrp{M},*}_{\mu\lambda n_{w_1}})$.
Since $\mu$ is also in $W_{\alggrp{M}_2,\aff}(1)$, we have $e(\sigma)(T^{*}_{\lambda n_{w_1}}) = e(\sigma)(T^{*}_{\mu\lambda n_{w_1}})$.
Finally, since $\mu \lambda n_{w_1}\in W_{\alggrp{M}_1}(1)$ is $\alggrp{M}_1$-negative, we have $e_{\alggrp{M}}(\sigma)(T^{\alggrp{M},*}_{\mu \lambda n_{w_1}}) = \sigma(T^{\alggrp{M}_1,*}_{\mu \lambda n_{w_1}}) = e(\sigma)(T^*_{\mu \lambda n_{w_1}})$.
\end{proof}

\subsection{Statement of the classification theorem}
We begin with the definition of supersingular representations.
\begin{defn}[{\cite[Proposition-Definition 5.9]{arXiv:1211.5366}, \cite[Definition 6.10]{Vigneras-prop-III}}]\label{defn:supersingular}
For each adjoint $\widetilde{W}(1)$-orbit $\mathcal{O}$ in $\Lambda(1)$, put $z_\mathcal{O} = \sum_{\lambda\in \mathcal{O}}E(\lambda)$.
Let $\mathcal{J}$ be the ideal generated by $\{z_\mathcal{O}\mid \ell(\lambda) > 0\ (\lambda\in\mathcal{O})\}$.
An $\mathcal{H}$-module $\pi$ is called \emph{supersingular} if there exists $n\in\Z_{>0}$ such that $\mathcal{J}^n\pi = 0$.
\end{defn}
Notice that $\ell(\lambda)$ is independent of $\lambda\in \mathcal{O}$.
By~\cite[Theorem~1.2]{Vigneras-prop-II}, $z_\mathcal{O}$ is in the center of $\mathcal{H}$.
We mainly use the characterization \ref{enum:supersingularity, by any X} and \ref{enum:supersingularity, by some X} in the following lemma.
\begin{lem}\label{lem:supersingurality, in terms of A-moudle}
Let $\pi$ be an irreducible $\mathcal{H}\otimes C$-module and $\mathcal{J}$ as in Definition~\ref{defn:supersingular}.
Then the following are equivalent.
\begin{enumerate}
\item\label{enum:supersingular} The module $\pi$ is supersingular.
\item\label{enum:supersingularity, Jpi = 0} We have $\mathcal{J}\pi = 0$.
\item\label{enum:supersingularity, by any X} For any irreducible $\mathcal{A}$-submodule $X$ of $\pi$, $\supp X = \Lambda_\Delta(1)$.
\item\label{enum:supersingularity, by some X} There exists an irreducible $\mathcal{A}$-submodule $X$ of $\pi$ such that $\supp X = \Lambda_\Delta(1)$.
\end{enumerate}
\end{lem}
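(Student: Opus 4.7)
The plan is to handle (1) $\Leftrightarrow$ (2), (3) $\Rightarrow$ (4), and (4) $\Rightarrow$ (2) directly from centrality of $z_{\mathcal{O}}$ (\cite[Theorem~1.2]{Vigneras-prop-II}) and irreducibility of $\pi$, then reduce (2) $\Rightarrow$ (3) to an explicit orbit computation. Since each $z_{\mathcal{O}}$ is central, $\mathcal{J}$ is a two-sided ideal, so $\mathcal{J}\pi$ is an $\mathcal{H}$-submodule of the irreducible $\pi$; it is either $0$ or $\pi$, and the latter precludes supersingularity, yielding (1) $\Leftrightarrow$ (2). The implication (3) $\Rightarrow$ (4) is immediate from the finite-dimensionality of $\pi$ noted in the preceding remark. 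For (4) $\Rightarrow$ (2), given $X \subset \pi$ irreducible with $\supp X = \Lambda_{\Delta}(1)$, any orbit $\mathcal{O}$ with $\ell > 0$ is disjoint from $\Lambda_{\Delta}(1)$, so every $E(\mu)$ with $\mu \in \mathcal{O}$ annihilates $X$; centrality promotes $X z_{\mathcal{O}} = 0$ to $\pi z_{\mathcal{O}} = (X\mathcal{H})z_{\mathcal{O}} = (Xz_{\mathcal{O}})\mathcal{H} = 0$, and summing over generators of $\mathcal{J}$ gives $\mathcal{J}\pi = 0$.

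The substantive step is (2) $\Rightarrow$ (3). Let $Y \subset \pi$ be any irreducible $\mathcal{A}$-submodule and use Proposition~\ref{prop:irred rep of A factors chi_Theta} to write $\supp Y = n_v(\Lambda_{\Theta}^{+}(1))$ for some $v \in W$ and $\Theta \subset \Delta$. I argue $\Theta = \Delta$ by contradiction: supposing $\alpha \in \Delta \setminus \Theta$, let $\alggrp{L}$ denote the standard Levi subgroup with root system $\Delta \setminus \{\alpha\}$. The fixed subspace $V^{W_{\Delta \setminus \{\alpha\}}}$ is one-dimensional, and $\nu$ maps $Z(\alggrp{L}(F)) \subset \alggrp{Z}(F)$ onto a nontrivial subgroup of it; pick $\mu \in Z(\alggrp{L}(F))$ with $\langle \nu(\mu), \alpha\rangle > 0$, so that the image of $\mu$ in $\Lambda(1)$ lies in $\Lambda_{\Theta}^{+}(1)$, has $\ell(\mu) > 0$, and satisfies $n_u \mu n_u^{-1} = \mu$ for every $u \in W_{\Delta \setminus \{\alpha\}}$ (centrality of $\mu$ in $\alggrp{L}(F) \supset N_{\alggrp{L}(F)}(\alggrp{S}(F))$). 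Set $\lambda = n_v \mu n_v^{-1} \in \supp Y$ and let $\mathcal{O}$ denote its adjoint $\widetilde{W}(1)$-orbit. I claim $\mathcal{O} \cap \supp Y = \{\lambda\}$: since $\nu(\mu)$ has $W$-stabilizer exactly $W_{\Delta \setminus \{\alpha\}}$ and is the unique dominant representative in its $W$-orbit, a conjugate $n_w \lambda n_w^{-1}$ can lie in $\supp Y$ only when $w \in vW_{\Delta \setminus \{\alpha\}}v^{-1}$, and for such $w$ the centrality of $\mu$ collapses the conjugate back to $\lambda$ itself. Therefore $Yz_{\mathcal{O}} = YE(\lambda) = Y$ by invertibility of $E(\lambda)$ on $Y$ (Proposition~\ref{prop:irred rep of A factors chi_Theta}), whereas $z_{\mathcal{O}} \in \mathcal{J}$ forces $Yz_{\mathcal{O}} = 0$ — the desired contradiction.

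The main technical obstacle is confirming $\mathcal{O} \cap \supp Y = \{\lambda\}$ strictly in $\Lambda(1)$: because $\Lambda_{\Theta}^{+}(1)$ is closed under multiplication by $Z_{\kappa}$, so is $\supp Y$, and therefore any element of the orbit differing from $\lambda$ merely by a $Z_{\kappa}$-factor would still lie in $\supp Y$ and spoil the argument. This is precisely why $\mu$ must be chosen in the genuine center of $\alggrp{L}(F)$ (rather than only with $\nu(\mu)$ fixed by $W_{\Delta\setminus\{\alpha\}}$): genuine centrality ensures that every lift of every $u \in W_{\Delta\setminus\{\alpha\}}$ conjugates $\mu$ trivially, so the ambiguity in the lift of $w \in vW_{\Delta\setminus\{\alpha\}}v^{-1}$ cannot introduce additional $Z_{\kappa}$-conjugates into the orbit.
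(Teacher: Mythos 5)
Your proof is correct, and its overall architecture matches the paper's: the implications $(1)\Leftrightarrow(2)$, $(3)\Rightarrow(4)$, and $(4)\Rightarrow(2)$ are handled directly from the centrality of $z_{\mathcal{O}}$ and irreducibility of $\pi$, and the substance is concentrated in $(2)\Rightarrow(3)$, where one produces, for an irreducible $\mathcal{A}$-submodule $Y$ with $\supp Y = n_v(\Lambda_\Theta^+(1))$ and $\Theta\ne\Delta$, a central $z_\mathcal{O}\in\mathcal{J}$ that acts invertibly on $Y$.

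Where you deviate is in the choice of the test element and in how you resolve the possible $Z_\kappa$-ambiguity in the orbit. The paper takes $\lambda_0\in X_*(\alggrp{S})$ (sent into $\alggrp{S}(F)$ via a uniformizer); the commutation of $\alggrp{S}(F)$ with $\alggrp{Z}(F)$ collapses the adjoint $\widetilde{W}(1)$-orbit to $\{n_w(\lambda)\}_{w\in W}$, and then the \emph{injectivity} of $\nu$ on the image of $X_*(\alggrp{S})$ is what guarantees that $n_w(\lambda)=\lambda$ whenever $w$ stabilizes $\nu(\lambda)$. You instead take $\mu$ in the genuine center of the Levi $\alggrp{L}(F)$ for $\Delta_{\alggrp{L}}=\Delta\setminus\{\alpha\}$; again $\mu$ commutes with $\Lambda(1)$, which shrinks the orbit to $\{n_w(\lambda)\}$, but you then use that the specific lifts $n_u$ (for $u\in W_{\Delta\setminus\{\alpha\}}$) lie in $\alggrp{L}(F)$ and hence literally commute with $\mu$, together with commutation of $\lambda$ with $Z_\kappa$, to conclude $n_w(\lambda)=\lambda$ for $w\in vW_{\Delta\setminus\{\alpha\}}v^{-1}$. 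Both mechanisms give $\mathcal{O}\cap\supp Y=\{\lambda\}$ and hence $Yz_\mathcal{O}=YE(\lambda)\ne 0$, so the approaches are interchangeable; yours is perhaps slightly more structural, the paper's slightly more computational.

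One small inaccuracy worth correcting: $V^{W_{\Delta\setminus\{\alpha\}}}$ is not one-dimensional in general — its dimension is $1 + \dim V^W$, so one-dimensionality requires $\alggrp{G}$ semisimple. This does not harm your argument, since all you need is that $\nu(Z(\alggrp{L}(F)))$ contains an element pairing positively with $\alpha$; this follows because the maximal split central subtorus of $\alggrp{L}$ has cocharacter space equal to $V^{W_{\Delta\setminus\{\alpha\}}}\supsetneq V^W$, and this subspace is not annihilated by $\alpha$.
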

\begin{proof}
Since $\mathcal{J}\pi$ is $\mathcal{H}$-stable, the equivalence between \ref{enum:supersingular} and \ref{enum:supersingularity, Jpi = 0} is clear.
Obviously, \ref{enum:supersingularity, by any X} implies \ref{enum:supersingularity, by some X}.

Notice that $\Lambda_\Delta(1) = \Lambda_\Delta^+(1)$.
Recall that $\supp X = \Lambda_\Delta(1)$ if and only if the action of $\mathcal{A}$ on $X$ factors through $\chi_\Delta$.
Let $X$ be an irreducible $\mathcal{A}$-submodule of $\pi$.
We prove $\mathcal{J}X = 0$ if and only if $\supp X = \Lambda_\Delta(1)$.
Let $\Theta\subset \Delta$ and $w\in W$ such that the action of $\mathcal{A}$ on $X$ factors through $w\chi_\Theta$.
By the length formula \eqref{eq:length formula}, $\ell(\lambda) = 0$ if and only if $\langle \nu(\lambda),\alpha\rangle = 0$ for any $\alpha\in\Sigma^+$, namely $\lambda\in \Lambda_\Delta(1)$.
Hence if $\ell(\lambda) > 0$, then $\chi_\Delta(E(\lambda)) = 0$.
Therefore, if $\Theta = \Delta$, then $\mathcal{J}X = 0$.

Assume that $\Theta \ne \Delta$.
We take $\lambda\in \Lambda(1)$ as follows.
Fix a uniformizer $\varpi$ of $F$ and consider the embedding $X_*(\alggrp{S})\hookrightarrow \alggrp{S}(F)$ defined by $\lambda\mapsto \lambda(\varpi)^{-1}$.
Then composition $X_*(\alggrp{S})\hookrightarrow \alggrp{S}(F)\hookrightarrow \alggrp{Z}(F)\to \Lambda(1)\xrightarrow{\nu}V$ is equal to the embedding $X_*(\alggrp{S})\hookrightarrow V$ and in particular it is injective.
Take $\lambda_0\in X_*(\alggrp{S})$ such that its image $\lambda$ in $\Lambda(1)$ satisfies $w\chi_\Theta(E(\lambda)) \ne 0$ and $\ell(\lambda) > 0$.
Since $\lambda_0\in \alggrp{S}(F)$ and $\alggrp{S}(F)$ is contained in the center of $\alggrp{Z}(F)$, $\lambda$ commutes with an element of $\Lambda(1)$.
Hence the orbit $\mathcal{O}$ through $\lambda$ is $\{n_w(\lambda)\mid w\in W\}$.
Moreover, if $w\in W$ satisfies $\nu(n_w(\lambda)) = n_w(\lambda)$, then we have $w(\lambda_0) = \lambda_0$.
Hence $n_w(\lambda) = \lambda$.
Therefore $\mathcal{O} = \{n_w(\lambda)\mid w\in W/\Stab_W(\nu(\lambda))\}$.
Since $w\chi_\Theta(E(\lambda))\ne 0$, $w^{-1}(\nu(\lambda))$ is dominant.
Hence, if $v\in W$ does not stabilize $\nu(\lambda)$, then $w^{-1}v^{-1}(\nu(\lambda))$ is not dominant.
Therefore we have $w\chi_\Theta(E(n_v(\lambda))) = 0$.
Hence $w\chi_\Theta(z_\mathcal{O}) = \sum_{v\in W/\Stab(\nu(\lambda))}w\chi_\Theta(E(n_v(\lambda))) = w\chi_\Theta(E(\lambda))$ and it is invertible.
Therefore $\mathcal{J}X\ne 0$.

We assume that $\pi$ is supersingular.
Let $X$ be an irreducible $\mathcal{A}$-submodule.
Then $\mathcal{J}X \subset \mathcal{J}\pi = 0$.
Hence we have $\supp X = \Lambda_\Delta(1)$.
Thereofre \ref{enum:supersingularity, Jpi = 0} implies \ref{enum:supersingularity, by any X}.
Assume that there exists an irreducible $\mathcal{A}$-submodule $X$ such that $\supp X = \Lambda_\Delta(1)$.
Then $\mathcal{J}X = 0$.
Hence $\{x\in \pi\mid \mathcal{J}x = 0\}\ne 0$.
Since the left hand side is $\mathcal{H}$-stable, $\{x\in \pi\mid \mathcal{J}x = 0\}= \pi$.
Therefore $\mathcal{J}\pi = 0$.
Hence \ref{enum:supersingularity, by some X} implies \ref{enum:supersingularity, Jpi = 0}.
\end{proof}

Let $\alggrp{P} = \alggrp{M}\alggrp{N}$ be a standard parabolic subgroup and $\sigma$ an irreducible supersingular representation of $\mathcal{H}_{\alggrp{M}}$.
Define
\[
	\Delta(\sigma) = \{\alpha\in\Delta\mid \langle\Delta_{\alggrp{M}},\check{\alpha}\rangle = 0, \text{$\sigma(E^{\alggrp{M}}(\lambda)) = 1$ for any $\lambda \in \Lambda_{s_\alpha}'(1)$}\}\cup \Delta_{\alggrp{M}}.
\]
Notice that if $\langle \Delta_{\alggrp{M}},\check{\alpha}\rangle = 0$ and $\lambda\in \Lambda_{s_\alpha}'(1)$, then $\langle \Delta_{\alggrp{M}},\nu(\lambda)\rangle = 0$ by Lemma~\ref{lem:structure of Lambda'_s}.
Hence $\ell_{\alggrp{M}}(\lambda) = 0$.
Therefore we have $E^{\alggrp{M}}(\lambda) = T^{\alggrp{M}}_\lambda$.

Let $\alggrp{P}(\sigma) = \alggrp{M}(\sigma)\alggrp{N}(\sigma)$ be the parabolic subgroup corresponding to $\Delta(\sigma)$.
Let $\alggrp{Q}$ be a parabolic subgroup such that $\alggrp{P}(\sigma)\supset \alggrp{Q}\supset \alggrp{P}$.
By Proposition~\ref{prop:extending}, we have the extension $e_{\alggrp{Q}}(\sigma)$ of $\sigma$ to $\mathcal{H}_{\alggrp{Q}}$.
By Proposition~\ref{prop:compatibility of extensions}, if $\alggrp{Q}'\supset \alggrp{Q}$, then we have
\begin{align*}
I_{\alggrp{Q}'}(e_{\alggrp{Q}'}(\sigma)) & = \Hom_{\mathcal{H}_{\alggrp{Q}'}^-}(\mathcal{H},e_{\alggrp{Q}'}(\sigma)|_{\mathcal{H}_{\alggrp{Q}'}^-})\\
& \subset \Hom_{\mathcal{H}_{\alggrp{Q}}^-}(\mathcal{H},e_{\alggrp{Q}'}(\sigma)|_{\mathcal{H}_{\alggrp{Q}}^-})\\
& = \Hom_{\mathcal{H}_{\alggrp{Q}}^-}(\mathcal{H},e_{\alggrp{Q}}(\sigma)|_{\mathcal{H}_{\alggrp{Q}}^-}) = I_{\alggrp{Q}}(e_{\alggrp{Q}}(\sigma)).
\end{align*}
Define an $\mathcal{H}$-module $I(\alggrp{P},\sigma,\alggrp{Q})$ by
\[
	I(\alggrp{P},\sigma,\alggrp{Q}) = \left.I_{\alggrp{Q}}(e_{\alggrp{Q}}(\sigma))\;\middle/\;\left(\sum_{\alggrp{P}(\sigma)\supset\alggrp{Q}'\supsetneq\alggrp{Q}}I_{\alggrp{Q}'}(e_{\alggrp{Q}'}(\sigma))\right)\right.
\]
The main theorem of this paper is the following.
\begin{thm}\label{thm:main theorem}
For any triple $(\alggrp{P},\sigma,\alggrp{Q})$, $I(\alggrp{P},\sigma,\alggrp{Q})$ is irreducible and any irreducible $\mathcal{H}$-module have this form.
Moreover, $I(\alggrp{P},\sigma,\alggrp{Q}) \simeq I(\alggrp{P}',\sigma',\alggrp{Q}')$ implies $\alggrp{P} = \alggrp{P}'$, $\sigma\simeq\sigma'$ and $\alggrp{Q} = \alggrp{Q}'$.
\end{thm}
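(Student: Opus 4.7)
The plan is to follow the pattern of the group-theoretic classification in \cite{MR3143708,arXiv:1412.0737}, converting everything to questions about the restriction of modules to the subalgebra $\mathcal{A}$ and exploiting the intertwining operators constructed in Section~\ref{sec:Intertwining operators}. The key structural input is the (not-yet-stated) Lemma~\ref{lem:I(P,sigma,Q) as A-mod}, which will describe $I(\alggrp{P},\sigma,\alggrp{Q})|_\mathcal{A}$ as a direct sum of twists $w\sigma_\mathcal{A}$; combined with Proposition~\ref{prop:tensor induction and parabolic induction} and Proposition~\ref{prop:compatibility of extensions} this gives a very explicit handle on these modules.

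For irreducibility, I would start from the $\mathcal{A}$-decomposition of $I(\alggrp{P},\sigma,\alggrp{Q})$: writing $\sigma|_\mathcal{A}$ as a direct sum of irreducible $\mathcal{A}$-modules $X$ with $\supp X = \Lambda_{\Delta_{\alggrp{M}}}(1)$ (valid since $\sigma$ is supersingular, by Lemma~\ref{lem:supersingurality, in terms of A-moudle}), $I(\alggrp{P},\sigma,\alggrp{Q})|_\mathcal{A}$ becomes $\bigoplus_w wX$ with $w$ running through a suitable set of coset representatives determined by the chain $\alggrp{P}\subset\alggrp{Q}\subset\alggrp{P}(\sigma)$. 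The hypothesis $\alggrp{Q}\subset\alggrp{P}(\sigma)$ means $\Delta_{\alggrp{Q}}\subset \Delta(X)$ in the notation of Corollary~\ref{cor:isom between std mod for simple module}, so for any two $w,w'$ arising in the decomposition one has $wX\otimes_\mathcal{A}\mathcal{H}\simeq w'X\otimes_\mathcal{A}\mathcal{H}$. Any nonzero $\mathcal{H}$-submodule $\pi'\subset I(\alggrp{P},\sigma,\alggrp{Q})$ contains some $wX$ as an $\mathcal{A}$-submodule, hence by the isomorphism of the induced modules it contains every $w'X$; this forces $\pi' = I(\alggrp{P},\sigma,\alggrp{Q})$. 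The cleanest way to phrase this is to first show that the quotient $I(\alggrp{P},\sigma,\alggrp{Q})$ cuts $I_{\alggrp{Q}}(e_{\alggrp{Q}}(\sigma))|_\mathcal{A}$ down to precisely those $w\sigma_\mathcal{A}$ whose supports are not killed by any strict enlargement $\alggrp{Q}'\supsetneq\alggrp{Q}$.

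For exhaustion, given an irreducible $\pi$, apply Proposition~\ref{prop:irred rep of A factors chi_Theta} to find an irreducible $\mathcal{A}$-submodule $X\subset \pi$ with $\supp X = n_w(\Lambda_\Theta^+(1))$ for some $\Theta\subset\Delta$ and $w\in W$ with $w(\Theta)\subset\Sigma^+$. Let $\alggrp{M}$ be the Levi subgroup with $\Delta_{\alggrp{M}} = \Theta$; using Frobenius-type adjunction coming from Proposition~\ref{prop:tensor induction and parabolic induction} and the construction in subsection~\ref{subsec:pro-$p$-Iwahori Hecke algebra of a Levi subgroup}, the $\mathcal{H}_{\alggrp{M}}^-$-action on $X$ extends (via the positive subalgebra and inversion of a strongly positive central element à la Remark~\ref{rem:role of strongly positive/negative element}) to an irreducible $\mathcal{H}_{\alggrp{M}}$-module $\tau$; by Lemma~\ref{lem:supersingurality, in terms of A-moudle} applied on $\alggrp{M}$, $\tau$ is supersingular. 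Take $\alggrp{P} = \alggrp{M}\alggrp{N}$. Non-vanishing of $\Hom_{\mathcal{H}_{\alggrp{M}}}(\tau,\pi|_{\mathcal{H}_{\alggrp{M}}^-})$ together with the tensor description produces a nonzero map $\tau\otimes_{\mathcal{H}_{\alggrp{P}'}^+}\mathcal{H}\simeq I_{\alggrp{P}}(\tau)\to \pi$, and composing with a suitable intertwiner from Section~\ref{sec:Intertwining operators} realises $\pi$ as a quotient of $I_{\alggrp{Q}}(e_{\alggrp{Q}}(\tau))$ for the maximal $\alggrp{Q}\subset\alggrp{P}(\tau)$ such that this quotient is nonzero; irreducibility of $I(\alggrp{P},\tau,\alggrp{Q})$ then yields $\pi\simeq I(\alggrp{P},\tau,\alggrp{Q})$.

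Uniqueness is extracted by reading $(\alggrp{P},\sigma,\alggrp{Q})$ off the $\mathcal{A}$-module structure of $\pi$: $\Delta_{\alggrp{M}}$ is recovered as the common $\Theta$ of supports of irreducible $\mathcal{A}$-submodules, $\alggrp{P}(\sigma)$ is determined by $\Delta(X)$ for any such $X$, $\alggrp{Q}$ is the maximal parabolic for which the support of $X$ survives the quotient, and $\sigma$ is recovered from the $\mathcal{H}_{\alggrp{M}}^-$-module $\tau$ constructed above via the localisation of Remark~\ref{rem:role of strongly positive/negative element}. The main obstacle I expect is the exhaustion step: producing the supersingular module $\sigma$ on $\mathcal{H}_{\alggrp{M}}$ and controlling the map into $I(\alggrp{P},\sigma,\alggrp{Q})$ requires carefully combining the intertwining operators of Section~\ref{sec:Intertwining operators} (especially Corollary~\ref{cor:isom between std mod for simple module}), the subtle $\mathcal{H}_{\alggrp{M}}^+$ vs.\ $\mathcal{H}_{\alggrp{M}}^-$ duality via $\iota$, and the compatibility of extensions across different intermediate parabolics (Proposition~\ref{prop:compatibility of extensions}); each of these is delicate, and their interaction is where the real work lies.
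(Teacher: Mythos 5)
Your overall strategy --- decompose everything over $\mathcal{A}$, use Lemma~\ref{lem:I(P,sigma,Q) as A-mod}, Corollary~\ref{cor:isom between std mod for simple module} and the tensor description of Proposition~\ref{prop:tensor induction and parabolic induction} --- is the same as the paper's, and your irreducibility and uniqueness paragraphs match the actual proof in essentials. One small point there: containing one irreducible $\mathcal{A}$-submodule $w'X$ of each summand $w'\sigma_\mathcal{A}$ does not by itself give all of $w'\sigma_\mathcal{A}$ unless $\sigma_\mathcal{A}$ is $\mathcal{A}$-irreducible; the paper passes through $\pi\cap w_\Delta w_{\Delta_{\alggrp{Q}}}\sigma\neq 0$, invokes Proposition~\ref{prop:M'-mod in parabolic induction} to upgrade this to containment of $w_\Delta w_{\Delta_{\alggrp{Q}}}\sigma$, and then uses Proposition~\ref{prop:tensor induction and parabolic induction} to see that this subspace generates.

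The genuine gap is in the exhaustion step. First, there is no ``irreducible $\mathcal{H}_{\alggrp{M}}$-module $\tau$ extending the action on $X$'': what $X$ produces is the induced module $X\otimes_{\mathcal{A}'}\mathcal{H}_{\alggrp{M}'}^+$ (localized as in Remark~\ref{rem:role of strongly positive/negative element}), which is in general reducible, and your appeal to Lemma~\ref{lem:supersingurality, in terms of A-moudle} to get supersingularity is unjustified as written: that lemma requires an irreducible $\mathcal{A}_{\alggrp{M}}$-submodule of the given irreducible subquotient with full support, which you have not produced. The paper sidesteps this entirely by an induction on $\dim\alggrp{G}$: it only shows $\pi$ is a subquotient of $I_{\alggrp{P}}(\sigma)$ for \emph{some} $\mathcal{H}_{\alggrp{M}}$-module $\sigma$, picks an irreducible subquotient $\sigma'$, and applies the inductive hypothesis to $\sigma'$ on the smaller group. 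Second, and more seriously, to move from the embedding $wX\hookrightarrow\pi$ (for the $w$ handed to you by Proposition~\ref{prop:irred rep of A factors chi_Theta}) to the position where the tensor description applies, you cannot in general invoke Corollary~\ref{cor:isom between std mod for simple module}: it requires $\Delta_w\cap\Delta(X)=\Delta_{w'}\cap\Delta(X)$, which fails for arbitrary $w$ with $w(\Theta)\subset\Sigma^+$. The paper needs the separate, genuinely nontrivial statement that $wX\otimes_\mathcal{A}\mathcal{H}$ and $w'X\otimes_\mathcal{A}\mathcal{H}$ always have the same composition factors with multiplicities, proved by deforming $X$ to $X[t^{\pm1}]$, using the freeness of Proposition~\ref{prop:freeness} and the Barthel--Livn\'e specialization lemma. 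Your ``composing with a suitable intertwiner'' does not supply this, since the intertwiners of Section~\ref{sec:Intertwining operators} are merely injective and do not transfer quotients.
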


\subsection{Irreducibility}
We prove that $I(\alggrp{P},\sigma,\alggrp{Q})$ is irreducible.
\begin{lem}\label{lem:I(P,sigma,Q) as A-mod}
The isomorphism $I_{\alggrp{Q}}(e_{\alggrp{Q}}(\sigma))\simeq \bigoplus_{w\in W^{\alggrp{Q}}}w\sigma_\mathcal{A}$ induces $I(\alggrp{P},\sigma,\alggrp{Q})\simeq \bigoplus_{\Delta_w\cap \Delta(\sigma) = \Delta_{\alggrp{Q}}}w\sigma_\mathcal{A}$.
\end{lem}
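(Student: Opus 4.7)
The plan is to work summand-by-summand under the $\mathcal{A}$-module decomposition $I_{\alggrp{Q}}(e_{\alggrp{Q}}(\sigma))\simeq\bigoplus_{w\in W^{\alggrp{Q}}}w\sigma_\mathcal{A}$ from the preceding proposition, and to identify which summands are contributed by each larger parabolic. Concretely, the central identification to establish is
\[
\Imm\bigl(I_{\alggrp{Q}'}(e_{\alggrp{Q}'}(\sigma))\hookrightarrow I_{\alggrp{Q}}(e_{\alggrp{Q}}(\sigma))\bigr)=\bigoplus_{w\in W^{\alggrp{Q}'}}w\sigma_\mathcal{A}
\]
for each $\alggrp{Q}'$ with $\alggrp{Q}\subsetneq\alggrp{Q}'\subset\alggrp{P}(\sigma)$. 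Granting this, a purely combinatorial step closes the argument: the denominator corresponds to $\bigoplus w\sigma_\mathcal{A}$ summed over $\bigcup_{\alggrp{Q}'}W^{\alggrp{Q}'}$, and since $w\in W^{\alggrp{Q}'}\iff\Delta_w\supset\Delta_{\alggrp{Q}'}$, this union consists of those $w\in W^{\alggrp{Q}}$ admitting some $\Delta_{\alggrp{Q}'}$ with $\Delta_{\alggrp{Q}}\subsetneq\Delta_{\alggrp{Q}'}\subset\Delta_w\cap\Delta(\sigma)$, i.e.\ exactly those $w$ with $\Delta_w\cap\Delta(\sigma)\supsetneq\Delta_{\alggrp{Q}}$ (the maximal witness being $\Delta_{\alggrp{Q}'}=\Delta_w\cap\Delta(\sigma)$). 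Complementing in $W^{\alggrp{Q}}$ produces exactly the index set $\{w:\Delta_w\cap\Delta(\sigma)=\Delta_{\alggrp{Q}}\}$ appearing in the lemma.

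One direction of the displayed identification is immediate from dimension count, since Lemma~\ref{lem:parabolic induction as vector space} gives $\dim I_{\alggrp{Q}'}(e_{\alggrp{Q}'}(\sigma))=|W^{\alggrp{Q}'}|\dim\sigma=\dim\bigoplus_{w\in W^{\alggrp{Q}'}}w\sigma_\mathcal{A}$; the substantive task is the containment $\Imm\subset\bigoplus_{w\in W^{\alggrp{Q}'}}w\sigma_\mathcal{A}$. Given $\varphi\in I_{\alggrp{Q}'}(e_{\alggrp{Q}'}(\sigma))$ and $v\in W^{\alggrp{Q}}\setminus W^{\alggrp{Q}'}$, the goal is to show $\varphi(T_{n_v})=0$. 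I would factor $v=v_1v_2$ with $v_1\in W^{\alggrp{Q}'}$ and $v_2\in W_{\alggrp{Q}'}\setminus\{1\}$ so that $\ell(v)=\ell(v_1)+\ell(v_2)$. The geometric input is the orthogonality built into $\Delta(\sigma)$: since $\alggrp{Q}'\subset\alggrp{P}(\sigma)$, the decomposition $\Delta_{\alggrp{Q}'}=\Delta_{\alggrp{M}}\sqcup(\Delta_{\alggrp{Q}'}\setminus\Delta_{\alggrp{M}})$ is orthogonal, giving $W_{\alggrp{Q}'}=W_{\alggrp{M}}\times W_{\Delta_{\alggrp{Q}'}\setminus\Delta_{\alggrp{M}}}$. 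Writing $v_2=v_2^{(1)}v_2^{(2)}$ accordingly, combining $v(\Delta_{\alggrp{M}})\subset\Sigma^+$ (from $v\in W^{\alggrp{Q}}$) with the fact that $W_{\Delta_{\alggrp{Q}'}\setminus\Delta_{\alggrp{M}}}$ fixes $\Delta_{\alggrp{M}}$ pointwise and the standard property $v_1(\Sigma_{\alggrp{Q}'}^-)\subset\Sigma^-$ for $v_1\in W^{\alggrp{Q}'}$ forces $v_2^{(1)}(\Delta_{\alggrp{M}})\subset\Sigma_{\alggrp{M}}^+$, and hence $v_2^{(1)}=1$.

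Thus $v_2\in W_{\Delta_{\alggrp{Q}'}\setminus\Delta_{\alggrp{M}}}$ admits a reduced expression $s_1\cdots s_k$ ($k\ge 1$) with $s_i=s_{\alpha_i}$, $\alpha_i\in\Delta_{\alggrp{Q}'}\setminus\Delta_{\alggrp{M}}$, and length additivity gives $T_{n_v}=T_{n_{v_1}}T_{n_{s_1}}\cdots T_{n_{s_k}}$. Since each $n_{s_i}\in W_{\alggrp{Q}'}$ is both $\alggrp{Q}'$-positive and $\alggrp{Q}'$-negative, $T_{n_{s_i}}$ lies in $j^-_{\alggrp{Q}'}(\mathcal{H}_{\alggrp{Q}'}^-)$, so right $\mathcal{H}_{\alggrp{Q}'}^-$-equivariance of $\varphi$ produces $\varphi(T_{n_v})=\varphi(T_{n_{v_1}})\,T^{\alggrp{Q}'}_{n_{s_1}}\cdots T^{\alggrp{Q}'}_{n_{s_k}}$. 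The punchline is Proposition~\ref{prop:extending} applied to the orthogonal decomposition $\Delta_{\alggrp{Q}'}=\Delta_{\alggrp{M}}\sqcup(\Delta_{\alggrp{Q}'}\setminus\Delta_{\alggrp{M}})$ of the Levi $\alggrp{M}_{\alggrp{Q}'}$: the extension hypothesis on $\Lambda'_{s_\alpha}(1)$ for $\alpha$ in the second factor holds by the defining condition of $\Delta(\sigma)$, so $T^{\alggrp{Q}'}_{n_{s_1}}$ acts as zero on $e_{\alggrp{Q}'}(\sigma)$, forcing $\varphi(T_{n_v})=0$. The main obstacle will be the Weyl-group bookkeeping showing $v_2^{(1)}=1$; once in hand, everything else — the injectivity of the inclusion of parabolic inductions via Proposition~\ref{prop:compatibility of extensions}, the extension vanishing, and the combinatorial translation of the indexing set — is routine.
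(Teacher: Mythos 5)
Your proposal is correct and follows essentially the same route as the paper's own proof: identify the image of $I_{\alggrp{Q}'}(e_{\alggrp{Q}'}(\sigma))$ in the $W^{\alggrp{Q}}$-indexed decomposition, use the orthogonality $\Delta_{\alggrp{Q}'}=\Delta_{\alggrp{M}}\sqcup(\Delta_{\alggrp{Q}'}\setminus\Delta_{\alggrp{M}})$ (which holds because $\alggrp{Q}'\subset\alggrp{P}(\sigma)$) to see that $W_{\alggrp{Q}'}\cap W^{\alggrp{Q}}$ is generated by $s_\alpha$ with $\alpha\in\Delta_{\alggrp{Q}'}\setminus\Delta_{\alggrp{M}}$, note that $T_{n_{s_\alpha}}$ kills $e_{\alggrp{Q}'}(\sigma)$ by the construction of the extension, and close with the same combinatorial translation of the index set. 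The one point the paper spends a few lines on that you pass over silently is the identification $\sigma_\mathcal{A}=e_{\alggrp{Q}}(\sigma)_\mathcal{A}$ (needed to replace the a priori decomposition $\bigoplus_{w\in W^{\alggrp{Q}}}w\,e_{\alggrp{Q}}(\sigma)_\mathcal{A}$ by the one in the statement), but this is routine and your plan implicitly uses it.
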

\begin{proof}
We remark that $\sigma_\mathcal{A} = e_{\alggrp{Q}}(\sigma)_\mathcal{A}$.
Indeed, if $\lambda\in \Lambda(1)$ is $\alggrp{M}$-negative, then $\sigma_\mathcal{A}(E(\lambda)) = \sigma(E^{\alggrp{M}}(\lambda)) = e_{\alggrp{Q}}(\sigma)(E^{\alggrp{Q}}(\lambda))$ by the definition of $\sigma_\mathcal{A}$ and the extension.
Since $\lambda$ is also $\alggrp{Q}$-negative, we have $e_{\alggrp{Q}}(\sigma)(E^{\alggrp{Q}}(\lambda)) = e_{\alggrp{Q}}(\sigma)_\mathcal{A}(E(\lambda))$.
If $\lambda$ is not $\alggrp{M}$-negative, take $\lambda_0^-\in \Lambda(1)$ as in Remark~\ref{rem:role of strongly positive/negative element}.
Then the above calculation shows $e_{\alggrp{Q}}(\sigma)_\mathcal{A}(E(\lambda_0^-)) = \sigma(E^{\alggrp{M}}(\lambda_0^-))$ and it is invertible.
On the other hand, since $\lambda$ is not $\alggrp{M}$-negative, $\nu(\lambda)$ and $\nu(\lambda_0^-)$ do not belong to the same closed chamber.
Hence $E(\lambda)E(\lambda_0^-) = 0$ by \eqref{eq:multiplication in A}.
Therefore we have $e_{\alggrp{Q}}(\sigma)_\mathcal{A}(E(\lambda)) = e_{\alggrp{Q}}(\sigma)_\mathcal{A}(E(\lambda)E(\lambda_0^-))e_{\alggrp{Q}}(\sigma)_\mathcal{A}(E(\lambda_0^-))^{-1} = 0$.
By the definition of $\sigma_\mathcal{A}$, we have $\sigma_\mathcal{A}(E(\lambda)) = 0$.
Hence we get $\sigma_\mathcal{A} = e_{\alggrp{Q}}(\sigma)_\mathcal{A}$.

For $\alggrp{P}(\sigma)\supset\alggrp{Q}'\supset\alggrp{Q}$, we have
\[
\begin{tikzcd}
I_{\alggrp{Q}'}(e_{\alggrp{Q}'}(\sigma))\arrow[hookrightarrow]{r}\arrow[-]{d}[above,rotate=90]{\sim} & I_{\alggrp{Q}}(e_{\alggrp{Q}}(\sigma))\arrow[-]{d}[above,rotate=90]{\sim}\\
\displaystyle\bigoplus_{w\in W^{\alggrp{Q}'}}w\sigma_\mathcal{A} \arrow[hookrightarrow]{r} & \displaystyle\bigoplus_{w\in W^{\alggrp{Q}}}w\sigma_\mathcal{A}.
\end{tikzcd}
\]
We write the low horizontal map explicitly.
If $\varphi\in I_{\alggrp{Q}'}(e_{\alggrp{Q}'}(\sigma))$, then its image in $\bigoplus_{w\in W^{\alggrp{Q}}}w\sigma_\mathcal{A}\simeq I_{\alggrp{Q}}(e_{\alggrp{Q}}(\sigma))$ is $(\varphi(T_{n_w}))_{w\in W^{\alggrp{Q}}}$.
Let $w\in W$ and decompose $w = w_1w_2$ such that $w_1\in W^{\alggrp{Q}'}$ and $w_2\in W_{\alggrp{Q}'}$.
Then we have $(\varphi(T_{n_w})) = (\varphi(T_{n_{w_1}}T_{n_{w_2}})) = (\varphi(T_{n_{w_1}})T_{n_{w_2}})$.
Hence the low horizontal map of the above diagram is given by $(x_w)_{w\in W^{\alggrp{Q}'}}\mapsto (x_{w_1}T_{n_{w_2}})_{w_1w_2}$.
For $\alpha\in \Sigma^+_{\alggrp{Q}}\subset \Sigma^+_{\alggrp{Q}'}$, $w_2(\alpha)\in \Sigma_{\alggrp{Q}'}$.
Hence $w_1w_2(\alpha) > 0$ if and only if $w_2(\alpha) > 0$.
Therefore, $w_1w_2\in W^{\alggrp{Q}}$ if and only if $w_2\in W^{\alggrp{Q}}$.

We have $w_2\in W_{\alggrp{Q}'}\cap W^{\alggrp{Q}}\subset W_{\alggrp{Q}'}\cap W^{\alggrp{P}}$.
Since we have the orthogonal decomposition $\Delta_{\alggrp{Q}'} = \Delta_{\alggrp{P}}\cup (\Delta_{\alggrp{Q}'}\setminus\Delta_{\alggrp{P}})$, the group $W_{\alggrp{Q}'}\cap W^{\alggrp{P}}$ is generated by $\{s_\alpha\mid \alpha\in \Delta_{\alggrp{Q}'}\setminus\Delta_{\alggrp{P}}\}$.
For such $\alpha$, $T_{n_{s_\alpha}} = 0$ on $e_{\alggrp{Q}'}(\sigma)$ by the definition of the extension.
Therefore, $x_{w_1}T_{n_{w_2}} = 0$ if $w_2\ne 1$.
Hence the low horizontal map of the above diagram is the inclusion map induced by $W^{\alggrp{Q}'}\hookrightarrow W^{\alggrp{Q}}$.
Therefore we have
\[
	I(\alggrp{P},\sigma,\alggrp{Q}) = \bigoplus_w w\sigma_\mathcal{A}
\]
where $w$ runs in $W^{\alggrp{Q}}$ such that $w\notin W^{\alggrp{Q}'}$ for any $\alggrp{P}(\sigma)\supset\alggrp{Q}'\supsetneq \alggrp{Q}$.
Notice that $w\in W^{\alggrp{Q}}$ if and only if $\Delta_w\supset \Delta_{\alggrp{Q}}$ by the definition.
Hence $w\in W^{\alggrp{Q}}$ and $w\notin W^{\alggrp{Q}'}$ for any $\alggrp{P}(\sigma)\supset\alggrp{Q}'\supsetneq \alggrp{Q}$ is equivalent  to $\Delta_w\supset \Delta_{\alggrp{Q}}$ and $\alpha\not\in \Delta_w$ for any $\alpha\in\Delta(\sigma)\setminus\Delta_{\alggrp{Q}}$ which is equivalent to $\Delta_w\cap \Delta(\sigma) = \Delta_{\alggrp{Q}}$.
\end{proof}

We use the notation $\Delta(X)$ from Corollary~\ref{cor:isom between std mod for simple module}.

\begin{lem}\label{lem:on A-submodule of sigma_A}
Let $X$ be an irreducible $\mathcal{A}$-submodule of $\sigma_\mathcal{A}$.
Then $\supp X = \Lambda_{\Delta_{\alggrp{M}}}^+(1)$ and $\Delta(X) = \Delta(\sigma)$.
\end{lem}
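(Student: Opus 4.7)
The plan is to view $X$ as an irreducible $\mathcal{A}_{\alggrp{M}}$-submodule of $\sigma$ by a localization argument, and then transfer the supersingularity of $\sigma$ to the two assertions about $X$. First I would fix a central element $\lambda_0^-$ of $\widetilde{W}_{\alggrp{M}}(1)$ that is $\alggrp{M}$-negative with $\langle\nu(\lambda_0^-),\alpha\rangle>0$ for all $\alpha\in\Sigma^+\setminus\Sigma_{\alggrp{M}}^+$ (Remark~\ref{rem:role of strongly positive/negative element}); then $E^{\alggrp{M}}(\lambda_0^-)$ is central in $\mathcal{H}_{\alggrp{M}}$ and acts on the irreducible $\sigma$ as a nonzero scalar $c$. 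Since the actions of $E^{\alggrp{M}}(\lambda)$ on $\sigma$ and of $E(\lambda)$ on $\sigma_{\mathcal{A}}$ agree for $\alggrp{M}$-negative $\lambda$, the $\mathcal{A}$-submodule $X\subset\sigma_{\mathcal{A}}$ is automatically $\mathcal{A}_{\alggrp{M}}^-$-stable in $\sigma$. Using $\mathcal{A}_{\alggrp{M}}=E^{\alggrp{M}}(\lambda_0^-)^{-1}\mathcal{A}_{\alggrp{M}}^-$ together with the identity $E^{\alggrp{M}}(\mu)=c^{-n}E(\mu(\lambda_0^-)^n)$ on $\sigma$ (valid once $\mu(\lambda_0^-)^n$ becomes $\alggrp{M}$-negative), I would extend this to full $\mathcal{A}_{\alggrp{M}}$-stability. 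Conversely, any $\mathcal{A}_{\alggrp{M}}$-submodule of $X$ is $\mathcal{A}_{\alggrp{M}}^-$-stable, hence $\mathcal{A}$-stable (the remaining $E(\lambda)$ act by zero on $\sigma_{\mathcal{A}}$), so the $\mathcal{A}$-irreducibility of $X$ upgrades to irreducibility as an $\mathcal{A}_{\alggrp{M}}$-submodule of $\sigma$.

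Applying Lemma~\ref{lem:supersingurality, in terms of A-moudle} inside $\mathcal{H}_{\alggrp{M}}$ to the supersingular $\sigma$ then yields $\supp_{\alggrp{M}} X=\Lambda_{\Delta_{\alggrp{M}}}(1)$. Because $E(\lambda)\ne 0$ on $X$ requires both that $\lambda$ be $\alggrp{M}$-negative (otherwise the action is zero already on $\sigma_{\mathcal{A}}$) and that $E^{\alggrp{M}}(\lambda)\ne 0$ on $X$, intersecting gives $\supp X=\{\alggrp{M}\text{-negative}\}\cap\Lambda_{\Delta_{\alggrp{M}}}(1)=\Lambda_{\Delta_{\alggrp{M}}}^+(1)$, proving the first claim and identifying $\Theta=\Delta_{\alggrp{M}}$ for $X$ in the sense of Corollary~\ref{cor:isom between std mod for simple module}.

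For $\Delta(X)=\Delta(\sigma)$ the containment of $\Delta_{\alggrp{M}}$ is built into the definition, so I fix $\alpha\in\Delta\setminus\Delta_{\alggrp{M}}$ with $\langle\Delta_{\alggrp{M}},\check{\alpha}\rangle=0$ and $\lambda\in\Lambda'_{s_\alpha}(1)$. Then $\nu(\lambda)\in\Z\check{\alpha}$ is orthogonal to $\Sigma_{\alggrp{M}}$, so $\lambda\in\Lambda_{\Delta_{\alggrp{M}}}(1)$ and $\ell_{\alggrp{M}}(\lambda)=0$, whence $E^{\alggrp{M}}(\lambda)=T^{\alggrp{M}}_\lambda$. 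Choosing a dominant $\mu\in\Lambda_{\Delta_{\alggrp{M}}}^+(1)$ with $\mu\lambda\in\Lambda_{\Delta_{\alggrp{M}}}^+(1)$ and using $E^{\alggrp{M}}(\mu)E^{\alggrp{M}}(\lambda)=E^{\alggrp{M}}(\mu\lambda)$ in $\mathcal{A}_{\alggrp{M}}\otimes C$ (valid because $\nu(\lambda)$ lies in the closure of every $\Sigma_{\alggrp{M}}$-chamber), the formula $\tau_\lambda=\tau_\mu^{-1}\tau_{\mu\lambda}$ in $C[\Lambda_{\Delta_{\alggrp{M}}}(1)]$ translates on $X$ to $\tau_\lambda=E^{\alggrp{M}}(\lambda)=T^{\alggrp{M}}_\lambda$. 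The inclusion $\Delta(\sigma)\subset\Delta(X)$ is then immediate by restriction. For the reverse, assuming $T^{\alggrp{M}}_\lambda|_X=\mathrm{id}$ for every $\lambda\in\Lambda'_{s_\alpha}(1)$, I would set $Y=\{y\in\sigma\mid yT^{\alggrp{M}}_\lambda=y\text{ for all }\lambda\in\Lambda'_{s_\alpha}(1)\}\supset X\ne 0$ and prove $Y$ is $\mathcal{H}_{\alggrp{M}}$-stable: the group $\Lambda'_{s_\alpha}(1)$ is preserved by $\widetilde{W}_{\alggrp{M}}(1)$-conjugation (because $\alggrp{Z}(F)$ normalizes $G'_{s_\alpha}$, and the Cartan symmetry $\langle\beta,\check{\alpha}\rangle=0\iff\langle\alpha,\check{\beta}\rangle=0$ forces $W_{\alggrp{M}}$ to fix $\alpha$), and coupled with $\ell_{\alggrp{M}}(\lambda)=0$ this gives $T^{\alggrp{M}}_{\widetilde{w}}T^{\alggrp{M}}_\lambda=T^{\alggrp{M}}_{\widetilde{w}\lambda\widetilde{w}^{-1}}T^{\alggrp{M}}_{\widetilde{w}}$ for all $\widetilde{w}\in\widetilde{W}_{\alggrp{M}}(1)$. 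Irreducibility of $\sigma$ then forces $Y=\sigma$, hence $\alpha\in\Delta(\sigma)$.

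The main technical hurdle will be the identification $\tau_\lambda\leftrightarrow T^{\alggrp{M}}_\lambda$ on $X$: one must locate $\mu$ so that both $\mu$ and $\mu\lambda$ land in $\Lambda_{\Delta_{\alggrp{M}}}^+(1)$ (exploiting that $\nu(\lambda)$ is neutral for the $\Sigma_{\alggrp{M}}$-chamber structure) and then verify $\ell_{\alggrp{M}}(\mu\lambda)=\ell_{\alggrp{M}}(\mu)$ to justify the product formula in $\mathcal{A}_{\alggrp{M}}\otimes C$; without this bridge the conditions defining $\Delta(X)$ and $\Delta(\sigma)$ look genuinely different, and the step from ``identity on $X$'' to ``identity on $\sigma$'' in the harder inclusion also relies crucially on the $\widetilde{W}_{\alggrp{M}}(1)$-stability of $\Lambda'_{s_\alpha}(1)$ obtained from the orthogonality hypothesis.
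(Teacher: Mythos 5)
Your proposal is correct and follows essentially the same route as the paper's proof: pass from $\mathcal{A}$ to $\mathcal{A}_{\alggrp{M}}$ via the central element $\lambda_0^-$, invoke supersingularity of $\sigma$ (Lemma~\ref{lem:supersingurality, in terms of A-moudle}) to identify the support, and then match $\tau_\lambda$ with $E^{\alggrp{M}}(\lambda)=T^{\alggrp{M}}_\lambda$ via a factorisation $\lambda=\lambda_1\lambda_2^{-1}$ in $\Lambda^+_{\Delta_{\alggrp{M}}}(1)$, closing the hard inclusion by the $\mathcal{H}_{\alggrp{M}}$-stability of the fixed-point subspace $Y$ and irreducibility of $\sigma$. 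The only variations are cosmetic: you note that $E^{\alggrp{M}}(\lambda_0^-)$ acts by a scalar (so the paper's $X'=X\sigma(\mathcal{A}_{\alggrp{M}})$ is already $X$), and you spell out the $\widetilde{W}_{\alggrp{M}}(1)$-stability of $\Lambda'_{s_\alpha}(1)$ underlying the claim that $Y$ is $\mathcal{H}_{\alggrp{M}}$-stable, a point the paper leaves implicit.
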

\begin{proof}
Take $\lambda_0^-\in \Lambda(1)$ as in Remark~\ref{rem:role of strongly positive/negative element}.
Then $\mathcal{A}_{\alggrp{M}} = (\mathcal{A}_{\alggrp{M}}\cap \mathcal{H}_{\alggrp{M}}^-)E^{\alggrp{M}}(\lambda_0^-)^{-1}$ by the argument in Remark~\ref{rem:role of strongly positive/negative element}.
Put $X' = X\sigma(\mathcal{A}_{\alggrp{M}}) = \bigcup_{n\in\Z_{\ge 0}}X\sigma(E^{\alggrp{M}}(\lambda_0^-)^{-n})$.
We prove that $X'$ is an irreducible $\mathcal{A}_{\alggrp{M}}$-module.
Take $x\in X'\setminus\{0\}$.
Then by multiplying the power of $E^{\alggrp{M}}(\lambda_0^-)$, we have $X\cap x\mathcal{A}_{\alggrp{M}}\ne 0$.
Since $X$ is an irreducible $\mathcal{A}$-module, we have $X \subset (X\cap x\mathcal{A}_{\alggrp{M}})\sigma_{\mathcal{A}}(\mathcal{A})$.
The definition of $\sigma_\mathcal{A}$ tells that $\sigma_\mathcal{A}(\mathcal{A}) = \sigma_\mathcal{A}(\mathcal{A}\cap \mathcal{H}_{\alggrp{M}}^-) = \sigma(\mathcal{A}_{\alggrp{M}}\cap \mathcal{H}_{\alggrp{M}}^-)$.
Hence $X \subset (X\cap x\mathcal{A}_{\alggrp{M}})\sigma(\mathcal{A}_{\alggrp{M}}\cap \mathcal{H}_{\alggrp{M}}^-)\subset x\sigma(\mathcal{A}_{\alggrp{M}})$.
Hence $X' = x\sigma(\mathcal{A}_{\alggrp{M}})$.
We get the irreducibility of $X'$.

Since $\sigma$ is supersingular, $\supp X' = \Lambda_{\Delta_{\alggrp{M}}}(1)$.
Recall that if $\lambda$ is $\alggrp{M}$-negative, then the action of $E(\lambda)$ on $\sigma_\mathcal{A}$ is the action of $E^{\alggrp{M}}(\lambda)$ on $\sigma$.
Hence for $\alggrp{M}$-negative $\lambda$, we have $\lambda\in \supp X$ if and only if $\lambda\in\Lambda_{\Delta_{\alggrp{M}}}(1)$.
By the definition of $\sigma_\mathcal{A}$, if $\lambda$ is not $\alggrp{M}$-negative then $\lambda\notin \supp X$.
Hence $\lambda$ is in $\supp X$ if and only if $\lambda$ is $\alggrp{M}$-negative and $\lambda\in \Lambda_{\Delta_{\alggrp{M}}}(1)$.
Namely we have $\supp X = \Lambda_{\Delta_{\alggrp{M}}}^+(1)$.

Hence $X$ is a $C[\Lambda_{\Delta_{\alggrp{M}}}(1)]$-module.
Assume that $\langle \alpha,\Delta_{\alggrp{M}}\rangle = 0$.
Then for $\lambda\in \Lambda_{s_\alpha}'(1)$, we have $\langle\nu(\lambda),\Delta_{\alggrp{M}}\rangle = 0$.
Namely, $\lambda\in \Lambda_{\Delta_{\alggrp{M}}}(1)$.
Take $\lambda_1,\lambda_2\in \Lambda_{\Delta_{\alggrp{M}}}^+(1)$ such that $\lambda = \lambda_1\lambda_2^{-1}$.
Then the action of $\tau_{\lambda_1}$ is equal to that of $E(\lambda_1)$ and, it is also equal to the action of $E^{\alggrp{M}}(\lambda_1)$ on $X\subset \sigma$.
It is also true if we replace $\lambda_1$ with $\lambda_2$.
Since $\tau_\lambda = \tau_{\lambda_1}\tau_{\lambda_2}^{-1}$ and $E^{\alggrp{M}}(\lambda) = E^{\alggrp{M}}(\lambda_1)E^{\alggrp{M}}(\lambda_2)^{-1}$, the action of $\tau_\lambda$ on $X$ and $E^{\alggrp{M}}(\lambda)$ on $X\subset \sigma$ coincide with each other.
If $\alpha\in \Delta(\sigma)$, then $E^{\alggrp{M}}(\lambda)$ is trivial on $\sigma$ for any $\lambda\in \Lambda'_{s_\alpha}(1)$.
Hence $\tau_\lambda$ is trivial on $X$, namely $\alpha\in \Delta(X)$.
Conversely, assume that $\alpha\in\Delta(X)$.
Then $\{x\in \sigma\mid xE^{\alggrp{M}}(\lambda) = x\ (\lambda\in \Lambda'_{s_\alpha}(1))\}\supset X$.
In particular it is not zero.
On the other hand, this subspace is stable under the action of $\mathcal{H}_{\alggrp{M}}$.
Hence this space is $\sigma$.
Therefore $\alpha\in\Delta(\sigma)$.
\end{proof}

\begin{lem}\label{lem:irreducible A-representation of parabolic induction}
There exists $\lambda\in n_{w_{\Delta}w_{\Delta_{\alggrp{Q}}}}(\Lambda_{\Delta_{\alggrp{Q}}}^+(1))$ such that $w\sigma_\mathcal{A}(E(\lambda)) = 0$ for any $w\in W^{\alggrp{Q}}\setminus \{w_\Delta w_{\Delta_{\alggrp{Q}}}\}$.
In particular, if $X$ is an irreducible $\mathcal{A}$-submodule of $I(\alggrp{P},\sigma,\alggrp{Q})$ such that $\supp X = n_{w_\Delta w_{\Delta_{\alggrp{Q}}}}(\Lambda_{\Delta_{\alggrp{Q}}}^+(1))$, then we have $X\subset w_{\Delta}w_{\Delta_{\alggrp{Q}}}\sigma$.
\end{lem}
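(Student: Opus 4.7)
The plan is to exhibit an explicit $\lambda = n_{w_0}(\mu)$, with $w_0 := w_\Delta w_{\Delta_{\alggrp{Q}}}$ and $\mu$ chosen sufficiently generically in $\Lambda_{\Delta_{\alggrp{Q}}}^+(1)$, and to reduce the desired vanishing to the assertion that $n_w^{-1}(\lambda)$ fails to be $\alggrp{M}$-negative for every $w \in W^{\alggrp{Q}} \setminus \{w_0\}$. Once this reduction is done, the defining property of $\sigma_\mathcal{A}$ (that $E(\lambda')$ acts as $0$ on $\sigma_\mathcal{A}$ when $\lambda'$ is not $\alggrp{M}$-negative) immediately gives
\[
w\sigma_\mathcal{A}(E(\lambda)) = \sigma_\mathcal{A}(E(n_w^{-1}(\lambda))) = 0.
\]

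First I would take $\mu \in \Lambda_{\Delta_{\alggrp{Q}}}^+(1)$ with $\nu(\mu)$ in the strict interior of the $\Delta_{\alggrp{Q}}$-face of the dominant chamber, i.e., $\langle \alpha, \nu(\mu) \rangle = 0$ for $\alpha \in \Delta_{\alggrp{Q}}$ and $\langle \alpha, \nu(\mu) \rangle > 0$ for $\alpha \in \Delta \setminus \Delta_{\alggrp{Q}}$. Such $\mu$ exists because the cocharacter lattice $X_*(\alggrp{S})$ embeds into $\Lambda(1)$ (via a uniformizer, as in the proof of Proposition~\ref{prop:irred rep of A factors chi_Theta}) and therefore supplies integral points in the relevant open cone. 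The point of this genericity is that $\Stab_W(\nu(\mu)) = W_{\Delta_{\alggrp{Q}}} = W_{\alggrp{Q}}$.

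The key step is the geometric claim: \emph{if $v \in W$ satisfies $v(\nu(\mu)) \in C$, where $C := \{y \in V \mid \langle \alpha, y \rangle \ge 0 \text{ for all } \alpha \in \Sigma^+ \setminus \Sigma_{\alggrp{M}}^+\}$ is the $\alggrp{M}$-negative cone, then $v \in W_{\alggrp{Q}}$.} Let $D$ denote the $W$-dominant chamber. Since each $s_\alpha$ with $\alpha \in \Delta_{\alggrp{M}}$ permutes $\Sigma^+ \setminus \Sigma_{\alggrp{M}}^+$, the cone $C$ is $W_{\alggrp{M}}$-stable, and a short check gives $C = W_{\alggrp{M}} \cdot D$: the unique $W_{\alggrp{M}}$-dominant element of a $W_{\alggrp{M}}$-orbit inside $C$ automatically satisfies $\langle \alpha, \cdot \rangle \ge 0$ for every $\alpha \in \Sigma^+$ and so lies in $D$. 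Thus if $v(\nu(\mu)) \in C$, write $v(\nu(\mu)) = u(y)$ with $u \in W_{\alggrp{M}}$ and $y \in D$; since $\nu(\mu) \in D$ and each $W$-orbit meets $D$ in a unique point, $y = \nu(\mu)$, so $u^{-1}v \in W_{\alggrp{Q}}$ and $v \in W_{\alggrp{M}} W_{\alggrp{Q}} = W_{\alggrp{Q}}$ (using $\Delta_{\alggrp{M}} \subset \Delta_{\alggrp{Q}}$). Applying this with $v = w^{-1}w_0$ for $w \in W^{\alggrp{Q}}$ forces $w \in w_0 W_{\alggrp{Q}}$, and since $w_0 \in W^{\alggrp{Q}}$ is the unique representative of this coset, $w = w_0$.

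For the ``in particular'' statement I would invoke the $\mathcal{A}$-module decomposition $I(\alggrp{P},\sigma,\alggrp{Q}) = \bigoplus_w w\sigma_\mathcal{A}$ of Lemma~\ref{lem:I(P,sigma,Q) as A-mod} with associated projections $\pi_w$; note that $w_0$ does appear in the index set since $\Delta_{w_0} = \Delta_{\alggrp{Q}} \subset \Delta(\sigma)$. For an irreducible $\mathcal{A}$-submodule $X$ with $\supp X = n_{w_0}(\Lambda_{\Delta_{\alggrp{Q}}}^+(1))$, we have $\lambda \in \supp X$ and so $XE(\lambda) \ne 0$. If $\pi_w|_X \ne 0$ for some $w \ne w_0$, irreducibility of $X$ makes $\pi_w|_X$ an $\mathcal{A}$-embedding, so $\pi_w(X)$ has the same support as $X$ and hence $\pi_w(X)E(\lambda) \ne 0$; but $\pi_w(X) \subset w\sigma_\mathcal{A}$ combined with the first part of the lemma gives $\pi_w(X)E(\lambda) = 0$, a contradiction. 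Therefore $\pi_w|_X = 0$ for all $w \ne w_0$, and $X \subset w_0\sigma$. The main technical content is the identification $C = W_{\alggrp{M}} \cdot D$ together with the stabilizer computation $\Stab_W(\nu(\mu)) = W_{\alggrp{Q}}$; given these, the rest is routine bookkeeping with supports under the $\mathcal{A}$-decomposition.
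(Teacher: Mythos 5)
Your proposal is correct, and its global strategy coincides with the paper's: both take $\lambda = n_{w_0}(\mu)$ with $\mu\in\Lambda_{\Delta_{\alggrp{Q}}}^+(1)$ strictly anti-dominant off $\Sigma_{\alggrp{Q}}$ (the paper's $\lambda_0^-$ of Remark~\ref{rem:role of strongly positive/negative element} is exactly such a $\mu$), reduce the vanishing to a failure of negativity for $n_w^{-1}(\lambda)$, and then deduce the ``in particular'' statement from the support of $X$. The difference lies in how the negativity failure is established. The paper proves a root-theoretic claim: for $w\in W^{\alggrp{Q}}\setminus\{w_0\}$ the set $(\Sigma^-\setminus\Sigma_{\alggrp{Q}}^-)\cap w_0^{-1}w(\Sigma^+\setminus\Sigma_{\alggrp{Q}}^+)$ is nonempty, by exploiting that $v=ww_0^{-1}$ is a nontrivial element of $W^{\alggrp{Q}'}$; pairing the resulting root with $\nu(\mu)$ shows $n_w^{-1}(\lambda)$ is not $\alggrp{Q}$-negative. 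You instead prove the chamber identity $C=W_{\alggrp{M}}\cdot\overline{D}$ for the $\alggrp{M}$-negative cone and combine it with $\Stab_W(\nu(\mu))=W_{\alggrp{Q}}$ and the uniqueness of dominant representatives of $W$-orbits to conclude $v\in W_{\alggrp{Q}}$, hence $w=w_0$; this shows $n_w^{-1}(\lambda)$ is not even $\alggrp{M}$-negative, which suffices since $\sigma_\mathcal{A}$ vanishes on non-$\alggrp{M}$-negative elements by definition. Your version is arguably more conceptual (it isolates the stabilizer computation and avoids producing an explicit root), while the paper's claim is the sharper statement about $\alggrp{Q}$-negativity; both yield the lemma. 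For the second assertion your projection argument and the paper's ``$xE(\lambda)=x_{w_0}E(\lambda)\ne 0$'' argument are the same computation in different clothing. The only points worth tightening are the existence of $\mu$ (which is exactly the paper's Remark~\ref{rem:role of strongly positive/negative element} applied to $\alggrp{Q}$, so you can simply cite it rather than re-deriving it from $X_*(\alggrp{S})$) and an explicit remark that the projections $\pi_w$ are $\mathcal{A}$-linear, which is what Lemma~\ref{lem:I(P,sigma,Q) as A-mod} provides.
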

\begin{proof}
First we prove the following claim.
\begin{claim}
If $w\in W^{\alggrp{Q}}\setminus \{w_\Delta w_{\Delta_{\alggrp{Q}}}\}$, then $(\Sigma^-\setminus \Sigma_{\alggrp{Q}}^-)\cap (w_\Delta w_{\Delta_{\alggrp{Q}}})^{-1} w(\Sigma^+\setminus \Sigma_{\alggrp{Q}}^+) \ne \emptyset$.
\end{claim}
Set $\alggrp{Q}' = n_{w_{\Delta} w_{\Delta_{\alggrp{Q}}}}\alggrp{Q}(n_{w_{\Delta}w_{\Delta_{\alggrp{Q}}}})^{-1}$ and $v = w(w_\Delta w_{\Delta_{\alggrp{Q}}})^{-1}$.
Then we have $\Delta_{\alggrp{Q}'} = -w_{\Delta}(\Delta_{\alggrp{Q}}) = w_{\Delta}w_{\Delta_{\alggrp{Q}}}(\Delta_{\alggrp{Q}})$.
Hence $v(\Delta_{\alggrp{Q}'}) = w(\Delta_{\alggrp{Q}}) \subset \Sigma^+$.
Therefore $v\in W^{\alggrp{Q}'}$.
We also have $v\ne 1$, hence $v\notin W_{\alggrp{Q}'}$.
Therefore $v^{-1}\notin W_{\alggrp{Q}'}$.
Hence there exists $\alpha\in\Sigma^+\setminus\Sigma^+_{\alggrp{Q}'}$ such that $v^{-1}(\alpha) < 0$.
If $v^{-1}(\alpha) \in \Sigma^-_{\alggrp{Q}'}$, then $\alpha = v(v^{-1}(\alpha)) \in v(\Sigma^-_{\alggrp{Q}'})\subset\Sigma^-$, this is a contradiction.
Hence $v^{-1}(\alpha)\in \Sigma^-\setminus\Sigma_{\alggrp{Q}'}^-$.
Namely $\alpha\in v(\Sigma^-\setminus\Sigma_{\alggrp{Q}'}^-)\cap (\Sigma^+\setminus\Sigma^+_{\alggrp{Q}'})$.
By the definition of $v$ and $\Sigma^{\mp}\setminus\Sigma^{\mp}_{\alggrp{Q}'} = w_\Delta (\Sigma^{\pm}\setminus\Sigma^{\pm}_{\alggrp{Q}}) = w_\Delta w_{\Delta_{\alggrp{Q}}}(\Sigma^{\pm}\setminus\Sigma^{\pm}_{\alggrp{Q}})$, we get $\alpha\in w(\Sigma^+\setminus\Sigma_{\alggrp{Q}}^+)\cap w_\Delta w_{\Delta_{\alggrp{Q}}}(\Sigma^-\setminus\Sigma^-_{\alggrp{Q}})$.
Hence $(w_\Delta w_{\Delta_{\alggrp{Q}}})^{-1}(\alpha)$ gives an element in the intersection of the claim.

Take $\lambda_0^-\in \Lambda_{\Delta_{\alggrp{Q}}}^+(1)$ as in Remark~\ref{rem:role of strongly positive/negative element} and set $\lambda = n_{w_\Delta w_{\Delta_{\alggrp{Q}}}}(\lambda_0^-)$.
We have $\lambda\in n_{w_{\Delta}w_{\Delta_{\alggrp{Q}}}}(\Lambda_{\Delta_{\alggrp{Q}}}^+(1))$.
Let $w\in W^{\alggrp{Q}}\setminus \{w_\Delta w_{\Delta_{\alggrp{Q}}}\}$.
Take $\alpha$ from the intersection in the claim.
Then since $\alpha\in \Sigma^-\setminus\Sigma^-_{\alggrp{Q}}$, we have $\langle w^{-1}w_\Delta w_{\Delta_{\alggrp{Q}}}(\alpha),\nu(n_w^{-1}(\lambda))\rangle = \langle \alpha,\nu(\lambda_0^-)\rangle < 0$.
Therefore $n_w^{-1}(\lambda)$ is not $\alggrp{Q}$-negative.
Hence $(w\sigma_\mathcal{A})(E(\lambda)) = 0$.

Assume that $X$ is an irreducible $\mathcal{A}$-submodule of $I(\alggrp{P},\sigma,\alggrp{Q})$ such that $\supp X = n_{w_\Delta w_{\Delta_{\alggrp{Q}}}}(\Lambda^+_{\Delta_{\alggrp{Q}}}(1))$.
Take $\lambda$ as in the above and $x\in X\setminus\{0\}$.
Let $x_w\in w\sigma$ such that $x = \sum x_w$.
Since $\lambda\in \supp X$, $E(\lambda)$ is invertible on $X$.
Hence $0\ne xE(\lambda) = \sum x_w E(\lambda) = x_{w_\Delta w_{\Delta_{\alggrp{Q}}}}E(\lambda)$.
Therefore $w_{\Delta}w_{\Delta_{\alggrp{Q}}}\sigma\cap X$ is a non-zero $\mathcal{A}$-submodule of $X$.
Since $X$ is irreducible, we have $X\subset w_{\Delta}w_{\Delta_{\alggrp{Q}}}\sigma$.
\end{proof}

We prove that $I(\alggrp{P},\sigma,\alggrp{Q})$ is irreducible.
Let $\pi\subset I(\alggrp{P},\sigma,\alggrp{Q})$ be a non-zero $\mathcal{H}$-submodule and take an irreducible $\mathcal{A}$-submodule $X'$ of $\pi$.
By Lemma~\ref{lem:I(P,sigma,Q) as A-mod}, we have the decomposition $I(\alggrp{P},\sigma,\alggrp{Q}) = \bigoplus_{\Delta_w\cap \Delta(\sigma) = \Delta_{\alggrp{Q}}} w\sigma_\mathcal{A}$.
Take $w$ such that the composition $X'\hookrightarrow I(\alggrp{P},\sigma,\alggrp{Q}) = \bigoplus_{\Delta_w\cap \Delta(\sigma) = \Delta_{\alggrp{Q}}} w\sigma_\mathcal{A}\twoheadrightarrow w\sigma_{\mathcal{A}}$ is non-zero.
Then we have $X'\hookrightarrow w\sigma_{\mathcal{A}}$.
Such irreducible representation has a form $wX$ for an irreducible submodule $X$ of $\sigma_\mathcal{A}$.
Hence we get a non-zero homomorphism $wX\otimes_\mathcal{A}\mathcal{H}\to \pi$.
By Lemma~\ref{lem:on A-submodule of sigma_A} and Corollary~\ref{cor:isom between std mod for simple module}, if $\Delta_{w'}\cap \Delta(\sigma) = \Delta_{\alggrp{Q}}$, then $wX\otimes_\mathcal{A}\mathcal{H}\simeq w'X\otimes_\mathcal{A}\mathcal{H}$.
Hence we have a non-zero homomorphism $w'X\otimes_\mathcal{A}\mathcal{H}\to \pi$, which gives a non-zero $\mathcal{A}$-homomorphism $w'X\to \pi$.
In particular we have $w_{\Delta}w_{\Delta_{\alggrp{Q}}} X\hookrightarrow \pi$.
By Lemma~\ref{lem:irreducible A-representation of parabolic induction}, the image of $w_{\Delta}w_{\Delta_{\alggrp{Q}}} X\hookrightarrow \pi\hookrightarrow I(\alggrp{P},\sigma,\alggrp{Q})$ is contained in $w_\Delta w_{\Delta_{\alggrp{Q}}}\sigma$.
Therefore we have $\pi\cap w_\Delta w_{\Delta_{\alggrp{Q}}}\sigma \ne 0$.
By Proposition~\ref{prop:M'-mod in parabolic induction}, we have $w_\Delta w_{\Delta_{\alggrp{Q}}}\sigma\subset \pi$.
Put $\sigma' = w_\Delta w_{\Delta_{\alggrp{Q}}}\sigma$.
Let $\alggrp{Q}'$ be a parabolic subgroup corresponding to $w_\Delta w_{\Delta_{\alggrp{Q}}}(\Delta_{\alggrp{Q}})$.
By Proposition~\ref{prop:M'-mod in parabolic induction}, $\sigma'$ is $\mathcal{H}_{\alggrp{Q}'}^+$-stable and we have a homomorphism $\sigma'\otimes_{\mathcal{H}_{\alggrp{Q}'}^+}\mathcal{H}\to \pi$.
By the construction, we have the following commutative diagram:
\[
\begin{tikzcd}
\sigma'\otimes_{\mathcal{H}_{\alggrp{Q}'}^+}\mathcal{H} \arrow{r}\arrow{d} &  I_{\alggrp{Q}}(e_{\alggrp{Q}}(\sigma))\arrow[twoheadrightarrow]{d}\\
\pi\arrow[hookrightarrow]{r} & I(\alggrp{P},\sigma,\alggrp{Q}).
\end{tikzcd}
\]
Here $\sigma'\otimes_{\mathcal{H}_{\alggrp{Q}'}^+}\mathcal{H} \to I_{\alggrp{Q}}(e_{\alggrp{Q}}(\sigma))$ is a homomorphism in Proposition~\ref{prop:tensor induction and parabolic induction}.
Hence it is an isomorphism.
Therefore $\pi = I(\alggrp{P},\sigma,\alggrp{Q})$.

\begin{cor}\label{cor:composition factors of parabolic induction}
Let $\alggrp{P} = \alggrp{M}\alggrp{N}$ be a parabolic subgroup and $\sigma$ an irreducible supersingular $\mathcal{H}_{\alggrp{M}}$-module.
Then the composition factors of $I_{\alggrp{P}}(\sigma)$ is given by $\{I(\alggrp{P},\sigma,\alggrp{Q})\mid \alggrp{P}(\sigma)\supset \alggrp{Q}\supset \alggrp{P}\}$.
\end{cor}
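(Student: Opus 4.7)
The plan is to construct an explicit filtration of $I_{\alggrp{P}}(\sigma)$ by $\mathcal{H}$-submodules whose successive quotients are exactly the $I(\alggrp{P},\sigma,\alggrp{Q})$, and to verify the quotients via the $\mathcal{A}$-module decomposition.

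First, I will use Lemma~\ref{lem:I(P,sigma,Q) as A-mod} as the combinatorial backbone. Restricted to $\mathcal{A}$, we have $I_{\alggrp{P}}(\sigma) = \bigoplus_{w\in W^{\alggrp{P}}}w\sigma_{\mathcal{A}}$, and for each $w\in W^{\alggrp{P}}$ the set $\Delta_w\cap\Delta(\sigma)$ is a subset of $\Delta(\sigma)$ containing $\Delta_{\alggrp{P}}$. Writing $\Delta_{\alggrp{Q}}=\Delta_w\cap\Delta(\sigma)$ gives a partition
\[
W^{\alggrp{P}} \;=\; \bigsqcup_{\alggrp{P}(\sigma)\supset\alggrp{Q}\supset\alggrp{P}} \{\,w\in W^{\alggrp{P}}\mid \Delta_w\cap\Delta(\sigma)=\Delta_{\alggrp{Q}}\,\},
\]
and Lemma~\ref{lem:I(P,sigma,Q) as A-mod} identifies the corresponding summand with $I(\alggrp{P},\sigma,\alggrp{Q})|_{\mathcal{A}}$.

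Next, for each $\alggrp{Q}$ with $\alggrp{P}\subset\alggrp{Q}\subset\alggrp{P}(\sigma)$, put $N_{\alggrp{Q}}=I_{\alggrp{Q}}(e_{\alggrp{Q}}(\sigma))\subset I_{\alggrp{P}}(\sigma)$ (via the inclusions used to define $I(\alggrp{P},\sigma,\alggrp{Q})$). As an $\mathcal{A}$-module, $N_{\alggrp{Q}}=\bigoplus_{w\in W^{\alggrp{Q}}}w(e_{\alggrp{Q}}(\sigma))_{\mathcal{A}}$, and the computation in the proof of Lemma~\ref{lem:I(P,sigma,Q) as A-mod} shows $(e_{\alggrp{Q}}(\sigma))_{\mathcal{A}}=\sigma_{\mathcal{A}}$, so
\[
N_{\alggrp{Q}}\;=\;\bigoplus_{w\,:\,\Delta_w\supset\Delta_{\alggrp{Q}}}w\sigma_{\mathcal{A}}.
\]
Order the parabolics between $\alggrp{P}$ and $\alggrp{P}(\sigma)$ as $\alggrp{Q}_{0},\alggrp{Q}_{1},\dots,\alggrp{Q}_{n}$ with $\alggrp{Q}_{0}=\alggrp{P}(\sigma)$, $\alggrp{Q}_{n}=\alggrp{P}$, compatibly with reverse inclusion (if $\alggrp{Q}_{j}\supsetneq\alggrp{Q}_{i}$ then $j<i$), and set $M_{i}=\sum_{j\le i}N_{\alggrp{Q}_{j}}$. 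Then $M_{-1}=0$ and $M_{n}=I_{\alggrp{P}}(\sigma)$, the last equality because $N_{\alggrp{P}}=I_{\alggrp{P}}(\sigma)$ already.

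The main step is to identify $M_{i}/M_{i-1}\simeq I(\alggrp{P},\sigma,\alggrp{Q}_{i})$ as $\mathcal{H}$-modules. On the one hand, by construction $\sum_{\alggrp{Q}'\supsetneq\alggrp{Q}_{i}}N_{\alggrp{Q}'}\subset N_{\alggrp{Q}_{i}}\cap M_{i-1}$, giving a natural surjection $N_{\alggrp{Q}_{i}}/(N_{\alggrp{Q}_{i}}\cap M_{i-1})\to I(\alggrp{P},\sigma,\alggrp{Q}_{i})$. For the reverse inclusion the $\mathcal{A}$-description is decisive: if $w\in W^{\alggrp{Q}_{i}}$ with $w\in W^{\alggrp{Q}_{j}}$ for some $j<i$, then $\Delta_{w}\supset\Delta_{\alggrp{Q}_{i}}\cup\Delta_{\alggrp{Q}_{j}}$ so $\Delta_{w}\cap\Delta(\sigma)\supsetneq\Delta_{\alggrp{Q}_{i}}$, hence equals $\Delta_{\alggrp{Q}'}$ for some $\alggrp{Q}'\supsetneq\alggrp{Q}_{i}$, and therefore the corresponding $w\sigma_{\mathcal{A}}$-summand lies in $N_{\alggrp{Q}'}\subset\sum_{\alggrp{Q}'\supsetneq\alggrp{Q}_{i}}N_{\alggrp{Q}'}$. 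Combined with the analogous partition argument for $W_{i}\setminus W_{i-1}=\{w:\Delta_{w}\cap\Delta(\sigma)=\Delta_{\alggrp{Q}_{i}}\}$, this gives both the equality $N_{\alggrp{Q}_{i}}\cap M_{i-1}=\sum_{\alggrp{Q}'\supsetneq\alggrp{Q}_{i}}N_{\alggrp{Q}'}$ and the matching $\mathcal{A}$-module equality $M_{i}/M_{i-1}\simeq I(\alggrp{P},\sigma,\alggrp{Q}_{i})|_{\mathcal{A}}$, forcing the $\mathcal{H}$-module isomorphism $M_{i}/M_{i-1}\simeq I(\alggrp{P},\sigma,\alggrp{Q}_{i})$.

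The main obstacle is the bookkeeping of the two intersection/sum identities in the previous paragraph; everything else is formal. Once these are in hand, each $M_{i}/M_{i-1}$ is irreducible by the irreducibility statement of Theorem~\ref{thm:main theorem} (proved just above the corollary), and their union is a composition series for $I_{\alggrp{P}}(\sigma)$ whose factors are exactly $\{I(\alggrp{P},\sigma,\alggrp{Q})\mid\alggrp{P}(\sigma)\supset\alggrp{Q}\supset\alggrp{P}\}$, as required.
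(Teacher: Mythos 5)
Your filtration argument is correct and is essentially the proof the paper leaves implicit (no explicit proof is given for this corollary in the text). The natural submodules $N_{\alggrp{Q}} = I_{\alggrp{Q}}(e_{\alggrp{Q}}(\sigma))$, the observation that each sits inside $I_{\alggrp{P}}(\sigma)|_{\mathcal{A}} = \bigoplus_{w\in W^{\alggrp{P}}} w\sigma_{\mathcal{A}}$ as the coordinate sub-direct-sum over $W^{\alggrp{Q}}$ (this is established in the proof of Lemma~\ref{lem:I(P,sigma,Q) as A-mod}), the linearisation of the poset of parabolics compatible with reverse inclusion, and the index-set bookkeeping showing $N_{\alggrp{Q}_{i}}\cap M_{i-1}=\sum_{\alggrp{Q}'\supsetneq\alggrp{Q}_{i}}N_{\alggrp{Q}'}$ — all of this is the right machinery, and together with the just-proved irreducibility of the $I(\alggrp{P},\sigma,\alggrp{Q})$ it produces a composition series with exactly the claimed factors.

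One small slip to fix: the inclusion $\sum_{\alggrp{Q}'\supsetneq\alggrp{Q}_{i}}N_{\alggrp{Q}'}\subset N_{\alggrp{Q}_{i}}\cap M_{i-1}$ gives a surjection $I(\alggrp{P},\sigma,\alggrp{Q}_{i}) = N_{\alggrp{Q}_{i}}\big/\sum_{\alggrp{Q}'\supsetneq\alggrp{Q}_{i}}N_{\alggrp{Q}'} \twoheadrightarrow N_{\alggrp{Q}_{i}}/(N_{\alggrp{Q}_{i}}\cap M_{i-1})\simeq M_{i}/M_{i-1}$, i.e.\ in the opposite direction to what you wrote; the smaller kernel yields the larger quotient. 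Since you then prove the two submodules are in fact equal, the final isomorphism is unaffected, but the intermediate sentence is stated backwards. You might also add a word noting each $M_{i}/M_{i-1}$ is nonzero (take $w=w_{\Delta}w_{\Delta_{\alggrp{Q}_{i}}}$, which has $\Delta_{w}\cap\Delta(\sigma)=\Delta_{\alggrp{Q}_{i}}$), so the filtration really is a composition series of the stated length.
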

\begin{rem}
After finishing the proof of Theorem~\ref{thm:main theorem}, we know that $I_{\alggrp{P}}(\sigma)$ is multiplicity free.
\end{rem}

\subsection{Completion of the proof}
We finish the proof of Theorem~\ref{thm:main theorem}.
First we prove that any irreducible $\mathcal{H}$-module $\pi$ is isomorphic to $I(\alggrp{P},\sigma,\alggrp{Q})$ by induction on $\dim \alggrp{G}$.
By Corollary~\ref{cor:composition factors of parabolic induction}, it is sufficient to prove that $\pi$ is a subquotient of $I_{\alggrp{P}}(\sigma)$ for a parabolic subgroup $\alggrp{P} = \alggrp{M}\alggrp{N}$ and a supsersingular irreducible representation $\sigma$ of $\mathcal{H}_{\alggrp{M}}$.
We use the following lemma.
This is a generalization of a conjecture of Ollivier~\cite[Conjecture~5.20]{MR2728487}.

\begin{lem}
Let $X$ be an irreducible $\mathcal{A}$-module and $\Theta\subset\Delta$ such that $\supp X = \Lambda_\Theta^+(1)$.
Let $w,w'\in W$ such that $w(\Theta),w'(\Theta)\subset\Sigma^+$.
Then $wX\otimes_\mathcal{A}\mathcal{H}$ and $w'X\otimes_\mathcal{A}\mathcal{H}$ have the same composition factors with multiplicities.
\end{lem}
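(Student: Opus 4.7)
The plan is to reduce by induction on a path length in the Bruhat poset of $W^\Theta := \{u\in W\mid u(\Theta)\subset\Sigma^+\}$ to the elementary case $w' = sw$ with $sw>w$ for a simple reflection $s=s_\alpha\in S$ and with both $w,sw\in W^\Theta$. Writing $\beta = w^{-1}(\alpha)$, the condition $sw\in W^\Theta$ is equivalent to $\beta\notin\Theta$. If $\beta$ is not simple, the argument of Lemma~\ref{lem:nice choice of s} gives $\Delta_w=\Delta_{sw}$ and Theorem~\ref{thm:std mod depends only on Delta_w} produces $wX\otimes_\mathcal{A}\mathcal{H}\simeq swX\otimes_\mathcal{A}\mathcal{H}$. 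If $\beta$ is simple and $\beta\notin\Delta(X)$, then $\Delta_w\cap\Delta(X)=\Delta_{sw}\cap\Delta(X)$ and Corollary~\ref{cor:isom between std mod for simple module} again yields an isomorphism, in particular the same composition factors.

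The remaining case is $\beta\in\Delta(X)\setminus\Theta$. In this case Proposition~\ref{prop:construction of intertwining operators} furnishes $\Phi=\Phi_{sw,w}\colon wX\otimes_\mathcal{A}\mathcal{H}\to swX\otimes_\mathcal{A}\mathcal{H}$, injective by Proposition~\ref{prop:injectivity of intertwining operator}, and Proposition~\ref{prop:hard intertwining operator} supplies the opposite direction $\Psi=\Psi_{w,sw}\colon swX\otimes_\mathcal{A}\mathcal{H}\to wX\otimes_\mathcal{A}\mathcal{H}$. By Proposition~\ref{prop:compositions of intertwining operators} combined with the case-(c) calculation in the proof of Corollary~\ref{cor:isom between std mod for simple module}, $\Psi\circ\Phi$ acts on the generating subspace $X\otimes 1\subset wX\otimes_\mathcal{A}\mathcal{H}$ as multiplication by $\tau_{n_w^{-1}(\lambda)}(1-\tau_{n_w^{-1}(\mu_{n_s^{-1}}(-1))})$. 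The hypothesis $\beta\in\Delta(X)$ together with Lemma~\ref{lem:c_s,c_{s,-1},mu_n_s and Lambda_s'(1) cap Z_k} (which shows that $\Lambda'_{s_\beta}(1)$ is generated by $\Lambda'_{s_\beta}(1)\cap Z_\kappa$ and $\mu_{n_{s_\beta}^{-1}}(-1)$) forces $\tau_{n_w^{-1}(\mu_{n_s^{-1}}(-1))}$ to act as the identity on $X$, so $\Psi\circ\Phi=0$ on $X\otimes 1$ and hence on all of $wX\otimes_\mathcal{A}\mathcal{H}$ by $\mathcal{H}$-linearity.

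Granted this vanishing, we have $\operatorname{Im}(\Phi)\subset\ker(\Psi)$, and the crux of the argument is to promote this inclusion to an equality, producing an identification $\operatorname{coker}(\Phi)\simeq\operatorname{Im}(\Psi)\subset wX\otimes_\mathcal{A}\mathcal{H}$. Combined with a mirror identification coming from running the same construction inside $wX\otimes_\mathcal{A}\mathcal{H}$, this gives matching short exact sequences and forces the equality $[wX\otimes_\mathcal{A}\mathcal{H}]=[swX\otimes_\mathcal{A}\mathcal{H}]$ in the Grothendieck group. The main obstacle is precisely the equality $\operatorname{Im}(\Phi)=\ker(\Psi)$ and the matching complementarity on the $wX\otimes_\mathcal{A}\mathcal{H}$ side. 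I would tackle this by exploiting the free $C[\Lambda_\Theta(1)]$-module structure from Proposition~\ref{prop:freeness} together with Lemma~\ref{lem:not a zero divisor}: localizing away from the hyperplane $\{\langle\nu(\cdot),\beta\rangle=0\}$, one falls into the situation of Corollary~\ref{cor:isom between std mod for simple module} where $\Phi$ becomes an isomorphism and any discrepancy is concentrated on the $\beta$-singular fiber, which can then be pinned down using the explicit formulas of Propositions~\ref{prop:construction of intertwining operators} and~\ref{prop:hard intertwining operator} and the triviality of the relevant $Z_\kappa$-characters supplied by Lemma~\ref{lem:c_s,c_{s,-1},mu_n_s and Lambda_s'(1) cap Z_k}.
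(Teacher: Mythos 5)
Your reduction to the elementary case $w' = sw$ with $sw > w$, and the case split according to whether $\beta = w^{-1}(\alpha)$ is simple and whether $\beta \in \Delta(X)$, is correct and matches the paper; so is the observation that in the hard case ($\beta \in \Delta(X)\setminus\Theta$) the composition $\Psi\circ\Phi$ acts as multiplication by $\tau_\lambda(1-\tau_{\mu})$ with $\mu \in \Lambda'_{s_\beta}(1)$, hence by $0$. But the core of your hard-case argument has a genuine gap, and one of its ingredients is simply false.

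The false ingredient: you assert that $\Phi = \Phi_{sw,w}\colon wX\otimes_\mathcal{A}\mathcal{H}\to swX\otimes_\mathcal{A}\mathcal{H}$ is injective by Proposition~\ref{prop:injectivity of intertwining operator}. That proposition establishes injectivity of $\Phi_{sw,w}$ on $w\chi_\Theta\otimes_\mathcal{A}\mathcal{H}$, which is a module over $C[\Lambda_\Theta(1)]$; it says nothing about the map after applying $-\otimes_{C[\Lambda_\Theta(1)]}X$ for the simple quotient $X$, and indeed injectivity does not pass to this quotient. In the very case you are in — $\alpha\in\Delta(X)$, so that $c_s$, $c_{s,-1}$ and $\tau_\mu$ all act by scalars making the composition vanish — the induced map on $wX\otimes_\mathcal{A}\mathcal{H}$ is in general not injective. (For $\mathrm{GL}_2$ with $X$ trivial on $\Lambda'(1)$, $\Phi$ kills $1\otimes T_{n_s}$ since $(T_{n_s}-c_s)T_{n_s} = q_sT_{n_s^2}\equiv 0$.) Once $\Phi$ is not injective, the relation $\Phi\Psi = 0$ no longer forces $\Psi = 0$, and the bookkeeping $[B] = [\ker\Psi]+[\operatorname{Im}\Psi]$ does not by itself give $[A]=[B]$; you would need both $\operatorname{Im}\Phi=\ker\Psi$ and $\operatorname{Im}\Psi=\ker\Phi$, and neither is established.

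The paper sidesteps all of this with a deformation argument that is the real content of the proof. Choose $x\in V^*$ with $x(\nu(\Lambda(1)))\subset\Z$ and $\langle x,\check\alpha\rangle\ne 0$, and twist $X$ to $X[t^{\pm 1}]=X\otimes C[t^{\pm 1}]$ via $\tau_\lambda\mapsto t^{\langle x,\nu(\lambda)\rangle}$. By Proposition~\ref{prop:freeness}, both $wX[t^{\pm 1}]\otimes_\mathcal{A}\mathcal{H}$ and $w'X[t^{\pm 1}]\otimes_\mathcal{A}\mathcal{H}$ are \emph{free} $C[t^{\pm 1}]$-modules, the intertwiners in both directions are injective, and the compositions are multiplication by $1-t^{\langle x,-\check\alpha\rangle}$, a nonzerodivisor. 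One then invokes \cite[Lemma~31]{MR1361556} to conclude that the two specializations (at $t=1$) have the same composition factors with multiplicities. Your concluding paragraph gestures toward something of this flavor ("localizing away from the hyperplane"), but that is not what is needed: the point is not to localize or to analyze a singular fiber, but to replace $X$ by a flat family over $C[t^{\pm 1}]$ so that the intertwiners are again injective and the composition is a nonzerodivisor, and then appeal to a general principle comparing specializations of such a pair of maps.
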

\begin{proof}
By Theorem~\ref{thm:std mod depends only on Delta_w}, we may assume $w = w_{\Delta}w_{\Theta_0}$ and $w' = ws_\alpha$ for $\Theta_0\subset\Delta$ and $\alpha\in\Theta_0$ such that $\Theta_0\subset \Theta$.
By Corollary~\ref{cor:isom between std mod for simple module}, we may assume $\alpha\in \Delta(X)$.
Take $x\in V^*$ such that $x(\nu(\Lambda(1)))\subset\Z$ and $\langle x,\check{\alpha}\rangle \ne 0$.
Consider an algebra homomorphism $C[\Lambda_\Theta(1)]\to C[t^{\pm 1}]$ defined by $\tau_\lambda\mapsto t^{\langle x,\nu(\lambda)\rangle}$.
Both $X$ and $C[t^{\pm 1}]$ are representations of $\Lambda_\Theta(1)$.
Hence we have a representation $X[t^{\pm 1}] = X\otimes C[t^{\pm 1}]$ of $\Lambda_\Theta(1)$, equivalently $C[\Lambda_\Theta(1)]$-module.
Via $\chi_\Theta$, we get an $\mathcal{A}$-module $X[t^{\pm 1}]$.
Consider the $\mathcal{H}$-module $wX[t^{\pm 1}]\otimes_\mathcal{A}\mathcal{H}$ and $w'X[t^{\pm 1}]\otimes_\mathcal{A}\mathcal{H}$.
By Proposition~\ref{prop:freeness}, these modules are free $C[t^{\pm 1}]$-modules.
From the proof of Corollary~\ref{cor:isom between std mod for simple module}, we have injective homomorphisms $wX[t^{\pm 1}]\otimes_\mathcal{A}\mathcal{H}\to w'X[t^{\pm 1}]\otimes_\mathcal{A}\mathcal{H}$ and $w'X[t^{\pm 1}]\otimes_\mathcal{A}\mathcal{H}\to wX[t^{\pm 1}]\otimes_\mathcal{A}\mathcal{H}$.
The compositions are given by $1 - t^{\langle x,-\check{\alpha}\rangle}$.
By \cite[Lemma~31]{MR1361556}, we get the lemma.
\end{proof}

Let $\pi$ be an irreducible representation of $\mathcal{H}$ and take an irreducible $\mathcal{A}$-submodule $X'$.
Then there exists $\Theta\subset\Delta$ and $w\in W$ such that $\supp X' = n_w(\Lambda_\Theta^+(1))$ and $w(\Theta) > 0$.
Take an irreducible $\mathcal{A}$-module $X$ such that $X' = wX$.
Then $\supp X = \Lambda_\Theta^+(1)$.
Since we have $wX = X'\hookrightarrow \pi$, we have a non-zero homomorphism $wX\otimes_\mathcal{A}\mathcal{H}\to \pi$.
Therefore $\pi$ is a quotient of $wX\otimes_\mathcal{A}\mathcal{H}$.
Taking $w' = 1$ in the above lemma, the composition factors of $wX\otimes_\mathcal{A}\mathcal{H}$ and $X\otimes_\mathcal{A}\mathcal{H}$ is the same.
Hence $\pi$ is a subquotient of $X\otimes_\mathcal{A}\mathcal{H}$.

If $\Theta = \Delta$, then $\pi$ is supersingular.
So we have nothing to prove.
We assume that $\Theta\ne\Delta$.
Let $\alggrp{P}' = \alggrp{M}'\alggrp{N}'$ (resp.\ $\alggrp{P} = \alggrp{M}\alggrp{N}$) be the parabolic subgroup corresponding to $\Theta$ (resp. $-w_\Delta(\Theta)$).
\begin{lem}
Put $\mathcal{A}' = \mathcal{A}\cap j_{\alggrp{M}'}^+(\mathcal{H}_{\alggrp{M}'}^+)$.
Then $X\otimes_\mathcal{A}\mathcal{H}\simeq X\otimes_{\mathcal{A}'}\mathcal{H}$ and $X\otimes_{\mathcal{A}'}\mathcal{H}_{\alggrp{M}'}^+$ is the restriction of an $\mathcal{H}_{\alggrp{M}'}$-module.
\end{lem}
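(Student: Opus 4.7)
The plan is to realise $X\otimes_{\mathcal{A}'}\mathcal{H}^+_{\alggrp{M}'}$ concretely as the restriction of a supersingular $\mathcal{H}_{\alggrp{M}'}$-module built from the $C[\Lambda_\Theta(1)]$-structure on $X$, and then to transfer the resulting parabolic realisation back, via associativity of tensor products and Proposition~\ref{prop:tensor induction and parabolic induction}, to obtain $X\otimes_\mathcal{A}\mathcal{H}\simeq X\otimes_{\mathcal{A}'}\mathcal{H}$.

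First, since $X$ is irreducible with $\supp X = \Lambda_\Theta^+(1)$, Proposition~\ref{prop:irred rep of A factors chi_Theta} promotes $X$ canonically to a $C[\Lambda_\Theta(1)]$-module on which every $\tau_\lambda$ acts invertibly. As $\Theta = \Delta_{\alggrp{M}'}$ and every $\lambda\in \Lambda_\Theta(1)$ is automatically $\alggrp{M}'$-dominant, the character $\chi^{\alggrp{M}'}_{\Delta_{\alggrp{M}'}}$ makes $X$ into a supersingular $\mathcal{A}_{\alggrp{M}'}$-module. The next step is to extend this to a right $\mathcal{H}_{\alggrp{M}'}$-module $\tilde X$, following the template of Proposition~\ref{prop:extending}: for each lift $n_s$ of $s\in S_{\alggrp{M}',\aff}$, let $T^{\alggrp{M}'}_{n_s}$ act via the image of $c^{\alggrp{M}'}_s\in C[Z_{\widetilde{\alpha},\kappa}]\subset C[\Lambda_\Theta(1)]$, and let $T^{\alggrp{M}'}_t$ act by $\tau_t$ for $t\in Z_\kappa$. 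The quadratic relation becomes $c^{\alggrp{M}'}_s\cdot c^{\alggrp{M}'}_s = (c^{\alggrp{M}'}_s)^2$; the braid relations come from the chosen lifts in Subsection~\ref{subsec:Notation}; and the Bernstein relations of Lemma~\ref{lem:Bernstein relations} trivialise on $\Lambda_\Theta(1)$ by Lemma~\ref{lem:when <lambda,alpha>=0, relation with c'(-k)}, so these prescriptions yield a genuine $\mathcal{H}_{\alggrp{M}'}$-module whose $\mathcal{A}_{\alggrp{M}'}$-support is $\Lambda_{\Delta_{\alggrp{M}'}}(1)$.

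Next, identify $X\otimes_{\mathcal{A}'}\mathcal{H}^+_{\alggrp{M}'}$ with $\tilde X|_{\mathcal{H}^+_{\alggrp{M}'}}$ through the natural map $x\otimes h\mapsto x\cdot h$. Well-definedness holds because the action of $\mathcal{A}'$ on $X$ coincides, through $(j^+_{\alggrp{M}'})^{-1}$, with the left action of $\mathcal{A}'\subset\mathcal{H}^+_{\alggrp{M}'}$ built into $\tilde X$. Surjectivity is clear since $\tilde X$ is generated as an $\mathcal{H}^+_{\alggrp{M}'}$-module by $X = X\otimes 1$, and injectivity follows from comparing the $\mathcal{A}_{\alggrp{M}'}$-isotypic components on the two sides using Proposition~\ref{prop:freeness} together with the explicit description of $\tilde X|_{\mathcal{A}_{\alggrp{M}'}}$. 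This proves the second assertion of the lemma.

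Finally, associativity gives $X\otimes_{\mathcal{A}'}\mathcal{H}\simeq \tilde X\otimes_{\mathcal{H}^+_{\alggrp{M}'}}\mathcal{H}$, which by Proposition~\ref{prop:tensor induction and parabolic induction} realises the parabolic induction $I_{\alggrp{P}}(\sigma)$ for the $\mathcal{H}_{\alggrp{M}}$-module $\sigma$ obtained from $\tilde X$ by transport through $n_{w_\Delta w_{\Delta_{\alggrp{M}}}}$. The natural surjection $X\otimes_{\mathcal{A}'}\mathcal{H}\twoheadrightarrow X\otimes_\mathcal{A}\mathcal{H}$ induced by $\mathcal{A}'\subset\mathcal{A}$ is then compared with the $\mathcal{A}$-decomposition $I_{\alggrp{P}}(\sigma)\simeq \bigoplus_{w\in W^{\alggrp{M}}}w\sigma_\mathcal{A}$ of Lemma~\ref{lem:parabolic induction as vector space}, and a dimension count on each isotypic component forces it to be an isomorphism. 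The principal obstacle will be the second step, verifying all Hecke relations for $\tilde X$ simultaneously; this is tractable precisely because the Bernstein relations collapse on $\Lambda_\Theta(1)$ by Lemma~\ref{lem:when <lambda,alpha>=0, relation with c'(-k)}.
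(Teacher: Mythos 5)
Your approach is entirely different from the paper's, and it has a decisive gap. The paper gives a short localisation argument: it picks a central element $\lambda_0\in\Lambda(1)$ as in Remark~\ref{rem:role of strongly positive/negative element} for $\alggrp{M}'$; this lies in $\Lambda_\Theta(1)$, so $E(\lambda_0)$ is invertible on $X$, and $\mathcal{H}_{\alggrp{M}'}=\mathcal{H}^+_{\alggrp{M}'}E^{\alggrp{M}'}(\lambda_0)^{-1}$. Hence $E^{\alggrp{M}'}(\lambda_0)$ acts invertibly on $X\otimes_{\mathcal{A}'}\mathcal{H}^+_{\alggrp{M}'}$, and this $\mathcal{H}^+_{\alggrp{M}'}$-module is simply the restriction of $X\otimes_{\mathcal{A}'}\mathcal{H}_{\alggrp{M}'}$. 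For the first assertion one checks that the canonical surjection $X\otimes_{\mathcal{A}'}\mathcal{H}\to X\otimes_{\mathcal{A}}\mathcal{H}$ is injective by showing that for $\lambda$ not $\alggrp{M}'$-positive both $x\otimes E(\lambda)F$ and $xE(\lambda)\otimes F$ vanish, again by cancelling against $E(\lambda_0)$. No auxiliary module is constructed.

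Your second step is incorrect: the map $X\otimes_{\mathcal{A}'}\mathcal{H}^+_{\alggrp{M}'}\to\tilde X$, $x\otimes h\mapsto xh$, cannot be injective when $W_{\alggrp{M}'}\neq\{1\}$. The subalgebra $\mathcal{A}'$ accounts only for the $E^{\alggrp{M}'}(\lambda)$ with $\lambda\in\Lambda(1)$; the classes $T^{\alggrp{M}'}_{n_w}$, $w\in W_{\alggrp{M}'}$, make the tensor product roughly $\lvert W_{\alggrp{M}'}\rvert$ copies of $X$, not a single copy. Concretely, for a simple reflection $s$ of $W_{\alggrp{M}'}$ the element $x\otimes T^{\alggrp{M}'}_{n_s}-xc_s\otimes 1$ is nonzero for $x\neq 0$ yet maps to $xc_s - xc_s = 0$ in $\tilde X$. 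This matches how the paper actually uses the lemma: immediately afterwards it sets $\sigma=X\otimes_{\mathcal{A}'}\mathcal{H}^+_{\alggrp{M}'}$, feeds it into Proposition~\ref{prop:tensor induction and parabolic induction}, and then passes to an irreducible subquotient of $\sigma$, so $\sigma$ is \emph{not} expected to be a copy of $X$. Separately, you have not verified that $T^{\alggrp{M}'}_{n_s}\mapsto c_s$ defines a module at all: the braid relations for the operators $c_s$ on $X$ are not automatic, and Proposition~\ref{prop:extending} (your claimed template) in fact makes $T_{n_s}$ act by $0$, not by $c_s$, and only under a triviality hypothesis that $X$ need not satisfy; Lemma~\ref{lem:when <lambda,alpha>=0, relation with c'(-k)} addresses the Bernstein relations, not the braid relations, so it does not fill this hole.
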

\begin{proof}
We have a natural homomorphism $X\otimes_{\mathcal{A}'}\mathcal{H}\to X\otimes_{\mathcal{A}}\mathcal{H}$.
Take $\lambda_0^+\in \Lambda(1)$ as in Remark~\ref{rem:role of strongly positive/negative element} for $\alggrp{M}'$.
Then we have $\lambda_0^+\in \Lambda_\Theta(1)$.
Hence $E(\lambda_0^+)$ is invertible on $X$.
If $\lambda\in \Lambda(1)$ is not $\alggrp{M}'$-positive, then $\nu(\lambda)$ and $\nu(\lambda_0^+)$ does not belong to the same closed Weyl chamber, hence $E(\lambda_0^+)E(\lambda) = 0$.
Hence $x\otimes E(\lambda)F = xE(\lambda_0^+)^{-1}\otimes E(\lambda_0^+)E(\lambda)F = 0$ in $X\otimes_{\mathcal{A}'}\mathcal{H}$ for $x\in X$ and $F\in \mathcal{H}$.
Therefore $x\otimes E(\lambda)F = 0 = xE(\lambda)\otimes F$.
Hence $X\otimes_{\mathcal{A}'}\mathcal{H}\to X\otimes_{\mathcal{A}}\mathcal{H}$ is an isomorphism.

The element $E^{\alggrp{M}'}(\lambda_0^+)$ is in the center of $\mathcal{H}_{\alggrp{M}'}^+$ and $j_{\alggrp{M}'}^+(E^{\alggrp{M}'}(\lambda_0^+)) = E(\lambda_0^+)\in \mathcal{A}'$ is invertible on $X$.
Hence $E^{\alggrp{M}'}(\lambda_0^+)$ is invertible on $X\otimes_{\mathcal{A}'}\mathcal{H}_{\alggrp{M}'}^+$.
Since $\mathcal{H}_{\alggrp{M}'} = \mathcal{H}_{\alggrp{M}'}^+E^{\alggrp{M}'}(\lambda_0^+)^{-1}$ by Remark~\ref{rem:role of strongly positive/negative element}, we have $X\otimes_{\mathcal{A}'}\mathcal{H}_{\alggrp{M}'}^+\simeq (X\otimes_{\mathcal{A}'}\mathcal{H}_{\alggrp{M}'}^+)E^{\alggrp{M}}(\lambda_0^+)^{-1}\simeq X\otimes_{\mathcal{A}'}\mathcal{H}_{\alggrp{M}'}$.
\end{proof}
Hence we have $X\otimes_\mathcal{A}\mathcal{H} \simeq (X\otimes_{\mathcal{A}'}\mathcal{H}^+_{\alggrp{M}'})\otimes_{\mathcal{H}^+_{\alggrp{M}'}}\mathcal{H}$.
By Proposition~\ref{prop:tensor induction and parabolic induction}, $\pi$ is an subquotient of $I_{\alggrp{P}}(\sigma)$  for an $\mathcal{H}_{\alggrp{M}}$-module $\sigma$. (Explicitly, $\sigma$ is given by $X\otimes_{\mathcal{A}'}\mathcal{H}_{\alggrp{M}'}^+$ twisting by $n_{w_{\Delta}w_{\Delta_{\alggrp{M}}}}$.)
Hence for some irreducible subquotient $\sigma'$ of $\sigma$, $\pi$ is a subquotient of $I_{\alggrp{P}}(\sigma')$.
By inductive hypothesis, $\sigma'$ is a subquotient of $I_{\alggrp{P}_0\cap\alggrp{M}}(\sigma_0)$ where $\alggrp{P}_0 = \alggrp{M}_0\alggrp{N}_0\subset \alggrp{P}$ is a parabolic subgroup of $\alggrp{G}$ and $\sigma_0$ is a supersingular $\mathcal{H}_{\alggrp{M}_0}$-module.
Hence $\pi$ is a subquotient of $I_{\alggrp{P}_0}(\sigma_0)$.

Finally, we prove that $I(\alggrp{P},\sigma,\alggrp{Q}) \simeq I(\alggrp{P}',\sigma,\alggrp{Q}')$ implies $\alggrp{P} = \alggrp{P}'$, $\alggrp{Q} = \alggrp{Q}'$  and $\sigma\simeq\sigma'$.
Let $\alggrp{M}$ (resp.\ $\alggrp{M}'$) be the Levi subgroup of $\alggrp{P}$ (resp.\ $\alggrp{P}'$).
Let $X$ be an irreducible $\mathcal{A}$-submodule of $I(\alggrp{P},\sigma,\alggrp{Q})$.
By Lemma~\ref{lem:I(P,sigma,Q) as A-mod} and Lemma~\ref{lem:on A-submodule of sigma_A}, $\supp X = n_{w}(\Lambda_{\Delta_{\alggrp{M}}}^+(1))$ for $w\in W$ such that $\Delta_w\cap \Delta(\sigma) = \Delta_{\alggrp{Q}}$.
Hence
\[
	\{n_{w}(\Lambda_{\Delta_{\alggrp{M}}}^+(1))\mid \Delta_w\cap \Delta(\sigma) = \Delta_{\alggrp{Q}}\}
	=
	\{n_{w}(\Lambda_{\Delta_{\alggrp{M}'}}^+(1))\mid \Delta_w\cap \Delta(\sigma') = \Delta_{\alggrp{Q}'}\}.
\]
Notice that $w\Lambda_{\Theta}^+(1) = w'\Lambda_{\Theta'}^+(1)$ implies $\Theta = \Theta'$ and $w\in w'W_{\Theta'}$.
Hence $\Delta_{\alggrp{M}} = \Delta_{\alggrp{M}'}$.
Therefore $\alggrp{P} = \alggrp{P}'$.
If $w\in W$ satisfies $\Delta_w\cap \Delta(\sigma) = \Delta_{\alggrp{Q}}$, then $\Delta_w\supset \Delta_{\alggrp{Q}}\supset \Delta_{\alggrp{P}}$.
Hence $w(\Delta_{\alggrp{P}})\subset \Sigma^+$.
Namely $w\in W^{\alggrp{M}}$.
Therefore, if $w,w'\in W$ satisfies $\Delta_w\cap \Delta(\sigma) = \Delta_{\alggrp{Q}}$, $\Delta_{w'}\cap \Delta(\sigma') = \Delta_{\alggrp{Q}'}$ and $w \in w'W_{\alggrp{M}}$, then $w = w'$.
Hence
\[
	\{w\mid \Delta_w\cap \Delta(\sigma) = \Delta_{\alggrp{Q}}\} =
	\{w\mid \Delta_w\cap \Delta(\sigma') = \Delta_{\alggrp{Q}'}\}.
\]
Therefore $\Delta(\sigma) = \Delta(\sigma')$ and $\alggrp{Q} = \alggrp{Q}'$.

We have $w_\Delta w_{\Delta_{\alggrp{Q}}}\sigma\subset I(\alggrp{P},\sigma,\alggrp{Q})$.
Take $\lambda\in \Lambda(1)$ as in Lemma~\ref{lem:irreducible A-representation of parabolic induction}.
Then the subspace $w_\Delta w_{\Delta_{\alggrp{Q}}}\sigma\subset I(\alggrp{P},\sigma,\alggrp{Q})$ is characterized by $\{x\in I(\alggrp{P},\sigma,\alggrp{Q})\mid xE(\lambda) \ne 0\}$.
Hence an isomorphism $I(\alggrp{P},\sigma,\alggrp{Q})\to I(\alggrp{P},\sigma',\alggrp{Q}')$ gives a morphism $w_\Delta w_{\Delta_{\alggrp{Q}}}\sigma\to w_\Delta w_{\Delta_{\alggrp{Q}}}\sigma'$.
By Proposition~\ref{prop:M'-mod in parabolic induction}, we have a non-zero homomorphism $\sigma\to \sigma'$ as $\mathcal{H}_{\alggrp{M}}^-$-modules.
By Remark~\ref{rem:role of strongly positive/negative element}, $\sigma$ and $\sigma'$ are irreducible $\mathcal{H}_{\alggrp{M}}^-$-modules.
Hence $\sigma\simeq\sigma'$ as $\mathcal{H}_{\alggrp{M}}^-$-modules and by Remark~\ref{rem:role of strongly positive/negative element} again, $\sigma\simeq\sigma'$ as $\mathcal{H}_{\alggrp{M}}$-modules.

Finally we introduce the notion of supercuspidality and compare it with supersingularlity.

\begin{defn}
An irreducible representation $\pi$ of $\mathcal{H}$ is called \emph{supercuspidal} if it is not isomorphic to a subquotient of $I_{\alggrp{P}}(\sigma)$ where $\alggrp{P} = \alggrp{M}\alggrp{N}$ is a proper parabolic subgroup of $\alggrp{G}$ and $\sigma$ an irreducible representation of $\mathcal{H}_{\alggrp{M}}$.
\end{defn}
\begin{cor}
Let $\pi = I(\alggrp{P},\sigma,\alggrp{Q})$ is an irreducible representation of $\mathcal{H}$.
Then the following is equivalent.
\begin{enumerate}
\item $\pi$ is supersingular.
\item $\pi$ is supercuspidal.
\item $\alggrp{P} = \alggrp{G}$.
\end{enumerate}
\end{cor}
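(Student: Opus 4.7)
The four implications split into two trivial ones and two that use the $\mathcal{A}$-module picture developed throughout the section. First, $(3) \Rightarrow (1)$ is immediate: when $\alggrp{P} = \alggrp{G}$ the only admissible $\alggrp{Q}$ is $\alggrp{G}$ and $I_{\alggrp{G}}(\sigma) = \sigma$, so $\pi = \sigma$ is supersingular by hypothesis on the triple. Similarly $(2) \Rightarrow (3)$ is a one-line appeal: Corollary~\ref{cor:composition factors of parabolic induction} exhibits $\pi$ as a composition factor of $I_{\alggrp{P}}(\sigma)$ with $\sigma$ irreducible, so if $\alggrp{P}$ were proper then $\pi$ could not be supercuspidal.

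For $(1) \Rightarrow (3)$ I would invoke characterization \ref{enum:supersingularity, by any X} of Lemma~\ref{lem:supersingurality, in terms of A-moudle}: every irreducible $\mathcal{A}$-submodule $X$ of $\pi$ satisfies $\supp X = \Lambda_\Delta(1)$. By Lemma~\ref{lem:I(P,sigma,Q) as A-mod} the restriction $\pi|_\mathcal{A}$ decomposes as $\bigoplus_w w\sigma_\mathcal{A}$, so $X$ lies in one summand; Lemma~\ref{lem:on A-submodule of sigma_A} identifies the support of any irreducible $\mathcal{A}$-submodule of $\sigma_\mathcal{A}$ as $\Lambda_{\Delta_\alggrp{M}}^+(1)$, giving $\supp X = n_w(\Lambda_{\Delta_\alggrp{M}}^+(1))$. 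Equality with $\Lambda_\Delta(1) = \Lambda_\Delta^+(1)$ forces $\Delta_\alggrp{M} = \Delta$, i.e.\ $\alggrp{P} = \alggrp{G}$.

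The substantive implication is $(3) \Rightarrow (2)$. Using the equivalence $(1) \Leftrightarrow (3)$ just established, $\pi$ is supersingular; suppose for contradiction that $\pi$ is a subquotient of $I_{\alggrp{P}'}(\sigma')$ for some proper parabolic $\alggrp{P}' = \alggrp{M}'\alggrp{N}'$ and some irreducible $\sigma'$. Lemma~\ref{lem:parabolic induction as vector space} together with the subsequent $\mathcal{A}$-module structure proposition gives $I_{\alggrp{P}'}(\sigma')|_\mathcal{A} \simeq \bigoplus_{w \in W^{\alggrp{M}'}} w(\sigma')_\mathcal{A}$. Pick an irreducible $\mathcal{A}$-submodule $X$ of $\pi$; lifting $X$ to the preimage inside a composition series realizes it as an irreducible $\mathcal{A}$-subquotient of $I_{\alggrp{P}'}(\sigma')$, so $X \simeq wY$ for some $w$ and some irreducible $\mathcal{A}$-subquotient $Y$ of $(\sigma')_\mathcal{A}$. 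Supersingularity forces $\supp X = \Lambda_\Delta(1)$, which is $W$-stable, whence also $\supp Y = \Lambda_\Delta(1)$.

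The contradiction comes from a single central element. Choose $\lambda_0^- \in \Lambda(1)$ as in Remark~\ref{rem:role of strongly positive/negative element} applied to $\alggrp{M}'$: it is $\alggrp{M}'$-negative, lies in the center of $\widetilde{W}_{\alggrp{M}'}(1)$, and satisfies $\langle \nu(\lambda_0^-), \alpha\rangle > 0$ for every $\alpha \in \Sigma^+ \setminus \Sigma_{\alggrp{M}'}^+$---a set which is nonempty because $\alggrp{P}' \neq \alggrp{G}$. Hence $\nu(\lambda_0^-) \neq 0$, so $\lambda_0^- \notin \Lambda_\Delta(1)$ and $YE(\lambda_0^-) = 0$. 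On the other hand, $E^{\alggrp{M}'}(\lambda_0^-)$ is central in $\mathcal{H}_{\alggrp{M}'}$ and invertible by Remark~\ref{rem:role of strongly positive/negative element}, so it acts on the irreducible $\sigma'$ by a nonzero scalar; by the definition of $(\sigma')_\mathcal{A}$ the same scalar governs the action of $E(\lambda_0^-)$, which therefore acts invertibly on $(\sigma')_\mathcal{A}$ and on every one of its subquotients, contradicting $YE(\lambda_0^-) = 0$. The main obstacle is to get this argument to run without transitivity of parabolic induction (which is not established in the paper); the key observation that makes it go through is that $\Lambda_\Delta(1)$ is $W$-stable, so passing from $X$ to $Y$ preserves the support condition exactly and one can immediately detect the non-trivial cocharacter direction perpendicular to $\Delta_{\alggrp{M}'}$ that supersingularity of $\pi$ forbids.
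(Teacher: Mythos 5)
Your proposal is correct but diverges from the paper on the key implication. For the easy directions, and for showing that supersingularity forces $\alggrp{P}=\alggrp{G}$ via the support computation from Lemmas~\ref{lem:I(P,sigma,Q) as A-mod} and \ref{lem:on A-submodule of sigma_A} combined with Lemma~\ref{lem:supersingurality, in terms of A-moudle}, you and the paper essentially agree (the paper phrases it contrapositively as the final step of its $(1)\Rightarrow(2)$ argument). Where you genuinely differ is on ``not supercuspidal implies not supersingular.'' The paper picks a \emph{minimal} proper $\alggrp{P}'$ with $\pi$ a subquotient of $I_{\alggrp{P}'}(\sigma')$, deduces $\sigma'$ is supercuspidal hence (by the classification proved just before) supersingular, identifies $\pi\simeq I(\alggrp{P}',\sigma',\alggrp{Q}')$ via Corollary~\ref{cor:composition factors of parabolic induction}, and then invokes the uniqueness part of Theorem~\ref{thm:main theorem} to conclude $\alggrp{P}=\alggrp{P}'\ne\alggrp{G}$ before running the support argument. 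Your proof instead derives a contradiction directly from $\pi$ being a subquotient of $I_{\alggrp{P}'}(\sigma')$ for \emph{any} proper $\alggrp{P}'$ and \emph{any} irreducible $\sigma'$: restrict to $\mathcal{A}$ via $I_{\alggrp{P}'}(\sigma')|_\mathcal{A}\simeq\bigoplus_{w\in W^{\alggrp{M}'}}w(\sigma')_\mathcal{A}$, observe that the central invertible element $E^{\alggrp{M}'}(\lambda_0^-)$ of $\mathcal{H}_{\alggrp{M}'}$ acts on the irreducible $\sigma'$ by a nonzero scalar, hence $E(\lambda_0^-)$ acts invertibly on every subquotient of $(\sigma')_\mathcal{A}$, while $\lambda_0^-\notin\Lambda_\Delta(1)$ precisely because $\alggrp{P}'$ is proper---incompatible with $\supp Y=\Lambda_\Delta(1)$. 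This buys you two things: you avoid both the minimality trick and the appeal to uniqueness in Theorem~\ref{thm:main theorem}, and you need no preliminary reduction to the case of supersingular $\sigma'$. The paper's argument is shorter given the machinery established immediately before; yours is more self-contained and isolates the single central element responsible for the obstruction. (One small point common to both arguments: an irreducible $\mathcal{A}$-submodule of a finite direct sum need not literally \emph{lie in} one summand, but the nonzero projection onto some summand is an injective $\mathcal{A}$-homomorphism, so the support computation is unaffected.)
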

\begin{proof}
Obviously (3) implies (1).
Since $I(\alggrp{P},\sigma,\alggrp{Q})$ is the subquotient of $I_{\alggrp{P}}(\sigma)$, (2) implies (3).
Assume that $\pi$ is not supercuspidal.
Take $(\alggrp{P}',\sigma')$ such that $\pi$ is a subquotient of $I_{\alggrp{P}'}(\sigma')$ and $\alggrp{P}'$ is minimal subject to this condition.
Then $\sigma'$ is supercuspidal.
We have already proved that irreducible supercuspidal representations are supersingular.
Hence $\sigma'$ is supersingular.
By Corollary~\ref{cor:composition factors of parabolic induction}, it is of a form $I(\alggrp{P}',\sigma',\alggrp{Q}')$ for some $\alggrp{Q}'$.
Hence $\alggrp{P} = \alggrp{P}' \ne\alggrp{G}$.
Let $X\subset I(\alggrp{P},\sigma,\alggrp{Q})$ be an irreducible $\mathcal{A}$-submodule.
By Lemma~\ref{lem:I(P,sigma,Q) as A-mod}, $X\subset w\sigma_\mathcal{A}$ for some $w\in W$ and by \ref{lem:on A-submodule of sigma_A}, we have $\supp X = n_w(\Lambda_{\Delta_{\alggrp{P}}}(1))$.
Hence by Lemma~\ref{lem:supersingurality, in terms of A-moudle} \ref{enum:supersingularity, by any X}, $I(\alggrp{P},\sigma,\alggrp{Q})$ is not supersingular.
\end{proof}

\def\cprime{$'$} \def\dbar{\leavevmode\hbox to 0pt{\hskip.2ex \accent"16\hss}d}
  \def\Dbar{\leavevmode\lower.6ex\hbox to 0pt{\hskip-.23ex\accent"16\hss}D}
  \def\cftil#1{\ifmmode\setbox7\hbox{$\accent"5E#1$}\else
  \setbox7\hbox{\accent"5E#1}\penalty 10000\relax\fi\raise 1\ht7
  \hbox{\lower1.15ex\hbox to 1\wd7{\hss\accent"7E\hss}}\penalty 10000
  \hskip-1\wd7\penalty 10000\box7}
  \def\cfudot#1{\ifmmode\setbox7\hbox{$\accent"5E#1$}\else
  \setbox7\hbox{\accent"5E#1}\penalty 10000\relax\fi\raise 1\ht7
  \hbox{\raise.1ex\hbox to 1\wd7{\hss.\hss}}\penalty 10000 \hskip-1\wd7\penalty
  10000\box7} \newcommand{\noop}[1]{}

\end{document}